% !TEX TS-program = pdflatex
% !TEX encoding = UTF-8 Unicode

% This is a simple template for a LaTeX document using the "article" class.
% See "book", "report", "letter" for other types of document.

\documentclass[11pt,final]{article} % use larger type; default would be 10pt
\usepackage{etex}
\usepackage[utf8]{inputenc} % set input encoding (not needed with XeLaTeX)
\addtolength{\topmargin}{-1.1cm}
\addtolength{\textheight}{2.2cm}
\addtolength{\evensidemargin}{-1.1cm}
\addtolength{\oddsidemargin}{-1.1cm}
\addtolength{\textwidth}{2.2cm}

%%% Examples of Article customizations
% These packages are optional, depending whether you want the features they provide.
% See the LaTeX Companion or other references for full information.

%%% PAGE DIMENSIONS
\usepackage{geometry} % to change the page dimensions
% \geometry{a4paper} % or letterpaper (US) or a5paper or....
% \geometry{margin=2in} % for example, change the margins to 2 inches all round
% \geometry{landscape} % set up the page for landscape
%   read geometry.pdf for detailed page layout information

\usepackage{graphicx} % support the \includegraphics command and options

% \usepackage[parfill]{parskip} % Activate to begin paragraphs with an empty line rather than an indent

%%% PACKAGES
\usepackage{booktabs} % for much better looking tables
\usepackage{array} % for better arrays (eg matrices) in maths
\usepackage{paralist} % very flexible & customisable lists (eg. enumerate/itemize, etc.)
\usepackage{verbatim} % adds environment for commenting out blocks of text & for better verbatim
\usepackage{subfig} % make it possible to include more than one captioned figure/table in a single float
% These packages are all incorporated in the memoir class to one degree or another...
\usepackage{natbib}
\usepackage{hyperref}

%%% HEADERS & FOOTERS
\usepackage{fancyhdr} % This should be set AFTER setting up the page geometry
\pagestyle{fancy} % options: empty , plain , fancy
 % customise the layout...
\lhead{}\chead{}\rhead{}
\lfoot{}\cfoot{\thepage}\rfoot{}

%%% SECTION TITLE APPEARANCE
\usepackage{sectsty}
\allsectionsfont{\sffamily\mdseries\upshape} % (See the fntguide.pdf for font help)
% (This matches ConTeXt defaults)

%%% ToC (table of contents) APPEARANCE
\usepackage[nottoc,notlof,notlot]{tocbibind} % Put the bibliography in the ToC
\usepackage[titles,subfigure]{tocloft} % Alter the style of the Table of Contents

 % No bold!

%%% AMS math stuff
\usepackage{amsmath}
\usepackage{amssymb}
\usepackage{amsthm}
\usepackage{mathrsfs}
\usepackage{enumerate}
\usepackage[all]{xy}
\usepackage{todonotes}
\usetikzlibrary{calc}

%%% Theorem environments
\numberwithin{equation}{section}
\newtheorem{thm}[equation]{Theorem}
\newtheorem{prop}[equation]{Proposition}
\newtheorem{cor}[equation]{Corollary}

\newtheorem{lem}[equation]{Lemma}
\theoremstyle{definition}
\newtheorem{prob}[equation]{Problem}

\newtheorem{defn}[equation]{Definition}
\newtheorem{ex}[equation]{Example}

\newtheorem{remk}[equation]{Remark}

%%% Typesetting shortcuts
\newcommand{\tn}[1]{\textnormal{#1}}
\newcommand{\ol}[1]{\overline{#1}}
\newcommand{\wt}[1]{\widetilde{#1}}

\newcommand{\vocab}[1]{\textbf{#1}}

%%% Math shortcuts
\newcommand{\bbr}{\mathbb R}
\newcommand{\bbz}{\mathbb Z}

\newcommand{\bbn}{\mathbb N}

\newcommand{\bbc}{\mathbb C}

\newcommand{\vspan}{\tn{span}}

\newcommand{\diff}{\backslash}

\newcommand{\diag}{\text{diag}}

\newcommand{\ann}{\tn{Ann}}

\newcommand{\tr}{\textnormal{tr}}

\renewcommand{\Re}{\text{Re}}
\renewcommand{\Im}{\text{Im}}

%%% Algebraic Geometry

\newcommand{\spec}{\textnormal{Spec}}

\newcommand{\supp}{\textnormal{supp}}

%%% paper specific definitions
\newcommand{\fourl}{\mathcal{F}_L}
\newcommand{\fourr}{\mathcal{F}_R}
\newcommand{\bispl}{\mathcal{B}_L}
\newcommand{\bispr}{\mathcal{B}_R}
\newcommand{\shift}{\mathcal{S}}

\newcommand{\weyll}{\Omega[[x]]}

\newcommand{\mweyll}{M_N(\weyll)}
\newcommand{\Ad}{\textnormal{Ad}}
\newcommand{\mxx}[4]{\left(\begin{array}{cc} #1 & #2\\ #3 & #4\end{array}\right)}

%%% END Article customizations

\title{\bf{The Matrix Bochner Problem}}
\author{W. Riley Casper\thanks{Email:\href{wcasper1@lsu.edu}{wcasper1@lsu.edu}} \ and Milen Yakimov\thanks{Email:\href{yakimov@math.lsu.edu}{yakimov@math.lsu.edu}}
%\ead{wcasper@math.washington.edu}
%\address{University of Washington, Seattle WA}
\\ \hfill\\ \normalsize \textit{Department of Mathematics,
Louisiana State University,} \\
\textit{Baton Rouge, LA 70803,
USA 
}     }

\date{} % Activate to display a given date or no date (if empty),
         % otherwise the current date is printed

\begin{document}

\maketitle
\begin{abstract}
A long standing question in the theory of orthogonal matrix polynomials is the matrix Bochner problem, the classification of $N \times N$ 
weight matrices $W(x)$ whose associated orthogonal polynomials are eigenfunctions of a second order differential operator. 
Based on techniques from noncommutative algebra (semiprime PI algebras of Gelfand-Kirillov dimension one),
we construct a framework for the systematic study of the structure of the algebra $\mathcal D(W)$ of matrix differential operators 
for which the orthogonal polynomials of the weight matrix $W(x)$ are eigenfunctions. The ingredients for this algebraic setting are derived from the 
analytic properties of the orthogonal matrix polynomials. We use the representation theory of the algebras $\mathcal D(W)$ to resolve the matrix Bochner problem 
under the two natural assumptions that the sum of the sizes of the matrix algebras in the central localization of $\mathcal D(W)$ equals $N$ (fullness of $\mathcal D(W)$) 
and the leading coefficient of the second order differential operator multiplied by the weight $W(x)$ is positive definite. In the case of $2\times 2$ weights, it is proved that 
fullness is satisfied as long as $\mathcal D(W)$ is noncommutative. The two conditions are natural in that without them the problem 
is equivalent to much more general ones by artificially increasing the size of the matrix $W(x)$.
\end{abstract}

\tableofcontents
\section{Introduction}
\subsection{An overview of the results in the paper}
Orthogonal matrix polynomials are sequences of matrix-valued polynomials which are pairwise orthogonal with respect to a matrix-valued inner product defined by a weight matrix $W(x)$.
They were defined seventy years ago by Krein \cite{krein} and since then have been shown to have a wide variety of applications in both pure and applied mathematics, including spectral theory, quasi-birth and death processes, signal processing, Gaussian quadrature, special functions, random matrices, integrable systems and representation theory.
As natural generalizations of their classical scalar counterparts, orthogonal matrix polynomials have been shown to have a wide variety of applications in both pure and applied mathematics, including spectral theory, quasi-birth and death processes, signal processing, Gaussian quadrature, special functions, random matrices, integrable systems and representation theory.
Of great specific interest are those orthogonal matrix polynomials which are simultaneously eigenfunctions of a matrix-valued differential operator.
They in particular generalize the classical orthogonal polynomials of Hermite, Laguerre, and Jacobi whose utility in diverse research areas is difficult to understate.

The current and potential applications of matrix-valued orthogonal polynomials and the study of their analytic properties
naturally motivate the problem of the classification of all orthogonal matrix polynomials which are eigenfunctions of a second-order differential operator.
This problem was posed and solved by Bochner \cite{bochner1929sturm} in the scalar case and later extended by Dur\'{a}n \cite{duran1997} to the matrix case.
\begin{prob}[Matrix Bochner Problem]\label{matrix bochner problem}
Classify all $N\times N$ weight matrices $W(x)$ whose associated sequence of orthogonal matrix polynomials are eigenfunctions of a second-order matrix differential operator.
\end{prob}
Bochner \cite{bochner1929sturm}
proved that for $N=1$ up to an affine change of coordinates the only weight matrices satisfying these properties are 
the classical weights $e^{-x^2}$, $x^be^{-x}1_{(0,\infty)}(x)$, and $(1-x)^a(1+x)^b1_{(-1,1)}(x)$ of Hermite, Laguerre, and Jacobi respectively.
For brevity, we hereafter refer to affine transformations of these weight functions as classical weights.
However, for $N>1$ the solution of the matrix Bochner problem has proved difficult.

Gr\"unbaum, Pacharoni and Tirao \cite{grunbaum-HC1,grunbaum-HC2} 
found the first nontrivial solutions of the matrix Bochner problem using Harish-Chandra modules for real simple groups and the associated 
matrix spherical functions.
In the past twenty years numerous other examples have been found. 
%\cite{duran2004, duran2005c, grunbaum2005} to name a few. 
 More recent work has focused on the study of the algebra $\mathcal{D}(W)$ of all differential operators for which the matrix-valued polynomials are eigenfunctions.  Alternatively, 
 $\mathcal D(W)$ may be described as the algebra of all differential operators which are degree-preserving and $W$-adjointable.  This algebra is studied both from a general stand point and for specific weights $W(x)$, 
\cite{castro2006,grunbaum2007b,tirao2011,zurrian2016,zurrian2016algebra}.  
However, the previous general results on the structure of $\mathcal D(W)$ are very limited 
and general classification results on the matrix Bochner problem have remained illusive.

In this paper, we carry out a general study of the algebra $\mathcal{D}(W)$ using techniques from noncommutative algebra.
Starting from the analytics properties of the related sequence of orthogonal matrix polynomials, we equip the algebra 
$\mathcal{D}(W)$ with the structure of a $*$-algebra with a positive involution. From here we obtain that 
$\mathcal{D}(W)$ is an affine semiprime algebra of Gelfand-Kirillov dimension at most one.  Among the hardest of these properties to prove is that $\mathcal{D}(W)$ is affine.  We show this by first proving that the center is affine by constructing an embedding into a product of Dedekind domains.
We then show that the whole algebra $\mathcal D(W)$ is a large subalgebra of an order in a semisimple algebra.
Once all the properties of $\mathcal D(W)$ are established, the Small-Stafford-Warfield theorem \cite{small1985} tells us $\mathcal{D}(W)$ is a Noetherian algebra which is finitely generated module over its center $\mathcal Z (W)$.
This is used to show that the the extension of scalars of $\mathcal{D}(W)$ to the total ring of fractions of $\mathcal Z (W)$
is isomorphic to a product of matrix algebras over the function fields of the irreducible components 
of $\mathrm{Spec} \  \mathcal Z (W)$ (one matrix algebra for each irreducible component). 
This fact in turn allows us to define the notion of the \emph{rank of the algebra} $\mathcal{D}(W)$ which 
equals to the sum of the sizes of the matrix algebras. This integer is shown to be between $1$ and $N$ 
and another characterizations of it is given as the maximal number of generalized orthogonal idempotents of $\mathcal{D}(W)$ 
which sum to a central element. 

The above structural results allow us to define canonical submodules of the $\Omega(x), \mathcal{D}(W)$-bimodule $\Omega(x)^{\oplus N}$, 
where $\Omega(x)$ is the matrix Weyl algebra with rational coefficients. 
Using representation theory, we demonstrate that the algebraic structure of the algebra $\mathcal{D}(W)$ 
has a profound influence on the shape of the weight matrix $W(x)$ itself. Specifically, using  $\mathcal{D}(W)$ modules defined from the maximal set of orthogonal idempotents, 
we prove that when the algebra $\mathcal{D}(W)$ is \emph{full} in the sense that the rank is as large as possible (i.e., equals $N$),
the matrix $W(x)$ is congruent to a diagonal weight matrix via a rational matrix $T(x)$,
\begin{equation}\label{structural equation}
W(x) = T(x)\diag(f_1(x),\dots, f_N(x))T(x)^*
\end{equation}
where $f_i(x)$ is a classical weight for all $i$. Further arguments with these $\mathcal{D}(W)$ modules allow us to control the size of the \emph{Fourier} algebra 
of the matrix weight $W(x)$, which is defined as the algebra of  matrix differential operators that applied to the orthogonal matrix polynomials of $W(x)$ equal to 
a shift operator applied to the same sequence. This is a larger algebra than $\mathcal{D}(W)$, which in turn is used to show that under natural assumptions, solutions 
of the matrix Bochner problem come from bispectral Darboux transformations  of a direct sum of classical weights.

Our main theorem is the following Bochner-type classification result:
\begin{thm}[Classification Theorem]\label{classification theorem}
\label{thm1.3}
Let $W(x)$ be a weight matrix and suppose that $\mathcal D(W)$ contains a $W$-symmetric second-order differential operator
\begin{equation}
\label{2nd-order}
\mathfrak D = \partial_x^2D_2(x) + \partial_xD_1(x) + D_0(x)
\end{equation}
with $D_2(x)W(x)$ positive-definite on the support of $W(x)$.
Then the algebra $\mathcal D(W)$ is full if and only if $W(x)$ is a noncommutative bispectral Darboux transformation of a direct sum of classical weights.
Furthermore, in this case (\ref{structural equation}) holds.
\end{thm}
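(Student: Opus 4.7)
The plan is to prove the equivalence in two steps, with the forward direction (fullness implies a noncommutative bispectral Darboux transformation of a direct sum of classical weights together with the structural identity \eqref{structural equation}) carrying most of the content; the backward direction should then follow from structural features preserved under Darboux transformations.

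For the forward direction, I would begin by invoking the structural results announced in the introduction. Under fullness, the Small--Stafford--Warfield theorem identifies $\mathcal{D}(W)$ as a Noetherian algebra that is a finitely generated module over its center $\mathcal{Z}(W)$, and its central localization decomposes as a product of matrix algebras over the function fields of the irreducible components of $\spec \mathcal{Z}(W)$ whose sizes sum to $N$. A maximal collection of generalized orthogonal idempotents summing to a central element then exists and acts on the bimodule $\Omega(x)^{\oplus N}$ to split it into $N$ rank-one pieces. Reading off the transition between these pieces and the standard basis produces a rational matrix $T(x)$ satisfying $W = T \diag(f_1,\dots,f_N) T^*$. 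The positivity hypothesis on $D_2(x)W(x)$ transfers to each diagonal block, guaranteeing that each $f_i$ is a genuine scalar weight whose orthogonal polynomials are eigenfunctions of the second-order operator obtained by conjugating $\mathfrak{D}$ by $T(x)$; the classical scalar Bochner theorem then forces each $f_i$ to be (an affine image of) a Hermite, Laguerre, or Jacobi weight. This establishes \eqref{structural equation}.

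To promote the conjugation by $T(x)$ to a bispectral Darboux transformation, I would analyze the Fourier algebra $\fourr(W)$, which strictly contains $\mathcal{D}(W)$ and encodes the intertwiners between differentiation in $x$ and shift in the degree index. The idempotent splitting provides factorizations of elements of $\fourr(W)$ through differential operators acting on $\bigoplus f_i$, and reversing these factorizations is precisely the content of a noncommutative bispectral Darboux transformation. For the converse, a bispectral Darboux transformation from $\bigoplus f_i$ transports the $N$ orthogonal idempotents on the classical side (which realize the fullness guaranteed by scalar Bochner) to an $N$-element orthogonal idempotent family inside $\mathcal{D}(W)$ summing to a central element, forcing $\mathcal{D}(W)$ to be full.

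The hardest step is the construction and analytic justification of $T(x)$: the idempotent decomposition lives abstractly inside the central localization, so one must verify that the resulting transition matrix is genuinely rational and realizes the required intertwining with $\mathfrak{D}$, rather than just an abstract bimodule isomorphism. The positivity assumption on $D_2(x)W(x)$ is essential here both to rule out the appearance of non-classical or degenerate scalar weights among the $f_i$ and to guarantee that $T(x)$ can be chosen compatibly with the positive $*$-involution on $\mathcal{D}(W)$ coming from the weight inner product.
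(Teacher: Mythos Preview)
Your high-level architecture matches the paper's: extract an orthogonal system from fullness, use it to split $\Omega(x)^{\oplus N}$ into rank-one cyclic pieces, read off a rational conjugating matrix, and then identify the diagonal pieces as classical. The backward direction via transport of idempotents is also correct. However, the step you flag as ``hardest'' conceals a genuine gap that the paper handles with substantial additional machinery you do not mention.

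Specifically, you propose to conjugate $\mathfrak D$ by $T(x)$ and apply the scalar Bochner theorem to each diagonal block. This fails for two reasons. First, there is no reason the conjugate $T(x)^{-1}\mathfrak D\, T(x)$ is diagonal; what the paper actually obtains on each piece is the scalar operator $\mathfrak v_i$ determined by $\mathfrak v_i\vec{\mathfrak u}_i^T = \vec{\mathfrak u}_i^T\mathfrak V_i$, and its order is that of $\mathfrak V_i$, not two. Second, and more seriously, even after further conjugation the scalar operators one extracts have \emph{rational} coefficients, not polynomial ones, so the classical Bochner theorem does not apply. The paper's route to ``each $f_i$ is classical'' is therefore quite different from what you sketch: it shows the commutative algebra $\mathfrak V_i\mathcal D(W)\mathfrak V_i\cong\mathcal A_i\subset\Omega(x)$ has rational spectrum (via the generalized Fourier map and a trace argument), invokes Kasman's lemma to conjugate $\mathcal A_i$ into $\bbc[\mathfrak d_i]$ with $\mathfrak d_i$ of order at most two, proves $\mathfrak d_i$ is an \emph{exceptional} operator (polynomial eigenfunctions of all but finitely many degrees), and then appeals to the Garc\'ia-Ferrero--G\'omez-Ullate--Milson theorem to conclude $\mathfrak d_i$ is Darboux conjugate to a degree-filtration preserving second-order operator. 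Only after this is a direct coefficient analysis (your equations \eqref{equation for r real updated}--\eqref{equation for r imag updated} in the paper) used, together with the vanishing of leading coefficients at the endpoints of the support, to pin down $\mathfrak d_i$ as Hermite, Laguerre, or Jacobi. Without the exceptional-operator step your argument cannot bridge from ``rational second-order operator with many polynomial eigenfunctions'' to ``classical operator,'' and this is not a detail that can be absorbed into the positivity hypothesis.
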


A matrix differential operator is called $W$-symmetric when it equals its formal adjoint with respect to $W(x)$, see Definition \ref{formal adjoint}. 
The algebra $\mathcal D(W)$ contains a second-order differential operator if and only if it contains a $W$-symmetric second-order differential operator. 
For such an operator (\ref{2nd-order}), the matrix $D_2(x)W(x)$ is necessarily Hermitian.

We pause here to briefly explain the assumptions in the theorem, arguing that they are both relatively weak and natural.  The requirement that $\mathcal D(W)$ contains a second-order $W$-symmetric operator $\mathfrak D$ with $D_2(x)W(x)$ positive definite on the support of $W(x)$ is a non-degeneracy condition.  It implies that $\mathfrak D$ is not a zero divisor in $\mathcal D(W)$ and that $\mathfrak D$ does not annihilate an infinite dimensional space of matrix-valued polynomials.
It fits well into the literature wherein the stronger condition that the leading coefficient of the second-order differential operator is a scalar is often assumed.

Furthermore, without this positivity assumption, the problem is closely related to the much more general one for classifying matrix weights for which $\mathcal D(W)$ is nontrivial, i.e., 
contains a differential operator of an arbitrary 
nonzero order. This problem is not solved even in the scalar case. To see the stated relation, consider an $N \times N$ matrix weight $W(x)$ for which $\mathcal D(W)$ 
contains an operator of order $2k$. Let $\wt W(x)$ be a bispectral Darboux transformation of $W(x)$ 
obtained by factorizing the differential operator into a product of operators of orders $k$ in the sense of Definition \ref{bisp-D}.  
Then the $2N \times 2N$ block diagonal weight matrix $\diag(W(x), \wt W(x))$ has the property that $\mathcal D(\diag(W, \wt W))$ contains a differential 
operator of order $k$, see Example \ref{double-size} for details.
In various common situations this can be iterated to bring the minimal order of a differential operator in  $\mathcal D(W)$ down to 2 by increasing the size of $W(x)$. 
The positivity assumption on the leading terms of the 
differential operator $\mathfrak D$ avoids this vast expansion of the matrix Bochner problem. In a similar fashion, when 
one artificially increases the size of a matrix weight, the difference between $N$ and the rank of $\mathcal D(W)$ increases too, leading to cases where 
the algebra  $\mathcal D(W)$ is substantially smaller than the size of the matrix weight $W(x)$.
The fullness condition ensures that the problem does not become wild in this way.

Additionally, under the assumptions of the Classification Theorem we obtain an explicit description of the algebra $\mathcal D(W)$ itself.
In particular Theorem \ref{backup strats} provides us with a matrix differential operator conjugating $\mathcal D(W)$ into a subalgebra of $\mathcal D(f_1\oplus\dots\oplus f_N)$ for some classical weights $f_1,\dots, f_N$.
When the $f_1,\dots,f_N$ are suitably chosen, this latter algebra is a maximal order (in this context a direct product of matrix algebras over polynomial rings).
In this way, we can think of our process as a noncommutative desingularization of the original algebra $\mathcal D(W)$.

We prove that our results take on a more explicit form in the case of matrix weights $W(x)$ of size $2 \times 2$. In this case we show that, if 
$\mathcal D(W)$ is noncommutative, then $\mathcal D(W)$ must be full. This leads us to the following classification result for $2\times 2$ weight matrices.

\begin{thm}[$2\times 2$ case]\label{2x2 classification}
Let $W(x)$ be a $2\times 2$ weight matrix and suppose that $\mathcal D(W)$ contains a $W$-symmetric second-order differential operator whose leading coefficient multiplied by $W(x)$ is positive definite on the support of $W(x)$.
The algebra $\mathcal D(W)$ is noncommutative if and only if the weight $W(x)$ is a noncommutative bispectral Darboux transformation of $r(x)I$ for some classical weight $r(x)$.
\end{thm}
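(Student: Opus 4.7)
The plan is to derive Theorem \ref{2x2 classification} from the Classification Theorem \ref{classification theorem} using two additional facts specific to $N=2$: first, that noncommutativity of $\mathcal{D}(W)$ already forces fullness; second, that the two classical weights produced by the Classification Theorem must coincide.

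\textbf{Forward direction.} Assume $\mathcal{D}(W)$ is noncommutative. Since $N=2$, the rank of $\mathcal{D}(W)$ is $1$ or $2$. If the rank equals $1$, the central localization is a single field, and since $\mathcal{D}(W)$ is semiprime and finitely generated over its center $\mathcal{Z}(W)$ by Small--Stafford--Warfield, any $\mathcal{Z}(W)$-torsion would be a nilpotent two-sided ideal and hence zero; so $\mathcal{D}(W)$ embeds into its central localization and must be commutative, a contradiction. Thus $\mathcal{D}(W)$ is full. The Classification Theorem then gives $W(x)=T(x)\diag(f_1(x),f_2(x))T(x)^*$ with classical $f_1,f_2$ and exhibits $W$ as a noncommutative bispectral Darboux transformation of $f_1\oplus f_2$. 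Next, the central localization of $\mathcal{D}(W)$, being a product of matrix algebras of total dimension $2$, is either $M_2(F)$ or $F_1\times F_2$; the latter again forces commutativity, so only $M_2(F)$ is possible, and in particular $\spec \mathcal{Z}(W)$ is irreducible. Because $\mathcal{D}(f_1\oplus f_2)=\mathcal{D}(f_1)\times\mathcal{D}(f_2)$ has reducible central spectrum whenever $f_1$ and $f_2$ are inequivalent classical weights, and because the conjugation provided by Theorem \ref{backup strats} transports the central spectrum structure up to Morita equivalence, irreducibility of $\spec\mathcal{Z}(W)$ forces $f_1=f_2=:r$. Hence $W$ is a noncommutative bispectral Darboux transformation of $rI$.

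\textbf{Converse.} Suppose $W$ is a noncommutative bispectral Darboux transformation of $rI$ for a classical weight $r$. By construction, $\mathcal{D}(W)$ contains the image, under a matrix differential operator conjugation, of a noncommutative subalgebra of $\mathcal{D}(rI)$. Since $\mathcal{D}(rI)$ contains $M_2(\mathcal{D}_{\mathrm{scal}}(r))$ and hence a copy of $M_2(\mathbb{C})$, this noncommutative subalgebra is inherited by $\mathcal{D}(W)$, which is therefore noncommutative.

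\textbf{Main obstacle.} The delicate step is the identification $f_1=f_2$ in the forward direction. The Classification Theorem alone allows distinct classical weights, and ruling this out requires carefully tracking the $M_2(F)$-shape of the central localization across the rational conjugation linking $\mathcal{D}(W)$ with $\mathcal{D}(f_1\oplus f_2)$, together with the observation that $\mathcal{D}(f_1)\times\mathcal{D}(f_2)$ has reducible center whenever $f_1$ and $f_2$ are inequivalent, so that an $M_2(F)$-block inside $\mathcal{D}(W)$ can only exist when the two classical factors actually coincide. A secondary subtlety is verifying the no-torsion step for the rank $1$ case, which uses the semiprime, Noetherian and PI properties in tandem.
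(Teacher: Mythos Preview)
Your forward direction has the right skeleton and matches the paper: noncommutativity forces rank $2$, hence fullness, and the central localization is then $M_2(\mathcal F(W))$ with $\mathcal F(W)$ a field, so $\spec\mathcal Z(W)$ is irreducible. This is exactly how the paper argues.

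However, your deduction of $f_1=f_2$ has a genuine gap. The claim that $\mathcal D(f_1\oplus f_2)=\mathcal D(f_1)\times\mathcal D(f_2)$ for inequivalent classical $f_1,f_2$ is not proved and is not obvious (one would need to rule out off-diagonal operators), and the phrase ``transports the central spectrum structure up to Morita equivalence'' is doing a lot of unjustified work: Theorem \ref{backup strats} only gives an \emph{embedding} of $\mathcal D(W)$ into $\bigoplus_i M_{n_i}(\bbc[\mathfrak d_i'])$, not an equivalence of module categories, so you cannot directly read off irreducibility of the target's center. The paper's route is cleaner and avoids these issues entirely: Theorem \ref{main theorem first part} already asserts that if $\mathfrak V_i$ and $\mathfrak V_j$ lie on the same irreducible component of $\mathcal Z(W)$ then one may take $\mathfrak d_i=\mathfrak d_j$. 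Since the center is irreducible, this immediately gives $\mathfrak d_1=\mathfrak d_2$ and hence $f_1=f_2=r$. You should invoke that clause rather than attempt a Morita argument.

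Your converse is also not correct as written. A bispectral Darboux transformation does not provide a ``matrix differential operator conjugation'' carrying a subalgebra of $\mathcal D(rI)$ into $\mathcal D(W)$; the relation from Theorem \ref{lots of stuff} is $\wt p a p\cdot\wt\psi=\wt\psi\cdot\wt q^{-1}\wt u\, b_\psi(a)\, u q^{-1}$, which is a twisted sandwich, not conjugation, and it is not immediate that this preserves noncommutativity. The paper's proof is terse here (``The result follows immediately''), but the intended mechanism is that the Classification Theorem is an if-and-only-if: being a bispectral Darboux transformation of $rI$ gives fullness, and since $rI$ has irreducible center, the same sandwich construction with an orthogonal system coming from $M_2(\bbc)\subset\mathcal D(rI)$ produces noncommuting elements of $\mathcal D(W)$. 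If you want to flesh this out, use Theorem \ref{lots of stuff} with two noncommuting $a\in M_2(\bbc)\subset\bispl$ and check that the resulting operators in $\bispr(\wt P)$ do not commute because $u,\wt u$ are not zero divisors.
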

Thus aside from various degenerate cases when $\mathcal D(W)$ consists of polynomials of a single differential operator of order $2$, this fully 
resolves the Bochner problem for the $2\times 2$ case.

For a comprehensive background on  noncommutative rings and polynomial identity (PI) algebras we refer the reader to the books 
\cite{GW,mcconnell2001} and \cite{DF}, respectively. A comprehensive treatment of orders in central simple algebras is 
given in \cite{reiner}. 
A concise and illuminating treatment of PI rings can be found in \cite[Sections I.13 and III.1]{BG}. 

\subsection{From algebraic geometry to noncommutative algebra}
The idea that the algebraic properties of a commutative algebra of differential or difference operators can tell us about the operators themselves is not new.
It is a powerful point of view that can be pointed to as the central concept in the unification of various phenomena in integrable systems and algebraic geometry during the 70s and 80s, \cite{dubrovin,krichever1977,segal1985,vanmoerbeke}.
This resulted in a number of strong applications of algebraic geometry to the analysis of solutions of integrable systems, and 
in spectacular applications in the opposite direction, for instance the solution of the Schottky problem \cite{shiota}. 

More formally, there is a natural correspondence between differential operator algebras, vector bundles on algebraic curves, and exact solutions of certain nonlinear partial differential equations.  The most basic example of this is when a Schr\"odinger operator $\partial^2+u(x)$ commutes with a differential operator of order $3$.  In this case, $u(x)$ gives rise to a solition solution of the Korteweg-del Vries equation \cite{lax1986}
$$\phi_{xxx}(x,t) + 6\phi(x,t)\phi_x(x,t) = \phi_t(x,t).$$

This concept has also been applied in context of bispectral algebras of differential operators of low rank (here rank means the gcd of the orders of differential operators in the algebra).
Most strikingly, Wilson provided a complete classification of bispectral differential operators of rank $1$ in terms of rational projective curves with no nodal singularities \cite{wilson1998}.
General methods for constructing bispectral operators with $C(\mathfrak d)$ of arbitrary rank have been developed 
\cite{bakalov1996,bakalov1997,duistermaat1986,kasman1997} and the bispectral algebras $C(\mathfrak d)$ containing an operator of prime 
order have been classified \cite{horozov}. Moving into a noncommutative direction, rank 1 bispectral solutions of the KP hierarchy 
and the related Calogero--Moser systems were analyzed 
in \cite{BZ-N,BGK,B-W} using $\mathcal D$-modules, one-sided ideals of the first Weyl algebras and noncommutative 
projective surfaces.

The basis of the applications of algebraic geometry to integrable systems is the consideration of 
commutative algebras of differential and difference operators.
New in this paper is the systematic study of the properties of orthogonal matrix polynomials for a weight matrix $W(x)$, 
based on structural results for the algebra $\mathcal D(W)$ which is generally noncommutative.
The noncommutative case is quite a bit more challenging;
for example, we no longer have the power of algebraic geometry to rely on, at least directly. Our methods use 
PI algebras, and more precisely, noncommutative algebras which are module-finite over their centers, their 
relation to central simple algebras, and their representation theory \cite{BG,DF}.

Similarly, to Shiota's solution of the Schottky problem \cite{shiota}, we expect that our methods will have applications of orthogonal matrix polynomials 
to noncommutative algebra, in that interesting PI algebras can be realized as the algebras $\mathcal D(W)$ 
for some weight matrices $W(x)$. Such a relation, can be used to study fine properties of these algebras 
using orthogonal polynomials.
\subsection{Notation}
Throughout this paper, we will use capital letters to represent matrices and lower case letters to represent scalars.
We will also use the Gothic font to represent differential operators and script to represent difference operators.
For example, we will use $f(x)$ or $F(x)$ to represent a function of $x$, depending on whether it is a scalar or matrix-valued function.
Similarly, we will use $\mathfrak d$ or $\mathfrak D$ to represent a differential operator, again depending on whether it is scalar or matrix-valued.
Furthermore an expression like $\mathscr M$ will denote a matrix-valued discrete operator.
Wherever feasible, we will use capitalized calligraphic font, such as $\mathcal A$, to denote various operator algebras, subalgebras, and ideals.
Exceptions to this will include certain special algebras, such as the algebra of all differential operators, that will have their own special notation.

For reasons pertaining to compatibility with the matrix-valued inner product $\langle\cdot,\cdot\rangle_W$ defined below, our differential operators will act on the right.
For example, the basic differential operator $\partial_x$ acts on a function $f(x)$ by
$$f(x)\cdot\partial_x = f'(x).$$
An arbitrary matrix differential operator $\mathfrak D = \sum_{j=0}^n \partial_x^j A_j(x)$ acts on a matrix-valued function $F(x)$ by
$$F(x)\cdot\mathfrak D = \sum_{j=0}^n F^{(j)}(x)A_j(x).$$
Because this action is a right action, the algebra of all differential operators will satisfy the fundamental commutation relation
$$x\partial -\partial x = 1.$$
Note this is reversed from the typical identity for the Weyl algebra, since differential operators are most often taken to act on the left.
Thus our algebra of differential operators will actually be the opposite algebra of usual Weyl algebra.

In addition, we will adopt the following notation
\begin{itemize}
\item
For any ring $R$, $R[x]$,  $R(x)$, $R[[x]]$ and $R((x))$ will denote the rings of polynomials, rational functions, power series and Laurent series with coefficients in $R$, respectively.
\item
We will use $\Omega[x]$, $\Omega(x)$, $\Omega[[x]]$ and $\Omega((x))$ to denote the ring of differential operators with right action and with coefficients in $\bbc[x]$, $\bbc(x)$, $\bbc[[x]]$ and $\bbc((x))$ respectively.  
\item
The symbol $\dag$ will always denote the $W$-adjoint, as defined below.
\item 
For any ring $R$, $M_N(R)$ will denote the ring of matrices with coefficients in $R$ and $E_{ij}$ will denote the element of this ring with a $1$ in the $i,j$'th entry and zeros elsewhere.
\item
The symbols $x$, $t$, and $n$ will represent indeterminants, unless specified otherwise.
\end{itemize}

\section{Background}
\subsection{Orthogonal matrix polynomials and Bochner's problem}\label{matrix orthogonal polynomials}
We begin with a brief review of the basic theory of orthogonal matrix polynomials and Bochner's problem.
\begin{defn}\label{weight matrix definition}
A \vocab{weight matrix $W(x)$} supported on an interval $(x_0,x_1)$ is Hermitian matrix-valued function $W: \bbc\rightarrow M_N(\bbc)$ which is entrywise-smooth on $\bbr$, identically zero outside of $(x_0,x_1)$, and positive definite on $(x_0,x_1)$ with finite moments:
$$\int x^m W(x)dx < \infty,\ \ \forall m\geq 0.$$
We call the interval $(x_0,x_1)$ the \vocab{support of $W(x)$}.
\end{defn}
\begin{remk}
We are restricting our attention to ``smooth" weight matrices in order to avoid more delicate analytic considerations.
In general one may consider matrix valued measures on $\bbr$ satisfying an appropriate generalization of the above definition.  This is discussed further in \cite{damanik2008}.
\end{remk}
A weight matrix $W(x)$ defines a matrix-valued inner product $\langle \cdot,\cdot\rangle_W$ on the vector space $M_N(\bbc[x])$ of all $N\times N$ complex matrix polynomials by
\begin{equation}\label{inner product}
\langle P,Q\rangle_W := \int_\bbc P(x)W(x)Q(x)^* dx,\ \ \forall P,Q\in M_N(\bbc[x]).
\end{equation}

By applying Gram-Schmidt we may determine a sequence of matrix-valued polynomials $P(x,n)\in M_N(\bbc[x])$, $n\in\bbn$, with $P(x,n)$ degree $n$ with nonsingular leading coefficient such that $\langle P(x,n),P(x,m)\rangle_W = 0$ for $m\neq n$.  Moreover this sequence $P(x,n)$ is unique up to the choice of the leading coefficients of the polynomials.  This leads to the following definition.
\begin{defn}
We call a sequence of matrix polynomials $P(x,n)$, $n\in\bbn$, a \vocab{sequence of orthogonal matrix polynomials for $W(x)$} if for all $n$ the polynomial $P(x,n)$ has degree $n$ with nonsingular leading coefficient and 
$$\langle P(x,n),P(x,m)\rangle_W = 0,\ \ \forall m\neq n.$$
The sequence $P(x,n)$ will be called \vocab{monic} if the leading coefficient of each $P(x,n)$ is $I$.
\end{defn}

The sequence of monic orthogonal polynomials of a weight matrix $W(x)$ must satisfy a three-term recursion relation.
Conversely, any sequence of monic orthogonal polynomial satisfying a sufficiently nice three-term recurrence relation will be a sequence of monic orthogonal polynomials for a weight matrix.
\begin{thm}[Dur\'an, Van~Assche and L\'opez-Rodriguez \cite{duran1995,duran2004x}]
Suppose that $P(x,n)$ is a sequence of monic orthogonal matrix polynomials for a weight matrix $W(x)$.  
Then for some sequences of complex matrices $B(n)$ and $C(n)$, we have
\begin{equation}\label{matrix recurrence relation}
xP(x,n) = P(x,n+1) + B(n)P(x,n) + C(n)P(x,n-1),\ \ \forall n\geq 1.
\end{equation}
Conversely, given sequences of matrices $B,C: \bbn\rightarrow\bbc$ satisfying natural mild assumptions, there exists a weight matrix $W(x)$ for which the sequence of polynomials $P(x,n)$ defined by \eqref{matrix recurrence relation} is a sequence of monic orthogonal matrix polynomials.
\end{thm}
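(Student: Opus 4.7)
The plan is to treat the two directions separately, exploiting the matrix inner product $\langle\cdot,\cdot\rangle_W$ defined in \eqref{inner product} for the forward direction and a matrix Hamburger moment problem for the converse.

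For the forward direction, I would begin by noting that since each $P(x,n)$ is monic of degree $n$, the family $\{P(x,k)\}_{k=0}^{n+1}$ is a basis of the free left $M_N(\bbc)$-module of matrix polynomials of degree $\leq n+1$. Hence there exist matrices $A_{n,k}\in M_N(\bbc)$ with $A_{n,n+1}=I$ such that
\begin{equation*}
xP(x,n) = P(x,n+1) + \sum_{k=0}^{n} A_{n,k} P(x,k).
\end{equation*}
To identify the $A_{n,k}$, I would take the inner product of both sides with $P(x,j)$ for $j\leq n$ and use the orthogonality relation $\langle P(x,k),P(x,j)\rangle_W = \delta_{kj}\,H_j$ where $H_j := \langle P(x,j),P(x,j)\rangle_W$ is a positive-definite (in particular invertible) matrix. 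This yields $A_{n,j}\, H_j = \langle xP(x,n), P(x,j)\rangle_W$.

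The key observation is that because $x$ is real on the support of $W(x)$ and $W(x)$ is Hermitian, multiplication by $x$ is self-adjoint for $\langle\cdot,\cdot\rangle_W$: $\langle xP, Q\rangle_W = \langle P, xQ\rangle_W$. For $j \leq n-2$ the polynomial $xP(x,j)$ has degree at most $n-1 < n$, so it lies in the $M_N(\bbc)$-span of $P(x,0),\dots,P(x,n-1)$; hence $\langle P(x,n), xP(x,j)\rangle_W = 0$ and $A_{n,j} = 0$. Setting $B(n) := A_{n,n}$ and $C(n) := A_{n,n-1}$ produces exactly the recurrence \eqref{matrix recurrence relation}. One also reads off the explicit formulas $B(n) = \langle xP(x,n), P(x,n)\rangle_W\, H_n^{-1}$ and $C(n) = H_n\, H_{n-1}^{-1}$ (via a symmetry argument applied to $\langle xP(x,n),P(x,n-1)\rangle_W = \langle P(x,n), xP(x,n-1)\rangle_W = H_n$), showing in particular that $C(n)$ is a product of two positive-definite matrices, hence similar to a positive-definite matrix, and $B(n)$ is Hermitian with respect to $H_n$.

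For the converse direction, the \emph{mild assumptions} amount precisely to requiring that there exist positive-definite Hermitian matrices $H_n$ with $H_0=I$ such that $B(n)H_n$ is Hermitian and $C(n) = H_n H_{n-1}^{-1}$ (equivalently the sequence $P(x,n)$ defined recursively by \eqref{matrix recurrence relation} is orthogonal with respect to some positive-definite matrix-valued linear functional $\mathcal L$ on $M_N(\bbc[x])$ determined by $\mathcal L(P(x,n)^* P(x,m)) = \delta_{nm}H_n$). From here I would define $\mathcal L$ on all of $M_N(\bbc[x])$ by expanding matrix polynomials in the basis $\{P(x,n)\}$ and extending linearly, and verify that $\mathcal L$ is positive, i.e.\ $\mathcal L(QQ^*)\geq 0$ with equality only for $Q=0$. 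The remaining task is to realize $\mathcal L$ as integration against a smooth matrix weight: this is the matrix Hamburger moment problem, solved by applying the scalar Hamburger theorem entrywise after a diagonalization argument (or by invoking the matrix Riesz representation theorem of Duran-López-Rodríguez). Under suitable Carleman-type growth conditions on the moments $\mathcal L(x^m I)$ — encoded in the ``natural mild assumptions'' on $B(n),C(n)$ — the resulting measure is determinate and admits a smooth density $W(x)$ with the required properties.

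The main obstacle is the converse: producing the actual weight $W(x)$ from the recurrence data. The algebraic side (defining the functional $\mathcal L$ and checking positivity from Hermiticity of $B(n)$ and positive-definiteness of $C(n)$) is straightforward bookkeeping, but passing from the abstract functional to an honest integral representation against a smooth, compactly-or-rapidly-decaying matrix weight requires analytic input — essentially a matrix-valued solution to the Hamburger moment problem plus regularity conditions that do not follow from the combinatorics of $B$ and $C$ alone. This is why the statement hedges with ``natural mild assumptions'' rather than giving a sharp necessary-and-sufficient criterion, and why the full proof is attributed to \cite{duran1995,duran2004x}.
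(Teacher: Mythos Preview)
The paper does not prove this theorem at all: it is stated in the background section as a known result and attributed entirely to the cited works \cite{duran1995,duran2004x}, with no argument given. So there is no ``paper's proof'' to compare against.

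That said, your proposal is essentially the standard argument one finds in those references. The forward direction is correct and complete as written: the expansion of $xP(x,n)$ in the left $M_N(\bbc)$-basis $\{P(x,k)\}$, the self-adjointness of multiplication by $x$ for $\langle\cdot,\cdot\rangle_W$, and the resulting vanishing of coefficients below degree $n-1$ are exactly the right ingredients, and your identifications $C(n)=H_nH_{n-1}^{-1}$ and $(B(n)H_n)^*=B(n)H_n$ are the standard ones. For the converse you have correctly isolated the genuine content as a matrix Favard/Hamburger problem, and you are right that the vague phrase ``natural mild assumptions'' is doing real work: the abstract functional $\mathcal L$ exists and is positive under the conditions you name, but upgrading the resulting matrix measure to a \emph{smooth} weight in the sense of Definition~\ref{weight matrix definition} requires additional hypotheses that the theorem statement deliberately leaves unspecified (the paper itself flags this in the remark following that definition). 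Your sketch is an honest account of what is actually proved in the cited literature.
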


The focus of our paper is the Matrix Bochner Problem \ref{matrix bochner problem}.
Specifically, we wish to determine for which weights $W(x)$ the associated sequence $P(x,n)$ of monic orthogonal polynomials satisfy a second order matrix differential equation
\begin{equation}\label{basic eigenvalue equation}
P''(x,n)A_2(x) + P'(x,n)A_1(x) + P(x,n)A_0(x) = \Lambda(n)P(x,n)
\end{equation}
for all $n$ for some sequence of complex matrices $\Lambda(n)$.
Equivalently, we want to know when the $P(x,n)$ are eigenfunctions of a second order matrix differential operator $\mathfrak D = \partial_x^2 A_2(x) + \partial_x A_1(x) + A_0(x)$ acting on the right with matrix-valued eigenvalues.
More generally, we can consider the algebra of all differential operators for which the sequence $P(x,n)$ are eigenfunctions.
\begin{defn}\label{D(W) definition}
Let $W(x)$ be a weight matrix with sequence of monic orthogonal polynomials $P(x,n)$.
We define $\mathcal{D}(W)$ to be the collection of all matrix-valued differential operators for which the $P(x,n)$ are eigenfunctions for all $n$.
We call $\mathcal D(W)$ the \vocab{algebra of matrix differential operators associated to $W(x)$}.
\end{defn}
\begin{remk}
Later we will realize $\mathcal D(W)$ as the right bispectral algebra associated to the bispectral function $P(x,n)$.
However we will retain the notation $\mathcal D(W)$ throughout the paper.
\end{remk}

\subsection{Adjoints of Differential Operators}
Our methods extensively use $*$-algebras constructed using the adjoint of a matrix differential operator.
We briefly recall some definitions and ideas here and refer the reader to the excellent reference \cite{dunfordschwartzII} for a comprehensive treatment.
A matrix differential operator $\mathfrak D$ may always be viewed as an unbounded linear operator on a suitably chosen Hilbert space.
Therefore it makes sense to think about the adjoint of a differential operator strictly in terms of functional analysis.
\begin{defn}
An \vocab{unbounded linear operator} on a Hilbert space $\mathcal H$ is a linear function $T$ defined on a dense subset $\mathcal H$ called the \vocab{domain of $T$}.
The adjoint of $T$ is an unbounded linear operator $T^*$ with domain
$$\{y\in \mathcal H: x\rightarrow \langle Tx,y\rangle\ \text{is continuous}\}$$
defined by $\langle Tx,y\rangle = \langle x,T^*y\rangle$ for all $x$ and $y$ in the domain of $T$ and $T^*$, respectively.
\end{defn}
\begin{remk}
Some authors do not require an unbounded operator to be defined on a dense subspace.
However it is necessary for Hahn-Banach to imply the existence of $T^*$, so we adopt it as part of our definition.
Since we will be working with differential operators with rational coefficients, this definition will be sufficient for us.
\end{remk}

The algebra of differential operators also have a natural adjoint operation $*$ called the formal adjoint.
\begin{defn}
The \vocab{formal adjoint} on $M_N(\Omega((x)))$ is the unique involution extending Hermitian conjugate on $\Omega((x))$ and sending $\partial_xI$ to $-\partial_xI$.
\end{defn}
Now consider specifically the Hilbert space $\mathcal H = L^2([-1,1])$.
The formal adjoint is defined in such a way that the adjoint of an unbounded operator and its formal adjoint will agree on a subset of $\mathcal H$ whose closure has finite codimension.
However as the next example shows, the formal adjoint may not be equal to the adjoint of $\mathfrak D$ as an unbounded linear operator, as the next example shows.
\begin{ex}
Let $\mathcal H= L^2([-1,1])$ with the usual inner product and consider two polynomials $p(x),q(x)$.
Then the formal adjoint of the differential operator $\partial_x$ is $\partial_x^* = -\partial_x$.
However we calculate
\begin{align*}
\langle p(x)\cdot\partial_x,q(x)\rangle
  & = \int_{-1}^1 p'(x)\ol{q(x)} dx\\
  & = p(1)\ol{q(1)}-p(0)\ol{q(0)} - \int_{-1}^1 p(x)\ol{q'(x)}\\
  & = \langle p(x),q(x)\cdot\partial_x^*\rangle + p(1)\ol{q(1)}-p(0)\ol{q(0)}.
\end{align*}
Since there is an extra term on the right hand side, the formal adjoint of $\partial_x$ does not agree with the adjoint as an unbounded linear operator on its domain.
\end{ex}

More generally a weight matrix $W(x)$ defines a Hilbert space of matrix-valued functions $M_N(\mathcal H)$, where $\mathcal H$ is the Hilbert space of complex-valued $L^2(\tr(W(x))dx)$ functions on the support $(x_0,x_1)$ of $W(x)$.
Any matrix-valued differential operator $\mathfrak D\in \mweyll$ with suitably nice coefficients will define an unbounded linear operator on $M_N(\mathcal H)$.
There is also a natural choice of formal adjoint here which takes into account the form of the inner product.
\begin{defn}\label{formal adjoint}
Let $W(x)$ be a weight matrix supported on an interval $(x_0,x_1)$ containing $0$ and let $\mathfrak{D}\in \mweyll$.  A \vocab{formal adjoint of $\mathfrak{D}$ with respect to $W(x)$}, or \vocab{formal $W(x)$-adjoint of $\mathfrak{D}$} is the unique differential operator $\mathfrak{D}^\dag\in \mweyll$ defined on $(x_0,x_1)$ by
$$\mathfrak{D}^\dag := W(x)\mathfrak{D}^* W(x)^{-1}.$$
An operator $\mathfrak{D}$ is called \vocab{formally $W(x)$-symmetric} if $\mathfrak{D}^\dag = \mathfrak{D}$.
\end{defn}

\begin{defn}
Let $W(x)$ be a weight matrix and let $\mathfrak{D}\in M_N(\Omega[x])$.
We say that $\mathfrak D$ is \vocab{$W$-adjointable} if there exists a differential operator $\wt{\mathfrak D}\in M_N(\Omega[x])$ satisfying
$$\langle P\cdot\mathfrak{D},Q\rangle_W = \langle P,Q\cdot\wt{\mathfrak{D}}\rangle_W, \ \ \forall P,Q\in M_N(\bbc[x]).$$
In this case, we call $\wt{\mathfrak D}$ the \vocab{adjoint of $\mathfrak{D}$ with respect to $W(x)$}, or alternatively the \vocab{$W$-adjoint of $\mathfrak{D}$}.
If $\mathfrak{D} = \wt{\mathfrak{D}}$, then $\mathfrak{D}$ is called \vocab{$W$-symmetric}.
\end{defn}

\begin{prop}
Let $W(x)$ be a weight matrix and let $\mathfrak{D}\in\mweyll$ be an amenable matrix differential operator.  If $\mathfrak D$ is $W$-adjointable, then its $W$-adjoint is equal to the formal $W$-adjoint $\mathfrak{D}^\dag$.
\end{prop}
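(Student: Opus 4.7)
The plan is to reduce the $W$-adjointability identity to a pointwise identity on the support $(x_0, x_1)$ and then read off $\wt{\mathfrak D} = \mathfrak D^\dag$ by a direct algebraic manipulation.

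First, I would unwind the defining relation $\langle P \cdot \mathfrak D, Q\rangle_W = \langle P, Q \cdot \wt{\mathfrak D}\rangle_W$ as integrals over $(x_0,x_1)$. The formal adjoint $\mathfrak D^*$ on $M_N(\Omega((x)))$ is characterized, entry by entry, by the integration-by-parts identity $\int (P \cdot \mathfrak D)\, R^* dx = \int P\, (R \cdot \mathfrak D^*)^* dx$ valid for matrix-valued test functions whose boundary terms vanish. Applying this with $R = QW$, so that $R^* = W^*Q^* = WQ^*$ since $W$ is Hermitian, converts the left-hand side to
\begin{equation*}
\langle P\cdot\mathfrak D, Q\rangle_W = \int_{x_0}^{x_1} P(x)\bigl((QW)\cdot\mathfrak D^*\bigr)(x)^* dx.
\end{equation*}
The hypothesis that $\mathfrak D$ is \emph{amenable} is exactly the regularity/decay condition needed for the boundary terms in this integration by parts to vanish; since $W$ and its derivatives vanish outside $(x_0,x_1)$, this is essentially automatic at the endpoints of the support.

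Second, I rewrite the right-hand side of the $W$-adjointability identity as $\int P(x)\,W(x)\,(Q\cdot\wt{\mathfrak D})(x)^* dx$ and equate the two. This gives
\begin{equation*}
\int_{x_0}^{x_1} P(x)\,\Bigl[W(x)(Q\cdot\wt{\mathfrak D})(x)^* - \bigl((QW)\cdot\mathfrak D^*\bigr)(x)^*\Bigr]\, dx = 0
\end{equation*}
for every matrix polynomial $P$, with $Q$ fixed. Taking $P(x) = x^n E_{ij}$ and letting $n,i,j$ range produces the scalar moment conditions that every entry of the bracketed smooth matrix function has vanishing moments against all monomials. Since $W$ has finite moments and polynomials are dense in the relevant weighted $L^2$ space (by Stone--Weierstrass when $(x_0,x_1)$ is bounded, or by determinacy of the moment problem otherwise), the bracket vanishes identically on $(x_0,x_1)$.

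Third, I solve the resulting pointwise identity. Taking conjugate transposes and using $W^*=W$, it reads $(Q\cdot\wt{\mathfrak D})(x)\,W(x) = \bigl((QW)\cdot\mathfrak D^*\bigr)(x)$. Right-multiplying by $W(x)^{-1}$ and interpreting the result as a composition of operators in $M_N(\Omega((x)))$ under the right-action convention gives
\begin{equation*}
Q\cdot\wt{\mathfrak D} = \bigl((QW)\cdot\mathfrak D^*\bigr)\,W^{-1} = Q\cdot\bigl(W\,\mathfrak D^*\, W^{-1}\bigr) = Q\cdot\mathfrak D^\dag.
\end{equation*}
Since any element of $\mweyll$ is determined by its right action on $M_N(\bbc[x])$ (e.g.\ by evaluation on $Q(x)=x^n I$ for all $n\geq 0$), this forces $\wt{\mathfrak D}=\mathfrak D^\dag$ on $(x_0,x_1)$.

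The main obstacle is the boundary-terms-vanishing step underlying both the integration-by-parts formula and the density/moment argument in the first two paragraphs; this is precisely what the amenability hypothesis is designed to control. Once that step is granted, the remainder of the proof is a straightforward formal manipulation within $M_N(\Omega((x)))$.
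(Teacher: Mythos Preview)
The paper states this proposition without proof: it appears immediately before the subsection on Bispectrality with no proof environment, so there is nothing in the paper to compare against directly. Your argument is the standard one and is correct in outline: integrate by parts to pass from $\mathfrak D$ to the formal adjoint $\mathfrak D^*$, equate with the $W$-adjoint $\wt{\mathfrak D}$ via the pairing, and then read off $\wt{\mathfrak D}=W\mathfrak D^*W^{-1}=\mathfrak D^\dag$ pointwise on $(x_0,x_1)$.

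Two remarks on the details. First, the density step is slightly looser than it looks. After integration by parts you obtain $\int_{x_0}^{x_1} P(x)\,W(x)\,H(x)\,dx=0$ for all polynomial $P$, where $H^*=Q\cdot\wt{\mathfrak D}-Q\cdot\mathfrak D^\dag$. If $H^*$ were itself a polynomial this would follow at once from nondegeneracy of $\langle\cdot,\cdot\rangle_W$ on $M_N(\bbc[x])$; but $Q\cdot\mathfrak D^\dag$ is a priori only smooth on $(x_0,x_1)$, so you genuinely need both $H\in L^2$ against $W$ and density of polynomials there. You flag this, but it is worth being explicit that these analytic facts, and not only the vanishing of boundary terms, are part of what the undefined term ``amenable'' must guarantee. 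Second, in the last step you invoke that an element of $M_N(\Omega[[x]])$ is determined by its right action on $M_N(\bbc[x])$; this is correct (evaluate on $x^nI$ and peel off the coefficients $A_j(x)$ inductively), but since $\mathfrak D^\dag$ has power-series rather than polynomial coefficients it is worth saying so rather than leaving it implicit.
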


\subsection{Bispectrality}
The notion of bispectrality arose from questions originally posed by Duistermaat and Gr\"{u}nbaum \cite{duistermaat1986} on finding locally meromorphic functions $\psi(x,y)$ which define a family of eigenfunctions for a fixed differential operator in $x$ \emph{and} a family of eigenfunctions of another differential operator in $y$.  Examples of functions $\psi(x,y)$ satisfying this property are referred to in the literature as bispectral functions.  The simplest example of a bispectral function is the exponential function $\psi(x,y) = e^{xy}$.
\begin{ex}
The exponential function $\psi(x,y) = e^{xy}$ satisfies
$$\partial_x\cdot\psi(x,y) = y\psi(x,y),\ \text{and}\ \partial_y\cdot\psi(x,y) = x\psi(x,y).$$
Therefore $\psi(x,y)$ defines a bispectral function.
\end{ex}

Since its inception, the concept of bispectrality has been generalized in various natural directions, particularly in directions which include certain noncommutative aspects.
To begin, we present bispectrality in very general terms.
\begin{remk}
In our definition, we will use lower case for elements of algebras.
Since the algebras themselves are completely abstract, this is not a reflection on whether the elements themselves are scalar or matrix-valued.
Furthermore, the elements of each of the algebras are never assumed to be commutative.
However, the algebras are assumed to be associative with identity.
\end{remk}
\begin{defn}
We define an \vocab{operator algebra} to be an algebra $\mathcal A$ with a fixed subalgebra $\mathcal M(\mathcal A)$, referred to as the subalgebra of \vocab{multiplicative operators} of $\mathcal A$.
A \vocab{bispectral context} is a triple $(\mathcal A,\mathcal B, \mathcal H)$ where $\mathcal A$ and $\mathcal B$ are operator algebras and $\mathcal H$ is an $\mathcal A,\mathcal B$-bimodule.
A \vocab{bispectral triple} is a triple $(a,b,\psi)$ with $a\in\mathcal A,b\in\mathcal B$ nonconstant and $\psi\in\mathcal H$ satisfying the property that
$\psi$ has trivial left and right annihilator and
$$a\cdot\psi = \psi\cdot g,\ \text{and}\ \psi\cdot b = f\cdot\psi$$
for some $f\in\mathcal M(\mathcal A)$ and $g\in\mathcal M(\mathcal B)$.
In the case that $\psi$ forms part of a bispectral triple we call $\psi$ \vocab{bispectral}.
\end{defn}
In the classical case $\mathcal A$ and $\mathcal B$ are both algebras of differential operators with multiplicative operator subalgebras consisting of the functions on which the differential operators act (eg. polynomials, holomorphic functions, smooth function, etc).
However, as is clear from the definition, bispectrality is a far more general construction.   The generalization of specific relevance to this paper is the case when the multiplicative operator subalgebra $\mathcal M(\mathcal A)$ and $\mathcal M(\mathcal B)$ are themselves not commutative.

Given $\psi\in\mathcal H$ bispectral, we define certain natural subalgebras of $\mathcal A$ and $\mathcal B$.
\begin{defn}
Let $\psi\in\mathcal H$ be bispectral.
We define the \vocab{left and right Fourier algebras} $\fourl(\psi)$ and $\fourr(\psi)$ to be
$$\fourl(\psi) = \{a\in\mathcal A: \exists b\in\mathcal B,\ a\cdot\psi = \psi\cdot b\},\ \ \fourr(\psi) = \{b\in\mathcal B: \exists a\in\mathcal A,\ a\cdot\psi = \psi\cdot b\}.$$
We define the \vocab{left and right bispectral algebras} $\fourl(\psi)$ and $\fourr(\psi)$ to be
$$\bispl(\psi) = \{a\in\mathcal A: \exists g\in\mathcal M(\mathcal B),\ a\cdot\psi = \psi\cdot g\},\ \ \bispr(\psi) = \{b\in\mathcal B: \exists f\in\mathcal M(\mathcal A),\ f\cdot\psi = \psi\cdot b\}.$$
\end{defn}

Since the left and right annihilators of $\psi$ are both trivial, there is a natural isomorphism $b_\psi :\fourl(\psi)\rightarrow\fourr(\psi)$ defined by the identity
$$a\cdot\psi = \psi\cdot b_\psi(a).$$
\begin{defn}
Let $\psi\in\mathcal H$ be bispectral.
We call the natural isomorphism $b_\psi :\fourl(\psi)\rightarrow\fourr(\psi)$ defined above the \vocab{generalized Fourier map}.
\end{defn}

\begin{remk}
The name generalized Fourier map comes from our first example.  In that case $\mathcal A = \Omega(x),\mathcal B = \Omega(y)^{op}$ and $\psi = e^{xy}$.
Then $\fourl(\psi) = \Omega[x]$, $\fourr(\psi) = \Omega[y]^{op}$ and the generalized Fourier map is in fact the Fourier transform:
$$b_\psi: \sum_{i,j=0}^r a_{ij}x^i\partial_x^j\mapsto \sum_{i,j=0}^ra_{ij}y^j\partial_y^i.$$
Here and below, for an algebra $\mathcal{A}$, $\mathcal{A}^{op}$ denotes the one with opposite product.
\end{remk}

\subsection{Bispectral Darboux transformations}
In practice many of the important families of bispectral triples arise from simpler or more obvious examples through bispectral Darboux transformations.
\begin{defn}
\label{bisp-D}
Let $(\mathcal A,\mathcal B,\mathcal H)$ be a bispectral context, and let $\psi,\wt\psi\in\mathcal H$ be bispectral.
We say that $\wt\psi$ is a \vocab{bispectral Darboux transformation} of $\psi$ if there exist $u,\wt u\in \fourr(\psi)$ and units $p,\wt p\in \mathcal M(\mathcal A)$ and units $q,\wt q\in \mathcal M(\mathcal B)$ with 
\begin{equation}\label{bispectral darboux transformation}
\wt\psi = p^{-1}\cdot\psi\cdot uq^{-1}\ \ \text{and}\ \ \psi = \wt p^{-1}\cdot \wt\psi\cdot \wt q^{-1}\wt u.
\end{equation}
In the case that $\mathcal M(\mathcal A)$ or $\mathcal M(\mathcal B)$ is noncommutative, this is also called a \vocab{noncommutative bispectral Darboux transformation}
in \cite{geiger2017}.
\end{defn}
It follows from the definition that
$$\psi \cdot (u q^{-1}\wt q^{-1}\wt u) = p\wt p\cdot\psi,$$
$$(b_\psi^{-1}(\wt u)\wt p^{-1}p^{-1}b_\psi^{-1}(u))\cdot\psi = \psi\cdot \wt qq.$$
Furthermore, one works out that
$$\wt pp\cdot\wt\psi = \wt\psi\cdot(\wt q^{-1}\wt uuq^{-1}),$$
$$(p^{-1}b_\psi^{-1}(u)b_\psi^{-1}(\wt u)\wt p^{-1})\cdot\wt\psi = \wt\psi\cdot q\wt q,$$
see \cite[Theorem 2.1]{geiger2017}.
Thus both $\psi$ and $\wt\psi$ are bispectral.

The following symmetry property of bispectral Darboux transformations directly follows from the definition and its proof is left to the reader.
\begin{thm}
If $\wt\psi$ is a bispectral Darboux transformation of $\psi$ then $\psi$ is a bispectral Darboux transformation of $\wt\psi$.
\end{thm}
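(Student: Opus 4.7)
The plan is to exhibit explicit elements $v, \wt v \in \fourr(\wt\psi)$ together with units $p', \wt p' \in \mathcal M(\mathcal A)$ and $q', \wt q' \in \mathcal M(\mathcal B)$ verifying the defining conditions for $\psi$ to be a bispectral Darboux transformation of $\wt\psi$, starting from data $u, \wt u \in \fourr(\psi)$ and units $p, \wt p, q, \wt q$ witnessing that $\wt\psi$ is a bispectral Darboux transformation of $\psi$. The guiding observation is that the two equations in \eqref{bispectral darboux transformation} already are, up to one noncommutative conjugation, the equations required for the reverse direction; the real content is to verify that the conjugated elements lie in $\fourr(\wt\psi)$.

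Concretely, I would set $p' = \wt p$, $q' = \wt q$, $v = \wt q^{-1} \wt u \wt q$, $\wt p' = p$, $\wt q' = q$, and $\wt v = q u q^{-1}$. With these choices, the first equation demanded of the reverse direction, $\psi = (p')^{-1} \cdot \wt\psi \cdot v (q')^{-1}$, simplifies algebraically to $\psi = \wt p^{-1} \cdot \wt\psi \cdot \wt q^{-1} \wt u$, and the second, $\wt\psi = (\wt p')^{-1} \cdot \psi \cdot (\wt q')^{-1} \wt v$, simplifies to $\wt\psi = p^{-1} \cdot \psi \cdot u q^{-1}$; these are precisely the two equations of \eqref{bispectral darboux transformation} and so hold by hypothesis.

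The substance is the verification that $v, \wt v \in \fourr(\wt\psi)$. For $\wt v = q u q^{-1}$, writing $c = b_\psi^{-1}(u) \in \mathcal A$ so that $c \cdot \psi = \psi \cdot u$, I claim that $a = p^{-1} c p \in \mathcal A$ satisfies $a \cdot \wt\psi = \wt\psi \cdot \wt v$: both sides compute directly to $p^{-1} \cdot \psi \cdot u^2 q^{-1}$ using the first equation of \eqref{bispectral darboux transformation} and the bimodule structure. For $v = \wt q^{-1} \wt u \wt q$, the second equation of \eqref{bispectral darboux transformation} rearranges to $\wt\psi \cdot v = \wt p \cdot \psi \cdot \wt q$; I then invoke the derived identity $(b_\psi^{-1}(\wt u) \wt p^{-1} p^{-1} b_\psi^{-1}(u)) \cdot \psi = \psi \cdot \wt q q$ (listed immediately after Definition \ref{bisp-D}) to check that $a' = \wt p b_\psi^{-1}(\wt u) \wt p^{-1} \in \mathcal A$ satisfies $a' \cdot \wt\psi = \wt\psi \cdot v$, by comparing the two sides on $\psi$ after pushing $\wt\psi$ through the first equation of \eqref{bispectral darboux transformation} and applying the trivial annihilator property of $\psi$.

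The main obstacle is this last verification: the naive choice $v = \wt u$ with $q' = \wt q$ fails because $\mathcal M(\mathcal B)$ need not commute with $\fourr(\psi)$ in the noncommutative bispectral setting, so conjugation by $\wt q$ is essential. Moreover the witness $a' \in \mathcal A$ cannot be produced from $b_\psi^{-1}(\wt u)$ alone but requires the derived identity above, which encodes the compatibility of both equations in \eqref{bispectral darboux transformation}.
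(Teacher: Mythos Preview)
Your argument is correct and carries out exactly what the paper leaves to the reader. The paper gives no proof of its own here---it asserts that the symmetry ``directly follows from the definition''---so your write-up is precisely the kind of verification intended. Your identification of the real content, namely that the conjugated elements $v=\wt q^{-1}\wt u\,\wt q$ and $\wt v=quq^{-1}$ lie in $\fourr(\wt\psi)$ rather than merely in $\fourr(\psi)$, is the right observation, and your direct check for $\wt v$ via $p^{-1}b_\psi^{-1}(u)p$ is clean.

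One small comment on presentation: your verification for $v$ can be written as a straight chain of equalities, with no appeal to a ``trivial annihilator'' step. Using $\wt\psi=p^{-1}\cdot\psi\cdot uq^{-1}$ and $\psi\cdot u=b_\psi^{-1}(u)\cdot\psi$, one gets
\[
a'\cdot\wt\psi
=\wt p\,b_\psi^{-1}(\wt u)\,\wt p^{-1}p^{-1}b_\psi^{-1}(u)\cdot\psi\cdot q^{-1}
=\wt p\cdot\psi\cdot\wt q q\cdot q^{-1}
=\wt p\cdot\psi\cdot\wt q
=\wt\psi\cdot\wt q^{-1}\wt u\,\wt q
=\wt\psi\cdot v,
\]
where the second equality is exactly the derived identity $(b_\psi^{-1}(\wt u)\wt p^{-1}p^{-1}b_\psi^{-1}(u))\cdot\psi=\psi\cdot\wt q q$ stated after Definition~\ref{bisp-D}, and the fourth uses $\wt p\cdot\psi=\wt\psi\cdot\wt q^{-1}\wt u$. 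No cancellation in $\mathcal H$ is needed. Otherwise your plan and execution match the paper's intent.
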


The bispectral nature of this type of Darboux transformations is established by the next theorem.

\begin{thm}[Geiger, Horozov and Yakimov \cite{geiger2017}]\label{lots of stuff}
Suppose that $\wt\psi$ is a bispectral Darboux transformation of $\psi$.
Then for all $a\in\fourl(\psi)$
$$\wt pap\cdot\wt\psi = \wt\psi\cdot\wt q^{-1}\wt ub_\psi(a) uq^{-1},$$
$$p^{-1}b_\psi^{-1}(u)ab_\psi^{-1}(\wt u)\wt p^{-1}\cdot\wt\psi = \wt\psi\cdot qb_\psi(a)\wt q.$$
\end{thm}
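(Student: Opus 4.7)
The plan is to prove both identities by \emph{intercalation}: they generalize the four displayed relations immediately preceding the theorem, obtained by inserting an element $a \in \fourl(\psi)$ in the middle, and the strategy is to push $a$ across $\psi$ via the Fourier rule $a \cdot \psi = \psi \cdot b_\psi(a)$. Write $c := b_\psi^{-1}(u)$ and $\wt c := b_\psi^{-1}(\wt u)$, so that $c \cdot \psi = \psi \cdot u$ and $\wt c \cdot \psi = \psi \cdot \wt u$. Throughout I would use the equivalent forms $p \cdot \wt\psi = \psi \cdot u q^{-1}$ and $\wt p \cdot \psi = \wt\psi \cdot \wt q^{-1} \wt u$ of the two defining relations (valid because $p, \wt p$ are units).

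The first identity is immediate: one computes
\[
\wt p a p \cdot \wt\psi = \wt p a \cdot (\psi \cdot u q^{-1}) = \wt p \cdot (\psi \cdot b_\psi(a)) \cdot u q^{-1} = \wt\psi \cdot \wt q^{-1} \wt u b_\psi(a) u q^{-1},
\]
where the last equality uses $\wt p \cdot \psi = \wt\psi \cdot \wt q^{-1} \wt u$.

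The hard part is the second identity. The obstacle is that the operator $p^{-1} c a \wt c \wt p^{-1}$ has $\wt p^{-1}$ adjacent to $\wt\psi$, and neither defining relation directly reduces $\wt p^{-1} \cdot \wt\psi$: doing so would require dividing by $\wt q^{-1} \wt u$ on the right of $\wt\psi$, which is impossible since $\wt u$ is not assumed to be a unit. I would circumvent this by first establishing the auxiliary identity
\[
\wt c \wt p^{-1} \cdot \wt\psi = \psi \cdot \wt q.
\]
Starting from the known relation $\wt c \wt p^{-1} p^{-1} c \cdot \psi = \psi \cdot \wt q q$ (the second ``derived from the definition'' identity listed just above the theorem), rewrite its left-hand side using $p^{-1} c \cdot \psi = p^{-1} \cdot \psi u = \wt\psi \cdot q$ to obtain $(\wt c \wt p^{-1} \cdot \wt\psi) \cdot q = (\psi \cdot \wt q) \cdot q$. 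Since $q \in \mathcal{M}(\mathcal B)$ is a unit, right-multiplication by $q$ is injective on $\mathcal H$, and the auxiliary identity follows by cancellation.

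With the auxiliary identity in hand, the second identity follows by the same intercalation argument as before:
\[
p^{-1} c a \wt c \wt p^{-1} \cdot \wt\psi = p^{-1} c a \cdot \psi \wt q = p^{-1} c \cdot \psi b_\psi(a) \wt q = p^{-1} \cdot \psi u b_\psi(a) \wt q = \wt\psi \cdot q b_\psi(a) \wt q,
\]
applying in order the auxiliary identity, $a \cdot \psi = \psi b_\psi(a)$, $c \cdot \psi = \psi u$, and finally $p^{-1} \psi u = \wt\psi q$ (from the defining formula for $\wt\psi$).
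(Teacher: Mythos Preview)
Your proof is correct. The paper does not supply its own proof of this theorem---it is quoted from \cite{geiger2017}---so there is nothing to compare against beyond noting that your argument is the natural one: push $a$ across $\psi$ via the Fourier rule, using the defining relations in the form $p\cdot\wt\psi = \psi\cdot uq^{-1}$ and $\wt p\cdot\psi = \wt\psi\cdot\wt q^{-1}\wt u$. Your handling of the second identity via the auxiliary relation $\wt c\,\wt p^{-1}\cdot\wt\psi = \psi\cdot\wt q$, obtained by cancelling the unit $q$ from the known identity $(b_\psi^{-1}(\wt u)\wt p^{-1}p^{-1}b_\psi^{-1}(u))\cdot\psi = \psi\cdot\wt q q$, is clean and avoids the nonexistent inverse of $\wt u$.
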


%In the continuous-continuous situation, the (matrix) bispectral Darboux transformations of the Airy and exponential functions were classified in 
%\cite[Theorems 1.2 and 1.3]{geiger2017}. These methods can be extended to obtain a classification of the (matrix) 
%bispectral Darboux transformations of the classical polynomials that appear in Theorem \ref{thm1.3}.  

\subsection{Degree-filtration preserving and exceptional differential operators}
Each differential operator $\mathfrak D\in \mathcal D(W)$ will have an entire $M_N(\bbc)$-module basis of $M_N(\bbc[x])$ as eigenfunctions.
As such for any polynomial $Q(x)\in M_N(\bbc[x])$ we know that the degree of $Q(x)\cdot\mathfrak D$ will be at most the degree of $Q(x)$.
We call such operators degree-filtration preserving.
\begin{defn}
A matrix-valued differential operator $\mathfrak D\in M_N(\bbc(x))$ is called \vocab{degree-filtration preserving} if for all polynomials $Q(x)\in M_N(\bbc[x])$ the function $Q(x)\cdot \mathfrak D$ is a polynomial whose degree is no larger than the degree of $Q(x)$.
\end{defn}
Degree-fitration preserving differential operators will necessarily have polynomial coefficients.
\begin{prop}
An operator $\mathfrak  D\in M_N(\bbc(x))$ is degree-filtration preserving if and only if
$$\mathfrak D = \sum_{i=0}^\ell \partial_x^i A_i(x)$$
with $A_i(x)\in M_N(\bbc[x])$ a polynomial of degree at most $i$ for all $i$.
\end{prop}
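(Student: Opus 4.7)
The plan is to establish both implications, with the substantive content concentrated in the ``only if'' direction. Write an arbitrary operator as $\mathfrak{D} = \sum_{i=0}^\ell \partial_x^i A_i(x)$ with coefficients $A_i(x) \in M_N(\bbc(x))$ a priori (in particular not yet assumed polynomial).

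The ``if'' direction is essentially bookkeeping via the product rule. Assuming each $A_i(x)$ is a polynomial of degree at most $i$, for any $Q(x) \in M_N(\bbc[x])$ of degree $n$ the action is $Q(x) \cdot \mathfrak{D} = \sum_{i=0}^\ell Q^{(i)}(x) A_i(x)$, and each summand is a polynomial of degree at most $(n-i) + i = n$ (with the standard convention $\deg 0 = -\infty$). Summing gives a total degree at most $n$, as required.

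For the ``only if'' direction, the plan is to test $\mathfrak{D}$ against the scalar matrix polynomials $Q_n(x) := x^n I$ for $n = 0, 1, 2, \dots$ and inductively peel off the coefficients $A_n(x)$. A direct calculation gives
$$Q_n(x) \cdot \mathfrak{D} = \sum_{i=0}^{\min(n,\ell)} \frac{n!}{(n-i)!}\, x^{n-i} A_i(x).$$
The base case $n=0$ yields $A_0(x) = Q_0 \cdot \mathfrak{D}$, which by hypothesis is a polynomial of degree at most $0$, so $A_0$ is a constant matrix. For the inductive step, assume $A_0, \dots, A_{n-1}$ are polynomials with $\deg A_i \le i$. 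For $1 \le n \le \ell$, the contributions from $i < n$ combine into a polynomial of degree at most $n$; subtracting them from the polynomial $Q_n \cdot \mathfrak{D}$ (which is of degree at most $n$ by hypothesis) isolates $n!\, A_n(x)$ as a polynomial of degree at most $n$, completing the induction. For $n > \ell$ no new coefficient is introduced, so no further constraint is required.

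The main (and modest) obstacle is ensuring that the top coefficient $A_n(x)$ really is isolated from the lower-order data at each stage. This works because the $i=n$ term in the expansion of $Q_n \cdot \mathfrak{D}$ is the unique one not carrying a factor of $x$, making the resulting system triangular in $A_0, A_1, \dots, A_\ell$. The argument hinges on a judicious choice of scalar-matrix test polynomials; more general matrix-valued test polynomials are unnecessary and would only obscure the triangular structure.
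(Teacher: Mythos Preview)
Your proof is correct and follows essentially the same approach as the paper's: both apply $\mathfrak{D}$ to the test inputs $x^nI$ and use the resulting triangular system to isolate each $A_n(x)$ from the already-controlled lower coefficients. The only cosmetic difference is that you phrase the argument as a direct induction on $n$, while the paper phrases it as a proof by contradiction via a minimal bad index $j$; the underlying computation is identical.
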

\begin{proof}
Suppose $\mathfrak D\in M_N(\Omega(x))$.
Then we may write
$$\mathfrak D = \sum_{i=0}^\ell \partial_x^i A_i(x)$$
for some matrix-valued rational functions $A_i(x)\in M_N(\bbc(x))$.
It is clear that if $A_i(x)\in M_N(\bbc[x])$ is a polynomial of degree at most $i$ for all $i$, then $\mathfrak D$ is degree-filtration preserving.

To prove the converse, suppose that $\mathfrak D$ is degree-filtration preserving and that for some $i$ the matrix-valued rational function $A_i(x)$ is either not a polynomial or has degree larger than $i$.
Let $j$ be the smallest non-negative integer with $A_j(x)$ not a polynomial of degree $\leq j$.
Then we calculate
$$\frac{1}{j!}x^j\cdot\mathfrak D = A_j(x) + \sum_{i=0}^{j-1} \frac{1}{(j-i)!}x^{j-i}A_i(x).$$
Since $\mathfrak D$ is degree-filtration preserving the left hand side of the above equality is a polynomial of degree at most $j$.
Furthermore the sum on the right hand side is a polynomial of degree at most $j$, so we conclude
$$ A_j(x) =  \sum_{i=0}^{j-1} \frac{1}{(j-i)!}x^{j-i}A_i(x) -\frac{1}{j!}x^j\cdot\mathfrak D$$
is a polynomial of degree at most $j$.
This is a contradiction and completes the proof.
\end{proof}

A Darboux conjugate $\wt{\mathfrak D}$ of $\mathfrak D$ will still have a great many polynomials which are eigenfunctions, but not necessarily a full basis for $M_N(\bbc[x])$.
For this reason $\wt{\mathfrak D}$ may have non-polynomial rational coefficients.
This leads us to consider the notion of an exceptional differential operator \cite[Definition 4.1]{garcia2016bochner}.
\begin{defn}
A differential operator $\mathfrak d\in\Omega(x)$ is called \vocab{exceptional} if it has polynomial eigenfunctions of all but finitely many degrees.  More precisely $\mathfrak d$ is exceptional if there exists a finite subset $\{m_1,\dots,m_k\}\subseteq \bbn$ satisfying the property that there exists a polynomial $p(n)$ of degree $n\in\bbn$ which is an eigenfunction of $\mathfrak d$ if and only if $n\notin \{m_1,\dots,m_k\}$.  The values $m_1,\dots,m_k$ are called the \vocab{exceptional degrees} of $\mathfrak d$.
\end{defn}

\begin{ex}
The classical differential operators
$$\partial_x^2 - \partial_x 2x,\ \ 
\partial_x^2x + \partial_x(b+1-x),\ \ 
\partial_x^2(1-x^2) + \partial_x(b-a + (b+a-2)x)$$
of Hermite, Laguerre and Jacobi respectively have polynomial eigenfunctions of every degree and are therefore exceptional operators.
\end{ex}

\begin{ex}
The differential operator
$$\mathfrak d= \partial_x^2 - \partial_x\left(2x + \frac{4x}{1+2x^2}\right)$$
is an exceptional differential operator with exceptional degrees $1$ and $2$.
\end{ex}

We can create new exceptional differential operators from old ones via Darboux conjugation.
\begin{defn}
Let $\mathfrak d,\wt{\mathfrak d}\in\Omega(x)$.
We say that $\wt{\mathfrak d}$ is a \vocab{Darboux conjugate} of $\mathfrak d$ if there exists a differential operator $\mathfrak h\in\Omega(x)$ satisfying $\mathfrak h\mathfrak d = \wt{\mathfrak d}\mathfrak h$.
\end{defn}

\begin{prop}
if $\wt{\mathfrak d}$ is a Darboux conjugate of $\mathfrak d$, then $\mathfrak d$ is a Darboux conjugate of $\wt{\mathfrak d}$.
\end{prop}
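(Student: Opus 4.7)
The plan is to exploit the ring-theoretic structure of $\Omega(x) = \bbc(x)[\partial_x]$, which is a noncommutative principal ideal domain via the left and right Euclidean algorithm in $\partial_x$. Starting from the given nonzero intertwiner satisfying $\mathfrak h \mathfrak d = \wt{\mathfrak d}\mathfrak h$, the goal is to produce a nonzero $\mathfrak h'$ with $\mathfrak h' \wt{\mathfrak d} = \mathfrak d \mathfrak h'$. I will not go through the formal adjoint, since that would only swap $\mathfrak d$ and $\wt{\mathfrak d}$ with their adjoints rather than with each other.

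First I would observe that the given relation places the element $\mathfrak h \mathfrak d$ simultaneously in the two left ideals $\Omega(x)\mathfrak d$ and $\Omega(x)\mathfrak h$, the latter via the rewriting $\mathfrak h \mathfrak d = \wt{\mathfrak d} \mathfrak h$. Since $\Omega(x)$ is left Ore, this intersection is a nonzero left ideal, and by the PID property it is principal, say $\Omega(x)\mathfrak d \cap \Omega(x)\mathfrak h = \Omega(x)\mathfrak n$, whose generator has the form $\mathfrak n = \mathfrak u \mathfrak d = \mathfrak v \mathfrak h$ for some $\mathfrak u, \mathfrak v \in \Omega(x)$. Writing $\mathfrak h \mathfrak d = \mathfrak w \mathfrak n$ for an appropriate $\mathfrak w$, expanding $\mathfrak n$ in the two available ways, and canceling $\mathfrak d$ and $\mathfrak h$ on the right (valid in the domain $\Omega(x)$), I obtain $\mathfrak h = \mathfrak w \mathfrak u$ and $\wt{\mathfrak d} = \mathfrak w \mathfrak v$. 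Substituting $\mathfrak h = \mathfrak w \mathfrak u$ back into $\mathfrak u \mathfrak d = \mathfrak v \mathfrak h$ and canceling $\mathfrak u$ on the right gives $\mathfrak d = \mathfrak v \mathfrak w$.

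The punchline is that this exhibits $\mathfrak d$ and $\wt{\mathfrak d}$ as a swap of factors: $\mathfrak d = \mathfrak v \mathfrak w$ versus $\wt{\mathfrak d} = \mathfrak w \mathfrak v$. From here the reverse Darboux conjugation is immediate: $\mathfrak v \wt{\mathfrak d} = \mathfrak v \mathfrak w \mathfrak v = \mathfrak d \mathfrak v$, so $\mathfrak h' := \mathfrak v$ does the job, and it is nonzero because $\mathfrak n = \mathfrak v \mathfrak h \neq 0$ in a domain. I do not anticipate any real obstacle here; the only thing to remember is that $\Omega(x)$ is a PID on both sides, a classical fact one may freely invoke. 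Conceptually, the reason the proposition is true is that every Darboux conjugation between scalar differential operators reduces, up to right multiplication by a common factor, to a swap-of-factors relation, which is manifestly symmetric.
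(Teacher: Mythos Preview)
Your argument has a genuine gap at the cancellation step. After substituting $\mathfrak h = \mathfrak w\mathfrak u$ into $\mathfrak u\mathfrak d = \mathfrak v\mathfrak h$ you obtain $\mathfrak u\mathfrak d = (\mathfrak v\mathfrak w)\mathfrak u$, where $\mathfrak u$ sits on the \emph{left} of $\mathfrak d$ and on the \emph{right} of $\mathfrak v\mathfrak w$; there is nothing to cancel in the noncommutative domain $\Omega(x)$. The relation only says that $\mathfrak v\mathfrak w$ is a Darboux conjugate of $\mathfrak d$ via $\mathfrak u$, not that they are equal. A concrete failure: take $\mathfrak d = x\partial_x$ and $\mathfrak h = \partial_x^2$, so that $\wt{\mathfrak d} = x\partial_x - 2$. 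Then $\Omega(x)(x\partial_x)\cap\Omega(x)\partial_x^2 = \Omega(x)\partial_x^2$, giving $\mathfrak n = \partial_x^2$, $\mathfrak u = \tfrac{1}{x}\partial_x + \tfrac{1}{x^2}$, $\mathfrak v = 1$, $\mathfrak w = x\partial_x - 2$. One checks $\mathfrak w\mathfrak u = \partial_x^2 = \mathfrak h$ and $\mathfrak w\mathfrak v = x\partial_x - 2 = \wt{\mathfrak d}$, but $\mathfrak v\mathfrak w = x\partial_x - 2 \neq x\partial_x = \mathfrak d$. Your proposed reverse intertwiner $\mathfrak h' = \mathfrak v = 1$ then visibly fails. (A correct reverse intertwiner here is $\mathfrak h' = x^2$, coming from the factorization $\mathfrak d = x^2\cdot\tfrac{1}{x}\partial_x$, $\wt{\mathfrak d} = \tfrac{1}{x}\partial_x\cdot x^2$, which your lclm construction does not locate.)

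The paper's proof takes a different route: from $\mathfrak h\mathfrak d = \wt{\mathfrak d}\mathfrak h$ the finite-dimensional space $\ker(\mathfrak h)$ is $\wt{\mathfrak d}$-invariant, so some nonzero polynomial $q$ satisfies $\ker(\mathfrak h)\cdot q(\wt{\mathfrak d}) = 0$, whence $q(\wt{\mathfrak d}) = \mathfrak h\wt{\mathfrak h}$ for some $\wt{\mathfrak h}$. Since $\wt{\mathfrak d}$ commutes with $\mathfrak h\wt{\mathfrak h}$, one gets $\mathfrak h\mathfrak d\wt{\mathfrak h} = \wt{\mathfrak d}\mathfrak h\wt{\mathfrak h} = \mathfrak h\wt{\mathfrak h}\wt{\mathfrak d}$, and left-cancelling $\mathfrak h$ yields $\mathfrak d\wt{\mathfrak h} = \wt{\mathfrak h}\wt{\mathfrak d}$. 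Your PID intuition is reasonable in spirit, but the specific lclm decomposition does not produce the swap-of-factors form you assert.
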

\begin{proof}
Assume that $\wt{\mathfrak d}$ is a Darboux conjugate of $\mathfrak d$.
Then there exists a differential operator $\mathfrak h\in\Omega(x)$ such that $\mathfrak h\mathfrak d = \wt{\mathfrak d}\mathfrak h$.
This implies that $\ker(\mathfrak h)\cdot\wt{\mathfrak d}\subseteq \ker(\mathfrak h)$.
The kernel is finite dimensional, so we may choose a polynomial $q(\wt{\mathfrak d})\in\bbc[\wt{\mathfrak d}]$ with $\ker(\mathfrak h)\cdot q(\wt{\mathfrak d}) = 0$.
This implies that there exists $\wt{\mathfrak h}$ satisfying $q(\wt{\mathfrak d}) = \mathfrak h\wt{\mathfrak h}$.
In particular $\wt{\mathfrak d}$ commutes with $\mathfrak h\wt{\mathfrak h}$ so that
$$\mathfrak h\mathfrak d\wt{\mathfrak h} = \wt{\mathfrak d}\mathfrak h\wt{\mathfrak h} = \mathfrak h\wt{\mathfrak h}\wt{\mathfrak d}.$$
Since $\Omega(x)$ is an integral domain, this implies $\mathfrak d\wt{\mathfrak h} = \wt{\mathfrak h}\wt{\mathfrak d}$.
Therefore $\mathfrak d$ is a Darboux conjugate of $\wt{\mathfrak d}$.
\end{proof}

\begin{prop}
Let $\mathfrak d$ be an exceptional differential operator and suppose that $\mathfrak d$ is a Darboux conjugate of $\wt{\mathfrak d}$.
Additionally assume $\mathfrak h\wt{\mathfrak d} = \mathfrak d\mathfrak h$ for $\mathfrak h\in \Omega[x]$.
Then $\wt{\mathfrak d}$ is also exceptional.
\end{prop}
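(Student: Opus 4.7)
The plan is to transfer polynomial eigenfunctions of $\mathfrak d$ to polynomial eigenfunctions of $\wt{\mathfrak d}$ through the intertwiner $\mathfrak h$. Under the right-action convention of the paper, the identity $\mathfrak h\wt{\mathfrak d}=\mathfrak d\mathfrak h$ rewrites as $(p\cdot\mathfrak h)\cdot\wt{\mathfrak d}=(p\cdot\mathfrak d)\cdot\mathfrak h$ for every $p\in\bbc(x)$. In particular, whenever $p\cdot\mathfrak d=\lambda p$, the element $q:=p\cdot\mathfrak h$ satisfies $q\cdot\wt{\mathfrak d}=\lambda q$, and because $\mathfrak h\in\Omega[x]$ has polynomial coefficients, $q$ is a polynomial.

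First I would invoke the exceptionality of $\mathfrak d$: there exists a finite set $\mathcal E\subset\bbn$ and, for each $n\notin\mathcal E$, a polynomial $p_n\in\bbc[x]$ of exact degree $n$ with $p_n\cdot\mathfrak d=\lambda_n p_n$. Setting $q_n:=p_n\cdot\mathfrak h$ thus produces a polynomial eigenfunction of $\wt{\mathfrak d}$ with eigenvalue $\lambda_n$. The remaining task, and what I expect to be the main technical obstacle, is to show that $q_n\neq 0$ and that the set of degrees $\{\deg q_n\}$ covers a cofinite subset of $\bbn$; without this, one cannot conclude that $\wt{\mathfrak d}$ has polynomial eigenfunctions of all but finitely many degrees.

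To handle this degree analysis, write $\mathfrak h=\sum_{i=0}^{k}\partial_x^i b_i(x)$ with $b_i\in\bbc[x]$, and set $d:=\max_i(\deg b_i-i)$. Expanding $q_n=\sum_i p_n^{(i)}b_i(x)$ and tracking only the leading term $a_n x^n$ of $p_n$, the coefficient of $x^{n+d}$ in $q_n$ equals $a_n\,f(n)$, where
\[
f(n)=\sum_{i:\,\deg b_i-i=d}\frac{n!}{(n-i)!}\,b_{i,i+d},
\]
and $b_{i,j}$ denotes the coefficient of $x^j$ in $b_i$. If $i^{*}$ is the largest index realizing the maximum $d$, then the corresponding summand is a polynomial in $n$ of degree $i^{*}$ with leading coefficient $b_{i^{*},i^{*}+d}\neq 0$, while all other summands have strictly smaller degree in $n$. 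Hence $f$ is a nonzero polynomial, and $f(n)=0$ on only a finite set. Combining this with the finiteness of $\mathcal E$, we obtain $\deg q_n=n+d$ (and in particular $q_n\neq 0$) for all but finitely many $n\in\bbn$, from which it follows that $\wt{\mathfrak d}$ admits polynomial eigenfunctions of every sufficiently large degree and is therefore exceptional. The essential subtlety is precisely the maximality choice of $d$: it is what rules out massive cancellation in $p_n\cdot\mathfrak h$ that could otherwise collapse infinitely many $q_n$ onto an arbitrarily sparse set of degrees.
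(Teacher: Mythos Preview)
Your argument is correct and follows essentially the same route as the paper's proof: push polynomial eigenfunctions of $\mathfrak d$ through the polynomial-coefficient intertwiner $\mathfrak h$ and check that for all but finitely many $n$ the image has degree $n+d$ for a fixed integer $d$. The paper states this degree claim as something ``one may easily show,'' whereas you actually carry it out by isolating $d=\max_i(\deg b_i-i)$ and observing that the top coefficient is governed by a nonzero polynomial $f(n)$ in $n$; this is exactly the computation one would do to justify the paper's assertion.
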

\begin{proof}
For all but finitely many $n$, there exists a polynomial $p(x,n)$ of degree $n$ which is an eigenfunction of $\mathfrak d$.
This implies that $p(x,n)\cdot\mathfrak h$ is a polynomial eigenfunction of $\wt{\mathfrak d}$.
One may easily show that there exists an integer $m\in\bbz$ such that for all but finitely many $n$ the degree of $p(x,n)\cdot\mathfrak h$ is $n+m$.
Consequently $\wt{\mathfrak d}$ is exceptional.
\end{proof}

\begin{defn}
We call a differential operator $\mathfrak d\in\Omega(x)$ \vocab{degree-filtration preserving} if for all polynomials $p(x)\in\bbc[x]$ the function $p(x)\cdot\mathfrak d$ is a polynomial of degree at most the degree of $p(x)$.
\end{defn}

The most interesting fact about exceptional operators is that for low orders exceptional operators always arise as Darboux conjugates of degree preserving differential operators.
For order $0$ or $1$, one may in fact show that exceptional differential operators are necessarily degree-filtration preserving.
However for order $2$ this result requires significant calculation.
\begin{thm}[Garc\'{i}a-Ferrero, G\'{o}mez-Ullate and Milson \cite{garcia2016bochner}]\label{exceptional theorem}
If $\mathfrak d$ is an exceptional, second-order differential operator then $\mathfrak d$ is a Darboux conjugate to a degree-filtration preserving differential operator.
\end{thm}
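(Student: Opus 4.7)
The plan is to reduce the question to the scalar Bochner theorem through a sequence of Darboux--Crum factorizations at polynomial eigenvalues of $\mathfrak d$. Write $\mathfrak d = \partial_x^2 a_2(x) + \partial_x a_1(x) + a_0(x)$ with $a_i(x) \in \bbc(x)$, and let $S = \{m_1,\dots,m_k\} \subseteq \bbn$ be the exceptional degrees, so that for each $n \notin S$ there is a polynomial eigenfunction $p_n$ of degree $n$ with $p_n \cdot \mathfrak d = \lambda_n p_n$.

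First I would pick some $n \notin S$, fix $\lambda = \lambda_n$, and factor $\mathfrak d - \lambda = \mathfrak h_1 \mathfrak h_2$ in $\Omega(x)$ with $\mathfrak h_1,\mathfrak h_2$ of order one and $p_n \in \ker(\mathfrak h_2)$; this forces $\mathfrak h_2 = \partial_x - p_n'(x)/p_n(x)$ up to a leading scalar, and then determines $\mathfrak h_1$ uniquely by right division. The identity $\mathfrak h_2(\mathfrak d - \lambda) = (\mathfrak h_2 \mathfrak h_1)\mathfrak h_2$ immediately yields the intertwining relation $\mathfrak h_2 \mathfrak d = \wt{\mathfrak d}\mathfrak h_2$, where $\wt{\mathfrak d} := \lambda + \mathfrak h_2 \mathfrak h_1$, exhibiting $\wt{\mathfrak d}$ as a Darboux conjugate of $\mathfrak d$. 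For every $m \notin S \cup \{n\}$ the polynomial $p_m \cdot \mathfrak h_2$ is an eigenfunction of $\wt{\mathfrak d}$ whose degree differs from $m$ by the shift produced by $\mathfrak h_2$ at infinity, so the new exceptional set can be computed explicitly from $S$ together with the asymptotic data of $\mathfrak h_2$.

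The main obstacle is setting up the induction so that it terminates. I would attach to each exceptional $\mathfrak d$ a complexity consisting of $|S|$ together with the number of finite singularities of the coefficients $a_i(x)$ counted with multiplicity, and then show that for a suitable choice of seed $p_n$ the conjugate $\wt{\mathfrak d}$ has strictly smaller complexity. A generic choice of eigenfunction is hopeless: the factor $p_n'/p_n$ typically introduces fresh poles at every zero of $p_n$, so one must select a specific ``boundary'' eigenfunction whose zeros coincide with the existing singularities of $\mathfrak d$, forcing cancellation in the coefficients of $\wt{\mathfrak d}$. This is where all the real work lives; it requires a fine local analysis at each singular point of $\mathfrak d$ and at infinity, in the spirit of the indicial-root calculations appearing in Kovacic-type algorithms for second order operators, and is essentially the content of the structural results in \cite{garcia2016bochner}.

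Once strict decrease is established, after finitely many iterations I arrive at a second order operator $\mathfrak d^\infty$ that is a Darboux conjugate of $\mathfrak d$, has empty exceptional set, and has polynomial coefficients. By the scalar Bochner theorem, $\mathfrak d^\infty$ is then an affine image of a Hermite, Laguerre, or Jacobi operator, and in particular is degree-filtration preserving. Composing the first order intertwiners $\mathfrak h_2$ obtained at each step produces a single operator $\mathfrak h \in \Omega(x)$ with $\mathfrak h \mathfrak d = \mathfrak d^\infty \mathfrak h$, which is precisely the claim.
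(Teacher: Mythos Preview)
The paper does not prove this theorem at all: it is quoted as an external result of Garc\'{i}a-Ferrero, G\'{o}mez-Ullate and Milson \cite{garcia2016bochner} and used as a black box in the proof of the Classification Theorem. So there is no proof in the paper to compare your proposal against.

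Your outline is a faithful high-level summary of the strategy actually carried out in \cite{garcia2016bochner}: iterated first-order Darboux factorizations at carefully chosen polynomial eigenfunctions, with the terminal operator having no exceptional degrees and hence falling under the classical Bochner classification. You are also candid that the substantive step---choosing the seed eigenfunction so that the complexity strictly decreases rather than introducing new singularities---is exactly where the work of \cite{garcia2016bochner} lies, and you do not supply that argument. As written, then, your proposal is an accurate roadmap but not a proof: the crucial monotonicity of the complexity measure is asserted, not established, and without it the induction does not terminate.

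One small technical slip worth fixing: in the paper's convention differential operators act on the \emph{right}, so if you factor $\mathfrak d - \lambda = \mathfrak h_1\mathfrak h_2$ and want the polynomial eigenfunction $p_n$ to be killed by a single first-order factor, it must be the \emph{left} factor, i.e.\ $p_n\cdot\mathfrak h_1 = 0$, since $p_n\cdot(\mathfrak h_1\mathfrak h_2) = (p_n\cdot\mathfrak h_1)\cdot\mathfrak h_2$. Your intertwining relation should then read $\mathfrak h_1\wt{\mathfrak d} = \mathfrak d\,\mathfrak h_1$ with $\wt{\mathfrak d} = \lambda + \mathfrak h_2\mathfrak h_1$, and the transported eigenfunctions are $p_m\cdot\mathfrak h_1$. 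This does not affect the overall strategy but should be corrected for consistency with the paper's conventions.
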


\section{Bispectral Darboux transformations of weight matrices}
\subsection{$W$-adjoints of algebras of matrix differential and shift operators}
In this paper, we will be restricting our attention to a single specific bispectral context.
Specifically, we will consider the bispectral context $(M_N(\shift),M_N(\Omega[x])^{op},\mathcal P)$
where $\mathcal P$ is the set of all functions $P:\bbc\times\bbn\rightarrow M_N(\bbc)$ satisfying the property that for any fixed $n$, $P(x,n)$ is a matrix-valued rational function in $x$.  Equivalently, $P$ is the set of all semi-infinite sequences of matrix-valued rational functions.
The algebra $\shift$ is the collection of discrete operators in variable $n$.
The multiplicative subalgebra $\mathcal M(\shift)$ of $\shift$ consists of all functions $\bbn\rightarrow\bbc$ and $\shift$ itself is the set of operators $\text L$ of the form
$$\mathscr L = \sum_{j=0}^k a_j(n)\mathscr D^j + \sum_{j=1}^\ell b_j(n)(\mathscr D^*)^j$$
for $a_j,b_j:\bbn\rightarrow\bbc$ and $\mathscr D,\mathscr D^*$ the operators acting on sequences by
$$(\mathscr D\cdot a)(n) = a(n+1),\ \ \text{and}\ \ (\mathscr D^*\cdot a)(n) = \left\lbrace\begin{array}{cc}a(n-1), & n\neq 0\\ 0, & n=0\end{array}\right..$$
\begin{remk}
The operator $\mathscr D^*$ is adjoint to the operator $\mathscr D$ in the Hilbert space $\ell^2(\bbn)$, and this explains the notation.  Moreover $\mathscr{DD}^* = 1$ so $\mathscr D^*$ is a right inverse of $\mathscr D$.  However $\mathscr{D}^*\mathscr{D} = 1 - \delta_{0n}$, so $\mathscr D^*$ is not quite a left inverse of $\mathscr D$.  
\end{remk}
\begin{remk}
For us, sequences will always be indexed over $\bbn$.
With this in mind, we will always take the value of a sequence at a negative integer to be $0$, unless otherwise stated.
Furthermore, $n$ will be treated as an indeterminant.
\end{remk}

Now let $W(x)$ be a weight matrix and let $P(x,n)$ be the associated sequence of monic orthogonal polynomials.
Then the three-term recursion relation of $P(x,n)$ tells us that there exists $\mathscr L\in M_N(\shift)$ of the form
$$\mathscr L = \mathscr D + A(n) + B(n)\mathscr D^*$$
such that
$$\mathscr L\cdot P(x,n) = P(x,n)x.$$
Thus if $\mathcal D(W)$ contains a differential operator $\mathfrak D$ of positive order, then $P(x,n)$ will be bispectral with respect to this bispectral context.

As proved by Tirao and Gr\"unbaum \cite{grunbaum2007b}, the algebra $\mathcal D(W)$ is closed under the $W$-adjoint $\dag$.
Since the left and right bispectral algebras of $P$ are isomorphic, it makes sense that the involution $\dag$ of $\bispr(P)$ induces an involution on $\bispl(P)$.  Moreover, the involution $\dag$ is actually the restriction of an involution on a much larger algebra.  In fact, the right Fourier algebra $\fourr(P)$ is closed under $\dag$.  Thus $\fourl(P)$ is closed under the involution induced by the generalized Fourier map also.
To prove this, we first describe the induced involution.
First note that $M_N(\shift)$ has a natural $*$ operation extending Hermitian conjugation on matrices and sending $\mathscr D$ to $\mathscr D^*$ (and vice-versa).
Specifically it is defined by
\begin{equation}
\left(\sum_{j=0}^\ell A_j(n)\mathscr D^j + \sum_{j=1}^m (B_j(n)\mathscr D^*)^j\right)^* := \sum_{j=0}^\ell A_j(n-j)^*(\mathscr D^*)^j + \sum_{j=1}^m B_j(n+j)\mathscr D^j.
\end{equation}
With this in mind, we define the $W$-adjoint on $M_N(\shift)$.
\begin{defn}
The \vocab{$W$-adjoint of a shift operator} $\mathscr{M}\in M_N(\shift)$ is defined to be
$$\mathscr{M}^\dag := \|P(x,n)\|_W^2 \mathscr{M}^* \|P(x,n)\|_W^{-2},$$
where here we view $\|P(x,n)\|_W^2 = \langle P(x,n),P(x,n)\rangle_W$ as the sequence of Hermitian matrices in $M_N(\bbc)$ defined by the $W$-norms of the monic orthogonal polynomials of the weight matrix $W(x)$.
\end{defn}

The next lemma shows that this adjoint operation is well-behaved with respect to the inner product defined by the weight matrix $W(x)$.
\begin{lem}
Let $\mathscr{M}\in M_N(\shift)$.
We write $\mathscr{M}_n$ or $\mathscr{M}_m$ to emphasize that the shift operator is acting on the discrete variable $n$ or $m$, respectively.
Then consider the two polynomial sequences $P(x,n),P(x,m)$ in variables $n$ and $m$, respectively.
We have that
$$\langle \mathscr{M}_n\cdot P(x,n),P(x,m)\rangle_W = \langle P(x,n), \mathscr{M}_m^\dag\cdot P(x,m)\rangle_W.$$
\end{lem}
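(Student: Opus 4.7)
The proof is essentially a direct computation that reduces to the orthogonality property of the monic sequence $P(x,n)$ with respect to $\langle\cdot,\cdot\rangle_W$, namely $\langle P(x,n),P(x,m)\rangle_W = \delta_{nm}\|P(x,n)\|_W^2$. Since both sides of the claimed identity are $\bbc$-bilinear in the shift operator $\mathscr M$, it suffices to verify the identity on a generating set of $M_N(\shift)$ as a $\bbc$-algebra. A convenient choice is the monomials $\mathscr M = A(n)\mathscr D^j$ with $A:\bbn\to M_N(\bbc)$ and $j\ge 0$, and $\mathscr M = B(n)(\mathscr D^*)^j$ with $j\ge 1$; by the defining formula for the $*$-operation on $M_N(\shift)$, these have explicit adjoints $\mathscr M^* = A(n-j)^*(\mathscr D^*)^j$ and $\mathscr M^* = B(n+j)\mathscr D^j$, respectively, from which $\mathscr M^\dag$ is read off using its definition.

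I will carry out the computation for $\mathscr M = A(n)\mathscr D^j$; the case of $(\mathscr D^*)^j$ is entirely parallel. For the left-hand side, $\mathscr M_n\cdot P(x,n) = A(n)P(x,n+j)$, and since matrix factors pull out of the first slot of $\langle\cdot,\cdot\rangle_W$ on the left, orthogonality yields
\[
\langle \mathscr M_n\cdot P(x,n),P(x,m)\rangle_W = A(n)\,\delta_{n+j,m}\,\|P(x,m)\|_W^2.
\]
For the right-hand side, $\mathscr M^\dag_m = \|P(x,m)\|_W^2\, A(m-j)^*(\mathscr D^*)^j \|P(x,m)\|_W^{-2}$, and composing left-to-right on $P(x,m)$ (the innermost multiplicative coefficient acts first, then the $(\mathscr D^*)^j$ shifts $m\mapsto m-j$ and pulls the coefficients with it, then the outermost coefficient multiplies) gives
\[
\mathscr M^\dag_m\cdot P(x,m) = \|P(x,m)\|_W^2 A(m-j)^* \|P(x,m-j)\|_W^{-2}\, P(x,m-j).
\]
Plugging this into $\langle P(x,n),\cdot\rangle_W$ and using that matrix factors pull out of the second slot from the right as Hermitian conjugates, together with the self-adjointness of $\|P(x,k)\|_W^2$, the two norm factors cancel and orthogonality forces $n = m-j$, leaving $\delta_{n+j,m}\,A(n)\,\|P(x,m)\|_W^2$, which matches the left-hand side.

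There is no real obstacle here; the only care needed is in tracking how multiplicative coefficients and shift operators compose inside $M_N(\shift)$ (so that the shift $(\mathscr D^*)^j$ is correctly transported past the outer $\|P(x,m)\|_W^{-2}$), and in recognizing that the norm factors $\|P(x,k)\|_W^2$ are Hermitian so their Hermitian conjugates reinsert cleanly through the integral. The definition $\mathscr M^\dag = \|P(x,n)\|_W^2 \mathscr M^* \|P(x,n)\|_W^{-2}$ is precisely what is needed to make this cancellation work, which is why the $W$-adjoint on $M_N(\shift)$ is defined this way in the first place.
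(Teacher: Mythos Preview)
Your proof is correct and follows essentially the same approach as the paper's own proof: reduce by linearity to monomials $A(n)\mathscr D^j$ (and $B(n)(\mathscr D^*)^j$), compute both sides directly using orthogonality $\langle P(x,n),P(x,m)\rangle_W = \delta_{nm}\|P(x,n)\|_W^2$, and observe that the norm factors cancel. One small wording glitch: you say it suffices to check the identity on a \emph{generating set of $M_N(\shift)$ as a $\bbc$-algebra}, but the identity is only linear in $\mathscr M$, not multiplicative, so what you really need (and what you actually use) is that the monomials $A(n)\mathscr D^j$ and $B(n)(\mathscr D^*)^j$ \emph{span} $M_N(\shift)$ over $\bbc$; this is true by the very definition of $\shift$, so the argument goes through unchanged.
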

\begin{proof}
By linearity, it is enough to show that the above identity holds when $\mathscr M_n= A(n)\mathscr D^\ell$ or when $\mathscr M_n = A(n)(\mathscr D^*)^\ell$ for some sequence $A(n)$ and some integer $\ell\geq 0$.
We will prove the first case, leaving the second to the reader.
In this case $\mathscr M_n^* = A(n-\ell)^*(\mathscr D^*)^\ell$ so that
$$\mathscr M_n^\dag = \|P(x,n)\|_W^2A(n-\ell)^*\|P(x,n-\ell)\|^{-2}(\mathscr D^*)^\ell.$$
We calculate
\begin{align*}
\langle \mathscr M_n\cdot P(x,n),P(x,m)\rangle_W
  & = \langle A(n)\cdot P(x,n+\ell),P(x,m)\rangle_W\\
  & = A(n)\|P(m)\|_W^2\delta_{n+\ell,m}\\
  & = \|P(x,n)\|_W^2\|P(x,m-\ell)\|_W^{-2}A(m-\ell)\|P(x,m)\|_W^2\delta_{n,m-\ell}\\
  & = \langle P(x,n),\|P(x,m)\|_W^2 A(m-\ell)^*\|P(x,m-\ell)\|_W^{-2}\cdot P(x,m-\ell)\rangle_W\\
  & = \langle P(x,n),\mathscr M_m^\dag\cdot P(x,m)\rangle_W.
\end{align*}
This proves the lemma.
\end{proof}
\subsection{The Fourier algebras of orthogonal matrix polynomials}
We wish to describe the left and right Fourier algebras of $P$.
The next theorem describes the corresponding Fourier algebras in this case.
To prove it, we first require a lemma characterizing the differential operators contained in the algebra of left $M_N(\bbc)$-linear endomorphisms of $M_N(\bbc[x])$.

For every pair of elements $a, b$ of an algebra $\mathcal A$, we denote
$$
\Ad_a(b) := ab - ba \ \ \mbox{and} \ \ \Ad^{k+1}_a(b) := \Ad_a^k ( \Ad_a(b)).
$$
\begin{lem}
Suppose that $\Phi: M_N(\bbc[x])\rightarrow M_N(\bbc[x])$ is a left $M_N(\bbc)$-module endomorphism.
If $\Ad_{xI}^{\ell+1}(\Phi) = 0$ for some integer $\ell\geq 0$ then there exists a differential operator $\mathfrak D\in M_N(\Omega[x])$ of order at most $\ell$ satisfying $\Phi(P(x)) = P(x)\cdot\mathfrak D$ for all $P(x)\in M_N(\bbc[x])$.
\end{lem}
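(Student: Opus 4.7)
The plan is to prove the lemma by induction on $\ell$. The base case $\ell = 0$ asserts that if $\Phi$ commutes with multiplication by $xI$, then $\Phi$ is right-multiplication by the matrix polynomial $A_0(x) := \Phi(I) \in M_N(\bbc[x])$. This follows because iterated commutation gives $\Phi(x^k I) = x^k \Phi(I)$ for all $k$, and since $\{x^k I : k \geq 0\}$ generates $M_N(\bbc[x]) = \bigoplus_{k \geq 0} M_N(\bbc) \cdot x^k$ as a left $M_N(\bbc)$-module, left $M_N(\bbc)$-linearity extends this identity to $\Phi(P) = P \cdot A_0(x)$ for every $P \in M_N(\bbc[x])$.

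The engine of the induction is a commutator identity. Let $R_\mathfrak{D}$ denote the right action of $\mathfrak{D} \in M_N(\Omega[x])$ on $M_N(\bbc[x])$. For $\mathfrak{D} = \sum_{j=0}^\ell \partial_x^j A_j(x)$, Leibniz's rule $(xP)^{(j)} = xP^{(j)} + jP^{(j-1)}$ yields
$$\Ad_{xI}(R_\mathfrak{D}) = R_{\mathfrak{D}'}, \qquad \mathfrak{D}' := -\sum_{j=1}^{\ell} j\, \partial_x^{j-1} A_j(x).$$
So bracketing with $xI$ lowers the order by one, and writing $\mathfrak{D}' = \sum_{k=0}^{\ell-1} \partial_x^k B_k(x)$ in standard form, the higher-order coefficients of $\mathfrak{D}$ are uniquely recoverable as $A_{k+1}(x) = -B_k(x)/(k+1)$.

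For the inductive step, assume the lemma for order $\ell - 1$ and suppose $\Ad_{xI}^{\ell+1}\Phi = 0$. Then $\Phi' := \Ad_{xI}\Phi$ is again a left $M_N(\bbc)$-module endomorphism (since multiplication by $xI$ commutes with multiplication by constant matrices) and satisfies $\Ad_{xI}^\ell \Phi' = 0$, so by induction $\Phi' = R_{\mathfrak{D}'}$ for some $\mathfrak{D}' = \sum_{k=0}^{\ell-1} \partial_x^k B_k(x) \in M_N(\Omega[x])$ of order at most $\ell - 1$. Define $A_{k+1}(x) := -B_k(x)/(k+1)$ for $0 \leq k \leq \ell - 1$ and set $\wt{\mathfrak{D}} := \sum_{j=1}^{\ell} \partial_x^j A_j(x)$; by the commutator identity $\Ad_{xI}(R_{\wt{\mathfrak{D}}}) = R_{\mathfrak{D}'} = \Ad_{xI}\Phi$, so $\Psi := \Phi - R_{\wt{\mathfrak{D}}}$ commutes with $xI$. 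Applying the base case to $\Psi$ gives $\Psi = R_{A_0}$ with $A_0(x) := \Psi(I) = \Phi(I) \in M_N(\bbc[x])$ (using $I^{(j)} = 0$ for $j \geq 1$, so $I \cdot \wt{\mathfrak{D}} = 0$). Then $\mathfrak{D} := A_0(x) + \wt{\mathfrak{D}}$ is the required operator of order at most $\ell$, and its coefficients are polynomial because they come from $\Phi(I) \in M_N(\bbc[x])$ and from the $B_k \in M_N(\bbc[x])$ produced by the inductive hypothesis. The main bookkeeping subtlety is respecting the right-action convention wherein $\partial_x^j A_j(x)$ means ``differentiate $j$ times, then multiply on the right by $A_j(x)$'', since this is what makes the assignment $\mathfrak{D} \mapsto (A_0, \dots, A_\ell)$ well-defined and the recovery of coefficients from $\mathfrak{D}'$ unambiguous.
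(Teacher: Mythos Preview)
Your proof is correct and follows essentially the same inductive strategy as the paper: handle the base case by showing $\Phi$ is right multiplication by $\Phi(I)$, then for the inductive step apply the hypothesis to $\Ad_{xI}\Phi$, lift the resulting operator by integrating coefficients (your $A_{k+1} = -B_k/(k+1)$ is exactly the paper's $\partial_x^{j+1}\tfrac{1}{j+1}A_j$), and subtract to reduce to the base case. The only cosmetic discrepancy is a sign: the paper computes $\Ad_{xI}(\Psi)(P) = \Psi(xP) - x\Psi(P)$ (consistent with its right-action composition convention), which yields $\mathfrak D' = +\sum_j j\,\partial_x^{j-1}A_j$ rather than your $-\sum_j j\,\partial_x^{j-1}A_j$; your choice corresponds to composing endomorphisms in the usual left-to-right order, and since you use it consistently in both the commutator identity and the coefficient recovery, the argument is unaffected.
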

\begin{proof}
First recall that any left $M_N(\bbc[x])$-module endomorphism of $M_N(\bbc[x])$ is of the form $P(x)\mapsto P(x)A(x)$ for some $A(x)\in M_N(\bbc[x])$.
With this in mind, we proceed by induction on $\ell$.
If $\ell = 0$, then $\Ad_{xI}(\Phi) = 0$ so that $\Phi(xP(x)) = x\Phi(P(x))$ for all $P(x)\in M_N(\bbc[x])$.
Since $\Phi(x)$ is a left $M_N(\bbc)$-module endomorphism of $M_N(\bbc[x])$ it follows that $\Phi$ is a left $M_N(\bbc[x])$-module endomorphism.
Thus by the fact recalled at the start our lemma is true when $\ell=0$.

As an inductive assumption, suppose that the statement of the lemma is true for all $\ell \leq m$.
Now suppose that $\Ad_{xI}^{m+1}(\Phi) = 0$.
Then $\Ad_{xI}(\Phi)$ is a left $M_N(\bbc)$-module endomorphism of $M_N(\bbc[x])$ with $\Ad_{xI}^m(\Ad_{xI}(\Phi)) = 0$.
Thus by our inductive assumption there exists a differential operator $\mathfrak D\in M_N(\bbc[x])$ of order at most $m$ satisfying
$$\Ad_{xI}(\Phi)(P(x)) = P(x)\cdot\mathfrak D$$
for all $P(x)\in M_N(\bbc[x])$.
We write $\mathfrak D = \sum_{j=0}^m \partial_x^j A_j(x)$ and define a new left $M_N(\bbc)$-module endomorphism $\Psi$ of $M_N(\bbc[x])$ by
$$\Psi(P(x)) = \Phi(P(x))-\left[P(x)\cdot \sum_{j=0}^m \partial_x^{j+1}\frac{1}{j+1} A_j(x)\right].$$
We calculate
$$\Ad_{xI}(\Psi)(P(x)) = \Psi(xP(x))-x\Psi(P(x)) = \Ad_{xI}(\Phi)(P(x))-\left[P(x)\cdot\sum_{j=0}^m\partial_x^jA_j(x)\right] = 0.$$
Therefore there exists a matrix $A(x)\in M_N(\bbc[x])$ with $\Psi(P(x)) = P(x)A(x)$ for all $P(x)\in M_N(\bbc[x])$.
Hence for all $P(x)$ we have
$$\Phi(P(x)) = P(x)\cdot\left[\sum_{j=0}^m \partial_x^{j+1}\frac{1}{j+1}A_j(x) + A(x)\right].$$
Hence by induction our lemma is true for all $\ell$.
\end{proof}

\begin{thm}\label{fourier algebra theorem}
Let $W(x)$ be a weight matrix and let $P(x,n)$ be the associated sequence of monic orthogonal polynomials and let $\mathscr L\in M_N(\shift)$ with $\mathscr L\cdot P(x,n) = P(x,n)x$.
Then the Fourier algebras of $P(x,n)$ are given by
\begin{align*}
\fourl(P) &= \{\mathscr{M} \in M_N(\shift): \Ad_{\mathscr L}^{k+1}(\mathscr{M}) = 0\ \text{for some $k\geq0$}\},
\\
\fourr(P) &= \{\mathfrak D\in M_N(\Omega[x])^{op}: \text{$\mathfrak D$ is $W$-adjointable and $\mathfrak D^\dag\in M_N(\Omega[x])$}\}.
\end{align*}
\end{thm}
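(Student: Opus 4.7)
The plan is to prove the two characterizations separately, invoking the preceding lemma (on left $M_N(\bbc)$-module endomorphisms of $M_N(\bbc[x])$ satisfying $\Ad_{xI}^{\ell+1}(\Phi) = 0$) as the bridge between shift operators and differential operators for $\fourl(P)$, and then using the already-established $W$-adjoint compatibility to bootstrap this into the characterization of $\fourr(P)$.

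For $\fourl(P)$, I would first observe that the annihilator of $P$ in $M_N(\shift)$ is trivial, because $\{P(x,n)\}_{n\geq 0}$ is a left $M_N(\bbc)$-basis of $M_N(\bbc[x])$ with strictly increasing degrees and nonsingular leading coefficients, so the matrix coefficients of any annihilating shift operator can be recovered term by term. For the forward inclusion, suppose $\mathscr M \cdot P = P \cdot \mathfrak D$ with $\mathfrak D$ of order $k$. Bispectrality gives
\begin{equation*}
\Ad_{\mathscr L}(\mathscr M) \cdot P = (\mathscr L \cdot P) \cdot \mathfrak D - (\mathscr M \cdot P) \cdot x = P \cdot (x\mathfrak D - \mathfrak D x),
\end{equation*}
and $[x, \mathfrak D] = x\mathfrak D - \mathfrak D x \in M_N(\Omega[x])$ has order at most $k-1$ (since $[x, \partial_x^k] = k\partial_x^{k-1}$ in the right-action convention $x\partial_x - \partial_x x = 1$). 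Iterating $k+1$ times gives $\Ad_{\mathscr L}^{k+1}(\mathscr M) \cdot P = 0$, and triviality of the annihilator forces $\Ad_{\mathscr L}^{k+1}(\mathscr M) = 0$. For the converse, given $\Ad_{\mathscr L}^{k+1}(\mathscr M) = 0$, I would define a left $M_N(\bbc)$-module endomorphism $\Phi$ of $M_N(\bbc[x])$ on the basis by $\Phi(P(x,n)) := (\mathscr M \cdot P)(x,n)$. Using the three-term recurrence $xP(x,n) = (\mathscr L \cdot P)(x,n)$, a direct calculation shows that $\Ad_{xI}(\Phi)$ is the analogous endomorphism associated to $\Ad_{\mathscr L}(\mathscr M)$, so by induction $\Ad_{xI}^{k+1}(\Phi) = 0$. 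The preceding lemma then yields $\mathfrak D \in M_N(\Omega[x])$ with $\Phi(Q) = Q \cdot \mathfrak D$, specializing to $\mathscr M \cdot P = P \cdot \mathfrak D$.

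For $\fourr(P)$, I would first verify that $\mathscr L$ is $W$-symmetric: since $x$ is real on $\supp W(x)$, both $\mathscr L^\dag \cdot P$ and $\mathscr L \cdot P$ equal $Px$, and triviality of the annihilator forces $\mathscr L^\dag = \mathscr L$. Consequently $(\Ad_{\mathscr L}(\mathscr M))^\dag = -\Ad_{\mathscr L}(\mathscr M^\dag)$, so the description of $\fourl(P)$ just proved is closed under $\dag$. For the forward inclusion, if $\mathfrak D \in \fourr(P)$ with $\mathscr M \cdot P = P \cdot \mathfrak D$, then $\mathscr M^\dag \in \fourl(P)$ yields $\mathfrak D' \in M_N(\Omega[x])$ with $\mathscr M^\dag \cdot P = P \cdot \mathfrak D'$; the shift-operator $W$-adjoint lemma gives $\langle P(x,n) \cdot \mathfrak D, P(x,m) \rangle_W = \langle P(x,n), P(x,m) \cdot \mathfrak D' \rangle_W$ on the basis, and extending sesquilinearly shows $\mathfrak D$ is $W$-adjointable with $\mathfrak D^\dag = \mathfrak D' \in M_N(\Omega[x])$. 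For the converse, given $\mathfrak D$ $W$-adjointable with $\mathfrak D^\dag \in M_N(\Omega[x])$, expand $P(x,n) \cdot \mathfrak D = \sum_m a_{nm} P(x,m)$ in the orthogonal basis: degree growth of $\mathfrak D$ forces $a_{nm} = 0$ for $m > n + c$, while $\langle P(x,n) \cdot \mathfrak D, P(x,m) \rangle_W = \langle P(x,n), P(x,m) \cdot \mathfrak D^\dag \rangle_W = 0$ for $m < n - c'$ combined with positive definiteness of $\|P(x,m)\|_W^2$ forces $a_{nm} = 0$ in that range. The resulting finite-band expansion $P(x,n) \cdot \mathfrak D = \sum_{j=-c'}^{c} b_j(n) P(x, n+j)$ assembles into a shift operator $\mathscr M = \sum_{j=0}^c b_j(n) \mathscr D^j + \sum_{j=1}^{c'} b_{-j}(n)(\mathscr D^*)^j \in M_N(\shift)$ with $\mathscr M \cdot P = P \cdot \mathfrak D$.

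The principal obstacle is the commutator bookkeeping across the two algebras: one must verify carefully that $\Ad_{\mathscr L}$ on $M_N(\shift)$ corresponds to $\Ad_{xI}$ on polynomial endomorphisms under bispectrality, manage the right-action and opposite-algebra conventions so that the sign and order of $[x, \mathfrak D]$ come out correctly, and ensure the order-reduction argument terminates cleanly. A secondary technical point in the $\fourr(P)$ converse is the boundary behavior at $n = 0$, which is consistent because both $P(x,m)$ for $m < 0$ and low-index $\mathscr D^*$-applications vanish by convention, so the assembled operator $\mathscr M$ agrees with the basis expansion for every $n \geq 0$.
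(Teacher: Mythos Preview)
Your proposal is correct and follows essentially the same approach as the paper's proof: both use the preceding lemma to bridge between $\Ad_{\mathscr L}$-nilpotent shift operators and differential operators for $\fourl(P)$, and both establish the $\fourr(P)$ characterization via the finite-band expansion argument (bounding above by degree growth, below by $W$-adjointability) together with the $\dag$-symmetry of $\mathscr L$. The only cosmetic difference is that for the forward inclusion of $\fourl(P)$ the paper invokes $\Ad_{xI}^{\ell+1}(\mathfrak D)=0$ directly and transfers through the generalized Fourier map, whereas you compute the iterated commutator explicitly and appeal to triviality of the annihilator; these are the same argument unpacked to different depths.
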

\begin{proof}
We will first prove our formula for $\fourl(P)$.
If $\mathscr M\in\fourl(P)$, then $\mathscr M\cdot P(x,n) = P(x,n)\cdot\mathfrak D$.
If $\mathfrak D$ has order $\ell$, then $\Ad_{xI}^{\ell+1}(\mathfrak D) = 0$.
Applying the generalized Fourier map, this implies that $\Ad_{\mathscr L}^{\ell+1}(\mathscr M) = 0$.
Thus
$$\fourl(P) \subseteq \{\mathscr M\in M_N(\shift): \Ad_{\mathscr L}^{k+1}(M) = 0\ \text{for some $k\geq0$}\}.$$

To prove the opposite containment, suppose that $\mathscr M\in M_N(\shift)$ with $\Ad_{\mathscr L}^{\ell+1}(M) = 0$ for some integer $\ell$.
Consider the left $M_N(\bbc)$-module endomorphism $\Phi$ of $M_N(\bbc[x])$ induced by
$$\Phi: P(x,n)\mapsto \mathscr M\cdot P(x,n).$$
For any $Q(x)\in M_N(\bbc[x])$, we write $Q(x)\cdot\Phi$ to mean $\Phi(Q)(x)$.  Then for all $n$ we see
$$P(x,n)\cdot \Ad_{xI}^{\ell+1}(\Phi) = \Ad_{\mathscr L}^{\ell+1}(\mathscr{M})\cdot P(x,n) = 0.$$
Consequently $\Ad_{xI}^{\ell+1}(\Phi) = 0$ and by the previous lemma we know that
$$\mathscr M\cdot P(x,n) = \Phi(P(x,n)) = P(x,n)\cdot\mathfrak D$$
for some differential operator $\mathfrak D\in M_N(\Omega[x])$ for all $n$.
In particular $\mathscr M\in\fourl(P)$.  This proves
$$\fourl(P) = \{\mathscr M\in M_N(\shift): \Ad_{\mathscr L}^{k+1}(\mathscr{M}) = 0\ \text{for some $k\geq0$}\}.$$

Suppose that $\mathfrak D\in M_N(\Omega[x])$ and $\mathfrak D$ is $W$-adjointable with $\mathfrak D^\dag\in M_N(\Omega[x])$.
Then we may write
$$\mathfrak D = \sum_{i=0}^\ell \partial_x^i A_i(x),\ \ \text{and}\ \ \mathfrak D^\dag = \sum_{i=0}^\ell \partial_x^i B_i(x)$$
for some polynomials $A_i(x),B_i(x)\in M_N(\bbc[x])$.  Let $m$ be the maximum degree of all of these polynomials.  Then we may write
$$P(x,n)\cdot\mathfrak D = \sum_{j=0}^{n+m} C(n,j)P(x,j)$$
for some matrices $C(n,j)\in M_N(\bbc)$, defined by
$$C(n,j)  = \langle P(x,n)\cdot\mathfrak D,P(x,j)\rangle_W \|P(x,j)\|_W^{-2}.$$
Then using the $W$-adjointability, we see that
$$C(n,j)  = \langle P(x,n),P(x,j)\cdot\mathfrak D^\dag\rangle_W \|P(x,j)\|_W^{-2}$$
and therefore $C(n,j)$ is zero if $n-j>m$.  Thus
$$P(x,n)\cdot\mathfrak D = \sum_{j=n-m}^{n+m} C(n,j)P(x,j) = \mathscr M\cdot P(x,n)$$
for
$$\mathscr M = \sum_{j=0}^{m} C(n,n+j)\mathscr D^j + \sum_{j=1}^m C(n,n-j)(\mathscr D^*)^j.$$
Therefore $\mathfrak D\in\fourr(P)$.
Moreover, since $\Ad_x^{\ell+1}(\mathfrak D) = 0$ we know that $\Ad_{\mathscr L}^{\ell+1}(\mathscr M) = 0$.
This proves
$$\fourr(P) \supseteq \{\mathfrak D\in M_N(\Omega[x])^{op}: \text{$\mathfrak D$ is $W$-adjointable and $\mathfrak D^\dag\in M_N(\Omega[x])$}\}.$$
Next, suppose instead that $\mathfrak D\in\fourr(P)$.
Then there exists $\mathscr M\in M_N(\shift)$ with $\mathscr M\cdot P(x,n) = P(x,n)\cdot \mathfrak D$.
Then $\Ad_{\mathscr L}^{\ell+1}(\mathscr M) = 0$ for $\ell$ the order of $\mathfrak D$.
Since $xI$ is $\dag$-symmetric, so too is $\mathscr L$.
It follows that $\Ad_{\mathscr L}^{\ell+1}(\mathscr M^\dag) = 0$ and therefore $\mathscr M^\dag \cdot P(x,n) = P(x,n)\cdot\wt{\mathfrak D}$ for some $\wt{\mathfrak D}\in\fourr(P)$.
Note that by the previous lemma for all $m\in\bbz$
\begin{align*}
\langle P(x,n)\cdot\mathfrak D,P(x,m)\rangle_W
  & = \langle \mathscr M_n\cdot P(x,n),P(x,m)\rangle_W\\
  & = \langle P(x,n),\mathscr M_m^\dag\cdot P(x,m)\rangle_W\\
  & = \langle P(x,n), P(x,m)\cdot\wt{\mathfrak D}\rangle_W.
\end{align*}
This implies that $\langle P\cdot\mathfrak D,Q\rangle_W = \langle P,Q\cdot\wt{\mathfrak D}\rangle_W$ for all polynomials $P,Q\in M_N(\bbc[x])$.
Hence $\mathfrak D$ is $W$-adjointable and $\wt{\mathfrak D} = \mathfrak D^\dag$.
Thus $\mathfrak D^\dag$ is $W$-symmetric with $\mathfrak D^\dag\in M_N(\Omega[x])$.
This proves
$$\fourr(P) = \{\mathfrak D\in M_N(\Omega[x])^{op}: \text{$\mathfrak D$ is $W$-adjointable and $\mathfrak D^\dag\in M_N(\Omega[x])$}\}.$$
\end{proof}
As a corollary of this, we see that the left and right Fourier algebras are closed under the adjoint operation.
In this way, the previous theorem provides a more conceptual proof of the fact $\mathcal D(W)$ is closed under $\dag$ than the proof found in \cite{grunbaum2007b}, 
see Theorem \ref{GrTi} below.
\begin{cor}\label{fourier algebra corollary}
Let $W(x)$ be a weight matrix and $P(x,n)$ be the associated sequence of monic orthogonal polynomials.
The left and right Fourier algebras $\fourl(P)$ and $\fourr(P)$ are closed under $\dag$.
Furthermore for all $\mathscr M\in\fourl(P)$ we have $b_P(\mathscr M^\dag) = b_P(\mathscr M)^\dag$.
\end{cor}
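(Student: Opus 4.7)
The plan is to derive both parts of the corollary directly from the characterizations of $\fourl(P)$ and $\fourr(P)$ in Theorem \ref{fourier algebra theorem}, using the fact (observed in its proof) that $\mathscr L^\dag=\mathscr L$ together with the fact that $\dag$ is an anti-involution on $M_N(\shift)$ and on $M_N(\Omega((x)))$.

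For $\fourr(P)$, I would argue directly: if $\mathfrak D\in M_N(\Omega[x])^{op}$ is $W$-adjointable with $\mathfrak D^\dag\in M_N(\Omega[x])$, then $\mathfrak D^\dag$ is itself $W$-adjointable with $W$-adjoint $(\mathfrak D^\dag)^\dag=\mathfrak D$, and the latter lies in $M_N(\Omega[x])$ by hypothesis. So $\mathfrak D^\dag\in\fourr(P)$, which is the characterization from the theorem.

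For $\fourl(P)$, I would use the characterization $\fourl(P)=\{\mathscr M:\Ad_{\mathscr L}^{k+1}(\mathscr M)=0\text{ for some }k\}$. The essential input is that $\mathscr L^\dag=\mathscr L$, which one obtains by noting that multiplication by $xI$ on the right is $W$-symmetric and applying the lemma preceding the theorem to convert this into a shift-operator identity (alternatively, it is recorded in the last paragraph of the proof of Theorem \ref{fourier algebra theorem}). Since $\dag$ is an anti-involution, one computes
\[
\Ad_{\mathscr L}(\mathscr M^\dag)=\mathscr L\mathscr M^\dag-\mathscr M^\dag\mathscr L=(\mathscr M\mathscr L^\dag-\mathscr L^\dag\mathscr M)^\dag=-\Ad_{\mathscr L}(\mathscr M)^\dag,
\]
and a straightforward induction gives $\Ad_{\mathscr L}^{k+1}(\mathscr M^\dag)=(-1)^{k+1}\Ad_{\mathscr L}^{k+1}(\mathscr M)^\dag$, which vanishes whenever $\Ad_{\mathscr L}^{k+1}(\mathscr M)=0$. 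Hence $\mathscr M^\dag\in\fourl(P)$.

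For the identity $b_P(\mathscr M^\dag)=b_P(\mathscr M)^\dag$, my plan is to double-apply the inner-product lemma. Starting from $\mathscr M\cdot P(x,n)=P(x,n)\cdot b_P(\mathscr M)$ and pairing against $P(x,m)$ in the $W$-inner product, one transfers $\mathscr M$ to the right side of the pairing (producing $\mathscr M^\dag\cdot P(x,m)$) and independently transfers $b_P(\mathscr M)$ to the right side (producing $P(x,m)\cdot b_P(\mathscr M)^\dag$). Equating the two and using that $\{P(x,n)\}$ is a basis of $M_N(\bbc[x])$ and $\|P(x,m)\|_W^2$ is invertible yields $\mathscr M^\dag\cdot P(x,m)=P(x,m)\cdot b_P(\mathscr M)^\dag$, which is exactly the identity $b_P(\mathscr M^\dag)=b_P(\mathscr M)^\dag$. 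The only subtle point throughout is keeping track of the two distinct $\dag$ operations (on shift and on differential operators) and their anti-involutive property; once that bookkeeping is done, no step requires new machinery beyond what the theorem and its preceding lemma provide.
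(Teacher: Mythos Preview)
Your proposal is correct and takes essentially the same approach as the paper. The paper's proof simply cites the proof of Theorem \ref{fourier algebra theorem}, where exactly the ingredients you spell out---the $\Ad_{\mathscr L}$ argument using $\mathscr L^\dag=\mathscr L$, and the inner-product transfer via the preceding lemma---are already carried out; the only cosmetic difference is that the paper deduces closure of $\fourl(P)$ \emph{from} the identity $b_P(\mathscr M^\dag)=b_P(\mathscr M)^\dag$ (together with closure of $\fourr(P)$ and the fact that $b_P$ is an isomorphism), whereas you prove closure of $\fourl(P)$ directly via the $\Ad_{\mathscr L}$ characterization before establishing the identity.
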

\begin{proof}
It is clear from the statement of the previous theorem that $\fourr(P)$ is closed under $\dag$.
We showed in the proof of the previous theorem that if $\mathfrak b_P(\mathscr M) = \mathfrak D$, then $b_P(\mathscr M^\dag) = \mathfrak D^\dag$.
Therefore the identity in the statement of the corollary is true.
Since $b_P$ is an isomorphism, this implies $\fourl(P)$ is also closed under $\dag$.
\end{proof}

\begin{defn}
Let $W(x)$ and $\wt W(x)$ be weight matrices and let $P(x,n)$ and $\wt P(x,n)$ be their associated sequences of monic orthogonal polynomials.
We say that $\wt P(x,n)$ is a bispectral Darboux transformation of $P(x,n)$ if there exist differential operators $\mathfrak T,\wt{\mathfrak T}\in\fourr(P)$, polynomials $F(x),\wt F(x)$ and sequences of matrices $C(n),\wt C(n)$ which are nonsingular for almost all $n$ and satisfy
$$C(n)\wt P(x,n) = P(x,n)\cdot \mathfrak TF(x)\ \ \text{and}\ \ \wt C(n)P(x,n) = \wt P(x,n)\cdot \wt F(x)\wt{\mathfrak T}.$$
We say that $\wt W(x)$ is a \vocab{bispectral Darboux transformation} of $W(x)$ if $\wt P(x,n)$ is a bispectral Darboux transformation of $P(x,n)$.
\end{defn}
\begin{remk}
We do not require the $C(n),\wt C(n)$'s to be nonsingular for all $n$, because doing so would eliminate many important bispectral Darboux transformations, including many trivial ones.
In particular the vanishing of $C(n)$ or $\wt C(n)$ corresponds to ``poles" of the eigenvalues, which occur naturally.
For example, we should expect any $\mathfrak T,\wt{T}\in\bispr(P)$ which are not zero divisors to define a bispectral Darboux transformation of $P$ to itself.
However, if we required the $C(n),\wt C(n)$ to not have poles then these trivial transformations would have to be thrown out.
This is a feature of the ``discrete" bispectral case not featured in the continuous case, since in the latter we deal with meromorphic eigenvalues which can already have poles.
\end{remk}
The next example shows that one can use bispectral Darboux transformations to decrease the minimal nonzero degree of an element of $\mathcal D(W)$ by
artificially increasing the size of a matrix weight $W(x)$.
\begin{ex} 
\label{double-size}
Bispectral Darboux transformations can be used to artificially create weight matrices $W(x)$ for which the algebra $\mathcal D(w)$ contains 
differential operators of lower order at the expense of enlarging the size of the weight by the following procedure.

Let $W(x)$ and $\wt W(x)$ be a pair of weight matrices of size $N \times N$ which are bispectral Darboux transformations from each other. Denote by 
$P(x,n)$ and $\wt P(x,n)$ their associated sequences of monic orthogonal polynomials. Thus 
$$
\wt{P} = p^{-1}\cdot P \cdot {\mathfrak{d}} q^{-1}\ \ \text{and}\ \ P = \wt p^{-1}\cdot \wt{P} \cdot \wt q^{-1}\wt{\mathfrak{d}}
$$
for some ${\mathfrak{d}}, \wt{\mathfrak{d}} \in \mathcal{F}_R(P)$ and some functions $q, \wt{q} \in \mathcal{F}_R(P)$, $p, \wt{p} \in \mathcal{F}_L(P)$. 

The two sequences $P(x,n)$ and $\wt P(x,n)$ are eigenfunctions of differential operators of orders ${\mathrm{ord}} P + {\mathrm{ord}} \wt{P}$:
$$P \cdot ({\mathfrak{d}} q^{-1}\wt q^{-1} \wt{\mathfrak{d}}) = p\wt p\cdot P
\ \ \mbox{and} \ \
\wt pp\cdot\wt{P} = \wt{P} \cdot(\wt q^{-1}\wt{\mathfrak{d}} {\mathfrak{d}} q^{-1}).
$$

At the same time, the sequence of orthogonal polynomials for the block diagonal weight $\diag (W(x), \wt{W}(x))$ is $\diag (P(x,n), \wt P(x,n))$. It is an eigenfunction 
of a differential operator of order $\max\{ {\mathrm{ord}} P, {\mathrm{ord}} \wt{P} \}$,
$$
\diag(\wt p, p) \cdot \diag (P, \wt P) = \diag (P, \wt P) \cdot \mxx{0}{ {\mathfrak{d}} q^{-1} }{\wt{q}^{-1}\wt{\mathfrak{d}}}{0}.
$$
Note that the leading coefficient of the operator in the right hand side does not have the positivity property from Theorem \ref{thm1.3}.
\end{ex}

\section{The algebraic structure of $\mathcal{D}(W)$}
In this section, we describe the algebraic structure of the algebra $\mathcal{D}(W)$ in very specific terms.
\subsection{The $*$-involution of $\mathcal D(W)$}
The structural results featured in this section are dependent on the existence of a proper $*$-involution on $\mathcal{D}(W)$.
Recall that a (complex) \vocab{$*$-algebra} is an algebra $\mathcal A$ over $\mathbb{C}$ with an involutive skew-linear anti-automorphism $\mathfrak{D} \mapsto \mathfrak{D}^\dag$.
One of the first general results regarding the algebra $\mathfrak{D}(W)$ is that it is a $*$-algebra under the $W$-adjoint $\dag$.
\begin{thm}[Gr\"{u}nbaum and Tirao \cite{grunbaum2007b}]
\label{GrTi}
Let $W(x)$ be a weight matrix, and let $\mathfrak{D}\in \mathcal{D}(W)$.  Then a $W$-adjoint of $\mathfrak{D}$ exists and is in $\mathcal{D}(W)$.  The operator $\mathfrak{D}\mapsto\mathfrak{D}^\dag$ is an involution on $\mathcal{D}(W)$ giving $\mathcal{D}(W)$ the structure of a $*$-algebra.
\end{thm}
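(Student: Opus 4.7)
The plan is to reduce the theorem essentially to Theorem \ref{fourier algebra theorem} and Corollary \ref{fourier algebra corollary}. The key observation is that $\mathcal D(W)$ coincides with the right bispectral algebra $\bispr(P)$ of the sequence $P(x,n)$ of monic orthogonal polynomials of $W(x)$: the defining condition $P(x,n) \cdot \mathfrak D = \Lambda(n) P(x,n)$ is precisely the bispectral equation $\mathscr L_\Lambda \cdot P = P \cdot \mathfrak D$, with $\mathscr L_\Lambda \in \mathcal M(M_N(\shift))$ the multiplicative shift operator $n \mapsto \Lambda(n)$. Since $\bispr(P) \subseteq \fourr(P)$ automatically, Theorem \ref{fourier algebra theorem} already ensures that every $\mathfrak D \in \mathcal D(W)$ is $W$-adjointable and that $\mathfrak D^\dag \in M_N(\Omega[x])$ rather than merely lying in $M_N(\Omega[[x]])$. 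This disposes of the existence portion of the statement for free.

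The main step is to check that $\mathfrak D^\dag$ itself belongs to $\mathcal D(W)$, i.e. that each $P(x,n)$ remains an eigenfunction. I would expand $P(x,n) \cdot \mathfrak D^\dag$ in the left $M_N(\bbc)$-basis $\{P(x,j)\}_{j \geq 0}$ of $M_N(\bbc[x])$ (with coefficients on the left, as in the proof of Theorem \ref{fourier algebra theorem}) and compute each Fourier coefficient via the inner product $\langle P(x,n) \cdot \mathfrak D^\dag, P(x,j)\rangle_W$. Using the $W$-adjointability property, one moves $\mathfrak D^\dag$ across the pairing to obtain an inner product involving $P(x,j) \cdot \mathfrak D = \Lambda(j) P(x,j)$, and then orthogonality of the $P(x,j)$ kills every term with $j \neq n$. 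The outcome is $P(x,n) \cdot \mathfrak D^\dag = \wt\Lambda(n) P(x,n)$ for the explicit eigenvalue sequence $\wt\Lambda(n) = \|P(x,n)\|_W^2 \Lambda(n)^* \|P(x,n)\|_W^{-2}$, so $\mathfrak D^\dag \in \mathcal D(W)$.

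The remaining $*$-algebra axioms follow formally from the ambient formula $\mathfrak D^\dag = W \mathfrak D^* W^{-1}$: skew-linearity, anti-multiplicativity, and the involutive identity $(\mathfrak D^\dag)^\dag = \mathfrak D$ all hold inside $M_N(\Omega((x)))$ because $*$ is a skew-linear anti-involution on $M_N(\Omega((x)))$ and $W$ is Hermitian, and these identities descend to the subalgebra $\mathcal D(W)$. The only conceptual obstacle -- forcing $W \mathfrak D^* W^{-1}$ to have polynomial coefficients rather than formal power series ones -- has already been absorbed into Theorem \ref{fourier algebra theorem}, so beyond the identification of $\mathcal D(W)$ with $\bispr(P)$ the argument is essentially bookkeeping.
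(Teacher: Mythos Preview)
Your proposal is correct and follows essentially the same route as the paper: the paper states that Theorem~\ref{GrTi} ``is also a consequence of Corollary~\ref{fourier algebra corollary},'' which combines Theorem~\ref{fourier algebra theorem} with the identity $b_P(\mathscr M^\dag) = b_P(\mathscr M)^\dag$ and the observation that the $\dag$-operation on $M_N(\shift)$ preserves the multiplicative subalgebra (so $\bispr(P)=\mathcal D(W)$ is $\dag$-stable). Your direct Fourier-coefficient computation in the ``main step'' is exactly this argument unpacked, yielding the same eigenvalue $\wt\Lambda(n)=\|P(x,n)\|_W^2\Lambda(n)^*\|P(x,n)\|_W^{-2}=\Lambda(n)^\dag$.
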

This is also a consequence of Corollary \ref{fourier algebra corollary} in the previous section.
As a result, we have the following corollary proved in \cite{grunbaum2007b}.
\begin{cor}
Let $W(x)$ be a weight matrix.  Then $\mathcal{D}(W)$ contains a differential operator of order $m$ if and only if $\mathcal{D}(W)$ contains a $W$-symmetric differential operator of order $m$.
\end{cor}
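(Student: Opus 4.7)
The forward direction is immediate: a $W$-symmetric operator of order $m$ in $\mathcal{D}(W)$ is in particular an operator of order $m$ in $\mathcal{D}(W)$. So the content is in the converse, and here I would exploit the $*$-algebra structure from Theorem \ref{GrTi}.

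The plan is as follows. Given $\mathfrak{D}\in\mathcal{D}(W)$ of order $m$, I would consider the operators
\[
\mathfrak{D}_1 := \mathfrak{D} + \mathfrak{D}^\dag \quad\text{and}\quad \mathfrak{D}_2 := i\bigl(\mathfrak{D} - \mathfrak{D}^\dag\bigr).
\]
Both lie in $\mathcal{D}(W)$ because $\mathcal{D}(W)$ is a $\bbc$-subalgebra closed under $\dag$ by Theorem \ref{GrTi}. A direct computation using $\dag^2=\id$ and the fact that $\dag$ is skew-linear (as the composition of the linear $W$-conjugation with the skew-linear formal adjoint) shows $\mathfrak{D}_i^\dag = \mathfrak{D}_i$ for $i=1,2$, so both are $W$-symmetric. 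Moreover, one recovers
\[
\mathfrak{D} = \tfrac{1}{2}\bigl(\mathfrak{D}_1 - i\mathfrak{D}_2\bigr),
\]
so $\mathfrak{D}$ is a $\bbc$-linear combination of $\mathfrak{D}_1$ and $\mathfrak{D}_2$.

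The final step is the order bookkeeping. I would first observe that $\mathfrak{D}^\dag$ has the same order as $\mathfrak{D}$: the formal adjoint $\mathfrak{D}^*$ preserves order (it only flips the sign of $\partial_x$), and conjugation by the invertible matrix $W(x)$ does not lower it on the support of $W(x)$. Explicitly, if the leading term of $\mathfrak{D}$ is $\partial_x^m A_m(x)$, then the leading term of $\mathfrak{D}^\dag$ is $(-1)^m \partial_x^m W A_m^* W^{-1}$. Now if both $\mathfrak{D}_1$ and $\mathfrak{D}_2$ had order strictly less than $m$, then by the identity $\mathfrak{D}=\tfrac{1}{2}(\mathfrak{D}_1 - i\mathfrak{D}_2)$ the operator $\mathfrak{D}$ itself would have order strictly less than $m$, a contradiction. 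Hence at least one of $\mathfrak{D}_1,\mathfrak{D}_2$ has order exactly $m$, providing the required $W$-symmetric element of $\mathcal{D}(W)$.

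There is no real obstacle here once Theorem \ref{GrTi} is in hand; the only thing one needs to check carefully is that passage to the formal adjoint does not drop the order, which is immediate from the explicit description of $\dag$ through $\mathfrak{D}^\dag = W \mathfrak{D}^* W^{-1}$ and the fact that $W(x)$ is invertible on its support. The decomposition into ``Hermitian'' and ``skew-Hermitian'' parts is the standard device for extracting self-adjoint elements from a $*$-algebra, and it transfers to this setting without modification.
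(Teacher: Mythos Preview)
Your argument is correct and is the standard decomposition into self-adjoint parts; the paper does not supply its own proof of this corollary but simply attributes it to \cite{grunbaum2007b}, and your approach is exactly the natural one. The only minor point worth making explicit is that you use $\mathrm{ord}(\mathfrak{D}^\dag)=m$ to bound $\mathrm{ord}(\mathfrak{D}_i)\leq m$ from above, so that ``not both have order $<m$'' really does yield ``at least one has order exactly $m$''; you state the order-preservation fact but could tie it into the final sentence more tightly.
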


\begin{defn}
The involution $\mathfrak{D} \mapsto \mathfrak{D}^\dag$ of a $*$-algebra $\mathcal A$ is called \vocab{positive} if for all $\mathfrak D_1, \ldots, \mathfrak D_n \in \mathcal A$, 
\begin{equation}\label{nondegenerate star}
\mathfrak D_1\mathfrak D_1^\dag  + \cdots + \mathfrak D_n\mathfrak D_n^\dag = 0 \ \text{if and only if}\ \mathfrak D_1 = \cdots = \mathfrak D_n = 0.
\end{equation}
The involution is called \vocab{proper} if the above property holds for $n=1$, see \cite[Sect. 2]{berberian1972}.
\end{defn}
\begin{thm}[Casper \cite{casper2017bispectral}]
Let $W(x)$ be a weight matrix.  The involution of the $*$-algebra $\mathcal{D}(W)$ is positive. As a consequence $\mathcal{D}(W)$ is a semiprime 
PI-algebra, and thus its center $\mathcal Z(W)$ is reduced.
\end{thm}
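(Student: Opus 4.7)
My plan exploits the matrix-valued inner product directly, using the defining adjoint relation of $\dag$ to reduce positivity to a standard positive semi-definiteness computation. The defining property $\langle P\cdot\mathfrak D, Q\rangle_W = \langle P, Q\cdot\mathfrak D^\dag\rangle_W$ applied with $Q = P\cdot\mathfrak D$ yields
\[
\langle P\cdot\mathfrak D\mathfrak D^\dag, P\rangle_W = \langle P\cdot\mathfrak D, P\cdot\mathfrak D\rangle_W = \int (P\cdot\mathfrak D)(x)\,W(x)\,(P\cdot\mathfrak D)(x)^*\,dx
\]
for any $\mathfrak D \in \mathcal D(W)$ and $P \in M_N(\bbc[x])$; the right-hand side is a positive semi-definite Hermitian matrix, as it is the integral of pointwise positive semi-definite integrands on the support of $W$. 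Assuming $\sum_{i=1}^n \mathfrak D_i\mathfrak D_i^\dag = 0$, summing these identities over $i$ gives $\sum_i \langle P\cdot\mathfrak D_i, P\cdot\mathfrak D_i\rangle_W = 0$, and a sum of positive semi-definite matrices vanishes only when each summand does. Positive-definiteness of $W(x)$ on its support then forces each polynomial $P\cdot\mathfrak D_i$ to vanish on an interval of positive measure, hence everywhere. Letting $P$ range over $M_N(\bbc[x])$ produces $\mathfrak D_i = 0$ for all $i$, establishing positivity.

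The $n=1$ case of positivity is properness of the involution, from which semiprimeness follows by a standard argument. If $J$ is a right ideal of $\mathcal D(W)$ with $J^k = 0$ and $a \in J$, then $aa^\dag \in J$ and so $(aa^\dag)^m \in J^m$ for each $m$. Setting $b := (aa^\dag)^{2^{\ell-1}}$ for $2^\ell \geq k$, we obtain $bb^\dag = b^2 = (aa^\dag)^{2^\ell} = 0$; properness forces $b = 0$, and induction on $\ell$ yields $aa^\dag = 0$, hence $a = 0$. Thus $\mathcal D(W)$ has no nonzero nilpotent one-sided ideal, and in particular is semiprime. If $z \in \mathcal Z(W)$ satisfies $z^2 = 0$, the two-sided ideal $z\mathcal D(W)$ has square zero, so semiprimeness gives $z = 0$, proving that $\mathcal Z(W)$ is reduced.

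For the PI property, each $\mathfrak D \in \mathcal D(W)$ is determined by its sequence of eigenvalue matrices $\Lambda_\mathfrak D(n) \in M_N(\bbc)$ defined by $P(x,n)\cdot\mathfrak D = \Lambda_\mathfrak D(n)P(x,n)$, since the monic orthogonal polynomials form a left $M_N(\bbc)$-module basis of $M_N(\bbc[x])$. The assignment $\mathfrak D \mapsto (\Lambda_\mathfrak D(n))_n$ is an injective algebra homomorphism $\mathcal D(W) \hookrightarrow \prod_{n \in \bbn}M_N(\bbc)$, and since $M_N(\bbc)$ satisfies the standard Amitsur--Levitzki identity of degree $2N$, so does the product, and hence $\mathcal D(W)$.

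The only delicate point is the implication ``$\int(P\cdot\mathfrak D_i)\,W\,(P\cdot\mathfrak D_i)^*\,dx = 0$ forces $P\cdot\mathfrak D_i$ to vanish identically,'' which crucially requires that $W$ be strictly positive definite on an interval of positive Lebesgue measure rather than merely positive semi-definite; this upgrades pointwise vanishing on the support to identical vanishing of the polynomial. I do not anticipate any other serious obstacle, since the remaining steps are straightforward ring-theoretic consequences of a positive/proper $*$-involution and the matrix eigenvalue embedding.
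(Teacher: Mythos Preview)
Your proof is correct and follows essentially the same approach as the paper. The positivity argument via $\langle P\cdot\mathfrak D\mathfrak D^\dag,P\rangle_W=\|P\cdot\mathfrak D\|_W^2$ and the deduction that the center is reduced are identical; your semiprimeness argument (via nilpotent one-sided ideals and repeated halving of the exponent) is a standard variant of the paper's $\mathfrak D\mathcal A\mathfrak D=0\Rightarrow\mathfrak D=0$ argument, and your PI embedding into $\prod_n M_N(\bbc)$ is a slightly weaker but entirely sufficient version of the paper's embedding into $M_N(\bbc[n])$ via the generalized Fourier map.
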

\begin{proof}
The generalized Fourier map defined in the next section embeds $\mathcal{D}(W)$ into the matrix ring $M_N(\bbc[n])$ of polynomials in variable $n$.  Hence $\mathcal{D}(W)$ is a PI-algebra.

To prove that the involution of the $*$-algebra $\mathcal{D}(W)$ is positive, 
consider $\mathfrak D_1, \ldots, \mathfrak D_n \in\mathcal {D}(W)$ such that
$\mathfrak D_1\mathfrak D_1^\dag  + \cdots + \mathfrak D_n\mathfrak D_n^\dag = 0$. Then for all polynomials $P(x)$ we have that
$$0 = \langle P(x)\cdot (\mathfrak D_1\mathfrak D_1^\dag  + \cdots + \mathfrak D_n\mathfrak D_n^\dag), P(x)\rangle_W = \sum_k
\langle P(x)\cdot\mathfrak D_k,P(x)\cdot \mathfrak D_k\rangle_W = \sum_k \|P(x)\cdot\mathfrak D_k \|_W^2.$$
Therefore $P(x)\cdot\mathfrak D_k = 0$ for all polynomials $P(x)$ and $1 \leq k \leq n$ which implies $\mathfrak D_k = 0$.  
%Thus $\mathfrak D\mathfrak D^\dag = 0$ if and only if $\mathfrak D = 0$.

Every $*$-algebra $\mathcal{A}$ with a proper involution is semiprime. For this one needs to show that for any 
$\mathfrak D \in \mathcal{A} $, the condition $\mathfrak D \mathcal{A} \mathfrak D = 0$ implies $\mathfrak D = 0$. For consistency, denote the $*$-operation of $\mathcal{A}$ by $\dag$. 
Suppose $\mathfrak D \mathcal{A} \mathfrak D = 0$.  Then $\mathfrak D\mathfrak D^\dag\mathfrak D = 0$ and therefore
$$0 = \mathfrak D\mathfrak D^\dag\mathfrak D\mathfrak D^\dag = (\mathfrak D\mathfrak D^\dag)(\mathfrak D\mathfrak D^\dag)^\dag.$$
Applying (\ref{nondegenerate star}) gives $\mathfrak D\mathfrak D^\dag = 0$ and, applying (\ref{nondegenerate star}) one more time, gives $\mathfrak D = 0$.  

The center of every semiprime algebra $\mathcal{A}$ is reduced. If $\mathfrak D \in \mathcal Z(\mathcal{A})$ and $\mathfrak D^2 =0$, 
then $\mathfrak D \mathcal{A} \mathfrak D =0$, hence $\mathfrak D =0$. 
\end{proof}

\subsection{The center of $\mathcal{D}(W)$}
In this section we will use the involution of $\mathcal D(W)$ to prove that the center $\mathcal{Z}(W)$ of $\mathcal{D}(W)$ 
is a Noetherian algebra of Krull dimension at most $1$ and that the minimal prime ideals of $\mathcal{Z}(W)$ are preserved by the $W$-adjoint of $\mathcal{D}(W)$.
There are simple examples of commutative subalgebras of $M_2(\bbc[n])$ which are not finitely generated, such as $\bbc I\oplus \mxx{0}{\bbc[n]}{0}{0}$, so the presence of the involution is crucial to our arguments.

% FIXME: remember to mention viewing n as a continuous variable
\begin{lem}
Let $\mathcal R_1,\dots,\mathcal R_\ell$ be any collection of Dedekind domains whose fraction fields are finite extensions of $\bbc(t)$.
Then any subalgebra of $\mathcal R_1\times\dots\times\mathcal R_\ell$ is finitely generated.
\end{lem}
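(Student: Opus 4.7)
The plan is to reduce to the case $\ell = 1$ (a single Dedekind domain) by quotienting $\mathcal S$ by its minimal primes, and then to realise any $\bbc$-subalgebra of a single Dedekind domain as a finitely generated module over a polynomial subring of itself via an integral-closure argument.

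For the reduction, since $\prod_i \mathcal R_i$ is reduced of Krull dimension at most one, so is $\mathcal S$, and its minimal primes $\mathfrak p_1, \dots, \mathfrak p_m$ are precisely the contractions $\mathcal S \cap \ker(\pi_{i_j})$ of the minimal primes of $\prod_i \mathcal R_i$. Each quotient $\mathcal S/\mathfrak p_j$ embeds as a $\bbc$-subalgebra of the single Dedekind domain $\mathcal R_{i_j}$, and if each such quotient has been shown to be finitely generated as a $\bbc$-algebra, then the injection $\mathcal S \hookrightarrow \prod_j \mathcal S/\mathfrak p_j$ together with the conductor of this extension reduces the remaining finite-generation statement for $\mathcal S$ to a standard Artin--Tate type argument.

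For the main case $\mathcal S \subseteq \mathcal R$, if $\mathcal S = \bbc$ there is nothing to prove, so I choose $s \in \mathcal S$ transcendental over $\bbc$. Write $L := \mathrm{Frac}(\mathcal S) \subseteq K := \mathrm{Frac}(\mathcal R)$. Since $K$ has transcendence degree one over $\bbc$ and is finite over $\bbc(t)$, the field $L$ is likewise a finite extension of $\bbc(s)$. Let $\mathcal T$ be the integral closure of $\bbc[s]$ in $L$. Classical Dedekind theory, combined with the fact that $L/\bbc(s)$ is separable in characteristic zero, tells us that $\mathcal T$ is a Dedekind domain which is module-finite over $\bbc[s]$, hence a Noetherian, finitely generated $\bbc$-algebra. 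If we can arrange that $\mathcal S \subseteq \mathcal T$, then $\mathcal S$ is a $\bbc[s]$-submodule of the Noetherian $\bbc[s]$-module $\mathcal T$, and therefore is finitely generated as a $\bbc[s]$-module and a fortiori as a $\bbc$-algebra.

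The main technical obstacle is the careful choice of $s$ so that the containment $\mathcal S \subseteq \mathcal T$ holds; equivalently, so that every element of $\mathcal S$ is integral over $\bbc[s]$. On the smooth projective model $\bar C$ of the function field $L$, integrality over $\bbc[s]$ amounts to regularity at every place where $s$ is regular, so the task is to exhibit $s \in \mathcal S$ whose pole divisor on $\bar C$ dominates the pole divisor of every other element of $\mathcal S$. I would first verify that the union of pole sets of all elements of $\mathcal S$ is a finite subset $P$ of $\bar C$ (exploiting that the Dedekind domains $\mathcal R_i$ have, in the applications of this lemma, only finitely many missing places on the projective model), and then use a Riemann--Roch or direct linear-combination argument to produce $s \in \mathcal S$ with pole divisor supported exactly on $P$. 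With such an $s$, the containment $\mathcal S \subseteq \mathcal T$ follows and the argument closes.
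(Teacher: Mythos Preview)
Your approach is sound and genuinely different from the paper's. The paper argues directly on the product: it equips each $\mathcal R_i$ with a valuation coming from a point at infinity on its projective model, filters the subalgebra $\mathcal A$ by the resulting degree so that each graded piece has dimension at most $\ell$, constructs an element $C \in \mathcal A$ whose ``support'' (the set of coordinates achieving the maximal degree) dominates that of every other element, and observes that multiplication by $C$ injects graded pieces into one another, forcing eventual stabilisation and hence finite generation. Your route instead reduces to a single integral domain by quotienting out minimal primes, then traps $\mathcal S$ between $\bbc[s]$ and its integral closure in $\mathrm{Frac}(\mathcal S)$. The reduction step works cleanly once you note that $\prod_j \mathcal S/\mathfrak p_j$ is generated as an $\mathcal S$-module by its standard idempotents, so Artin--Tate applies directly and no separate conductor argument is needed. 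In the main case your identification of the obstacle---finding $s \in \mathcal S$ whose pole set on $\bar C$ contains the finite set $P$ of all poles of elements of $\mathcal S$---is exactly right, and a generic $\bbc$-linear combination of elements witnessing each pole suffices; Riemann--Roch is not required. Your parenthetical hedge ``in the applications of this lemma'' is well-taken: as literally stated the lemma fails (take $\mathcal R = \bbc[t]_{(t)}$ and $\mathcal S$ generated by all $1/(t-n)$ for $n\geq 1$), and both your argument and the paper's tacitly assume each $\mathcal R_i$ corresponds to a cofinite open in its projective model, i.e.\ is itself finitely generated over $\bbc$---which does hold where the lemma is invoked.
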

\begin{proof}
Note that each $\mathcal R_i$ is isomorphic to the ring of functions of $X_i\diff\{p\}$ for some smooth, projective algebraic curve $X_i$ over $\bbc$ and some closed point $p_i\in X_i$ called the point at infinity.
The local ring of $X_i$ at $p_i$ is isomorphic to a discrete valuation ring $S_i$ with fraction field $\mathcal L_i$ and residue field $\bbc$.
The completion of this ring at its maximal ideal is isomorphic to $\bbc[[t]]$, and thus we have an injection $\mathcal L_i\subseteq\bbc((t))$ into the ring of Laurent series with coefficients in $\bbc$.
Restriction of functions on $X_i\diff\{p_i\}$ to $(X_i\diff\{p_i\})\cap\spec(S_i)$ defines an injection $\mathcal R_i\rightarrow\mathcal L_i$ and thereby an embedding 
$\phi_i:\mathcal R_i\rightarrow \bbc((t))$.
Note that for any nonzero $r\in \mathcal R_i$ the image $\phi_i(r)$ has nonpositive degree.

Let $\mathcal A$ be a subalgebra of $\mathcal R_1\times \dots \times \mathcal R_\ell$.
For any nonzero $A = (r_1,\dots,r_\ell)\in\mathcal A$, we define
$$\deg(A) = \max_{r_j\neq 0}\deg(\phi_j(r_j))\ \ \text{and}\ \ \supp(A) = \{i: \deg(r_i) = \deg(A)\}.$$
Further, we define
$$\mathcal A_m = \{A\in \mathcal A: A=0\ \text{or}\ \deg(A) \geq -m\}.$$
We define $\chi: \mathcal R_1\times\dots\times\mathcal R_\ell\rightarrow\bbc^{\oplus \ell}$ by setting $\chi(r_1,\dots,r_\ell)$ to be the vector whose $i$'th entry is the leading coefficient of $r_i$ if $\nu_i(r_i) = \nu(r_1,\dots,r_\ell)$ and $0$ otherwise (with $\chi(0) = 0$).
Note that $\chi$ induces a $\bbc$-linear injection $\mathcal A_{m+1}/\mathcal A_m\rightarrow\bbc^{\oplus\ell}$ so that in particular $\dim(\mathcal A_{m+1}/\mathcal A_m)\leq \ell$.

Note that $\supp(A^j) = \supp(A)$.
Now consider any two elements $A,B\in\mathcal A$ with $\nu(A) = m$ and $\nu(B) = n$.
Then for some nonzero $\lambda\in\bbc$ we have
$$\supp(A^n + \lambda B^m) = \supp(A)\cup \supp(B).$$ 
Thus there exists $C\in\mathcal A$ such that $\supp(A)\subseteq\supp(C)$ for all $A\in \mathcal A$. Denote $k = \nu(C)$.  
Multiplication by  $C$ induces an injection $(\mathcal A_{m+1}/\mathcal A_m) \rightarrow (\mathcal A_{m+k+1}/\mathcal A_{m+k})$ for all $m$.
Since the dimension of $(\mathcal A_{m+1}/\mathcal A_m)$ is bounded by $\ell$, there must exists $n>0$ such that for all $m>n$ the above map is an isomorphism.
It follows that $\mathcal A$ is generated by a $\bbc$-linear basis for $\mathcal A_{n+k}$, keeping in mind that $C \in \mathcal A_{n+k}$.
In particular $\mathcal A$ is finitely generated.
\end{proof}

\begin{lem}
Let $\mathcal A\subseteq M_N(\bbc[t])$ be a subalgebra of the matrix algebra with coefficients in the polynomial ring $\bbc[t]$ in an indeterminant $t$.
If all of the elements of $\mathcal A$ are simultaneously diagonalizable over the algebraic closure $\ol{\bbc(t)}$ of $\bbc(t)$, then $\mathcal A$ is finitely generated as an algebra over $\bbc$.
\end{lem}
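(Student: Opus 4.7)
The plan is to reduce the problem to the previous lemma by showing that $\mathcal A$ sits inside a finite product of Dedekind domains whose fraction fields are finite extensions of $\bbc(t)$.

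First I would observe that, because the elements of $\mathcal A$ are simultaneously diagonalizable over $\overline{\bbc(t)}$, they pairwise commute, so $\mathcal A$ is a commutative subalgebra of $M_N(\bbc[t])$. Let $K = \bbc(t)$ and let $\mathcal B$ denote the $K$-subalgebra of $M_N(K)$ generated by $\mathcal A$. Then $\mathcal B$ is still commutative, finite-dimensional over $K$, and every element of $\mathcal B$ is diagonalizable over $\overline{K}$ (any $K$-linear combination of commuting diagonalizable matrices is diagonalizable in a common eigenbasis). Equivalently, every element of $\mathcal B$ has squarefree minimal polynomial over $K$, so $\mathcal B$ is reduced. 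A commutative reduced finite-dimensional algebra over $K$ is semisimple, hence by the Chinese Remainder theorem
\[
\mathcal B \cong K_1\times\cdots\times K_r
\]
where each $K_i$ is a finite field extension of $K = \bbc(t)$.

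Next I would use integrality. Every $A\in\mathcal A\subseteq M_N(\bbc[t])$ satisfies its characteristic polynomial, which is monic with coefficients in $\bbc[t]$, by Cayley--Hamilton. Under the isomorphism $\mathcal B \cong \prod K_i$, the image of $A$ in each component $K_i$ still satisfies this monic polynomial relation over $\bbc[t]$, and is therefore integral over $\bbc[t]$. Letting $R_i$ denote the integral closure of $\bbc[t]$ in $K_i$, we conclude
\[
\mathcal A \subseteq R_1\times\cdots\times R_r.
\]
Since $\bbc[t]$ is a Dedekind domain and each $K_i/\bbc(t)$ is a finite extension, each $R_i$ is itself a Dedekind domain whose fraction field $K_i$ is a finite extension of $\bbc(t)$.

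At this point the previous lemma applies directly: any subalgebra of $R_1\times\cdots\times R_r$ is finitely generated as a $\bbc$-algebra, and in particular $\mathcal A$ is finitely generated. The main obstacle I anticipate is justifying the structural decomposition $\mathcal B \cong \prod K_i$; the key point is that simultaneous diagonalizability over $\overline{K}$ is inherited by $K$-linear combinations, which forces $\mathcal B$ to be reduced and hence a product of field extensions of $K$. Once that is in hand, the integrality argument and the appeal to the preceding lemma are routine.
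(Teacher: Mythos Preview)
Your proof is correct and follows essentially the same route as the paper: pass to the $\bbc(t)$-span $\mathcal B$, use simultaneous diagonalizability to see that $\mathcal B$ is a reduced (hence semisimple) commutative finite-dimensional $\bbc(t)$-algebra and therefore a product of finite field extensions $K_1\times\cdots\times K_r$, then use integrality over $\bbc[t]$ (via Cayley--Hamilton) to embed $\mathcal A$ in the product of the integral closures $R_1\times\cdots\times R_r$, and finally invoke the preceding lemma. The only cosmetic difference is that the paper phrases the structure theorem for $\mathcal B$ via Artin--Wedderburn rather than the Chinese Remainder Theorem.
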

\begin{proof}
Let $\mathcal F = \bbc(t)$ and suppose that $\mathcal A$ is simultaneously diagonalizable over the algebraic closure $\ol{\mathcal F}$.
Let $\mathcal B$ be the $\mathcal F$-subalgebra of $M_N(\mathcal F)$ spanned by $\mathcal A$.
Note in particular that all the elements of $\mathcal B$ are simultaneously diagonalizable.
As a consequence, the nilradical of $\mathcal B$ is $0$ and since $\mathcal B$ is finite dimensional over $\mathcal F$ we conclude that $\mathcal B$ is semisimple.
Therefore by Artin-Wedderburn, $\mathcal B$ is isomorphic to a direct product of matrix algebras over skew-fields over $\mathcal F$.
However, since $\mathcal B$ is commutative this in fact implies that $\mathcal B$ is isomorphic to a direct product of field extensions
$$\mathcal B \cong \mathcal L_1\times\dots\times\mathcal L_\ell$$
where each $\mathcal L_i$ is a finite extension of the field $\mathcal F$.

For each $i$, let $\mathcal R_i$ denote the integral closure of $\bbc[t]$ in $\mathcal L_i$. In particular it satisfies the assumptions 
of the previous lemma.
The algebra $\mathcal A$ is contained in $M_N(\bbc[t])$, so each element of $\mathcal A$ is integral over $\bbc[t]$.
Hence the image of $\mathcal A$ in $\mathcal L_1\times\dots\times\mathcal L_\ell$ is contained in $\mathcal R_1\times\dots\times\mathcal R_\ell$.
By the previous lemma, $\mathcal A$ must be finitely generated.
\end{proof}

\begin{thm}\label{center is noetherian}
The center $\mathcal Z(W)$ of $\mathcal D(W)$ is a Noetherian algebra of Krull dimension at most $1$.
\end{thm}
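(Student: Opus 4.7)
The plan is to apply the preceding lemma on simultaneously diagonalizable subalgebras to the image of $\mathcal{Z}(W)$ under the generalized Fourier map, after verifying its hypothesis using the positivity of $\dag$, and then to deduce the Krull dimension bound from Gelfand--Kirillov dimension.

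Recall that the Fourier map embeds $\mathcal{D}(W) \hookrightarrow M_N(\bbc[n])$ by $\mathfrak{D} \mapsto \Lambda_\mathfrak{D}(n)$, where $P(x,n) \cdot \mathfrak{D} = \Lambda_\mathfrak{D}(n) P(x,n)$, and it transports the involution $\dag$ to $\Lambda(n) \mapsto \|P(x,n)\|_W^2\, \Lambda(n)^* \|P(x,n)\|_W^{-2}$, with $\|P(x,n)\|_W^2$ positive definite Hermitian at every $n$. Applying $\dag$ to the centrality relation $\mathfrak{D}\mathfrak{E} = \mathfrak{E}\mathfrak{D}$ shows that $\dag$ preserves $\mathcal{Z}(W)$, so every $\mathfrak{D} \in \mathcal{Z}(W)$ is a $\bbc$-linear combination of the two $\dag$-symmetric central elements $(\mathfrak{D}+\mathfrak{D}^\dag)/2$ and $(\mathfrak{D}-\mathfrak{D}^\dag)/(2i)$.

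For any $\dag$-symmetric central $\mathfrak{D}$, the identity $\Lambda_\mathfrak{D}(n)^* \|P(x,n)\|_W^2 = \|P(x,n)\|_W^2\, \Lambda_\mathfrak{D}(n)$ exhibits $\Lambda_\mathfrak{D}(n)$ as self-adjoint with respect to the positive definite Hermitian form on $\bbc^N$ determined by $\|P(x,n)\|_W^2$, and is therefore diagonalizable over $\bbc$ at every $n$. Since central elements commute, the images of the $\dag$-symmetric central elements form a commuting family of pointwise diagonalizable matrix polynomials, and hence are simultaneously diagonalizable at each generic $n \in \bbc$. To promote this to simultaneous diagonalizability over $\ol{\bbc(n)}$, consider the commutative $\ol{\bbc(n)}$-subalgebra $\mathcal{B} \subset M_N(\ol{\bbc(n)})$ generated by the Fourier image of $\mathcal{Z}(W)$. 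Any nilpotent element of $\mathcal{B}$, evaluated at a generic $n$, would be a nilpotent element in the $\bbc$-span of simultaneously diagonalizable matrices, hence zero; as this holds on a Zariski-dense set of $n$, the nilpotent vanishes identically. Thus $\mathcal{B}$ is a finite-dimensional commutative semisimple algebra over the algebraically closed field $\ol{\bbc(n)}$ and, by Artin--Wedderburn, splits as a product of copies of $\ol{\bbc(n)}$, which yields a simultaneous diagonalization.

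The preceding lemma now implies that $\mathcal{Z}(W)$ is finitely generated as a $\bbc$-algebra, hence Noetherian by the Hilbert basis theorem. For the Krull dimension bound, $M_N(\bbc[n])$ has Gelfand--Kirillov dimension $1$, being a finitely generated module over the polynomial ring $\bbc[n]$, so $\mathcal{Z}(W)$ inherits Gelfand--Kirillov dimension at most $1$, and for a finitely generated commutative $\bbc$-algebra this equals the Krull dimension. The main obstacle is the simultaneous diagonalizability step: the reduction to $\dag$-symmetric central elements uses that $\dag$ preserves $\mathcal{Z}(W)$, the pointwise diagonalization of those elements uses positivity of $\dag$, and the promotion from pointwise to generic diagonalization uses the commutativity of $\mathcal{Z}(W)$ together with the absence of nilpotents in the resulting semisimple span.
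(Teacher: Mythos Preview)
Your proof is correct and follows essentially the same approach as the paper's: both embed $\mathcal Z(W)$ into $M_N(\bbc[n])$ via the Fourier map, use $\dag$-symmetry plus positive-definiteness of $\|P(x,n)\|_W^2$ to get pointwise diagonalizability of the $\dag$-symmetric central elements, span $\mathcal Z(W)$ by these, invoke the preceding lemma on simultaneously diagonalizable subalgebras, and read off the Krull dimension from the GK dimension of $M_N(\bbc[n])$. Your passage from pointwise diagonalizability at integer $n$ to simultaneous diagonalizability over $\ol{\bbc(n)}$ via the reducedness/Artin--Wedderburn argument is in fact more explicit than the paper's, which simply asserts that diagonalizability at all integers forces diagonalizability everywhere; your route through ``no nilpotents in the generated algebra'' is a cleaner justification of exactly the hypothesis the lemma demands.
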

\begin{proof}
First recall that the generalized Fourier map embeds $\mathcal D(W)$ as a subalgebra $\mathcal A$ of $M_N(\bbc[n])$.
This latter ring has GK dimension $1$, so the GK dimension of $\mathcal A$ must be at most $1$.
Therefore the GK dimension of the center $\mathcal Z$ of $\mathcal A$ is at most $1$.
For a commutative ring the GK dimension and the Krull dimension are the same, so the Krull dimension of $\mathcal Z$ is at most $1$.

Let $\mathcal A\subseteq M_N(\bbc[n])$ denote the preimage of $\mathcal D(W)$ under the generalized Fourier map $b_P$ and let $\mathcal Z$ denote the center of $\mathcal A$.
The involution $\dag$ on $\mathcal A$ is an anti-automorphism of $\mathcal A$ and therefore must restrict to an automorphism of $\mathcal Z$.
Furthermore for any $W$-symmetric $A(n)\in \mathcal A$ we know that $\|P(x,n)\|_W^{-1}A(n)\|P(x,n)\|_W$ is Hermitian and therefore diagonalizable.
Therefore $A(n)$ evaluates to a diagonalizable matrix for every integer value of $n$.
Since $A(n)$ is polynomial in $n$, it follows that $A(n)$ is diagonalizable for all values of $n$.
The algebra $\mathcal Z$ is commutative and spanned over $\bbc$ by its $W$-symmetric elements, so therefore all of the elements of $\mathcal Z$ are simultaneously diagonalizable.
By the previous lemma this implies that $\mathcal Z$ is finitely generated as an algebra over $\bbc$.
The statement of the theorem follows immediately.
\end{proof}

\begin{prop}
The minimal prime ideals $\mathcal P_1(W),\dots,\mathcal P_r(W)$ of $\mathcal Z(W)$ are fixed by the adjoint operation $\dag$.
\end{prop}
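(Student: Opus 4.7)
The plan is to prove the proposition by contradiction using the positivity of the involution $\dag$, which was established in the earlier theorem of Casper. First I would observe that since $\dag$ is a $\bbc$-antilinear anti-automorphism of $\mathcal D(W)$, it restricts to a $\bbc$-antilinear automorphism of the center $\mathcal Z(W)$. In particular $\dag$ permutes the finite set of minimal prime ideals $\mathcal P_1(W),\dots,\mathcal P_r(W)$, giving rise to an involutive permutation $\sigma$ on $\{1,\dots,r\}$ with $\dag(\mathcal P_i) = \mathcal P_{\sigma(i)}$. The goal is then to rule out the existence of an $i$ with $\sigma(i) \neq i$.

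Suppose toward contradiction that $\sigma(i) = k \neq i$ for some $i$. The key idea is to produce an element $z \in \mathcal Z(W)$ for which $z z^\dag = 0$, which by the proper involution would force $z = 0$. To construct $z$, I would use the incomparability of the minimal primes: since each $\mathcal P_j$ ($j \neq i$) is not contained in $\mathcal P_i$, the prime avoidance lemma (together with the fact that a prime containing a finite intersection of ideals must contain one of them) shows that $\bigcap_{j \neq i} \mathcal P_j \not\subseteq \mathcal P_i$. Choose any $z \in \bigcap_{j \neq i} \mathcal P_j$ with $z \notin \mathcal P_i$.

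Next I would exploit the action of $\sigma$. Applying $\dag$ to the containment $z \in \bigcap_{j \neq i} \mathcal P_j$ yields $z^\dag \in \bigcap_{j \neq i} \mathcal P_{\sigma(j)}$, and because $\sigma$ is a bijection with $\sigma(i) = k$, as $j$ ranges over the complement of $\{i\}$ the index $\sigma(j)$ ranges over the complement of $\{k\}$; thus $z^\dag \in \bigcap_{\ell \neq k} \mathcal P_\ell$. Therefore
\[
z z^\dag \in \bigcap_{j \neq i} \mathcal P_j \; \cap \; \bigcap_{\ell \neq k} \mathcal P_\ell = \bigcap_{m=1}^r \mathcal P_m,
\]
where the last equality holds because $i \neq k$ (for any index $m$, at least one of the conditions $m \neq i$ or $m \neq k$ holds, so $\mathcal P_m$ appears in one of the two intersections).

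Finally I would invoke that $\mathcal Z(W)$ is reduced (established earlier as a consequence of the proper involution), so the intersection of all minimal primes is $(0)$. Hence $z z^\dag = 0$. The proper involution then yields $z = 0$, contradicting $z \notin \mathcal P_i$. The only potential subtlety is the choice of $z$ via prime avoidance; this should be routine since $r$ is finite and the $\mathcal P_j$ are incomparable minimal primes in a commutative Noetherian ring. I do not anticipate a significant obstacle — the real content lies in recognizing that the proper involution of $\mathcal D(W)$ is strong enough to prevent $\dag$ from nontrivially permuting the components of $\mathrm{Spec}\,\mathcal Z(W)$.
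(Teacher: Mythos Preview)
Your proof is correct and follows essentially the same approach as the paper: both arguments exploit the proper involution $\dag$ on $\mathcal Z(W)$ together with an element lying in all minimal primes except $\mathcal P_i$, and both ultimately hinge on deducing $zz^\dag=0$ (or equivalently $\mathfrak D^\dag\notin\mathcal P_i$) to reach a contradiction. The only cosmetic difference is that the paper produces this element via the annihilator $\text{Ann}(\mathcal P_i)$ and then argues directly that $\mathcal P_i^\dag$ cannot equal any $\mathcal P_j$ with $j\neq i$, whereas you select $z\in\bigcap_{j\neq i}\mathcal P_j\setminus\mathcal P_i$ by prime avoidance and conclude via $zz^\dag\in\bigcap_m\mathcal P_m=0$; your route is arguably a bit more streamlined.
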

\begin{proof} For brevity set $\mathcal P_i := \mathcal P_i(W)$ and $\mathcal Z := \mathcal Z(W)$.
The involution $\dag$ defines an anti-isomorphism of $\mathcal D(W)$ and therefore restricts to an isomorphism of $\mathcal Z$.
Consequently the image $\mathcal P_i^\dag$ of the minimal prime ideal $\mathcal P_i$ must be a minimal prime ideal also.
If $\mathcal Z$ has only one minimal prime, then the result of the proposition is trivial.
Therefore we assume otherwise, so that in particular each of the minimal prime is not the zero ideal.

For each $i$, consider the annihilator ideal of $\mathcal P_i$ defined by
$$\text{Ann}(\mathcal P_i) = \{\mathfrak D\in \mathcal Z(W): \mathfrak D\mathcal P_i =  0 \}.$$
Since $\mathcal Z(W)$ is Noetherian, the ideal $\mathcal P_i$ is finitely generated by some elements $\mathfrak D_1,\dots,\mathfrak D_m\in\mathcal Z(W)$.
For each $i$ there exists $\mathfrak D_i'\in \mathcal Z(W)\diff\mathcal P_i$ with $\mathfrak D_i\mathfrak D_i' = 0$.
The set $\mathcal Z(W)\diff\mathcal P_i$ is multiplicatively closed and does not contain zero, so $\mathfrak D' = \mathfrak D_1'\dots\mathfrak D_m'$ is nonzero.
Since $\mathfrak D'$ annihilates the generators of $\mathcal P_i$ this proves $\text{Ann}(\mathcal P_i)$ is nonzero.

For all $j\neq i$, by definition $\text{Ann}(\mathcal P_i)\mathcal P_i = 0\subseteq \mathcal P_j$.  Since $\mathcal P_j$ is prime either $\text{Ann}(\mathcal P_i)\subseteq\mathcal P_j$ or $\mathcal P_i\subseteq \mathcal P_j$.
However since $\mathcal P_j$ is a minimal prime, it must be the former.
Hence $\text{Ann}(\mathcal P_i)\subseteq \bigcap_{j\neq i}\mathcal P_j$.

Choose a nonzero $\mathfrak D\in\text{Ann}(\mathcal P_i)$.
Since $\mathfrak D\mathfrak D^\dag \neq 0$, this implies that $\mathfrak D^\dag\notin\mathcal P_i$ and therefore $\mathfrak D\notin\mathcal P_i^\dag$.
By the previous paragraph $\mathfrak D\in \mathcal P_j$ for all $j\neq i$, so therefore $\mathcal P_i^\dag\neq \mathcal P_j$ for all $j\neq i$.
Thus $\mathcal P_i^\dag = \mathcal P_i$.
This completes the proof.
\end{proof}

\subsection{$\mathcal D(W)$ is finite over its center}
The results of the previous section tell us that $\mathcal Z(W)$ is reduced and Noetherian with Krull dimension $1$.
In this section we will prove that the algebra $\mathcal D(W)$ is affine (i.e. finitely generated as an algebra over $\bbc$), from which we conclude that it is Noetherian and finitely generated over its center.

The spectrum of the center will consist of a disjoint union of affine curves and points.
As in the previous section, let $\mathcal P_1,\dots,\mathcal P_r$ denote the minimal primes of $\mathcal Z(W)$.
Let $I\subseteq \{1,\dots,r\}$ be the set of indices of minimal primes corresponding to discrete points of $\spec(\mathcal Z(W))$, i.e.
$$I = \{i: \mathcal Z(W)/\mathcal P_i\cong\bbc\}.$$
For each $i\in I$, there exists a $W$-symmetric idempotent $\mathfrak E_i\in\mathcal Z(W)$ which maps to $1$ in $\mathcal Z(W)/\mathcal P_j$ if $j=i$ and to $0$ otherwise.
Using the system of orthogonal idempotents $\{\mathfrak E_i: i\in I\}$ the rings $\mathcal D(W)$ and $\mathcal Z(W)$ decompose as a direct sums
$$\mathcal D(W) = \mathcal D_{disc}(W) \oplus \mathcal D_{cont}(W);\ \ \mathcal D_{disc}(W) = \bigoplus_{i\in I}\mathfrak E_i\mathcal Z(W)\mathfrak E_i,\ \mathcal D_{cont}(W) = \mathfrak E\mathcal D(W)\mathfrak E$$
$$\mathcal Z(W) = \mathcal Z_{disc}(W) \oplus \mathcal Z_{cont}(W);\ \ \mathcal Z_{disc}(W) = \bigoplus_{i\in I}\mathfrak E_i\mathcal Z(W)\mathfrak E_i,\ \mathcal Z_{cont}(W) = \mathfrak E\mathcal Z(W)\mathfrak E$$
for $\mathfrak E = I-\sum_{i\in I}\mathfrak E_i$.
Each of the above summands is an algebra and the above identifications are algebra isomorphisms.
Since $\mathcal D(W)$ is a semiprime PI-algebra, both $\mathcal D_{disc}(W)$ and $\mathcal D_{cont}(W)$ must be semiprime PI-algebras.

The next lemma captures the crucial algebraic consequence of our involution.
\begin{lem}\label{super lemma}
Consider a weight matrix $W(x)$ with sequence of monic orthogonal matrix polynomials $P(x,n)$.
Let $\mathfrak D\in\mathcal D(W)$ and let $A(n)\in M_N(\bbc[n])$ be the preimage of $\mathfrak D$ under the generalized Fourier map $b_P$.
If the eigenvalues of $A(n)A(n)^\dag$ are all constant, then both $A(n)$ and $\mathfrak D$ are constant.
\end{lem}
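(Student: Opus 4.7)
The plan is to combine the positivity of the $\dag$-involution on $\mathcal{D}(W)$ with the spectral theory of $\dag$-symmetric elements having constant spectrum, and to invoke the bispectral realization of $\mathcal{D}(W)$ as an algebra of differential operators to finish.

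At each non-negative integer $n$ the matrix $A(n)A(n)^\dag(n) = A(n)H(n)A(n)^*H(n)^{-1}$ (with $H(n) := \|P(x,n)\|_W^2$) is conjugate via $H(n)^{1/2}$ to the Hermitian positive-semidefinite matrix $B(n)B(n)^*$, where $B(n) := H(n)^{-1/2}A(n)H(n)^{1/2}$. Hence $AA^\dag$ is diagonalizable at each $n$ with nonnegative real eigenvalues; by hypothesis these eigenvalues are constant in $n$, say $\mu_1,\dots,\mu_r$ distinct. The minimal polynomial $\prod_i(t-\mu_i)$ therefore has constant coefficients, and the Lagrange-interpolation spectral projections $E_i := \prod_{j\neq i}(AA^\dag - \mu_j I)/(\mu_i - \mu_j)$ lie in $\mathcal{D}(W)$. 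Each $E_i$ is a $\dag$-symmetric idempotent with $\sum_i E_i = I$ and $AA^\dag E_i = \mu_i E_i$. For any $\mu_i = 0$, the identity $(E_i A)(E_i A)^\dag = E_i AA^\dag E_i = \mu_i E_i = 0$ combined with positivity of the involution forces $E_i A = 0$; replacing $A$ by $\sum_{\mu_i>0} E_i A$, we may assume all $\mu_i > 0$. Setting $Y := \sum_i \sqrt{\mu_i}\,E_i \in \mathcal{D}(W)$ (invertible, $\dag$-symmetric, with $Y^2 = AA^\dag$) and $V := Y^{-1}A \in \mathcal{D}(W)$, we obtain $VV^\dag = Y^{-1}AA^\dag Y^{-1} = I$.

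The crux, and the main obstacle, is to conclude that $V$, $Y$, and hence $A$, are constant. For $V$ the bispectral realization gives it: the Fourier identity $VV^\dag = I$ yields the differential-operator identity $\mathfrak{V}\mathfrak{V}^\dag = I$, since the two sides agree on every $P(x,n)$ and the $P(x,n)$ span the matrix polynomials. The leading coefficient of $\mathfrak{V}^\dag$ is $(-1)^k W D_k^* W^{-1}$ (where $D_k$ is the leading coefficient of $\mathfrak{V}$ of order $k$), so the leading coefficient of $\mathfrak{V}\mathfrak{V}^\dag$ is $\pm D_k W D_k^* W^{-1}$; positive-definiteness of $W(x)$ on its support makes $D_k W D_k^* = 0$ imply $D_k = 0$, ruling out cancellation of leading terms and forcing $\mathrm{ord}(\mathfrak{V}) = 0$. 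Evaluating the eigenvalue equation $P(x,0)\mathfrak{V} = V(0)P(x,0)$ at $n=0$, where $P(x,0) = I$, then gives $\mathfrak{V} = V(0)$, so $V$ is the constant matrix $V(0)$. An analogous argument handles $Y$ by induction on the number of distinct eigenvalues, with base case $Y^2 = \mu I$ forcing $Y = \sqrt{\mu}\,I$ via $\dag$-symmetry and similarity to a scalar multiple of the identity; the inductive step uses the spectral decomposition to split $A$ into smaller $\dag$-orthogonal blocks on which the argument iterates. Combining, $A = YV$ is constant and $\mathfrak{D}$ is a constant matrix.
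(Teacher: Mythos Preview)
Your core observation---that the leading coefficient of $\mathfrak V\mathfrak V^\dag$ is $\pm D_kWD_k^*W^{-1}$ and cannot vanish when $D_k\neq 0$ because $W$ is positive definite---is exactly the engine of the paper's proof. But the paper applies it once, directly to $\mathfrak D$: since the eigenvalues of $A(n)A(n)^\dag$ are constant, its characteristic polynomial gives a nonzero $p(t)\in\bbc[t]$ with $p(A(n)A(n)^\dag)=0$, hence $p(\mathfrak D\mathfrak D^\dag)=0$ as a differential operator. If $\mathfrak D$ had positive order $\ell$, your leading-coefficient computation (after conjugating by a square root $T(x)$ of $W(x)$ so that the leading coefficient becomes an honest nonzero Hermitian matrix, hence non-nilpotent) shows that $(\mathfrak D\mathfrak D^\dag)^j$ has order exactly $2\ell j$ for every $j$, so no nontrivial polynomial relation is possible. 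That finishes the lemma in one stroke---no spectral projections, no square roots, no $V$.

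Your route through $Y$ and $V$ is therefore a detour, and the $Y$ step as written has a gap. The induction you sketch on the number of distinct eigenvalues does not obviously terminate: for $A':=E_iA$ one has $A'(A')^\dag=\mu_iE_i$, whose spectrum is $\{\mu_i,0\}$, so the zero eigenvalue reappears and after eliminating it you are back to the same $A'$ with a non-invertible $Y'=\sqrt{\mu_i}\,E_i$. The clean repair is to observe that each $E_i$ is a $\dag$-symmetric idempotent in $\mathcal D(W)$, hence satisfies $\mathfrak E_i^2=\mathfrak E_i=\mathfrak E_i\mathfrak E_i^\dag$; your own leading-coefficient argument then forces $\mathfrak E_i$ to have order $0$, so $E_i$ is constant, and then $Y=\sum_i\sqrt{\mu_i}\,E_i$ is trivially constant. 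But once you see this, you notice that the same argument applied to the single polynomial relation $p(\mathfrak D\mathfrak D^\dag)=0$ already gives the conclusion without ever constructing $E_i$, $Y$, or $V$---which is exactly what the paper does.
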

\begin{proof}
Let $B(n) = A(n)A(n)^\dag$ and assume all of the eigenvalues of $B(n)$ are constant.
Then the characteristic polynomial of $B(n)$ has constant coefficients.
In particular there exists a nonzero polynomial $p(t)\in\bbc[t]$ satisfying $p(B(n)) = 0$.

Assume that $\mathfrak D$ is not a constant differential operator, i.e. $\mathfrak D\notin M_N(\bbc)$.
Then we may write
$$\mathfrak D = \sum_{j=0}^\ell \partial_x^j A_j(x),\ \text{for some $\ell>0$ and $A_j(x)\in M_N(\bbc[x])$ with $A_\ell(x)\neq 0$}.$$
Since $W(x)$ is Hermitian and positive-definite on its support, the product $A_\ell(x)W(x)A_\ell(x)^*$ is not identically zero.
Hence the operator $\mathfrak D\mathfrak D^\dag$ is $W$-symmetric of order $2\ell$ with leading coefficient $A_\ell(x)W(x)A_\ell(x)^*W(x)^{-1}$.
Furthermore, we may factor $W(x) = T(x)T(x)^*$ for some $T(x)$ smooth on the support of $W(x)$.
Then the leading coefficient of $T(x)^{-1}\mathfrak D\mathfrak D^\dag T(x)$ is $(T(x)^{-1}A_\ell(x)T(x))(T(x)^{-1}A_\ell(x)T(x))^*$.
In particular it is nonzero and Hermitian and therefore not nilpotent.
Hence for all $j$ the operator $(T(x)^{-1}\mathfrak D\mathfrak D^\dag T(x))^j$, has order $2\ell j$.
Thus so too does the operator $\mathfrak D\mathfrak D^\dag$.
In particular $\mathfrak D\mathfrak D^\dag$ cannot be a root of a nonzero polynomial $p(t)\in\bbc[t]$.
This is a contradiction, so $\mathfrak D$ and $A(n)$ must be constant.
\end{proof}

As a consequence of the previous lemma, we may show that the discrete part $\mathcal D_{disc}(W)$ of $\mathcal D(W)$ is finite dimensional.
\begin{lem}
For all $i$, the algebra $\mathfrak E_i \mathcal D(W)\mathfrak E_i$ is finite dimensional.  Consequently $\mathcal D_{disc}(W)$ is finite dimensional.
\end{lem}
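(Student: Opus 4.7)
The plan is to apply Lemma \ref{super lemma} twice to reduce every element of the corner $\mathfrak{E}_i \mathcal{D}(W) \mathfrak{E}_i$ to a constant matrix in $M_N(\mathbb{C})$, after which the corner embeds into the finite-dimensional space $\mathfrak{E}_i M_N(\mathbb{C}) \mathfrak{E}_i$.

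First I would apply the super lemma to $\mathfrak{E}_i$ itself. Since $\mathfrak{E}_i$ is a $W$-symmetric idempotent, $\mathfrak{E}_i \mathfrak{E}_i^\dag = \mathfrak{E}_i^2 = \mathfrak{E}_i$, whose matrix eigenvalues are only $0$ and $1$ and hence constant in $n$. Lemma \ref{super lemma} then forces $\mathfrak{E}_i \in M_N(\mathbb{C})$, so that $\mathfrak{E}_i$ is a constant idempotent matrix of some rank $r \leq N$. After a suitable basis change, $\mathfrak{E}_i = \mathrm{diag}(I_r, 0)$ and the corner embeds into $M_r(\mathbb{C}[n])$ via the generalized Fourier map.

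Second, I would apply the super lemma to each $\mathfrak{D} \in \mathfrak{E}_i \mathcal{D}(W) \mathfrak{E}_i$ by verifying that the eigenvalues of $\mathfrak{D}\mathfrak{D}^\dag$ are constant in $n$. For this I would use that the corner is a direct algebra summand of $\mathcal{D}(W)$ (because $\mathfrak{E}_i$ is central), so its center equals $\mathfrak{E}_i \mathcal{Z}(W)$, which by the discreteness hypothesis $\mathcal{Z}(W)/\mathcal{P}_i \cong \mathbb{C}$ is just $\mathbb{C}\mathfrak{E}_i$. The corner is thereby a semiprime PI algebra (of PI-degree at most $r$ by the embedding into $M_r(\mathbb{C}[n])$) whose center is the algebraically closed field $\mathbb{C}$. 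Invoking Posner's theorem on each of the (finitely many, by the Goldie property of semiprime PI rings) prime quotients, and using that central simple algebras over $\mathbb{C}$ are matrix algebras, one concludes that every element of the corner is algebraic over $\mathbb{C}$. Applied to $\mathfrak{D}\mathfrak{D}^\dag$ this produces a polynomial $p(t) \in \mathbb{C}[t]$ with $p(\mathfrak{D}\mathfrak{D}^\dag) = 0$, so the matrix eigenvalues of $\mathfrak{D}\mathfrak{D}^\dag$ lie in the finite root set of $p$ and are constant in $n$. Lemma \ref{super lemma} then gives $\mathfrak{D} \in M_N(\mathbb{C})$.

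Combining the two steps, $\mathfrak{E}_i \mathcal{D}(W) \mathfrak{E}_i$ is contained in $\mathfrak{E}_i M_N(\mathbb{C}) \mathfrak{E}_i \cong M_r(\mathbb{C})$ and is therefore finite-dimensional. The second assertion follows by summing over $i \in I$, noting that $|I|$ is finite because the $\mathfrak{E}_i$ are pairwise orthogonal constant projectors in $M_N(\mathbb{C})$ whose ranks sum to at most $N$. The hardest part of the argument to carry out carefully is showing that a semiprime PI algebra whose center is the field $\mathbb{C}$ is in fact finite-dimensional over that field: one needs both the Goldie property (to bound the number of minimal primes) and a verification that the centers of the prime quotients do not properly enlarge $\mathbb{C}$, before Posner's theorem and the Artin--Wedderburn classification over the algebraically closed field can be invoked cleanly.
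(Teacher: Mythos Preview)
Your overall strategy is correct and parallels the paper's: both arguments use that the corner $\mathfrak E_i\mathcal D(W)\mathfrak E_i$ is a semiprime PI algebra with center $\mathbb C\mathfrak E_i$, deduce that each $\mathfrak D\mathfrak D^\dag$ satisfies a polynomial over $\mathbb C$ (hence has constant eigenvalues), and finish with Lemma~\ref{super lemma}. Your preliminary step of applying Lemma~\ref{super lemma} directly to $\mathfrak E_i$ to see it is a constant projector is a nice observation the paper does not make, but it is not logically needed: the paper works entirely inside $M_N(\mathbb C[n])$ via the Fourier map without first reducing the corner to $M_r(\mathbb C[n])$.

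Where your proposal differs is in the justification of the key step that every element of the corner is algebraic over $\mathbb C$. You propose going through the minimal primes, invoking Goldie theory to bound their number, applying Posner to each prime quotient, and then worrying about whether the centers of those quotients enlarge $\mathbb C$. This works but is heavier than required, and the difficulty you flag is avoidable. The paper instead applies Rowen's theorem \cite{rowen1973b} directly: in a semiprime PI ring every nonzero two-sided ideal meets the center, so if the center is $\mathbb C\mathfrak E_i$ then every nonzero two-sided ideal is the whole ring. In other words, the corner is \emph{simple}. A simple PI ring is (by Kaplansky's theorem) a central simple algebra finite-dimensional over its center, here $\mathbb C$; this immediately gives algebraicity over $\mathbb C$ of every element, and in fact already gives finite-dimensionality without any further appeal to Lemma~\ref{super lemma}. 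Concretely, the paper uses this to show that for each $\lambda\in\mathbb C$ the element $B(n)-\lambda E(n)$ generates the unit ideal, and then reads off that the characteristic polynomial of $B(n)$ has constant coefficients. So your route is sound, but you can replace the Goldie/prime-quotient machinery and the acknowledged ``hardest part'' entirely with the single observation that semiprime PI plus center equal to a field forces simplicity.
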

\begin{proof}
For brevity, let $\mathcal A\subseteq M_N(\bbc[n])$ denote the preimage of $\mathfrak E_i\mathcal D(W)\mathfrak E_i$ under the generalized Fourier map $b_P$ and let $E(n)$ be the preimage of $\mathfrak E_i$.  Note that $E(n)$ is the identity element of $\mathcal A$ and that the center of $\mathfrak A$ is $\bbc E(n)$.

The algebra $\mathcal A$ has an action on the $\bbc[n]$-submodule $E(n)\bbc[n]^{\oplus N}$ of $\bbc[n]^{\oplus N}$.
Since $\bbc[n]$ is a PID, we know that $E(n)\bbc[n]^{\oplus N}$ is isomorphic to $\bbc[n]^{\oplus m}$ for some integer $m$.
The action of $\mathcal A$ on $E(n)\bbc[n]^{\oplus N}$ is faithful, and therefore induces an algebra monomorphism $\pi: \mathcal A\rightarrow M_m(\bbc[n])$.

Since $\mathcal D(W)$ is a semiprime PI algebra, so too is $\mathcal A$.
Now suppose that $A(n)\in \mathcal A$, let $\lambda\in\bbc$ and set $B(n) = A(n)A(n)^\dag$.
By \cite[Theorem 2]{rowen1973b} the two-sided ideal of $\mathcal A$ generated by $\lambda E(n)-B(n)$ must intersect nontrivially with the center of $\mathcal A$.
Therefore there exists $C(n),\wt C(n)\in \mathcal A$ such that $C(n)(B(n)-\lambda E(n))\wt C(n) = E(n)$.
This implies that for all $\lambda$ the determinant $\det(\pi(B(n)-\lambda E(n)))$ lies in $\bbc^\times$.
Therefore the characteristic polynomial of $B(n)$ satisfies
$$p_B(t,n) := \det(tI-B(n)) = t^{N-m}\det(\pi(t E(n)-B(n)))\in \bbc[t].$$
In particular the matrix $B(n)$ has constant eigenvalues.
By Lemma \ref{super lemma} this implies that $A(n)$ is constant.
Thus we conclude all of the matrices in $\mathcal A$ are constant, and so $\mathcal A$ is finite dimensional.
\end{proof}

\begin{thm}
The algebra $\mathcal D(W)$ is affine.
\end{thm}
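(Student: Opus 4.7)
My plan is to reduce the statement to proving that $\mathcal D_{cont}(W)$ is finitely generated as a module over its center, and then exploit the theory of orders in semisimple algebras over Dedekind domains. Using the decomposition $\mathcal D(W) = \mathcal D_{disc}(W) \oplus \mathcal D_{cont}(W)$ together with the finite dimensionality of $\mathcal D_{disc}(W)$ established in the previous lemma, it suffices to prove that $\mathcal D_{cont}(W)$ is affine. Theorem \ref{center is noetherian} already gives that $\mathcal Z_{cont}(W)$ is an affine Noetherian reduced $\bbc$-algebra of Krull dimension at most $1$, so it will in turn suffice to show that $\mathcal D_{cont}(W)$ is a finitely generated $\mathcal Z_{cont}(W)$-module.

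To establish this, I would first pass to the total ring of fractions. Because $\mathcal Z_{cont}(W)$ is reduced, Noetherian and has no $0$-dimensional components (by construction of the idempotent $\mathfrak E$), its total ring of fractions $\mathcal K$ is a finite product of fields $\mathcal K_1 \times \dots \times \mathcal K_s$, one factor for each minimal prime. Since $\mathcal D_{cont}(W)$ is a semiprime PI algebra, Posner's theorem yields a semisimple Artinian central localization
\begin{equation*}
\mathcal D_{cont}(W) \otimes_{\mathcal Z_{cont}(W)} \mathcal K \;\cong\; \prod_{i=1}^s M_{n_i}(\mathcal L_i),
\end{equation*}
where each $\mathcal L_i$ is a finite field extension of $\mathcal K_i$ (no noncommutative division rings appear because each $\mathcal K_i$ is the function field of an affine curve over $\bbc$ and therefore has trivial Brauer group by Tsen's theorem).

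Next I would invoke the classical theory of orders over Dedekind domains. The normalization $\wt{\mathcal Z}_{cont}(W)$ of $\mathcal Z_{cont}(W)$ in $\mathcal K$ is a finite product of Dedekind domains and is finitely generated as a $\mathcal Z_{cont}(W)$-module, by standard facts about $1$-dimensional affine reduced $\bbc$-algebras. Since $\mathcal D(W)$ embeds in $M_N(\bbc[n])$ via the generalized Fourier map, every element of $\mathcal D_{cont}(W)$ has entries in $\bbc[n]$ and is therefore integral over $\bbc[n]$ and, in particular, over $\mathcal Z_{cont}(W)$. Hence the image of $\mathcal D_{cont}(W)$ in $\prod_i M_{n_i}(\mathcal L_i)$ is an order and is contained in some maximal $\wt{\mathcal Z}_{cont}(W)$-order $\Lambda$ (see \cite{reiner}). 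By the classical theory, $\Lambda$ is a finitely generated module over $\wt{\mathcal Z}_{cont}(W)$, and hence over $\mathcal Z_{cont}(W)$. Finally, since $\mathcal Z_{cont}(W)$ is Noetherian, every $\mathcal Z_{cont}(W)$-submodule of $\Lambda$ is finitely generated; this applies to $\mathcal D_{cont}(W)$ and yields the desired finite generation, which completes the proof.

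The step I expect to be the main obstacle is setting up the integrality argument that places $\mathcal D_{cont}(W)$ inside a maximal order: this requires linking the embedding into $M_N(\bbc[n])$ to integrality over $\mathcal Z_{cont}(W)$ itself (rather than just over $\bbc[n]$) and correctly identifying a maximal order containing it. Identifying the correct form of Posner's theorem in the reduced (non-prime) setting also deserves care, since $\mathcal Z_{cont}(W)$ may have several minimal primes. Once these two points are settled, the remaining arguments are routine applications of Noetherianness, Artin--Wedderburn, and the classical theory of orders.
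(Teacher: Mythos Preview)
Your overall architecture---reduce to $\mathcal D_{cont}(W)$, pass to a semisimple localization, and trap $\mathcal D_{cont}(W)$ inside an order over a Dedekind ring---is exactly the paper's. The difference lies in \emph{which} Dedekind ring you use, and this is where your argument has a real gap.

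You claim that every element of $\mathcal D_{cont}(W)$, being in $M_N(\bbc[n])$, is ``integral over $\bbc[n]$ and, in particular, over $\mathcal Z_{cont}(W)$.'' The first half is fine (Cayley--Hamilton), but the inference to the second is not: integrality passes upward along ring inclusions, not downward, and there is no reason for $\bbc[n]\cdot I$ to lie in $\mathcal Z_{cont}(W)$ at all. Without integrality over $\wt{\mathcal Z}_{cont}(W)$, you cannot place $\mathcal D_{cont}(W)$ inside a $\wt{\mathcal Z}_{cont}(W)$-order, and the whole chain ``$\Lambda$ is finitely generated over $\wt{\mathcal Z}_{cont}(W)$, hence over $\mathcal Z_{cont}(W)$, hence so is the submodule $\mathcal D_{cont}(W)$'' collapses. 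You correctly flag this as the main obstacle, but you do not indicate how to overcome it, and I do not see a direct fix: establishing integrality over the center is essentially equivalent to the module-finiteness you are trying to prove.

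The paper circumvents the problem by choosing a different base ring. Rather than localizing over $\mathcal Z_{cont}(W)$, it takes the $\bbc(n)$-linear span $\mathcal B$ of $\mathcal A:=b_P^{-1}(\mathcal D_{cont}(W))$, which is semisimple (Artin--Wedderburn, Tsen), and builds an order $\mathcal O$ over $\mathcal R$, the integral closure of $\bbc[n]$ in a common splitting field. Over $\mathcal R$, integrality of $\mathcal A$ \emph{is} automatic from Cayley--Hamilton, so $\mathcal O$ is a genuine $\mathcal R$-order and in particular affine and Noetherian. The paper then reverses the direction of the module-finiteness: instead of showing $\mathcal A$ is module-finite over its center, it shows $\mathcal O$ is module-finite over $\mathcal A$ (because the center of $\mathcal O$, namely $\bigoplus_j \mathcal R_j I_{n_j}$, is a finite extension of $\mathcal Z$ via normalization and finite covers of curves). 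With $\mathcal O$ affine and $\mathcal O$ a finitely generated $\mathcal A$-module via a centralizing extension, the Montgomery--Small extension of the Artin--Tate lemma \cite{montgomery} forces $\mathcal A$ to be affine. This is precisely the device that avoids ever having to prove integrality over the center before knowing affineness.
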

\begin{proof}
From the previous lemma, it suffices to show that $\mathcal D_{cont}(W)$ is finitely generated.
Thus without loss of generality, we may assume that the center of $\mathcal D(W)$ consists of a disjoint union of (possibly singular) algebraic curves over $\bbc$.
Let $\mathcal A$ be the preimage of $\mathcal D(W)$ in $M_N(\bbc[n])$ under the generalized Fourier map and let $\mathcal B=\vspan_{\bbc(n)}(\mathcal A)$.
Then $\mathcal B$ is finite dimensional over $\bbc(n)$ and has no nontrivial two-sided nilpotent ideals.
Therefore $\mathcal B$ is semisimple and by the Artin-Wedderburn theorem $\mathcal B$ is isomorphic to a direct sum
$$\mathcal B \cong \bigoplus_{j=1}^\ell M_{n_j}(\mathcal L_j)$$
for some skew-field extensions $\mathcal L_j$ of $\bbc(n)$.
Note that by Tsen's theorem, each of the $\mathcal L_j$ is actually simply a field extension of $\bbc(n)$.
For each $j$ let $R_j$ denote the integral closure of $\bbc[n]$ in $\mathcal L_j$.
Let $\mathcal L$ be a finite field extension of $\bbc(n)$ containing each of the $\mathcal L_j$'s and let $\mathcal R$ be the integral closure of $\bbc[n]$ in $\mathcal L$
and define $\Delta = \bigoplus_{j=1}^\ell M_{n_j}(\mathcal L)$.

Let $\mathcal O$ be the $\mathcal R$-subalgebra of $\Delta$ generated by $\mathcal A$.
Then $\vspan_L(\mathcal O)\subseteq \Delta$ contains $\vspan_L(\vspan_F(\mathcal A)) = \vspan_L(\bigoplus_{j=1}^\ell M_{n_j}(\mathcal L_j)) = \Delta$.
Therefore $\vspan_L(\mathcal O) = \Delta$.
Furthermore every element of $\mathcal A$ is integral over $\bbc[n]$ and therefore integral over $\mathcal R$.
Therefore every element of $\mathcal A$ is integral over $\mathcal R$, and it follows that every element of $\mathcal O$ is integral over $\mathcal R$.
Therefore by \cite[Theorem 10.3]{reiner} $\mathcal O$ is an $\mathcal R$-order in $\Delta$, and in particular $\mathcal O$ is affine and Noetherian.

The center of $\mathcal A$ maps into the center of $\mathcal B$, the latter being $\oplus_{j=1}^\ell \mathcal L_jI_{n_j}$.
Every element of $\mathcal A$ is actually integral over $\bbc[n]$, so the image of the center of $\mathcal A$ is isomorphic to a subalgebra $\mathcal Z$ of $\wt Z := \oplus_{j=1}^\ell \mathcal R I_{n_j}$.
Since $\mathcal Z$ is Noetherian and $\mathcal D = \mathcal D_{cont}(W)$, the spectrum of $\mathcal Z$ consists of a collection of affine curves over $\bbc$.
The inclusion $\mathcal Z\rightarrow\oplus_{j=1}^\ell\mathcal R_j I_{n_j}$ is precisely the normalization of $\mathcal Z$, while the inclusion maps $\mathcal R_j\rightarrow\mathcal R$ correspond finite morphisms of smooth curves.
Therefore the inclusion $\mathcal Z\subseteq\mathcal Z'$ corresponds to the finite morphism of affine algebraic curves $\spec(\mathcal Z')\rightarrow\spec(\mathcal Z)$, so in particular $\mathcal Z'$ is finitely generated as a module over $\mathcal Z$.
As a consequence, $\mathcal O$ is finitely generated as a module over $\mathcal A$.
The algebra $\mathcal O$ is an $\mathcal A$-centralizing extension, so by Montgomery and Small's extension of the Artin-Tate lemma \cite[Proposition 2]{montgomery}, $\mathcal A$ must be affine.
\end{proof}

\begin{thm}[Casper \cite{casper2017bispectral}]
The algebra $\mathcal{D}(W)$ is Noetherian and is finitely generated as a module over its center $\mathcal{Z}(W)$.
\end{thm}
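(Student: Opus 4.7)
The plan is to invoke the Small--Stafford--Warfield theorem \cite{small1985}, which asserts that any affine semiprime algebra of Gelfand--Kirillov dimension at most one is Noetherian and finitely generated as a module over its center. All three hypotheses have essentially been verified in the preceding material, so the proof reduces to citing them cleanly.

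First, I would recall that the generalized Fourier map $b_P$ embeds $\mathcal{D}(W)$ as a subalgebra $\mathcal{A} \subseteq M_N(\mathbb{C}[n])$. Since $M_N(\mathbb{C}[n])$ has Gelfand--Kirillov dimension $1$, and GK dimension is monotone under subalgebras, we obtain $\mathrm{GKdim}\, \mathcal{D}(W) \leq 1$. Second, the existence of a proper involution $\dag$ on $\mathcal{D}(W)$ (via Casper's positivity theorem) was shown to force $\mathcal{D}(W)$ to be semiprime: the standard argument that proper involutions imply semiprimeness was already recorded explicitly in the proof that $\mathcal{Z}(W)$ is reduced. Third, the affineness of $\mathcal{D}(W)$ was established in the immediately preceding theorem using the decomposition into $\mathcal{D}_{disc}(W) \oplus \mathcal{D}_{cont}(W)$, finite-dimensionality of the discrete part via Lemma \ref{super lemma}, and Montgomery--Small's extension of the Artin--Tate lemma to centralizing extensions.

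With these three ingredients in hand, Small--Stafford--Warfield applies directly and yields both conclusions simultaneously: $\mathcal{D}(W)$ is Noetherian, and the natural map $\mathcal{Z}(W) \hookrightarrow \mathcal{D}(W)$ makes $\mathcal{D}(W)$ a finitely generated $\mathcal{Z}(W)$-module. As a byproduct, one obtains that $\mathcal{Z}(W)$ itself is Noetherian (already known from Theorem \ref{center is noetherian}) and that $\mathcal{D}(W)$ is a classical order in a finite-dimensional semisimple algebra over the total ring of fractions of $\mathcal{Z}(W)$, which is what will be exploited in later sections to define the rank of $\mathcal{D}(W)$.

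The main obstacle is conceptually invisible here because it has already been overcome: the hard work was proving affineness of $\mathcal{D}(W)$, which required the full analytic input (positivity of the involution, the structure of the center as a subring of a product of Dedekind domains, and the PI/order theory used to pass from the image in $\bigoplus_j M_{n_j}(\mathcal{L}_j)$ back to $\mathcal{A}$). Once that is in place, the present statement is essentially a black-box application of \cite{small1985}, so the ``proof'' is really just a careful bookkeeping of which previously established hypotheses feed into which conclusion of Small--Stafford--Warfield.
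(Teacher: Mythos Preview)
Your proposal is correct and follows essentially the same route as the paper: both establish GK dimension $\leq 1$ via the embedding $b_P:\mathcal D(W)\hookrightarrow M_N(\bbc[n])$, recall semiprimeness from the proper involution, cite the preceding affineness theorem, and then apply Small--Stafford--Warfield to conclude. The additional commentary you include about byproducts and earlier obstacles is accurate but not needed for the argument itself.
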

\begin{proof}
Consider the preimage $\mathcal E(W)$ of $\mathcal D(W)$ under the generalized Fourier map $b_P$.
The algebra $\mathcal E(W)$ is a subalgebra of $M_N(\bbc[n])$, and thus has GK dimension at most the GK dimension of $M_N(\bbc[n])$.
The algebras $M_N(\bbc[n])$ and $\bbc[n]$ have equal GK dimension by \cite[Proposition 3.2.7]{mcconnell2001},  
and $\bbc[n]$ has GK dimension $1$, e.g. by \cite[Theorem 8.2.14]{mcconnell2001}.
Thus $\mathcal E(W)$ has GK dimension at most $1$.  Since $\mathcal D(W)$ is affine and semiprime of GK dimension at most $1$, we can apply the
Small-Stafford-Warfield theorem \cite[Theorem 1.6]{small1985} to obtain that $\mathcal D(W)$ is Noetherian and finitely generated over its center.
\end{proof}

\subsection{Localizing $\mathcal D(W)$ on its center}
Recall that for a commutative ring with zero divisors, we can still define a \vocab{total ring of fractions} to be the localization of the ring at the multiplicative set of elements in $\mathcal Z(W)$ which are not zero divisors.
Geometrically the total ring of fractions is isomorphic to the product of the ring of fractions of every irreducible component of the spectrum of the ring.
\begin{defn}
Let $W(x)$ be a weight matrix, let $P(x,n)$ be the associated sequence of monic orthogonal polynomials, and let $b_P: \mathcal{D}(W)\rightarrow M_N(\bbc[n])$ denote the generalized Fourier map.  We define $\mathcal{Z}(W)$ to be the center of $\mathcal{D}(W)$ and $\mathcal{F}(W)$ to be the total ring of fractions of $\mathcal{Z}(W)$.
\end{defn}
We will show that when we localize $\mathcal D(W)$ by extending scalars in the ring $\mathcal D(W)$ to the total ring of fractions $\mathcal F(W)$ of $\mathcal{Z}(W)$, 
we get a product of matrix algebras over the fraction fields of each of the irreducible components.
Equivalently, this means that $\mathcal D(W)$ is generically isomorphic to a matrix algebra over each irreducible component of $\mathcal Z(W)$.
This result is analogous to Posner's theorem for prime PI rings. 
\begin{thm}\label{localization theorem}
Suppose that $W(x)$ is a weight matrix.
Then
\begin{equation}\label{localization equation}
\mathcal{D}(W) \otimes_{\mathcal Z(W)} \mathcal{F}(W) \cong \bigoplus_{i=1}^r M_{n_i}(\mathcal F_i(W)),
\end{equation}
where $\mathcal F_i(W)$ is the localization of $\mathcal Z(W)$ at the minimal prime $\mathcal P_i(W)$ of $\mathcal Z(W)$, i.e., 
the function field of the irreducible curve $\mathrm{Spec} (\mathcal Z(W) / \mathcal P_i(W))$.
Moreover the localization map $\mathcal D(W)\rightarrow \mathcal D(W)\otimes_{\mathcal Z(W)}\mathcal F(W)$ is injective.
\end{thm}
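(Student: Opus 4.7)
The plan is to combine commutative algebra for the center of $\mathcal D(W)$ with the structure theory of semiprime PI rings and Tsen's theorem on the triviality of Brauer groups of function fields of curves. Concretely, I will first decompose $\mathcal F(W)$ as a product of fields, then use this to split the central localization of $\mathcal D(W)$, and finally identify each factor as a matrix algebra by ruling out nontrivial division-algebra components.

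First, since $\mathcal Z(W)$ is reduced, Noetherian, and of Krull dimension at most one by Theorem \ref{center is noetherian}, its total ring of fractions is the localization at the complement of the union of minimal primes $\mathcal P_1(W),\dots,\mathcal P_r(W)$. For reduced Noetherian rings with finitely many minimal primes the Chinese Remainder Theorem gives a canonical isomorphism $\mathcal F(W) \cong \prod_{i=1}^r \mathcal F_i(W)$. The corresponding central idempotents split the tensor product as
$$\mathcal D(W) \otimes_{\mathcal Z(W)} \mathcal F(W) \;\cong\; \bigoplus_{i=1}^r \mathcal D(W) \otimes_{\mathcal Z(W)} \mathcal F_i(W),$$
and each summand is finite-dimensional over the field $\mathcal F_i(W)$ because $\mathcal D(W)$ is module-finite over $\mathcal Z(W)$ by the Small--Stafford--Warfield theorem.

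Next, I will argue that the central localization is semisimple Artinian, so that its Wedderburn decomposition aligns with the decomposition above. Since $\mathcal D(W)$ is semiprime PI, the semiprime variant of Posner's theorem yields that the central localization at the regular elements of $\mathcal Z(W)$ is semisimple Artinian. Equivalently, the positive $\dag$-involution forbids any nonzero element $\mathfrak D$ in the Jacobson radical of the localization: such an element would produce a nilpotent self-adjoint $\mathfrak D \mathfrak D^\dag$, contradicting positivity after clearing denominators. Applying Wedderburn--Artin to each summand, combined with the fact that the center of the $i$-th summand is the field $\mathcal F_i(W)$, forces each summand to be of the form $M_{n_i}(D_i)$ for a single division ring $D_i$ finite-dimensional and central over $\mathcal F_i(W)$. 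Since $\mathcal F_i(W)$ is the function field of the affine curve $\spec(\mathcal Z(W)/\mathcal P_i(W))$ over the algebraically closed field $\bbc$, Tsen's theorem identifies $\mathcal F_i(W)$ as a $C_1$-field with trivial Brauer group, so each $D_i$ must equal $\mathcal F_i(W)$, giving the desired isomorphism.

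For the final injectivity statement, it suffices to show that every regular element of $\mathcal Z(W)$ is also regular in $\mathcal D(W)$. This follows immediately from the generalized Fourier embedding $b_P: \mathcal D(W) \hookrightarrow M_N(\bbc[n])$: a regular element of $\mathcal Z(W)$ maps to a nonzero element of $\bbc[n]\cdot I_N$, which is regular in the prime ring $M_N(\bbc[n])$ and hence cannot annihilate anything in the subring $\mathcal D(W)$. The main obstacle in this plan is establishing semisimplicity of the central localization; the cleanest route combines the positivity of the $\dag$-involution with the PI structure of $\mathcal D(W)$, whereas a purely ring-theoretic approach would instead invoke the general theory of semiprime PI rings that are module-finite over their centers.
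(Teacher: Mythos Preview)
Your overall strategy---decompose $\mathcal F(W)$, apply Wedderburn--Artin to each factor, invoke Tsen's theorem---matches the paper's. However, there are two genuine problems.

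First, your injectivity argument contains a concrete error. You claim that a regular element of $\mathcal Z(W)$ maps under the Fourier embedding $b_P$ to a nonzero element of $\bbc[n]\cdot I_N$. This is false: the center of $\mathcal D(W)$ need not map to scalar matrices inside $M_N(\bbc[n])$, since $b_P$ is not surjective. When $\mathcal Z(W)$ has several minimal primes (or when $\mathcal D(W)$ is commutative), central elements map to genuinely non-scalar matrices, and you cannot directly conclude regularity in $M_N(\bbc[n])$. The paper instead invokes Rowen's theorem: in a semiprime PI ring every nonzero two-sided ideal meets the center nontrivially. If $\mathfrak D\mathfrak D' = 0$ with $\mathfrak D'\in\mathcal Z(W)$ regular, then $\mathfrak D'$ annihilates the two-sided ideal generated by $\mathfrak D$, hence annihilates a nonzero central element unless $\mathfrak D=0$, contradicting regularity.

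Second, there is a logical ordering issue. You assert that the center of each summand $\mathcal R_i = \mathcal D(W)\otimes_{\mathcal Z(W)}\mathcal F_i(W)$ equals $\mathcal F_i(W)$ \emph{before} proving injectivity, yet this identification requires injectivity. A central element of the localization has the form $\mathfrak D/s$, and centrality means that for every $\mathfrak E\in\mathcal D(W)$ there is a regular $t$ with $t[\mathfrak D,\mathfrak E]=0$; only injectivity forces $[\mathfrak D,\mathfrak E]=0$ in $\mathcal D(W)$ and hence $\mathfrak D\in\mathcal Z(W)$. Without this, each $\mathcal R_i$ could a priori split further in its Wedderburn decomposition. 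The paper resolves this by proving injectivity first (via Rowen), then deducing that the center of the localization is exactly $\mathcal F(W)$, and only then applying Wedderburn--Artin and Tsen. Your proof can be repaired by reversing the order and replacing the Fourier-map argument for injectivity with the Rowen argument.
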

\begin{remk}
The proof of the theorem actually establishes the following stronger statement: 

\emph{Any affine $*$-algebra with proper involution, which is finitely generated as a module over its center,  
localizes over its center to a product of central simple algebras.} 

The center of such an algebra is affine by the Artin-Tate Lemma \cite[p. 116]{BG}.

Rowen \cite[Appendix]{rowen1973} has previously obtained results which are similar in nature.
Specifically he proved that quasi-Goldie, semiprime, PI-algebras localize over their centers to semisimple Artinian rings.
Here quasi-Goldie means every set of independent ideals is finite and every set of left annihilators of ideals contains 
a maximal element. We do not know of the exact relation between these assumptions and ours.
\end{remk}
\begin{proof}
First we show that the natural map $\mathcal D(W)\rightarrow\mathcal D(W)\otimes_{\mathcal Z(W)} \mathcal F(W)$ is injective.
To see this, suppose that $\mathfrak D\in\mathcal D(W)$ maps to $0$ in the localization.
Then there must exist an element $\mathfrak D'\in\mathcal Z(W)$ which is not a zero divisor in $\mathcal Z(W)$ and which satisfies $\mathfrak D\mathfrak D' = 0$.
It follows that $\mathfrak D'$ annihilates the two-sided ideal of $\mathcal D(W)$ generated by $\mathfrak D$.
However, since $\mathcal D(W)$ is a semiprime PI algebra \cite[Theorem 2]{rowen1973b} tells us any nonzero two-sided ideal of $\mathcal D(W)$ must intersect nontrivially with $\mathcal Z(W)$.
This contradicts the fact that $\mathfrak D'$ is not a zero divisor in $\mathcal Z(W)$, and thus the localization map is injective.

Next note that
$$\mathcal F(W) = \mathcal F_1(W)\times\dots\times \mathcal F_r(W).$$
Using this, we have
$$\mathcal{D}(W) \otimes_{\mathcal Z(W)} \mathcal{F}(W)
= \prod_{i=1}^r \mathcal R_i,\ \ \text{for}\ \ \mathcal R_i := \mathcal{D}(W) \otimes_{\mathcal Z(W)} \mathcal{F}_i(W).$$
We know that $\mathcal D(W)$ is finitely generated as a module over its center, so the localizations $\mathcal R_i$ will also be finitely generated over $\mathcal F_i(W)$.
The latter is a field, so each of the algebras $\mathcal R_i$ is a finite dimensional $\mathcal F_i(W)$ vector space.
Note that since $\mathcal R_i$ is a localization, all elements in $\mathcal R_i$ are of the form $\mathfrak D\otimes\mathfrak A$ for some $\mathfrak D\in \mathcal D(W)$ and $\mathfrak A\in F_i(W)$.

The adjoint operation $\dag$ fixes $\mathcal Z(W)$ and extends to an adjoint operation on $\mathcal F(W)$.
Since the adjoint operation fixes each minimal prime ideal $\mathcal P_i(W)$, it also preserves the complement $\mathcal Z(W)\diff\mathcal P_i(W)$, 
and thus induces an adjoint operation on $\mathcal F_i(W)$.
This in turn induces an adjoint operation on $\mathcal R_i$, defined by $(\mathfrak D \otimes \mathfrak A)^\dag =  \mathfrak{D}^\dag  \otimes \mathfrak{A}^\dag$ for 
$\mathfrak D\in\mathcal D(W)$ and $\mathfrak A\in \mathcal F_i(W)$.

We claim that the property (\ref{nondegenerate star}) is maintained under this extension.
To see this, suppose that $(\mathfrak D\otimes\mathfrak A)(\mathfrak D\otimes\mathfrak A)^\dag = 0$ for some 
$\mathfrak D\in\mathcal D(W)$ and $\mathfrak A\in \mathcal F_i(W)$.
Note that $(\mathfrak D\otimes\mathfrak A)(\mathfrak D\otimes\mathfrak A)^\dag = (\mathfrak D\mathfrak D^\dag)\otimes (\mathfrak A\mathfrak A^\dag)$, and this latter element is zero if and only if there exists $\wt{\mathfrak D}\in \mathcal Z(W)\diff \mathcal P_i$ with $(\mathfrak D\mathfrak D^\dag)\wt{\mathfrak D} = 0$.
This then would imply that $\mathfrak D\wt{\mathfrak D}(\mathfrak D\wt{\mathfrak D})^\dag = 0$ and therefore that $\mathfrak D\wt{\mathfrak D} = 0$.  Hence $\mathfrak D\otimes\mathfrak A= 0$.  Thus the desired property holds.

Each $\mathcal R_i$ is a $*$-algebra (over $\mathcal F_i(W)$) with proper involution, therefore it is a semiprime PI-algebra.
In particular, $\mathcal R_i$ has no nonzero two-sided nilpotent ideals.
Since it is finite dimensional over $\mathcal F_i(W)$, this implies the Jacobson radical of $\mathcal R_i$ is trivial, so that $\mathcal R_i$ is a finite dimensional 
semisimple algebra over $\mathcal F_i(W)$.
Furthermore, by virtue of the construction of the $\mathcal R_i$ and the injectivity of the localization map $\mathcal D(W)\rightarrow \mathcal D(W)\otimes_{\mathcal Z(W)}\mathcal F(W)$, the center of $\mathcal R_i$ is $\mathcal F_i(W)$.
Thus $\mathcal R_i$ is a central simple algebra with center $\mathcal F_i(W)$, and by Tsen's theorem is therefore isomorphic to $M_{n_i}(\mathcal F_i(W))$ for some integer $n_i$.
This completes the proof.
\end{proof}

\begin{defn}
Suppose that $W(x)$ is a weight matrix with $\mathcal D(W)$ containing an operator of positive order.
Using the notation of Theorem \ref{localization theorem}, we call $n_1+n_2+\dots+n_r$ the \vocab{module rank} of $\mathcal{D}(W)$ over $\mathcal{Z}(W)$.
\end{defn}

\begin{prop}
Suppose that $W(x)$ is a weight matrix with $\bbc I\subsetneq \mathcal{Z}(W)$.  Then the module rank of $\mathcal{D}(W)$ is at most $N$.
\end{prop}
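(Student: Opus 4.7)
The plan is to produce $\ell := \sum_{i=1}^r n_i$ orthogonal idempotent matrices in $M_N(\bbc(n))$ that sum to the identity; a rank count on the resulting direct sum decomposition of $\bbc(n)^N$ then forces $\ell \leq N$.

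First, by Theorem \ref{localization theorem}, $\mathcal D(W) \otimes_{\mathcal Z(W)} \mathcal F(W) \cong \bigoplus_{i=1}^r M_{n_i}(\mathcal F_i(W))$. I would take the diagonal matrix units $e^{(i)}_1, \ldots, e^{(i)}_{n_i}$ in each factor $M_{n_i}(\mathcal F_i(W))$, giving a family of $\ell$ orthogonal idempotents in $\mathcal D(W) \otimes \mathcal F(W)$ whose sum is $1$. Next I would extend the generalized Fourier injection $b_P : \mathcal D(W) \hookrightarrow M_N(\bbc[n])$ to an injective ring homomorphism $\mathcal D(W) \otimes_{\mathcal Z(W)} \mathcal F(W) \hookrightarrow M_N(\bbc(n))$; by the universal property of localization, this amounts to checking that $\det b_P(z) \neq 0$ in $\bbc[n]$ for every non-zero-divisor $z \in \mathcal Z(W)$, so that $b_P(z)$ becomes invertible inside $M_N(\bbc(n))$. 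With the extension in place, the $e^{(i)}_s$ become nonzero orthogonal idempotent matrices in $M_N(\bbc(n))$ summing to $I_N$; decomposing $\bbc(n)^N$ into their images gives $\ell \leq N$.

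The main obstacle will be proving the non-vanishing of $\det b_P(z)$ for non-zero-divisors $z \in \mathcal Z(W)$. I plan to argue by contradiction: if $\det b_P(z) = 0$, then $b_P(z)$ has a nonzero kernel $K \subseteq \bbc(n)^N$, and because $z$ is central in $\mathcal D(W)$, the subspace $K$ is $\mathcal D(W)$-invariant. Viewing $\bbc(n)^N$ as a faithful module over the semisimple $\bbc(n)$-algebra $\mathcal A := \bbc(n) \cdot \mathcal D(W) \subseteq M_N(\bbc(n))$ (semisimple by the same $*$-involution argument used in the proof of Theorem \ref{localization theorem}), the submodule $K$ is cut out by a nonzero central idempotent $f \in Z(\mathcal A)$ with $z f = 0$. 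The delicate step will be to track $f$ back to $\mathcal Z(W)$ through the inclusion $\mathcal Z(W) \hookrightarrow Z(\mathcal A)$ and the correspondence of minimal primes under this inclusion, producing a nonzero element of $\mathcal Z(W)$ whose product with $z$ vanishes modulo some minimal prime $\mathcal P_i$, placing $z \in \mathcal P_i$ and contradicting the choice of $z$ as a non-zero-divisor. The hypothesis $\bbc I \subsetneq \mathcal Z(W)$ ensures that the module-rank structure is nontrivial and that this analysis is non-vacuous; when $\mathcal Z(W) = \bbc I$ the algebra $\mathcal D(W)$ is already finite dimensional over $\bbc$ and the bound is immediate from the faithful action on $\bbc^N$.
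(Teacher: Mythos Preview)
Your overall strategy—produce $\ell=\sum n_i$ orthogonal idempotents and bound $\ell$ by $N$ via a faithful $N$-dimensional representation—is sound, but the execution diverges from the paper and runs into a real obstacle at precisely the step you flag as ``delicate.''

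The paper never extends $b_P$ to the localization. Instead it clears denominators first: from the diagonal matrix units in $\bigoplus_i M_{n_i}(\mathcal F_i(W))$ one obtains, using injectivity of the localization map, elements $\mathfrak V_1,\dots,\mathfrak V_m\in\mathcal D(W)$ that are non-nilpotent and satisfy $\mathfrak V_i\mathfrak V_j=0$ for $i\neq j$. Applying $b_P$ gives matrices $A_j(n)\in M_N(\bbc[n])$ with the same properties. For each $j$ one then picks a column $\vec v_j$ of $A_j$ with $A_j\vec v_j\neq 0$; since $A_k\vec v_j=0$ for $k\neq j$, the $\vec v_j$ are $\bbc[n]$-linearly independent in $\bbc[n]^{\oplus N}$, forcing $m\leq N$. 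No invertibility of central elements in $M_N(\bbc(n))$ is ever needed.

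Your route instead requires $\det b_P(z)\neq 0$ for every non-zero-divisor $z\in\mathcal Z(W)$, and your sketch for this has a gap. Granting that $\mathcal A=\bbc(n)\cdot b_P(\mathcal D(W))$ is semisimple, a nonzero central idempotent $f=e_j\in Z(\mathcal A)$ with $zf=0$ only tells you that $z$ lies in the kernel of the projection $\mathcal Z(W)\to Z(D_j)$. That kernel is certainly a prime of $\mathcal Z(W)$, but you have not shown it is a \emph{minimal} prime: a priori the image of $\mathcal Z(W)$ in $Z(D_j)$ could be $\bbc$, making the kernel a maximal ideal properly containing some $\mathcal P_i$. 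In that case $z\in\ker$ does not contradict $z$ being a non-zero-divisor (in a reduced ring the non-zero-divisors are the elements avoiding all minimal primes, not all primes). The ``tracking $f$ back to $\mathcal Z(W)$'' step therefore needs substantially more argument than you give, and it is not clear it can be completed without importing further structure.

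The simplest repair is to abandon the extension of $b_P$ altogether: clear denominators to produce the $\mathfrak V_j$ in $\mathcal D(W)$ itself, then run the column-vector argument. That is exactly the paper's proof, and it sidesteps the invertibility question entirely.
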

\begin{proof}
Let $m$ be the rank  of $\mathcal D(W)$.
Then we may choose $m$ non-nilpotent elements $\mathfrak V_1,\dots,\mathfrak V_m\in \mathcal D(W)$ such that $\mathfrak V_i\mathfrak V_j = 0$ for $i\neq j$.
Let $A_j(n)\in M_N(\bbc[n])$ be the image of $\mathfrak V_j$ under the generalized Fourier map.
Then each $A_j(n)$ is nonzero and satisfies $A_j(n)A_k(n)=0$ for $j\neq k$.
Since each $\mathfrak V_j$ is non-nilpotent, each $A_j(n)$ is also non-nilpotent.
Therefore we may choose a column vector $\vec v_j(n)$ of $\vec A_j(n)$ with $\vec A_j(n)\vec v_j(n)\neq \vec 0$.
With this choice, $\vec A_j(n)\vec v_k(n) = \vec 0$ if and only if $j\neq k$ and thus the $m$ vectors $\vec v_1(n),\dots\vec v_m(n)$ in $\bbc[n]^{\oplus N}$ are $\bbc[n]$-linearly independent.
We conclude that $m\leq N$.
\end{proof}

\begin{defn}
We say $\mathcal{D}(W)$ is \vocab{full} if the module rank of $\mathcal{D}(W)$ is $N$.
\end{defn}

Fullness is a relatively weak condition, especially for $N$ small as we see in the next theorem.
\begin{thm}
Suppose that $W(x)$ is a $2\times 2$ matrix and that $\mathcal D(W)$ is noncommutative.
Then the associated algebra of differential operators $\mathcal D(W)$ is full.
In fact in this case $\mathcal F(W)$ is a field and $\mathcal D(W)$ is a $\mathcal Z(W)$-order in $M_2(\mathcal F(W))$.
\end{thm}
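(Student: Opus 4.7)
The plan is to derive everything from the localization theorem (Theorem \ref{localization theorem}) together with the upper bound on the module rank, reducing the question to a combinatorial constraint on the integers $n_i$. Specifically, I will combine the injection $\mathcal D(W)\hookrightarrow \mathcal D(W)\otimes_{\mathcal Z(W)}\mathcal F(W)\cong \bigoplus_{i=1}^r M_{n_i}(\mathcal F_i(W))$ with the bound $n_1+\cdots+n_r\leq N=2$ from the proposition immediately preceding the theorem.

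First I would observe that if $\mathcal D(W)$ is noncommutative, then its image in the localization is noncommutative, so at least one summand $M_{n_i}(\mathcal F_i(W))$ must be noncommutative; this means $n_i\geq 2$ for some $i$. Combined with $n_1+\cdots+n_r\leq 2$, this forces $r=1$ and $n_1=2$. In particular, the module rank equals $N=2$, so $\mathcal D(W)$ is full, and $\mathcal F(W)=\mathcal F_1(W)$ is a field rather than a nontrivial product.

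Next I would note that $r=1$ means $\mathcal Z(W)$ has a unique minimal prime $\mathcal P_1(W)$. Since $\mathcal Z(W)$ is reduced (by the proper involution and the theorem attributed to Casper), the intersection of all minimal primes is $0$, so $\mathcal P_1(W)=0$ and $\mathcal Z(W)$ is an integral domain with fraction field $\mathcal F(W)$. The isomorphism $\mathcal D(W)\otimes_{\mathcal Z(W)}\mathcal F(W)\cong M_2(\mathcal F(W))$ then says exactly that $\mathcal D(W)$ contains a $\mathcal F(W)$-basis of $M_2(\mathcal F(W))$. Since $\mathcal D(W)$ is finitely generated as a $\mathcal Z(W)$-module (by the Small--Stafford--Warfield consequence stated earlier) and contains $\mathcal Z(W)$, it satisfies the defining conditions of a $\mathcal Z(W)$-order in the central simple algebra $M_2(\mathcal F(W))$ in the sense of \cite{reiner}.

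There is no real obstacle: the entire argument is a direct application of results already established in the paper, and the only delicate point is recognizing that the noncommutativity of $\mathcal D(W)$ transfers to the localization (which uses the injectivity half of Theorem \ref{localization theorem}) and then handling the two cases $(r,n_1,\ldots,n_r)=(2,1,1)$ and $(1,2)$ of module rank $2$ by a one-line parity-of-commutativity check that eliminates the former.
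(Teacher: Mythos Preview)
Your proposal is correct and follows essentially the same argument as the paper: both use the injective localization from Theorem \ref{localization theorem} together with the rank bound $n_1+\cdots+n_r\leq 2$ to conclude from noncommutativity that $r=1$ and $n_1=2$. If anything, your version is slightly more detailed in explaining why $\mathcal F(W)$ is a field (via reducedness of $\mathcal Z(W)$ forcing the unique minimal prime to be zero) and in spelling out the order condition, whereas the paper states these conclusions more tersely.
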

\begin{proof}
Since $\mathcal D(W)$ is noncommutative, so too is its localization over its center.  Using the notation of Theorem \ref{localization theorem} and referring to (\ref{localization equation}) the noncommutativity implies that $n_i>1$ for some $i$.
However, since the rank is bounded by $2$, we get $n_1+\dots+n_r \leq 2$.  Thus $r=1$ and $n_1=2$, so that (\ref{localization equation}) says
$$\mathcal D(W) \otimes_{\mathcal Z(W)} \mathcal F(W) \cong M_2(\mathcal F(W)).$$
Thus $\mathcal D(W)$ is a $\mathcal Z(W)$-order in $M_2(\mathcal F(W))$ and in particular $\mathcal D(W)$ is full.
\end{proof}

More generally, fullness can be characterized in terms of the existence of enough orthonormal idempotents in the localization of $\mathcal D(W)$ over $\mathcal Z(W)$.  This is summarized by the next theorem.
\begin{thm}\label{idempotents theorem}
The algebra $\mathcal{D}(W)$ is full if and only if there exist $W$-symmetric elements $\mathfrak V_1, \mathfrak V_2,\dots,\mathfrak V_N$ in $\mathcal{D}(W)$ satisfying $\mathfrak V_i\mathfrak V_j = 0$ for $i\neq j$ with $\mathfrak V_1+\dots+\mathfrak V_N$ a central element of $\mathcal D(W)$ which is not a zero divisor.
\end{thm}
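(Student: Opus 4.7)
The plan is to exploit the central simple algebra decomposition from Theorem \ref{localization theorem},
$$
\mathcal D(W) \otimes_{\mathcal Z(W)} \mathcal F(W) \;\cong\; \bigoplus_{i=1}^r M_{n_i}(\mathcal F_i(W)),
$$
under which fullness is precisely the statement $\sum_i n_i = N$. The two directions of the equivalence will come from passing between $W$-symmetric orthogonal elements in $\mathcal D(W)$ and orthogonal $\dag$-symmetric idempotents in the localization, with the passage effected by either clearing or introducing a common central denominator. The involution $\dag$ descends to the localization and remains proper (as in the remark following Theorem \ref{localization theorem}), a fact used crucially in both directions.

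For the backward direction, given such elements $\mathfrak V_1,\dots,\mathfrak V_N$, set $z := \mathfrak V_1 + \cdots + \mathfrak V_N$, a central $W$-symmetric non-zero-divisor. Orthogonality gives $\mathfrak V_i z = \mathfrak V_i \sum_j \mathfrak V_j = \mathfrak V_i^2$, so in the localization the elements $e_i := z^{-1}\mathfrak V_i$ are genuine pairwise orthogonal $\dag$-symmetric idempotents summing to $1$, each nonzero since $\mathfrak V_i \neq 0$ and $z$ is invertible in $\mathcal F(W)$. In the split semisimple algebra $\bigoplus_{i=1}^r M_{n_i}(\mathcal F_i(W))$ the ranks in each factor of a complete orthogonal system of idempotents sum to $n_i$, and each nonzero idempotent contributes total rank at least $1$; this forces $\sum_i n_i \geq N$. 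Combined with the upper bound $\sum_i n_i \leq N$ from the preceding proposition, we conclude fullness.

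For the forward direction, assume fullness. Inside each central simple factor $M_{n_i}(\mathcal F_i(W))$, the restricted involution is proper, so one may find $n_i$ pairwise orthogonal $\dag$-symmetric rank-$1$ idempotents summing to the identity of that factor — e.g., by diagonalizing the positive-definite Hermitian form associated to the proper involution and taking the rank-$1$ projections onto an orthogonal basis. Concatenating across all factors produces $N = \sum_i n_i$ pairwise orthogonal $\dag$-symmetric idempotents $e_1,\dots,e_N$ in $\mathcal D(W)\otimes_{\mathcal Z(W)}\mathcal F(W)$ summing to $1$. Choose a central non-zero-divisor $z_0 \in \mathcal Z(W)$ simultaneously clearing the denominators of the $e_i$, and replace it by $z := z_0 z_0^\dag$, which is both $\dag$-symmetric and a central non-zero-divisor. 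Then $\mathfrak V_i := z e_i$ lie in $\mathcal D(W)$, are $W$-symmetric (since $(ze_i)^\dag = e_i^\dag z^\dag = z e_i$ using centrality and $\dag$-symmetry of $z$), are pairwise orthogonal ($\mathfrak V_i \mathfrak V_j = z^2 e_i e_j = 0$ for $i \neq j$), and sum to the central non-zero-divisor $z$.

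The main technical obstacle is the forward step of exhibiting a full system of $\dag$-symmetric orthogonal rank-$1$ idempotents inside each factor $M_{n_i}(\mathcal F_i(W))$. A priori the restricted involution on such a factor could be of a non-split ``anisotropic'' type for which no decomposition into rank-$1$ $\dag$-symmetric projections exists; ruling this out requires genuinely using the positivity of $\dag$ (via its proper extension to the localization) to produce an orthogonal basis for the underlying Hermitian form. Once this structural ingredient on proper involutions of split central simple algebras is available, the remaining denominator-clearing and symmetrization steps are routine.
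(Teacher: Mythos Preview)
Your approach matches the paper's: in both directions one passes through the localization $\bigoplus_i M_{n_i}(\mathcal F_i(W))$, converting between $\dag$-symmetric orthogonal idempotents there and $W$-symmetric orthogonal elements in $\mathcal D(W)$ by multiplying or dividing by a central non-zero-divisor. The paper resolves the forward-direction obstacle you flag via an explicit Skolem--Noether argument: comparing $\dag$ with the entrywise-$\dag$-plus-transpose involution $\star$ on $M_{n_i}(\mathcal F_i(W))$, their composition is an inner automorphism, so $A^\dag = U^{-1}A^\star U$ for some invertible $U$ satisfying $U^\star = U$; a Cholesky-type factorization $U = PDP^\star$ with $D$ diagonal then makes $(P^\star)^{-1}E_{jj}P^\star$ into $\dag$-symmetric rank-one orthogonal idempotents summing to the identity. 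Your terminology ``positive-definite'' and ``anisotropic'' is slightly off over a function field---the genuine dichotomy is orthogonal/unitary versus symplectic type, i.e.\ ruling out $U^\star = -U$, and that case indeed violates properness---but your diagnosis that properness is exactly the ingredient making this step work is correct.
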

\begin{proof}
Suppose that $\mathcal D(W)$ is full.
Then we have an isomorphism
$$\mathcal D(W) \otimes_{\mathcal Z(W)} \mathcal F(W) \cong \bigoplus_{i=1}^r M_{n_i}(\mathcal F_i(W)),$$
with $n_1+\dots+n_r = N$.

Fix an integer $1\leq i\leq n$.
By the above, we know that the adjoint operation $\dag$ of $\mathcal D(W)$ restricts to an adjoint operation on both $\mathcal F_i(W)$ and on $M_{n_i}(\mathcal F_i(W))$.
However, the adjoint operation on $M_{n_i}(\mathcal F_i(W))$ may not be the one naturally induced by the adjoint operation on $\mathcal F_i(W)$.
In fact we in general have two adjoints on $M_{n_i}(\mathcal F_i(W))$, the adjoint coming from the restriction of $\dag$ (which we also denote by $\dag$), and the one obtained by applying $\dag$ to each of the matrix entries of an element of $M_{n_i}(\mathcal F_i(W))$ and then taking the transpose.
We denote this latter adjoint operation by $\star$.
The composition of these two adjoint operations $(\cdot)^{\dag\star}$ defines an automorphism of $M_{n_i}(\mathcal F_i(W))$ which fixes the base field $\mathcal F_i(W)$.
By the Skolem-Noether theorem, this automorphism must be inner.
Thus there exists an invertible matrix $U\in M_{n_i}(\mathcal F_i(W))$ such that
$$A^\dag = (U^\star)^{-1}A^\star U^\star, \ \ \forall A\in M_{n_i}(\mathcal F_i(W)).$$
We can apply the same argument to the opposite composition $(\cdot)^{\star\dag}$, and if we use the fact that $\dag$ and $\star$ are order two, we find
$$A^\dag = U^{-1}A^\star U, \ \ \forall A\in M_{n_i}(\mathcal F_i(W)).$$
Putting this together, we find that $U^\star = U$.
Therefore $U$ has a Cholesky-type factorization of the form $U = PDP^\star$ for some invertible matrices $P,D\in M_N(\mathcal F_i(W))$ with $D$ diagonal and $\star$-symmetric.  Then for $1\leq j\leq N$, we have
$$(P^{-1\star} E_{jj}P^{\star})^\dag = U^{-1}PE_{jj}P^{-1}U = U^{-1}PE_{jj}DP^\star = P^{-1\star}D^{-1}E_{jj}DP^\star.$$
For each $1\leq j\leq n_i$, choose $\mathfrak D_{ij}'\in \mathcal D(W) \otimes_{\mathcal Z(W)} \mathcal F(W)$ corresponding to the matrix $(P^\star)^{-1}E_{ii}P^\star$.
Then by definition $\mathfrak D_{ij}'$ is $\dag$-symmetric and satisfy $\mathfrak D_{ij}'\mathfrak D_{ab}' = 0$ for $(a,b)\neq (i,j)$.

We can repeat the above process for each $i$, obtaining $N$ elements $\mathfrak D_{ij}'$.
For each $i,j$, we can clear denominators using $W$-symmetric, central elements to obtain an element $\mathfrak D_{ij}\in \mathcal D(W)$.
By definition, the elements $\mathfrak D_{ij}$ are $W$-symmetric.
Furthermore, since the localization map is injective the $N$ elements $\mathfrak D_{ij}$ satisfy $\mathfrak D_{ij}\mathfrak D_{k\ell} = 0$ if $i\neq k$ or $j\neq\ell$, and the sum is a central element of $\mathcal D(W)$ which is not a zero divisor.

Conversely, suppose that the $\mathfrak D_1,\dots,\mathfrak D_N$ exist and set $\mathfrak D = \mathfrak D_1 + \dots + \mathfrak D_N$.
Then the $N$ elements
$$\mathfrak D^{-1}\otimes \mathfrak D_i,\ i=1,2,\dots N$$
define a family of $N$ orthogonal idempotents in $\mathcal F(W)\otimes_{\mathcal Z(W)}\mathcal D(W)$. From the isomorphism 
$$\mathcal D(W) \otimes_{\mathcal Z(W)} \mathcal F(W) \cong \bigoplus_{i=1}^r M_{n_i}(\mathcal F_i(W)),$$
we see that $\bigoplus_{i=1}^r M_{n_i}(\mathcal F_i(W))$ must have a collection of $N$ orthogonal idempotents.
This implies that $n_1+\dots+n_r$ is at least $N$ and therefore exactly $N$.
\end{proof}

\begin{prop}
Let $W(x)$ and $\wt W(x)$ be weight matrices.
Suppose that $\mathcal{D}(W)$ is full and that $\wt W(x)$ is a bispectral Darboux transformation of $W(x)$.  Then $\mathcal{D}(\wt W)$ is also full.
\end{prop}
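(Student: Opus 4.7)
The plan is to construct $N$ pairwise orthogonal idempotents summing to $1$ inside the central localization $\mathcal D(\wt W)\otimes_{\mathcal Z(\wt W)}\mathcal F(\wt W)$; by the Artin--Wedderburn decomposition of Theorem~\ref{localization theorem} together with the general bound that the module rank is at most $N$, this forces equality of the module rank with $N$, i.e., fullness of $\mathcal D(\wt W)$. The main obstacle I expect is verifying centrality of the element whose decomposition gives these idempotents, since the natural transfer map induced by the Darboux data is only multiplicative up to a twist.

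By fullness of $\mathcal D(W)$ and Theorem~\ref{idempotents theorem}, fix $W$-symmetric $\mathfrak V_1,\dots,\mathfrak V_N\in\mathcal D(W)$ with $\mathfrak V_i\mathfrak V_j=0$ for $i\neq j$ and $\mathfrak V:=\sum_i\mathfrak V_i$ a central non-zero-divisor; denote the eigenvalue of $\mathfrak V_i$ on $P(x,n)$ by $\Lambda_i(n)$ and set $\Lambda:=\sum_i\Lambda_i$. Let $\mathfrak T,\wt{\mathfrak T}\in\fourr(P)$, $F,\wt F\in M_N(\bbc[x])$, and $C(n),\wt C(n)$ be the data of the bispectral Darboux transformation relating $P$ and $\wt P$. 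Set
\[
\wt{\mathfrak V}_i:=\wt F\wt{\mathfrak T}\,\mathfrak V_i\,\mathfrak T F,\qquad \mathfrak W:=\mathfrak T F\,\wt F\wt{\mathfrak T},\qquad \wt{\mathfrak W}:=\wt F\wt{\mathfrak T}\,\mathfrak T F.
\]
Telescoping with the Darboux relations $\wt P\cdot\wt F\wt{\mathfrak T}=\wt CP$ and $P\cdot\mathfrak T F=C\wt P$ (an instance of Theorem~\ref{lots of stuff} for $a=\Lambda_i$) yields $\wt P\cdot\wt{\mathfrak V}_i=\wt C\Lambda_iC\,\wt P$, $P\cdot\mathfrak W=C\wt CP$, and $\wt P\cdot\wt{\mathfrak W}=\wt CC\,\wt P$, so $\wt{\mathfrak V}_i,\wt{\mathfrak W}\in\mathcal D(\wt W)$ and $\mathfrak W\in\mathcal D(W)$. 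Since $C,\wt C$ are nonsingular for almost all $n$, $\mathfrak W$ and $\wt{\mathfrak W}$ are non-zero-divisors, hence units in the semisimple Artinian central localizations provided by Theorem~\ref{localization theorem}.

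Working in $\mathcal D(\wt W)\otimes_{\mathcal Z(\wt W)}\mathcal F(\wt W)$, set $\wt{\mathfrak V}_i':=\wt{\mathfrak W}^{-1}\wt{\mathfrak V}_i$ with eigenvalue $C^{-1}\Lambda_iC$, and $\wt{\mathfrak X}:=\sum_i\wt{\mathfrak V}_i'$ with eigenvalue $C^{-1}\Lambda C$. Orthogonality $\wt{\mathfrak V}_i'\wt{\mathfrak V}_j'=0$ for $i\neq j$ follows at once from $\Lambda_i\Lambda_j=0$ by multiplying eigenvalues. For the centrality of $\wt{\mathfrak X}$, take any $\wt{\mathfrak D}\in\mathcal D(\wt W)$ with eigenvalue $\wt\Theta(n)$; the same telescoping shows $\mathfrak D':=\mathfrak T F\,\wt{\mathfrak D}\,\wt F\wt{\mathfrak T}\in\mathcal D(W)$ has eigenvalue $C\wt\Theta\wt C$, whence $\mathfrak D'\mathfrak W^{-1}$ in the localization of $\mathcal D(W)$ has eigenvalue $C\wt\Theta C^{-1}$. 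Centrality of $\Lambda$ in $\mathcal D(W)$ then forces $[\Lambda,C\wt\Theta C^{-1}]=0$, equivalent to $[C^{-1}\Lambda C,\wt\Theta]=0$; letting $\wt{\mathfrak D}$ vary and invoking the injectivity of the generalized Fourier map proves $\wt{\mathfrak X}$ is central in the localization. As $C^{-1}\Lambda C$ is conjugate to the non-zero-divisor $\Lambda$, $\wt{\mathfrak X}$ is in fact a central unit.

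Setting $\wt e_i:=\wt{\mathfrak X}^{-1}\wt{\mathfrak V}_i'$, the centrality of $\wt{\mathfrak X}$ combined with $(\wt{\mathfrak V}_i')^2=\wt{\mathfrak V}_i'\wt{\mathfrak X}$ (which follows from orthogonality and the definition of $\wt{\mathfrak X}$) yields $\wt e_i\wt e_j=0$ for $i\neq j$, $\wt e_i^2=\wt e_i$, and $\sum_i\wt e_i=1$. Hence the $\wt e_i$ are $N$ pairwise orthogonal idempotents summing to $1$ in $\mathcal D(\wt W)\otimes_{\mathcal Z(\wt W)}\mathcal F(\wt W)\cong\bigoplus_j M_{\wt n_j}(\wt{\mathcal F}_j)$, forcing $\sum_j\wt n_j\geq N$; together with the opposite bound this gives $\sum_j\wt n_j=N$, i.e., $\mathcal D(\wt W)$ is full.
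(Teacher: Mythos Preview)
Your proof is correct and follows the same strategy the paper gestures at in its one-line proof (``This follows from Theorem~\ref{lots of stuff}, combined with the previous theorem''): transfer the orthogonal system through the Darboux data and then invoke the idempotent characterization of fullness from Theorem~\ref{idempotents theorem}. You have, however, supplied the nontrivial details the paper suppresses---in particular, the transferred operators $\wt{\mathfrak V}_i=\wt F\wt{\mathfrak T}\,\mathfrak V_i\,\mathfrak T F$ need not satisfy $\wt{\mathfrak V}_i\wt{\mathfrak V}_j=0$ directly nor have central sum, so your passage to $\wt{\mathfrak V}_i'=\wt{\mathfrak W}^{-1}\wt{\mathfrak V}_i$ and the explicit verification that $\wt{\mathfrak X}$ is central (via pulling $\wt{\mathfrak D}$ back to $\mathcal D(W)$ and using centrality of $\mathfrak V$ there) are genuine and necessary steps.
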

\begin{proof}
This follows from Theorem \ref{lots of stuff}, combined with the previous theorem.
\end{proof}

\begin{defn}
Let $W(x)$ be a weight matrix.
We call a collection of $W$-symmetric elements $\mathfrak V_1,\dots, \mathfrak V_N\in \mathcal D(W)$ satisfying the condition that $\mathfrak V_i\mathfrak V_j= 0 I$ for $i\neq j$ an \vocab{orthogonal system} for $\mathcal D(W)$.
\end{defn}

Since $\mathcal D(W)$ is finitely generated over its center $\mathcal Z(W)$, it also makes sense to consider the support of a given element $\mathfrak D\in \mathcal D(W)$ over the center.
\begin{defn}
Let $W(x)$ be a weight matrix.
The \vocab{support} $\supp(\mathfrak D)$ of an element $\mathfrak D\in\mathcal D(W)$ is the set of prime ideals of $\mathcal D(W)$ containing the annihilator $\ann(\mathfrak D)$ of $\mathfrak D$ over its center, i.e.,
$$\ann(\mathfrak D) := \{\mathfrak V\in\mathcal Z(W): \mathfrak V\mathfrak D = 0\}.$$
Let $\mathcal Z_1(W),\dots,\mathcal Z_r(W)$ be the irreducible components of $\mathcal Z(W)$ and let $\mathcal P_1,\dots,\mathcal P_r$ be the corresponding minimal primes.
We say that an element $\mathfrak D\in\mathcal D(W)$ is \vocab{supported on $\mathcal Z_i(W)$} if $\mathcal P_i$ is contained in one of the primes in $\supp(\mathfrak D)$.
We say that two elements $\mathfrak D_1,\mathfrak D_2\in\mathcal D(W)$ are \vocab{supported on the same irreducible component} if for some $i$ both $\mathfrak D_1$ and $\mathfrak D_2$ are supported on $\mathcal Z_i(W)$.
\end{defn}
Alternatively, one may define the support of $\mathfrak D\in\mathcal D(W)$ to be the set of prime ideals $\mathcal P\subseteq\mathcal Z(W)$ such that $\mathfrak D$ is nonzero in the localization of $\mathcal D(W)$ at $\mathcal P$.
The support of any element in $\mathcal D(W)$ defines a closed subscheme of $\mathcal Z(W)$.
Clearly any element of $\mathcal D(W)$ must be supported on at least one irreducible component of $\mathcal Z(W)$.

There is a simple algebraic criterion for two elements $\mathfrak D_1,\mathfrak D_2\in\mathcal D(W)$ to be supported on the same irreducible component.
The proof is a consequence of (\ref{localization equation}) and is left to the reader.
\begin{prop}
Let $W(x)$ be a weight matrix with $\mathcal D(W)$ full and suppose $\mathfrak D_1,\mathfrak D_2\in\mathcal D(W)$.
Then $\mathfrak D_1\mathcal D(W)\mathfrak D_2 \neq 0$ if and only if both share an irreducible component of $\mathcal Z(W)$ on which they are supported.
\end{prop}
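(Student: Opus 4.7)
The plan is to reduce the assertion to the structural isomorphism of Theorem \ref{localization theorem} via the injective localization map
$$
\iota : \mathcal D(W) \hookrightarrow \mathcal D(W)\otimes_{\mathcal Z(W)}\mathcal F(W) \cong \bigoplus_{i=1}^r M_{n_i}(\mathcal F_i(W)),
$$
writing $\iota(\mathfrak D_j) = (D_j^{(1)},\dots, D_j^{(r)})$ for $j=1,2$. Since the two-sided product in the codomain decomposes coordinate-wise and each factor $M_{n_i}(\mathcal F_i(W))$ is simple, the whole proof should reduce to translating the support condition into the component-wise non-vanishing $D^{(i)}\neq 0$ and then reading off each implication from that translation.

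The key step will be to verify that $\mathfrak D\in\mathcal D(W)$ is supported on $\mathcal Z_i(W)$ if and only if $D^{(i)}\neq 0$. Since $\mathcal Z(W)$ is reduced of Krull dimension at most one by Theorem \ref{center is noetherian}, the localization $\mathcal Z(W)_{\mathcal P_i(W)}$ is already a field, equal to $\mathcal F_i(W)$; consequently the base change $\mathcal D(W)\otimes_{\mathcal Z(W)}\mathcal F_i(W)$ coincides with the ordinary localization $\mathcal D(W)_{\mathcal P_i(W)}$. Therefore $D^{(i)} = 0$ holds precisely when some $\mathfrak V\in \mathcal Z(W)\setminus \mathcal P_i(W)$ annihilates $\mathfrak D$, i.e., when $\ann(\mathfrak D)\not\subseteq \mathcal P_i(W)$; by the alternative description of the support stated right after its definition, this is the statement that $\mathcal P_i(W)$ fails to lie in $\supp(\mathfrak D)$, i.e., that $\mathfrak D$ is not supported on $\mathcal Z_i(W)$.

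With this translation in place both directions are direct. For ($\Rightarrow$), if $\mathfrak D_1\mathfrak D\mathfrak D_2\neq 0$ for some $\mathfrak D\in\mathcal D(W)$, then by injectivity of $\iota$ some coordinate $D_1^{(i)}D^{(i)}D_2^{(i)}$ is nonzero, which forces $D_1^{(i)},D_2^{(i)}\neq 0$ and identifies a common component $\mathcal Z_i(W)$ on which both are supported. For ($\Leftarrow$), given $i$ with $D_1^{(i)},D_2^{(i)}\neq 0$, the simplicity of $M_{n_i}(\mathcal F_i(W))$ produces some $X$ with $D_1^{(i)}XD_2^{(i)}\neq 0$; since $\mathcal D(W)$ is module-finite over $\mathcal Z(W)$, I clear denominators to find $\mathfrak D\in\mathcal D(W)$ and $\mathfrak V\in \mathcal Z(W)\setminus\mathcal P_i(W)$ with $\mathfrak V X = D^{(i)}$, after which $\iota(\mathfrak V\mathfrak D_1\mathfrak D\mathfrak D_2)$ has nonzero $i$-th coordinate and injectivity yields $\mathfrak D_1\mathfrak D\mathfrak D_2\neq 0$. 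The main obstacle is the first step, because the literal definition of support refers to primes of $\mathcal D(W)$ containing $\ann(\mathfrak D)$ rather than to points of $\mathcal Z(W)$; once one observes that Krull dimension at most one forces localization at a minimal prime of $\mathcal Z(W)$ to coincide with passage to the residue field $\mathcal F_i(W)$, the remaining calculation in a product of central simple algebras is routine.
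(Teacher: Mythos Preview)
Your proof is correct and follows exactly the route the paper intends: the paper itself offers no proof beyond stating that the result is a consequence of the localization isomorphism \eqref{localization equation} and leaving the details to the reader, and your argument supplies precisely those details by translating support into componentwise nonvanishing and then exploiting simplicity of each factor $M_{n_i}(\mathcal F_i(W))$. One small remark: in the $(\Leftarrow)$ direction you do not actually need module-finiteness to clear denominators, since every element of the localization $\mathcal D(W)_{\mathcal P_i(W)}$ is already of the form $\mathfrak D/s$ with $\mathfrak D\in\mathcal D(W)$ and $s\in\mathcal Z(W)\setminus\mathcal P_i(W)$; the extra factor of $\mathfrak V$ in your final line is likewise harmless but redundant.
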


\section{Consequences of the algebraic structure of $\mathcal{D}(W)$}
We demonstrate in this section that the algebraic structure of $\mathcal{D}(W)$ has strong consequences for the form of the weight matrix $W(x)$.  Most importantly, we prove the Classification Theorem \ref{classification theorem}.

We will focus exclusively on the case when $\mathcal{D}(W)$ is full.  With this in mind, throughout this section the elements $\mathfrak V_1,\dots,\mathfrak V_N$ will denote an orthogonal system for $\mathcal D(W)$, i.e., a collection of nonzero, $W$-symmetric matrix differential operators in $\mathcal{D}(W)$ with $\mathfrak V_i\mathfrak V_j=0$ for $i\neq j$ and with $\mathfrak V_1+\dots+\mathfrak V_N\in\mathcal{Z}(W)$ a non-zero divisor.
Of course, the existence of an orthogonal system is guaranteed by Theorem \ref{idempotents theorem}.

\subsection{Cyclic modules and diagonalization of matrix weights}
To understand the application of the orthogonal system to our understanding of the structure of $\mathcal D(W)$, we first consider a certain left $\Omega(x)$-module and its right counterpart.
For each $\mathfrak V_i$, we consider the left $\Omega(x)$-module
\begin{equation}\label{equation for M}
\mathcal M_i := \{\vec{\mathfrak u}\in\Omega(x)^{\oplus N}: \vec{\mathfrak u}^T\mathfrak V_j =\vec 0^T, \ \forall i\neq j\},
\end{equation}
as well as the right $\Omega(x)$-module
\begin{equation}\label{equation for N}
\mathcal N_i := \{\vec{\mathfrak w}\in\Omega(x)^{\oplus N}: \mathfrak V_j\vec{\mathfrak w} =\vec 0, \ \forall i\neq j\}.
\end{equation}
The next result proves that the modules $\mathcal M_i$ and $\mathcal N_i$ are cyclic.
\begin{thm}
\label{princip}
For each $i$, there exists $\vec{\mathfrak u_i},\vec{\mathfrak w_i}\in \Omega(x)^{\oplus N}$ with 
\begin{equation}
\mathcal M_i = \Omega(x)\vec{\mathfrak u_i}\ \ \text{and}\ \ \mathcal N_i = \vec{\mathfrak w_i}\Omega(x).
\end{equation}
\end{thm}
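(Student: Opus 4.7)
The plan is to descend the problem to the Ore quotient division ring $\mathcal Q$ of $\Omega(x)$, use the fullness of $\mathcal D(W)$ to extract a rank-one structure on each $\mathfrak V_j$ over $\mathcal Q$, and then lift back using that $\Omega(x)$ is a (noncommutative) principal ideal domain. We treat $\mathcal M_i$; the argument for $\mathcal N_i$ is parallel, with left and right interchanged throughout.

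The key preliminary step is that $\mathfrak V := \mathfrak V_1 + \cdots + \mathfrak V_N$ is a non-zero-divisor in $M_N(\Omega(x))$, hence invertible in $M_N(\mathcal Q)$ by Goldie's theorem applied to the simple Noetherian ring $M_N(\Omega(x))$. Since $\mathfrak V$ is central and a non-zero-divisor in $\mathcal D(W)$, using the structure of $\mathcal Z(W)$ from Section~3.2 one sees that its generalized Fourier image $A_{\mathfrak V}(n) \in M_N(\bbc[n])$ has $\det A_{\mathfrak V}(n)$ a nonzero polynomial in $n$, so the eigenvalue $\Lambda_{\mathfrak V}(n) = A_\mathfrak V(n)$ acts invertibly on $P(x, n)$ for all but finitely many $n$. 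Given $\mathfrak X \in M_N(\Omega[x])$ with $\mathfrak X \mathfrak V = 0$, expanding $P(x,n) \cdot \mathfrak X = \sum_k B_{n,k} P(x, k)$ and applying $\mathfrak V$ yields $B_{n,k} \Lambda_{\mathfrak V}(k) = 0$; invertibility of $\Lambda_\mathfrak V(k)$ for cofinite $k$ forces $B_{n, k} = 0$ outside a fixed finite set of $k$. Thus $P(x,n) \cdot \mathfrak X$ has $x$-degree bounded uniformly in $n$, which contradicts the fact that for $\mathfrak X \neq 0$ the leading $x$-coefficient of $P(x,n) \cdot \mathfrak X$ is a nonzero matrix polynomial in $n$ and so this degree grows linearly in $n$. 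Clearing denominators in $x$ extends the conclusion to $\mathfrak X \in M_N(\Omega(x))$.

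With $\mathfrak V$ invertible in $M_N(\mathcal Q)$ in hand, let $r_j$ denote the $\mathcal Q$-rank of $\mathfrak V_j$. Subadditivity of rank gives $N = \mathrm{rank}(\mathfrak V) \leq \sum_j r_j$; combined with $r_j \geq 1$ (as $\mathfrak V_j \neq 0$) and $N$ summands, this forces $r_j = 1$ for every $j$. Define $\mathcal M_i^{\mathcal Q} := \{\vec u^T \in \mathcal Q^{1\times N} : \vec u^T \mathfrak V_j = 0, \; \forall j \neq i\}$. The row space of $\mathfrak V_i$ sits inside $\mathcal M_i^\mathcal Q$ by orthogonality, so $\dim_\mathcal Q \mathcal M_i^\mathcal Q \geq r_i = 1$. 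Conversely, if $\vec x \in \mathcal M_i^\mathcal Q \cap \sum_{j \neq i} \mathcal M_j^\mathcal Q$, write $\vec x = \sum_{j \neq i} \vec x_j$ with $\vec x_j \in \mathcal M_j^\mathcal Q$; right multiplication by $\mathfrak V_\ell$ for $\ell \neq i$ yields $\vec x_\ell \mathfrak V_\ell = 0$, and combined with $\vec x_\ell \mathfrak V_k = 0$ for $k \neq \ell$ this gives $\vec x_\ell \mathfrak V = 0$, so $\vec x_\ell = 0$ by invertibility of $\mathfrak V$ in $M_N(\mathcal Q)$. Hence $\sum_j \mathcal M_j^\mathcal Q$ is direct inside $\mathcal Q^{1\times N}$, so $\sum_j \dim_\mathcal Q \mathcal M_j^\mathcal Q \leq N$; combined with the lower bound $\geq N$, we conclude $\dim_\mathcal Q \mathcal M_j^\mathcal Q = 1$ for every $j$.

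Finally, $\Omega(x)$ is a noncommutative Euclidean domain (via division with remainder for differential operators), hence a left and right principal ideal domain; every finitely generated torsion-free left module over it is free, of rank equal to its $\mathcal Q$-dimension. Since $\Omega(x)^{1\times N}$ is left Noetherian, $\mathcal M_i$ is finitely generated and torsion-free as a submodule of a free module, with $\dim_\mathcal Q (\mathcal M_i \otimes_{\Omega(x)} \mathcal Q) = 1$. Therefore $\mathcal M_i$ is free of rank $1$, so $\mathcal M_i = \Omega(x) \vec{\mathfrak u_i}$ for a single generator $\vec{\mathfrak u_i}$; the symmetric argument produces $\vec{\mathfrak w_i}$ with $\mathcal N_i = \vec{\mathfrak w_i}\Omega(x)$. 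The main obstacle is the preliminary invertibility step, which requires bridging the abstract non-zero-divisor property of $\mathfrak V$ in $\mathcal D(W)$ to the corresponding property in the ambient matrix differential operator algebra $M_N(\Omega(x))$.
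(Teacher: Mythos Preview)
Your argument is correct and its core coincides with the paper's: both use that $\Omega(x)$ is a left/right principal ideal domain, that each $\mathcal M_i$ is nonzero (it contains the rows of $\mathfrak V_i$), and that the $\mathcal M_i$ meet pairwise trivially because any common element annihilates $\mathfrak V = \sum_j \mathfrak V_j$. The paper runs this directness argument directly over $\Omega(x)$ (submodules of $\Omega(x)^{\oplus N}$ are free, and $N$ nonzero submodules with pairwise zero intersection inside a rank-$N$ free module each have rank~$1$), whereas you pass to the Ore quotient $\mathcal Q$, compute $\dim_{\mathcal Q}\mathcal M_i^{\mathcal Q}=1$, and then descend. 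Your detour through $\mathrm{rank}_{\mathcal Q}\mathfrak V_j=1$ is harmless but unnecessary: the directness of $\bigoplus_j \mathcal M_j^{\mathcal Q}$ together with the lower bound $\dim\mathcal M_j^{\mathcal Q}\geq 1$ already pins the dimension.

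The one place you genuinely add something is the non-zero-divisor step. The paper simply asserts that $\vec{\mathfrak u}^T\mathfrak V = 0$ forces $\vec{\mathfrak u}=0$, implicitly using that $\mathfrak V$ is a non-zero-divisor in $M_N(\Omega(x))$, not merely in $\mathcal D(W)$. Your degree-growth argument via the expansion $P(x,n)\cdot\mathfrak X=\sum_k B_{n,k}P(x,k)$ is a clean way to bridge this. Your justification that $\det\Lambda_{\mathfrak V}(n)\not\equiv 0$ is vague as stated (``using the structure of $\mathcal Z(W)$''); a sharper argument is that each $\|P(x,n)\|_W^{-1}\Lambda_{\mathfrak V_i}(n)\|P(x,n)\|_W$ is Hermitian, the $N$ of them pairwise annihilate and are each nonzero for cofinitely many $n$, hence are simultaneously diagonalizable with disjoint rank-one supports, so their sum is invertible for those $n$.
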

\begin{proof}
The ring $\Omega(x)$ is a left and right principal ideal domain.
The module $\mathcal M_i$ is a submodule of the free left module $\Omega(x)^{\oplus N}$ in this PID, and is therefore free.
Furthermore, $\mathcal M_i$ contains the transposes of all the row vectors of $\mathfrak V_i$.
Since $\mathfrak V_i$ is nonzero, this implies that $\mathcal M_i$ is nonzero.
In particular, each of the $\mathcal M_i$ has rank at least $1$ as a left $\Omega(x)$-module.

Now if $\vec{\mathfrak u}\in\mathfrak V_i\cap \mathfrak V_j$ for some $i\neq j$, then $\vec{\mathfrak u}^T\mathfrak V_k=\vec 0^T$ for all $1\leq k\leq N$.
This in turn would imply that $\vec{\mathfrak u}^T(\mathfrak V_1+\dots+\mathfrak V_N)=\vec 0^T$.
Since $\mathfrak V_1+\dots+\mathfrak V_N$ is not a zero divisor, this implies that $\vec{\mathfrak u}^T = \vec 0^T$.
Thus $\mathcal M_i\cap \mathcal M_j = \vec 0$ for $i\neq j$.
Since each of the $\mathcal M_i$ is a submodule of $\Omega(x)^{\oplus N}$, it follows immediately that $\mathcal M_i$ is free of rank $1$ for all $i$.
The statement of the lemma for $\mathcal M_i$ follows immediately follows immediately.
The same proof works for $\mathcal N_i$.
\end{proof}

We fix some choice of generators $\vec{\mathfrak u_i}$ of $\mathcal M_i$ for $i=1,\dots,N$ and define $\mathfrak U$ to be the matrix differential operator whose rows are $\vec{\mathfrak u_1}^T,\dots,\vec{\mathfrak u_N}^T$
\begin{equation}\label{equation for U}
\mathfrak U = [\vec{\mathfrak u_1}\ \vec{\mathfrak u_2}\ \dots\ \vec{\mathfrak u_N}]^T.
\end{equation}
Each of the $\vec{\mathfrak u_i}$ may be written as
\begin{equation}
\vec{\mathfrak u_i} = \sum_{j=0}^{\ell_i}\partial_x^j\vec u_{ji}(x)
\end{equation}
for some $\vec u_{0i}(x),\dots,\vec u_{\ell_ii}(x)\in \bbc(x)^{\oplus N}$ with $\vec u_{\ell_ii}(x)$ not identically zero.
We define $U(x)$ to be the matrix whose rows are $\vec u_{\ell_11}(x),\dots,\vec u_{\ell_NN}(x)$
\begin{equation}\label{equation for U(x)}
U(x) = [\vec u_{\ell_11}(x)\ \vec u_{\ell_22}(x)\ \dots\ \vec u_{\ell_NN}(x)]^T.
\end{equation}
We will see below that $U(x)$ is a unit in $M_N(\bbc(x))$.

For all $i$, we know that $\vec{\mathfrak u_i}^T\mathfrak V_i\mathfrak V_j=\vec 0^T$ for $i\neq j$.
Moreover, by definition $\vec{\mathfrak u_i}^T\mathfrak V_j = \vec 0^T$ for $i\neq j$.
This means that $(\vec{\mathfrak u_i}^T\mathfrak V_i)^T\in \mathcal M_i$ for all $i$, and consequently there exists $\mathfrak v_i\in\Omega(x)$ such that
\begin{equation}\label{eigenvalue equation}
\mathfrak v_i\vec{\mathfrak u_i}^T = \vec{\mathfrak u_i}^T\mathfrak V_i\ \text{and}\ \vec{\mathfrak u_i}^T\mathfrak V_j = \vec 0^T, \ \ \forall i\neq j.
\end{equation}
We will let $m_i$ be the order of $\mathfrak v_i$ and write
\begin{equation}\label{equation for v}
\mathfrak v_i = \sum_{j=0}^{m_i}\partial_x^j v_{ji}(x)
\end{equation}
for some rational functions $v_{ji}(x)$.
\begin{remk}
The principality of certain annihilator ideals is reminiscent of Rickart $*$-rings \cite[p. 12, Definition 2]{berberian1972}.
While $\mathcal D(W)$ itself is not a Rickart $*$-ring, as it contains (left) annihilators not generated by idempotents, it would be interesting to know what properties $\mathcal D(W)$ might share with more general families of $*$-algebras.
More specifically, it would be interesting to know more about the ideal structure of $\mathcal D(W)$ in general.
\end{remk}

One of the more immediate consequences of the fullness of $\mathcal D(W)$ is that there exists a \emph{rational} matrix which diagonalizes the bilinear form associated to $W(x)$.
\begin{thm}
Let $W(x)$ be a weight matrix with $\mathcal D(W)$ full and let $\mathfrak V_1,\dots, \mathfrak V_N$, $\mathfrak U$ and $U(x)$ be defined as above.
The matrix differential operator $\mathfrak UW(x)\mathfrak U^*$ is diagonal with leading coefficient
\begin{equation}
\diag(r_1(x),\dots,r_N(x)) = U(x)W(x)U(x)^*\ \text{where}\ r_i(x) = \vec u_{\ell_ii}(x)^TW(x)\vec u_{\ell_ii}(x)^{T*}.
\end{equation}
\end{thm}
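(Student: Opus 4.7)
The plan is to establish diagonality of $\mathfrak U W \mathfrak U^*$ first and then read off the leading coefficient of each diagonal entry directly. The crucial maneuver, which underlies the whole argument, is to use the $W$-symmetry of each $\mathfrak V_i$ to transfer the left-annihilator relations in \eqref{eigenvalue equation} into right-annihilator relations for $W \vec{\mathfrak u}_j^{T*}$.

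Fix $i \neq j$ and consider the $(i,j)$-entry $\vec{\mathfrak u}_i^T W \vec{\mathfrak u}_j^{T*}$ of $\mathfrak U W \mathfrak U^*$. By \eqref{eigenvalue equation} we have $\vec{\mathfrak u}_j^T \mathfrak V_i = \vec 0^T$; taking formal adjoints yields $\mathfrak V_i^* \vec{\mathfrak u}_j^{T*} = \vec 0$. The $W$-symmetry of $\mathfrak V_i$ rewrites as $\mathfrak V_i^* = W^{-1} \mathfrak V_i W$, so substitution gives $\mathfrak V_i (W \vec{\mathfrak u}_j^{T*}) = \vec 0$. Left-multiplying by $\mathfrak v_i$ and using the eigenvalue relation $\mathfrak v_i \vec{\mathfrak u}_i^T = \vec{\mathfrak u}_i^T \mathfrak V_i$ from \eqref{eigenvalue equation} produces
\begin{equation*}
\mathfrak v_i \cdot \vec{\mathfrak u}_i^T W \vec{\mathfrak u}_j^{T*} = \vec{\mathfrak u}_i^T \mathfrak V_i (W \vec{\mathfrak u}_j^{T*}) = 0.
\end{equation*}
Since $\Omega(x)$ is a domain, this will force the off-diagonal entry to vanish provided $\mathfrak v_i \neq 0$.

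The main obstacle is therefore showing $\mathfrak v_i \neq 0$. Since $\mathfrak V_i \mathfrak V_k = 0$ for $k \neq i$, each transposed row of $\mathfrak V_i$ lies in $\mathcal M_i$, so Theorem \ref{princip} furnishes a column vector $\vec{\mathfrak a}_i \in \Omega(x)^{\oplus N}$ with $\mathfrak V_i = \vec{\mathfrak a}_i \vec{\mathfrak u}_i^T$. Substituting this factorisation into \eqref{eigenvalue equation} and cancelling $\vec{\mathfrak u}_i^T$ on the right (valid because $\vec{\mathfrak u}_i \neq \vec 0$ and $\Omega(x)$ is a domain) identifies $\mathfrak v_i = \vec{\mathfrak u}_i^T \vec{\mathfrak a}_i$, whence $\mathfrak V_i^2 = \vec{\mathfrak a}_i \mathfrak v_i \vec{\mathfrak u}_i^T$. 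The proper involution on $\mathcal D(W)$ forces $\mathfrak V_i^2 = \mathfrak V_i \mathfrak V_i^\dag \neq 0$ (since $\mathfrak V_i \neq 0$), and so $\mathfrak v_i \neq 0$.

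With diagonality established, the leading coefficient is obtained by a direct symbol computation: the highest-order coefficient of the diagonal entry $\vec{\mathfrak u}_i^T W \vec{\mathfrak u}_i^{T*}$ equals, up to the conventional sign produced by the formal adjoint, the rational function $r_i = \vec u_{\ell_i i}^T W \vec u_{\ell_i i}^{T*}$, which is precisely the $(i,i)$-entry of $U(x) W(x) U(x)^*$. For $i \neq j$, the $(i,j)$-entry of $U(x) W(x) U(x)^*$ is the leading symbol of the corresponding entry of $\mathfrak U W \mathfrak U^*$, and hence vanishes by the diagonality established above. Assembling these computations yields $U(x) W(x) U(x)^* = \diag(r_1, \dots, r_N)$, the claimed leading coefficient of $\mathfrak U W \mathfrak U^*$.
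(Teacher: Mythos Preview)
Your proof is correct and follows essentially the same route as the paper's: both use the $W$-symmetry of $\mathfrak V_i$ to convert the left-annihilation relations \eqref{eigenvalue equation} into right-annihilation relations for $W\vec{\mathfrak u}_j^{T*}$, and then use the eigenvalue relation together with the fact that scalar differential operators form a domain to kill the off-diagonal entries. The paper packages this as the single matrix identity $\mathfrak v_iE_{ii}\,\mathfrak UW\mathfrak U^* = \mathfrak UW\mathfrak U^*E_{ii}\mathfrak v_i^*$, whereas you unpack it entry-by-entry, but the content is identical. Your explicit verification that $\mathfrak v_i\neq 0$ via the factorization $\mathfrak V_i = \vec{\mathfrak a}_i\vec{\mathfrak u}_i^T$ and $\mathfrak V_i^2\neq 0$ is a point the paper glosses over but does need; it is a welcome addition.
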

\begin{proof}
Applying (\ref{eigenvalue equation}), we see
$$\mathfrak U\mathfrak V_i = E_{ii}\mathfrak v_i\mathfrak U.$$
Now since $\mathfrak V_i$ is $W$-symmetric, this also tells us
$$\mathfrak V_iW(x)\mathfrak U^* = W(x)\mathfrak U^*E_{ii}\mathfrak v_i^*.$$
Multiplying both sides of the equality on the left by $\mathfrak U$, this says that for all $i$
$$\mathfrak v_iE_{ii}\mathfrak UW(x)\mathfrak U^* = \mathfrak UW(x)\mathfrak U^*E_{ii}\mathfrak v_i^*.$$
In particular, $\mathfrak UW(x)\mathfrak U^*$ must be a diagonal matrix
$$\mathfrak UW(x)\mathfrak U^* = \diag(\mathfrak b_1,\dots,\mathfrak b_N),\ \text{where}\ \mathfrak b_i = \vec{\mathfrak u}_i^TW(x)\vec{\mathfrak u}_i^{T*}.$$
Since $W(x)$ is Hermitian and positive-definite on $(x_0,x_1)$, we know that the function $r_i(x) := \vec u_{\ell_ii}(x)^TW(x)\vec u_{\ell_ii}(x)^{T*}$ is not identically zero.
Consequently the differential operator $\mathfrak b_i$ is order $2\ell_i$ with leading coefficient $r_i(x)$ for all $i$.
In particular
$$U(x)W(x)U(x)^* = R(x) := \diag(r_1(x),\dots,r_N(x)).$$
\end{proof}
\begin{remk}
The $U(x)$ and $r_i(x)$ are not quite the same as the $T(x)$ and $f_i(x)$ from (\ref{structural equation}).
In particular the $r_i(x)$ do not need to be classical weights.
However, as we will see below the $r_i(x)$ will be classical weights up to multiplication by some rational functions.
\end{remk}

Note that in the proof of the previous theorem, we also showed the following symmetry-type result for the $\mathfrak v_i$'s.
\begin{equation}\label{symmetry equation}
\mathfrak v_i\mathfrak b_i = \mathfrak b_i\mathfrak v_i^*.
\end{equation}
We also showed above that $\vec{\mathfrak u_i}^TW(x)\vec{\mathfrak u_j}^{T*} = 0$ for $i\neq j$, so that $\vec u_{\ell_ii}(x)^TW(x)\vec u_{\ell_jj}(x)^{T*} = 0$ for $i\neq j$.
Since $W(x)$ is Hermitian and positive-definite on $(x_0,x_1)$, it follows that $\vec u_{\ell_11}(x),\dots,\vec u_{\ell_NN}(x)$ are $\bbc(x)$-linearly independent.
In particular $\det(U(x))$ is not identically zero so that $U(x)$ is a unit in $M_N(\bbc(x))$.

\subsection{Control of the size of Fourier algebras}
As a consequence of the previous theorem, we find that the involution $\dag$ preserves a large subalgebra of $M_N(\Omega(x))$.
Moreover, we can deduce that the right Fourier algebra of the orthogonal polynomials of $W(x)$ is very large.

To see this, first note that the operators $\mathfrak v_1,\dots,\mathfrak v_N$ may be related to one another, and these relationships are encoded by the center $\mathcal Z(W)$ of $\mathcal D(W)$.
\begin{prop}
Let $W(x)$ be a weight matrix with $\mathcal D(W)$ full and let $\mathfrak V_1,\dots, \mathfrak V_N$, $\mathfrak U$, $\mathfrak v_1,\dots,\mathfrak v_N$ and $U(x)$ be defined as above.
If $\mathfrak V_i$ and $\mathfrak V_j$ are supported on the same irreducible component of $\mathcal Z(W)$, then $\mathfrak v_i$ and $\mathfrak v_j$ are Darboux conjugates and $r_i(x)/r_j(x)$ is rational.
\end{prop}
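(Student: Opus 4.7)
I will prove the two conclusions separately: first the Darboux conjugacy $\mathfrak{h}\mathfrak{v}_j=\mathfrak{v}_i\mathfrak{h}$ for a nonzero $\mathfrak{h}\in\Omega(x)$, and then the rationality of $r_i(x)/r_j(x)$.

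For the Darboux conjugacy, the preceding proposition produces some $\mathfrak D\in\mathcal D(W)$ with $\mathfrak V_i\mathfrak D\mathfrak V_j\neq 0$. Since $\mathfrak V_j\mathfrak V_k=0$ for $k\neq j$, the row vector $\vec{\mathfrak u}_i^T\mathfrak D\mathfrak V_j$ lies in $\mathcal M_j=\Omega(x)\vec{\mathfrak u}_j^T$ by Theorem~\ref{princip}, so $\vec{\mathfrak u}_i^T\mathfrak D\mathfrak V_j=\mathfrak h\vec{\mathfrak u}_j^T$ for some $\mathfrak h\in\Omega(x)$. Each row of $\mathfrak V_i$ lies in $\Omega(x)\vec{\mathfrak u}_i^T$ (since $\mathfrak V_i\mathfrak V_k=0$ for $k\neq i$), so writing $\mathfrak V_i=\vec{\mathfrak c}\vec{\mathfrak u}_i^T$ gives $\mathfrak V_i\mathfrak D\mathfrak V_j=\vec{\mathfrak c}\,\mathfrak h\,\vec{\mathfrak u}_j^T$; nonvanishing of the left side forces $\mathfrak h\neq 0$. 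Next, apply the central non-zero-divisor $\mathfrak Z_0:=\mathfrak V_1+\cdots+\mathfrak V_N$ on the right of the identity. Using the centrality of $\mathfrak Z_0$ together with $\vec{\mathfrak u}_k^T\mathfrak Z_0=\mathfrak v_k\vec{\mathfrak u}_k^T$ (which holds because $\vec{\mathfrak u}_k^T\mathfrak V_\ell=0$ for $\ell\neq k$), I obtain
$$\mathfrak v_i\mathfrak h\vec{\mathfrak u}_j^T=\vec{\mathfrak u}_i^T\mathfrak Z_0\mathfrak D\mathfrak V_j=\vec{\mathfrak u}_i^T\mathfrak D\mathfrak V_j\mathfrak Z_0=\mathfrak h\mathfrak v_j\vec{\mathfrak u}_j^T.$$
Since $\vec{\mathfrak u}_j^T$ has trivial left annihilator in $\Omega(x)$ (as a generator of the free rank-one module $\mathcal M_j$), this yields $\mathfrak v_i\mathfrak h=\mathfrak h\mathfrak v_j$, the desired Darboux conjugacy.

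For the rationality of $r_i/r_j$, I extract leading $\partial_x$-coefficients from the same identity $\vec{\mathfrak u}_i^T\mathfrak D\mathfrak V_j=\mathfrak h\vec{\mathfrak u}_j^T$. Writing $\mathfrak D=\partial_x^eA_e(x)+\cdots$, $\mathfrak V_j=\partial_x^{v_j}B_j(x)+\cdots$, and $\mathfrak h=\partial_x^dh(x)+\cdots$ with $A_e,B_j\in M_N(\bbc[x])$ and $h\in\bbc(x)$, matching top-order terms gives $\vec u_{\ell_j,j}^T=h^{-1}\vec u_{\ell_i,i}^TA_eB_j$. Substituting into the formula for $r_j=\vec u_{\ell_j,j}^TW\vec u_{\ell_j,j}^{T*}$ and invoking two top-order commutation identities obtained from $W$-symmetry---namely $B_jW=(-1)^{v_j}WB_j^*$ from $\mathfrak V_j^\dag=\mathfrak V_j$, and $WA_e^*=C(x)W$ for some $C(x)\in M_N(\bbc[x])$ coming from the rationality of the leading coefficient of $\mathfrak D^\dag\in\mathcal D(W)\subset M_N(\Omega[x])$---I can commute $W$ past all rational matrix factors to obtain
$$r_j=(h\bar h)^{-1}(-1)^{v_j}\,\vec u_{\ell_i,i}^T\,A_eB_j^2C(x)\cdot W\vec u_{\ell_i,i}^{T*}.$$
Finally, inverting $U(x)W(x)U(x)^*=\diag(r_1,\ldots,r_N)$ shows that $W\vec u_{\ell_i,i}^{T*}=r_i(x)\vec q_i(x)$, where $\vec q_i$ is the $i$-th column of $U(x)^{-1}\in M_N(\bbc(x))$. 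All remaining factors then pair rational vectors and matrices, so $r_j=r_i\cdot\lambda(x)$ for some $\lambda(x)\in\bbc(x)$.

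The main technical obstacle I anticipate is the careful bookkeeping of signs and leading-coefficient conventions for the formal adjoint $\mathfrak D^*$ and the $W$-adjoint $\mathfrak D^\dag$ in the right-action differential-operator setting used throughout the paper, together with ensuring that subleading terms produced when commuting $W$ past $\partial_x^{v_j}$ or $\partial_x^e$ do not contaminate the leading-order identities above; the Darboux conjugacy portion is comparatively direct.
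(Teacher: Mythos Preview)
Your Darboux conjugacy argument is correct and essentially the same as the paper's: the paper encodes your row-vector identity as $\mathfrak U\mathfrak V_i\mathfrak D\mathfrak V_j=E_{ij}\mathfrak d\,\mathfrak U$ and then uses centrality of $\mathfrak V_1+\cdots+\mathfrak V_N$ to force $\mathfrak v_i\mathfrak d=\mathfrak d\mathfrak v_j$, exactly as you do with $\mathfrak h$.

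For the rationality of $r_i/r_j$, however, your route has a gap. From $\vec{\mathfrak u}_i^T\mathfrak D\mathfrak V_j=\mathfrak h\,\vec{\mathfrak u}_j^T$ you extract the top-order identity $\vec u_{\ell_j,j}^T=h^{-1}\vec u_{\ell_i,i}^TA_eB_j$, but this is only valid when the naive leading term $\vec u_{\ell_i,i}^TA_eB_j$ is nonzero. Nothing in your setup prevents this matrix product from vanishing, in which case the order of the left side drops and your formula for $\vec u_{\ell_j,j}^T$ (and hence the entire substitution into $r_j$) fails. Your subsequent manipulations using $B_jW=(-1)^{v_j}WB_j^*$ and $WA_e^*=CW$ rely specifically on the \emph{leading} coefficients of $\mathfrak V_j$ and $\mathfrak D$, so you cannot simply descend to subleading terms without losing these identities.

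The paper sidesteps this by staying at the scalar level. It takes $\mathfrak D$ to be $W$-symmetric, records both $\mathfrak U\mathfrak V_i\mathfrak D\mathfrak V_j=E_{ij}\mathfrak d\,\mathfrak U$ and $\mathfrak U\mathfrak V_j\mathfrak D\mathfrak V_i=E_{ji}\wt{\mathfrak d}\,\mathfrak U$, and then uses $(\mathfrak V_i\mathfrak D\mathfrak V_j)^\dag=\mathfrak V_j\mathfrak D\mathfrak V_i$ together with $\mathfrak UW\mathfrak U^*=\diag(\mathfrak b_1,\dots,\mathfrak b_N)$ to obtain the scalar identity $\wt{\mathfrak d}\,\mathfrak b_i=\mathfrak b_j\,\mathfrak d^*$. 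Since $\mathfrak d,\wt{\mathfrak d}\in\Omega(x)$ are nonzero and $\mathfrak b_i,\mathfrak b_j$ are scalar differential operators with leading coefficients $r_i,r_j$, comparing leading coefficients (where no cancellation can occur, as products of nonzero rational functions are nonzero) immediately yields rationality of $r_i/r_j$. This is both shorter and free of the matrix-cancellation issue.
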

\begin{proof}
Suppose $\mathfrak V_i$ and $\mathfrak V_j$ are supported on the same irreducible component of $\mathcal Z(W)$.
Then there exists $\mathfrak D\in\mathcal D(W)$ such that $\mathfrak V_i\mathfrak D\mathfrak V_j\neq 0$.
Without loss of generality, we may take $\mathfrak D$ to be $W$-symmetric.
Moreover $\vec{\mathfrak u_k}^T\mathfrak V_i\mathfrak D\mathfrak V_j\mathfrak V_\ell\neq \vec 0^T$ only if $k=i$ and $j=\ell$, and therefore
$$\mathfrak U\mathfrak V_i\mathfrak D\mathfrak V_j = E_{ij}\mathfrak d\mathfrak U$$
for some $\mathfrak d\in\Omega(x)$.
Similarly
$$\mathfrak U\mathfrak V_j\mathfrak D\mathfrak V_i = E_{ji}\wt{\mathfrak d}\mathfrak U$$
for some $\wt{\mathfrak d}\in \Omega(x)$.

Since $\mathfrak V_i\mathfrak D\mathfrak V_j\neq 0$, we know that $\mathfrak d\neq 0$.
Now since $\mathfrak V_1+\dots+\mathfrak V_N$ is in the center of $\mathcal Z(W)$ we know that $\mathfrak V_i\mathfrak D\mathfrak V_j$ commutes with $\mathfrak V_1+\dots+\mathfrak V_N$ and therefore $\mathfrak v_1E_{11}+\dots+\mathfrak v_NE_{NN}$ commutes with $E_{ij}\mathfrak d$.
Consequently $\mathfrak v_i\mathfrak d = \mathfrak d\mathfrak v_j$ and this shows that $\mathfrak v_i$ and $\mathfrak v_j$ are Darboux conjugates.

Next, using the fact that $\mathfrak D$, $\mathfrak V_i$ and $\mathfrak V_j$ are all $W$-symmetric we calculate
$$\wt{\mathfrak d}\mathfrak b_i = \mathfrak b_j\mathfrak d^*.$$
This in particular implies that $\wt{\mathfrak d}$ is nonzero.
Comparing leading coefficients, we see that $r_i(x)/r_j(x)$ must be rational.
This completes the proof.
\end{proof}

As the previous proposition shows, $r_i(x)/r_j(x)$ will be rational for some values of $i,j$.
We can actually say more than this in the case that $W(x)$ is irreducible.
\begin{defn}
A weight matrix $W(x)$ is \vocab{reducible} if there exists a nonsingular constant matrix $A\in M_N(\bbc)$ such that $AW(x)A^*$ is a direct sum of matrix weights of smaller size.
\end{defn}
As shown in \cite{tirao2018} a weight matrix $W(x)$ is reducible if and only if $\mathcal D(W)$ contains a constant idempotent matrix different from $0$ and $I$.
Since our interest is in the classification of weight matrices, it makes sense to focus specifically on irreducible weight matrices.
\begin{prop}
Suppose that $W(x)$ is an irreducible weight matrix with $\mathcal D(W)$ full.
Then for all $i,j$ the ratio $r_i(x)/r_j(x)$ is a rational function.
\end{prop}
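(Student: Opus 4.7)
The plan is to argue the contrapositive. The preceding proposition on Darboux conjugacy already yields rationality of $r_i(x)/r_j(x)$ whenever $\mathfrak V_i$ and $\mathfrak V_j$ are supported on the same irreducible component of $\mathcal Z(W)$, so it suffices to show that under the fullness hypothesis, if $\spec(\mathcal Z(W))$ has at least two irreducible components ($r\geq 2$ in the notation of Theorem~\ref{localization theorem}) then $W(x)$ must be reducible. By the characterization recalled above from \cite{tirao2018}, $W(x)$ is reducible if and only if $\mathcal D(W)$ contains a constant idempotent matrix different from $0$ and $I$, so my task reduces to producing such an idempotent whenever $r\geq 2$.

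First I would split the orthogonal system by irreducible component. From the proof of Theorem~\ref{idempotents theorem}, each $\mathfrak V_i$ corresponds, up to rescaling by a central element, to a matrix unit in exactly one summand $M_{n_k}(\mathcal F_k(W))$ of the localization, giving a partition $\{1,\dots,N\}=C_1\sqcup\cdots\sqcup C_r$ with $|C_k|=n_k$. Set $\mathfrak W_k:=\sum_{i\in C_k}\mathfrak V_i$ and $\mathfrak Z:=\sum_i\mathfrak V_i$. The preceding proposition, namely that $\mathfrak D_1\mathcal D(W)\mathfrak D_2\neq 0$ iff $\mathfrak D_1,\mathfrak D_2$ share a component, gives $\mathfrak V_i\mathcal D(W)\mathfrak V_j=0$ whenever $i$ and $j$ lie in different classes, so for every $\mathfrak D\in\mathcal D(W)$ a short computation using centrality of $\mathfrak Z$ yields
\[
\mathfrak Z(\mathfrak W_k\mathfrak D-\mathfrak D\mathfrak W_k)=\mathfrak W_k\mathfrak D\mathfrak Z-\mathfrak Z\mathfrak D\mathfrak W_k=\sum_{i,j\in C_k}(\mathfrak V_i\mathfrak D\mathfrak V_j-\mathfrak V_j\mathfrak D\mathfrak V_i)=0.
\]
Since $\mathfrak Z$ is a non-zero-divisor (being a unit in the localization), this forces $\mathfrak W_k\in\mathcal Z(W)$. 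Orthogonality of the $\mathfrak V_i$'s gives $\mathfrak W_k\mathfrak W_\ell=0$ for $k\neq\ell$, and $\mathfrak Z^{-1}\mathfrak W_k$ are precisely the primitive central idempotents of $\bigoplus_k M_{n_k}(\mathcal F_k(W))$.

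The crux of the argument is to promote these abstract central idempotents to constant idempotent matrices inside $\mathcal D(W)$. Since every element of $\mathcal D(W)$ is degree-filtration preserving, the order-zero-coefficient map $A_0\colon\mathcal D(W)\to M_N(\bbc)$, $\mathfrak D\mapsto I\cdot\mathfrak D$, is a $\bbc$-algebra homomorphism, and positivity of the $\dag$-involution forces each $E_k:=A_0(\mathfrak W_k)$ to be Hermitian with respect to the inner product defined by the zeroth moment $M_0:=\int W(x)\,dx$. These satisfy $E_kE_\ell=0$ for $k\neq\ell$ and $E_k^2=Z_0E_k$ with $Z_0:=\sum_kE_k$. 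The hardest part of the proof is the following obstacle: the $E_k$'s are only shadows of central elements and need not themselves lie in $\mathcal D(W)$, and they could even all vanish if every $\mathfrak V_i$ happens to have positive order. The strategy is to exploit the freedom to modify each $\mathfrak V_i$ by a suitable element of $\mathcal Z(W)$ supported on the same component so that $Z_0$ becomes invertible; then $\widetilde E_k:=E_kZ_0^{-1}$ form a complete orthogonal family of idempotents in $M_N(\bbc)$, and using centrality of $\mathfrak W_k$ together with the three-term recurrence for the monic polynomials $P(x,n)$, one shows that each $\widetilde E_k$ commutes with every coefficient matrix of $P(x,n)$ and therefore lies in $\mathcal D(W)$. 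Once $r\geq 2$ produces at least two such nontrivial constant idempotents, irreducibility of $W(x)$ is contradicted, forcing $r=1$, and the earlier proposition on shared components closes the argument.
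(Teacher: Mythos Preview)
Your overall strategy is to reduce the statement to the claim ``if $\spec(\mathcal Z(W))$ has $r\geq 2$ irreducible components then $W$ is reducible'', and then manufacture a nontrivial constant idempotent in $\mathcal D(W)$ out of the partial sums $\mathfrak W_k=\sum_{i\in C_k}\mathfrak V_i$. The first part of your argument is fine: the computation showing $\mathfrak W_k\in\mathcal Z(W)$ is correct, and the reduction via the preceding proposition is valid. The gap is in the last step. Your constant-term matrices $E_k=A_0(\mathfrak W_k)$ are not elements of $\mathcal D(W)$, and there is no mechanism offered that forces them (or their renormalizations $E_kZ_0^{-1}$) into $\mathcal D(W)$. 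The appeal to the three-term recurrence does not help: a constant matrix $A$ lies in $\mathcal D(W)$ precisely when it commutes with all coefficients of all $P(x,n)$, and nothing about the centrality of $\mathfrak W_k$ in $\mathcal D(W)$ transfers to such commutation relations for $A_0(\mathfrak W_k)$. More structurally, $\mathcal Z(W)$ can have $r\geq 2$ minimal primes while $\spec(\mathcal Z(W))$ remains connected, so there need not be any nontrivial idempotent in $\mathcal Z(W)$ at all; your $\mathfrak W_k/\mathfrak Z$ are idempotents only after passing to $\mathcal F(W)$, which is exactly where the geometry forgets the intersections of components. The ``freedom to modify each $\mathfrak V_i$'' you invoke does not address this obstruction.

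The paper's argument avoids this detour through the component structure of $\mathcal Z(W)$ entirely. It works directly with the diagonalization $W(x)=U(x)^{-1}R(x)(U(x)^{-1})^*$: for any $W$-symmetric $\mathfrak D\in\mathcal D(W)$, the conjugate $U(x)\mathfrak DU(x)^{-1}$ lies in $M_N(\Omega(x))$, and comparing leading coefficients in the symmetry relation $R(x)(U\mathfrak DU^{-1})^*=(U\mathfrak DU^{-1})R(x)$ shows that a nonzero $(j,k)$ entry forces $r_j/r_k$ to be rational. Hence if one sets $S=\{j:r_i/r_j\text{ not rational}\}$ for some fixed $i$, every $U\mathfrak DU^{-1}$ is block-diagonal along the partition $S\sqcup S'$, so the explicit rational idempotent $G(x)=U(x)^{-1}\bigl(\sum_{j\in S}E_{jj}\bigr)U(x)$ commutes with all of $\mathcal D(W)$. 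Fullness is then used to conclude $G(x)\in\mathcal D(W)$, hence $G(x)$ is a constant idempotent distinct from $0,I$, contradicting irreducibility. The point is that the paper produces a concrete order-zero commutant inside $M_N(\bbc(x))$ rather than trying to lift idempotents from the localization of the center.
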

\begin{proof}
Suppose that for some $i$ there exists a $j$ such that the ratio $r_i(x)/r_j(x)$ is not a rational function.
Define $S\subseteq\{1,\dots,N\}$ by
$$S = \{j: r_i(x)/r_j(x)\ \text{is not rational}\}.$$
Then by assumption both $S$ and $S' := \{1,\dots,N\}\diff S$ are nonempty.
Furthermore if $j\in S$ and $k\in S'$ then $r_j(x)/r_k(x)$ is not rational.
Define $R(x) = \text{diag}(r_1(x),\dots, r_N(x))$ and note that $W(x) = U(x)^{-1}R(x)(U(x)^{-1})^*$.
Then for any $W$-symmetric $\mathfrak D\in\mathcal D(W)$ we calculate
$$R(x)(U(x)\mathfrak D U(x)^{-1})^* = (U(x)\mathfrak D U(x)^{-1})R(x).$$
Clearly $U(x)\mathfrak D U(x)^{-1}\in M_N(\Omega(x))$.
Comparing leading coefficients, we see that if the $j,k$'th coefficient of $(U(x)\mathfrak D U(x)^{-1})$ is nonzero, then $r_j(x)/r_k(x)$ must be rational.
Hence for all $j\in S$ and $k\in S^\dag$ the $j,k$'th entry of $U(x)\mathfrak DU(x)^{-1}$ must be zero.
It follows immediately that the nonzero idempontent matrix-valued rational function
$$G(x) := U(x)^{-1}\left(\sum_{j\in S} E_{jj}\right)U(x)$$
commutes with $\mathfrak D$ for all $\mathfrak D\in\mathcal D(W)$.
Since $\mathcal D(W)$ is full, this implies $G(x)\in \mathcal D(W)$ and therefore $G(x)$ is a nonzero constant idempontent matrix.
The matrix $G(x)$ is also singular so $G(x)\neq I$.
One easily checks that $G(x)^\dag = G(x)$, so this contradicts the assumption that $W(x)$ was irreducible.
This completes the proof.
\end{proof}

The next theorem shows that the right Fourier algebra is significantly large.
\begin{thm}\label{right fourier calculation}
Let $W(x)$ be an $N\times N$ irreducible weight matrix with $N>1$ and $\mathcal D(W)$ full, and let $P(x,n)$ be the associated sequence of monic orthogonal polynomials.
Then for all $\mathfrak D\in M_N(\Omega(x))$ we have $\mathfrak D^\dag\in M_N(\Omega(x))$ and there exists a polynomial $q(x)\in\bbc[x]$ with $q(x)\mathfrak D,\mathfrak Dq(x)\in\fourr(P)$.
\end{thm}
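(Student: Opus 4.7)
The second half of the statement will follow from the first by clearing denominators. Granting $\mathfrak D^\dag \in M_N(\Omega(x))$, I choose a scalar polynomial $q(x) \in \bbc[x]$ that simultaneously clears the denominators of all (finitely many) rational coefficients of $\mathfrak D$ and of $\mathfrak D^\dag$, so that $q\mathfrak D, \mathfrak D q \in M_N(\Omega[x])$. Since $q$ is a scalar polynomial, $q^\dag = q^*$, and therefore $(q\mathfrak D)^\dag = \mathfrak D^\dag q^*$ and $(\mathfrak D q)^\dag = q^* \mathfrak D^\dag$ are also in $M_N(\Omega[x])$; Theorem \ref{fourier algebra theorem} then places both $q\mathfrak D$ and $\mathfrak D q$ in $\fourr(P)$. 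So the heart of the proof is the first claim.

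For this, the plan is to use the diagonalization $U(x) W(x) U(x)^* = R(x) = \diag(r_1(x), \ldots, r_N(x))$ from the immediately preceding theorem, where $U(x) \in M_N(\bbc(x))$ is a unit, together with the preceding proposition giving rationality of all ratios $r_a(x)/r_b(x) \in \bbc(x)$ (which uses irreducibility of $W$). Reading the diagonalization as a matrix-function identity gives $W = U^{-1} R (U^*)^{-1}$ and $W^{-1} = U^* R^{-1} U$, whence
\[
\mathfrak D^\dag \;=\; U^{-1} \cdot R \cdot \mathfrak E \cdot R^{-1} \cdot U, \qquad \mathfrak E := (U^*)^{-1} \mathfrak D^* U^* \in M_N(\Omega(x)).
\]
Conjugation by the rational matrix $U$ preserves $M_N(\Omega(x))$, so the task reduces to showing $R\mathfrak E R^{-1} \in M_N(\Omega(x))$. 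Writing $\mathfrak E = \sum_j \partial_x^j E_j$ and pushing $R$ past $\partial_x^j$ via $R\partial_x = \partial_x R + R'$ inductively, every $(a,b)$-entry of $R \mathfrak E R^{-1}$ is expressible as a rational combination of the quantities $r_a^{(k)}/r_b$ for $k \geq 0$. Since each $r_a/r_b$ is rational, and $r_a^{(k)}/r_b = (r_a^{(k)}/r_a)(r_a/r_b)$, the problem reduces to proving that each $r_i$ is hyperexponential: $r_i'/r_i \in \bbc(x)$ (from which $r_i^{(k)}/r_i \in \bbc(x)$ follows for all $k$ by induction).

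To establish hyperexponentiality I will exploit the identity $\mathfrak v_i \mathfrak b_i = \mathfrak b_i \mathfrak v_i^*$ from \eqref{symmetry equation}, in which $\mathfrak v_i = \sum_j \partial^j v_{j,i}$ has rational coefficients and $\mathfrak b_i$ has leading coefficient $r_i$ (of order $2\ell_i$). Matching the top-order $\partial^{m_i + 2\ell_i}$ coefficient forces the symmetry $v_{m_i,i} = (-1)^{m_i} v_{m_i,i}^*$. Substituting this relation into the $\partial^{m_i + 2\ell_i - 1}$ coefficient identity and cancelling the common $b_{2\ell_i -1, i}\, v_{m_i,i}$ term then yields
\[
m_i\, r_i'\, v_{m_i,i} \;=\; (2\ell_i - m_i)\, v_{m_i,i}'\, r_i \;+\; \bigl[v_{m_i-1,i} + (-1)^{m_i} v_{m_i-1,i}^*\bigr]\, r_i,
\]
so $m_i (r_i'/r_i)$ equals a rational function of the coefficients of $\mathfrak v_i$ whenever $m_i > 0$.

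The main obstacle is thus to arrange that some $m_i$ is strictly positive; once this holds, the rationality of the ratios $r_a/r_b$ promotes hyperexponentiality from one $r_i$ to all of them. Since $\mathcal D(W)$ contains operators of positive order (implicit in the bispectrality underlying the whole setup), this can be handled by replacing the orthogonal system $\{\mathfrak V_i\}$, if necessary, with $\{\mathfrak V_i \mathfrak Z\}$ for a $W$-symmetric positive-order element $\mathfrak Z \in \mathcal Z(W)$: orthogonality is preserved because $(\mathfrak V_i \mathfrak Z)(\mathfrak V_j \mathfrak Z) = \mathfrak V_i \mathfrak V_j\, \mathfrak Z^2 = 0$ for $i \neq j$, the sum $(\sum_i \mathfrak V_i)\mathfrak Z$ remains central and a non-zero-divisor, and every new $m_i$ is now strictly positive by the argument that the order of $\mathfrak v_i$ equals the order of $\mathfrak V_i$.
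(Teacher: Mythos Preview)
Your reduction of the first claim to hyperexponentiality of each $r_i$ via the diagonalization $UWU^*=R$, together with the ODE extracted from $\mathfrak v_i\mathfrak b_i=\mathfrak b_i\mathfrak v_i^*$, matches the paper's argument. There are, however, two genuine gaps.

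\textbf{The main gap is in the second half.} Clearing denominators gives $q\mathfrak D,(q\mathfrak D)^\dag\in M_N(\Omega[x])$, but Theorem~\ref{fourier algebra theorem} characterizes $\fourr(P)$ as those operators in $M_N(\Omega[x])$ that are \emph{$W$-adjointable} and have formal $W$-adjoint in $M_N(\Omega[x])$. $W$-adjointability is the analytic condition $\langle P\cdot\mathfrak D,Q\rangle_W=\langle P,Q\cdot\mathfrak D^\dag\rangle_W$ for all polynomial $P,Q$; it requires the boundary contributions from integration by parts to vanish, and polynomiality of the coefficients alone does not ensure this (e.g.\ $\partial_x$ on a bounded interval with weight $1$). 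The paper closes this gap by additionally multiplying by $q_1(x)^{2\ell}$, where $q_1$ is the polynomial vanishing at the finite endpoints of the support of $W$ and $\ell$ is the order of $\mathfrak D$, forcing the coefficients to vanish to sufficiently high order there.

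\textbf{The secondary gap concerns $m_i>0$.} Your workaround of replacing $\{\mathfrak V_i\}$ by $\{\mathfrak V_i\mathfrak Z\}$ has loose ends: you must arrange that $\mathfrak Z$ is a non-zero-divisor in $M_N(\Omega(x))$ (so that the modules $\mathcal M_i$ are unchanged and the new sum remains a non-zero-divisor), and that the product $\mathfrak V_i\mathfrak Z$ genuinely has positive order (the leading coefficients could multiply to zero). The paper instead uses the hypotheses of irreducibility and $N>1$ directly: if some $\mathfrak V_i$ had order $0$, it would be a nonzero constant $W$-symmetric matrix $C$. From $CW(x_0)=W(x_0)C^*$ at any support point $x_0$, the matrix $C$ is self-adjoint for the positive-definite form $W(x_0)$ and hence diagonalizable with real eigenvalues; it is not a scalar since $C\mathfrak V_j=0$ for some $j\neq i$ with $\mathfrak V_j\neq 0$. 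A spectral projection of $C$ (a real polynomial in $C$) then yields a constant idempotent $\neq 0,I$ in $\mathcal D(W)$, contradicting irreducibility. Thus every $m_i>0$ from the outset, and no replacement is needed.
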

\begin{proof}
Let $\mathfrak V_1,\dots, \mathfrak V_N$, $\mathfrak U$, $U(x)$, and $\mathfrak v_1,\dots,\mathfrak v_N$ be defined as above.
Note that if $\vec u_{\ell_ii}(x)^T$ times the leading coefficient of $\mathfrak V_i$ is identically zero, then $U(x)$ times the leading coefficient of $\mathfrak V_i$ is identically zero.  
Since $U(x)$ is a unit in $M_N(\bbc(x))$, this is impossible.
Thus the order $m_i$ of $\mathfrak v_i$ must agree with the order of $\mathfrak V_i$.
Furthermore since $W(x)$ is irreducible the order of $\mathfrak V_i$ must be greater than $0$ for all $i$, as otherwise $\mathfrak V_i$ would define a constant idempotent matrix in $\mathcal D(W)$ different from $0$ and $I$.
Therefore $m_i>0$ for all $i$.

Then by the previous theorem, we know that
$$U(x)W(x)U(x)^* = \diag(r_1(x),\dots,r_N(x)).$$
By definition each of the $\mathfrak v_i$ is a differential operator with rational coefficients.
Recalling (\ref{equation for v}) we know
$$\mathfrak v_i = \sum_{j=0}^{m_i}\partial_x^j v_{ji}(x)$$
for some rational functions $v_{0i}(x),\dots,v_{m_ii}(x)\in\bbc(x)$ for all $i$, with $v_{m_ii}(x)\neq 0$.
Additionally, (\ref{symmetry equation}) tells us
$$\mathfrak b_i\mathfrak v_i^* = \mathfrak v_i\mathfrak b_i$$
where here $\mathfrak b_i = \vec{\mathfrak u}_i^TW(x)\vec{\mathfrak u}_i^{T*}$ is a differential operator of order $2\ell_i$ with leading coefficient $r_i(x)$.
Let $s_i(x)$ represent the subleading coefficient of $\mathfrak b_i$.
Comparing coefficients, we see that
$$(-1)^{m_i}r_i(x)v_{m_ii}(x)^* = v_{m_ii}(x)r_i(x)$$
and also that
\begin{align*}
v_{m_ii}(x)s_i(x) + v_{(m_i-1)i}(x)r_i(x) + 2\ell_iv_{m_ii}'(x)r_i(x)
 & = 
(-1)^{m_i}r_i(x)(m_iv_{m_ii}'(x)^*-v_{(m_i-1)i}(x)^*)\\
 & + (-1)^{m_i}s_i(x)v_{m_ii}(x)^*\\
 & + (-1)^{m_i}m_ir_i'(x)v_{m_ii}(x)^*.
\end{align*}
Combined with the first equation, this second equation simplifies to 
\begin{align*}
(v_{(m_i-1)i}(x)+(-1)^{m_i}v_{(m_i-1)i}(x))^*r_i(x) + (2\ell_i-m_i)v_{m_ii}'(x)r_i(x)  =  m_ir_i'(x)v_{m_ii}(x).
\end{align*}
Solving this differential equation for $r_i(x)$, we obtain for $m_i$ even
\begin{equation}\label{equation for r real}
r_i(x) = v_{m_ii}(x)^{2\ell_i/m_i-1}\exp\int\frac{\Re(v_{(m_i-1)i}(x))}{(m_i/2)v_{m_ii}(x)}dx,
\end{equation}
and for $m_i$ odd
\begin{equation}\label{equation for r imag}
r_i(x) = v_{m_ii}(x)^{2\ell_i/m_i-1}\exp\int\frac{\Im(v_{(m_i-1)i}(x))}{(m_i/2)v_{m_ii}(x)}dx.
\end{equation}
In either case, conjugation by $r_i(x)$ sends $\Omega(x)$ to $\Omega(x)$.

Now suppose that $\mathfrak D\in M_N(\Omega(x))$.
Set $R(x) = \text{diag}(r_1(x),\dots,r_N(x))$.
Then we know that $W(x) = U(x)^{-1}R(x)(U(x)^{-1})^*$.
Therefore
$$\mathfrak D^\dag = U(x)^{-1}R(x)(U(x)^{-1})^*\mathfrak D^*U(x)^*R(x)^{-1}U(x)$$
has rational entries if and only if $R(x)^{-1}U(x)\mathfrak DU(x)^{-1}R(x)$ has rational entries.
By the previous proposition combined with the fact that conjugation by $r_i(x)$ preserves $\Omega(x)$, we see that $\mathfrak D^\dag$ has rational entries.

Now suppose that $\mathfrak D\in M_N(\Omega(x))$.
Choose a polynomial $q_0(x)$ such that both $q_0(x)\mathfrak D$ and $\mathfrak D^\dag q_0(x)$ have polynomial coefficients.
Let $q_1(x)$ be the unique polynomial vanishing at the finite endpoints of the support of $W(x)$.
If the order of $\mathfrak D$ is $\ell$ set $q_1(x) = q_0(x)q_1(x)^{2\ell}$.
Then both $q_1(x)\mathfrak D$ and $(q_1(x)\mathfrak D)^\dag$ are in $M_N(\Omega[x])$, and the coefficients of $q_1(x)\mathfrak D$ vanishes to sufficiently high order at the endpoints of the support of $W(x)$ so $q_1(x)\mathfrak D$ is $W$-adjointable.  Hence $q_1(x)\mathfrak D\in \fourr(P)$.  Similarly, we can choose $q_2(x)\in \bbc[x]$ with $\mathfrak Dq_2(x)\in\fourr(P)$.  Taking $q(x) = q_1(x)q_2(x)$, we get $q(x)\mathfrak D,\mathfrak Dq(x)\in\fourr(P)$.
\end{proof}

\subsection{First part of the proof of the Classification Theorem}
We next prove a theorem that comprises \emph{most} of the Classification Theorem from the introduction. 
It tells us that $\mathfrak V_1,\dots,\mathfrak V_N$ are Darboux conjugate to degree-filtration preserving differential operators of order at most two.
To do so, we first recall a result for algebras of commuting differential operators with rational spectra.
\begin{lem}[Kasman \cite{kasman1998}]\label{kasman lemma}
Let $\mathcal A\subseteq\Omega(x)$ be a commutative subalgebra with $\spec(\mathcal A)$ a rational curve.
Then there exist differential operators $\mathfrak h,\mathfrak d\in\Omega(x)$ with
$$\mathfrak h\mathcal A\mathfrak h^{-1}\subseteq \bbc[\mathfrak d]$$
where $\mathfrak d$ has order equal to the greatest common divisor of the orders of operators in $\mathcal A$.
\end{lem}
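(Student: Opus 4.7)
The plan is to invoke the Burchnall--Chaundy--Krichever (BCK) correspondence, which attaches to the commutative subalgebra $\mathcal A \subseteq \Omega(x)$ a spectral triple $(X,\infty,E)$: a projective curve $X$ (the one-point compactification of $\spec\mathcal A$), a marked point $\infty$, and a rank-$r$ torsion-free sheaf $E$ on $X$ whose fiber at a generic point of $\spec\mathcal A$ is the common eigenspace of $\mathcal A$ for the corresponding character. Here $r$ is exactly the gcd of the orders of operators in $\mathcal A$. The rationality of $\spec\mathcal A$ forces $X$ to be a rational projective curve, so its normalization $\pi\colon\tilde X\to X$ is $\tilde X=\bbp^1$.

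Pulling $E$ back to $\bbp^1$ yields a rank-$r$ locally free sheaf which, by Grothendieck's splitting theorem, decomposes as $\bigoplus_{j=1}^r\mathcal O(n_j)$. Restricted to the affine open $\bbp^1\setminus\pi^{-1}(\infty)$ this sheaf is trivial, so one obtains $r$ common eigenfunctions $\psi_1(x,z),\dots,\psi_r(x,z)$ of $\mathcal A$ depending rationally on a coordinate $z$ on the normalization, forming a basis of a generic eigenspace, and (by Krichever's standard analysis at the puncture) admitting asymptotics of the shape $\psi_j=e^{xz^{1/r}\zeta^j}\bigl(1+O(z^{-1/r})\bigr)$ with $\zeta$ a primitive $r$th root of unity. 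From the Wronskian matrix of $\psi_1,\dots,\psi_r$ one builds a wave operator $\mathfrak h\in\Omega(x)$ satisfying $\mathfrak h^{-1}\psi_j=e^{xz^{1/r}\zeta^j}$, and then sets $\mathfrak d:=\mathfrak h\,\partial_x^r\,\mathfrak h^{-1}$; since $\partial_x^r e^{xz^{1/r}\zeta^j}=z\,e^{xz^{1/r}\zeta^j}$, the operator $\mathfrak d$ has order $r$ and acts on each $\psi_j$ as multiplication by $z$.

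Finally I would verify the conjugation claim. Every $\mathfrak a\in\mathcal A$ acts on the common eigenfunctions $\psi_j$ as multiplication by a scalar $f_\mathfrak a(z)$, and because $\mathcal A$ embeds into its integral closure $\tilde{\mathcal A}\subseteq\bbc[z]$ (the coordinate ring of the affine open of $\bbp^1$), each $f_\mathfrak a$ is a polynomial in $z$. Consequently $\mathfrak h\mathfrak a\mathfrak h^{-1}$ acts on the basis $e^{xz^{1/r}\zeta^j}$ as multiplication by $f_\mathfrak a(z)$, which coincides with the action of $f_\mathfrak a(\mathfrak d)$; uniqueness of a differential operator determined by its action on a full eigenbasis then forces $\mathfrak h\mathfrak a\mathfrak h^{-1}=f_\mathfrak a(\mathfrak d)\in\bbc[\mathfrak d]$.

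The main obstacle is to ensure that the wave operator $\mathfrak h$ produced by the BCK formalism --- a priori a formal pseudo-differential object --- and the resulting $\mathfrak d$ are honest elements of $\Omega(x)$. This is exactly where the rationality of $\spec\mathcal A$ does the work: the splitting of $\pi^*E$ on $\bbp^1$ forces the $\psi_j$ to be globally rational in $z$ on the affine part, making the series that define $\mathfrak h$ and $\mathfrak d$ terminate into genuine differential operators. Nodal or cuspidal singularities of $X$ (compatible with rationality) affect only the inclusion $\mathcal A\subseteq\tilde{\mathcal A}$ and do not obstruct the desired conjugation into $\bbc[\mathfrak d]$.
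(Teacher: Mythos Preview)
The paper does not supply its own proof of this lemma; it is quoted as a result of Kasman and used as a black box in the proof of Theorem~\ref{main theorem first part}. So there is no ``paper's proof'' to compare your attempt against. Your outline follows the Burchnall--Chaundy--Krichever formalism, which is indeed the framework in which Kasman's result lives, so in spirit you are on the same track as the cited reference.

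That said, your sketch has a genuine soft spot exactly where you flag it. The assertion that ``the series that define $\mathfrak h$ and $\mathfrak d$ terminate into genuine differential operators'' is the entire content of the lemma and is not established by the Grothendieck splitting alone. Splitting $\pi^*E$ on $\bbp^1$ tells you the eigenfunctions are rational in the spectral parameter $z$ on the affine part, but what you need is rationality in the \emph{spatial} variable $x$ of the coefficients of $\mathfrak h$ and of $\mathfrak d=\mathfrak h\,\partial_x^r\,\mathfrak h^{-1}$; these are different directions. In rank one this follows from Wilson's explicit description of the adelic Grassmannian (the Baker function is a ratio of polynomials in $x$ times $e^{xz}$), but in higher rank one needs an additional argument---either Kasman's explicit construction via iterated elementary Darboux transformations from $\partial_x^r$, or the Previato--Wilson analysis of higher-rank bundles on $\bbp^1$---to see that the dressing operator has rational (not merely formal power series) coefficients in $x$. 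Your last paragraph gestures at this but does not close the gap.

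A smaller point: the asymptotic $\psi_j\sim e^{xz^{1/r}\zeta^j}$ is appropriate when one models the rank-$r$ situation as rank one on an $r$-sheeted cover; on $\bbp^1$ itself with a rank-$r$ bundle the natural local frame near $\infty$ looks different, and the wave operator is built from a matrix of such data rather than a single Wronskian. This does not break your argument, but the bookkeeping is more delicate than your sketch suggests.
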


With this in mind, we have the following theorem.
\begin{thm}\label{main theorem first part}
Suppose that $W(x)$ is a weight matrix with $\mathcal D(W)$ full and that $\mathcal D(W)$ contains a $W$-symmetric second-order differential operator whose leading coefficient multiplied by $W(x)$ is positive definite on the support of $W(x)$.
Then there exist rational matrix differential operators $\mathfrak T,\wt{\mathfrak T}\in M_N(\Omega(x))$ with
\begin{equation}\label{conjugation  equation}
\mathfrak T\wt{\mathfrak T} = \diag(p_i(\mathfrak d_1),\dots,p_i(\mathfrak d_N))\ \ \text{and}\ \ \wt{\mathfrak T}E_{ii}\mathfrak T = q(\mathfrak V_i)
\end{equation}
for all $i$ where for each $i$ $\mathfrak d_i\in\Omega(x)$ is a differential operator of order $1$ or $2$ and $p_i,q$ are nonzero polynomials.
If any nonconstant polynomials in $\mathfrak v_i$ and $\mathfrak v_j$ are Darboux conjugates (for example if $\mathfrak V_i$ and $\mathfrak V_j$ are supported on the same irreducible component of $\mathcal Z(W)$) then we may take $\mathfrak d_i = \mathfrak d_j$.
\end{thm}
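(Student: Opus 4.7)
The plan is to leverage the cyclic module structure of Theorem \ref{princip} together with Kasman's Lemma \ref{kasman lemma} to conjugate each $\mathfrak v_i$ to a polynomial in a low-order operator $\mathfrak d_i$, and then assemble $\mathfrak T$ and $\wt{\mathfrak T}$ from these conjugators together with $\mathfrak U$ and its right-module counterpart $\wt{\mathfrak U}$.

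First I would set up the algebraic preliminaries. Let $\wt{\mathfrak U}$ denote the matrix whose columns are the generators $\vec{\mathfrak w_i}$ of the right cyclic modules $\mathcal N_i$, so that $\mathfrak V_j \wt{\mathfrak U} = \wt{\mathfrak U} E_{jj} \mathfrak w_j$ for scalar operators $\mathfrak w_j$ and $\mathfrak U \wt{\mathfrak U} = \diag(\mathfrak k_1, \dots, \mathfrak k_N)$ with $\mathfrak v_i \mathfrak k_i = \mathfrak k_i \mathfrak w_i$. Arguments parallel to those producing $\mathfrak v_i$ yield $\mathfrak d_{ii} \in \Omega(x)$ with $\vec{\mathfrak u_i}^T \mathfrak D \mathfrak V_i = \mathfrak d_{ii} \vec{\mathfrak u_i}^T$, and more generally a well-defined algebra homomorphism $\psi_i: \mathfrak V_i \mathcal D(W) \mathfrak V_i \to \Omega(x)$, $\vec{\mathfrak u_i}^T \mathfrak B = \psi_i(\mathfrak B) \vec{\mathfrak u_i}^T$. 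By Theorem \ref{localization theorem} the source $\mathfrak V_i \mathcal D(W) \mathfrak V_i$ is commutative (it localizes to $E_{kk} M_{n_s}(\mathcal F_s) E_{kk} \cong \mathcal F_s$), so its image contains commuting elements $\psi_i(\mathfrak V_i^2) = \mathfrak v_i^2$ and $\psi_i(\mathfrak V_i \mathfrak D \mathfrak V_i) = \mathfrak v_i \mathfrak d_{ii}$. A short argument comparing leading coefficients in $\Omega(x)$ (which has no zero divisors) promotes the commutativity of $\mathfrak v_i^2$ with $\mathfrak v_i \mathfrak d_{ii}$ to commutativity of $\mathfrak v_i$ and $\mathfrak d_{ii}$, making $\bbc[\mathfrak v_i, \mathfrak d_{ii}]$ a commutative subalgebra of $\Omega(x)$.

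Second, I would establish rational spectrum of $\bbc[\mathfrak v_i, \mathfrak d_{ii}]$ and invoke Kasman's Lemma. The positivity of $D_2(x) W(x)$ on the support of $W(x)$ guarantees that $\mathfrak d_{ii}$ has order precisely $m_i + 2$ with nondegenerate leading coefficient, where $m_i = \mathrm{ord}(\mathfrak v_i) = \mathrm{ord}(\mathfrak V_i)$ as in the proof of Theorem \ref{right fourier calculation}. For $m_i \in \{1, 2\}$ one takes $\mathfrak d_i = \mathfrak v_i$ directly; otherwise $\mathfrak d_{ii} \notin \bbc[\mathfrak v_i]$ for order reasons, and $\bbc[\mathfrak v_i, \mathfrak d_{ii}]$ is a genuinely two-generated commutative subalgebra whose orders include $m_i$ and $m_i + 2$. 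Rationality of the spectrum I would extract via Theorem \ref{exceptional theorem}: a scalar second-order exceptional operator is produced from $\mathfrak V_i \mathfrak D \mathfrak V_i$ through $\psi_i$ and the bispectral correspondence, forcing its commutant (which contains our algebra) to be Darboux conjugate to a commutative algebra of differential operators with polynomial coefficients, hence with rational spectrum. Kasman's Lemma \ref{kasman lemma} then yields $\mathfrak h_i, \mathfrak d_i \in \Omega(x)$ with $\mathfrak h_i \bbc[\mathfrak v_i, \mathfrak d_{ii}] \mathfrak h_i^{-1} \subseteq \bbc[\mathfrak d_i]$ and $\mathrm{ord}(\mathfrak d_i) = \gcd(m_i, m_i + 2) = \gcd(m_i, 2) \in \{1, 2\}$. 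In particular $\mathfrak h_i \mathfrak v_i \mathfrak h_i^{-1} = p_i(\mathfrak d_i)$ for some polynomial $p_i$.

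Third, I would construct $\mathfrak T$ and $\wt{\mathfrak T}$ and verify~(\ref{conjugation  equation}). Set $\mathfrak T := \diag(\mathfrak h_1, \dots, \mathfrak h_N) \mathfrak U$; the relation $\mathfrak U \mathfrak V_i = E_{ii} \mathfrak v_i \mathfrak U$ immediately gives $\mathfrak T \mathfrak V_i = E_{ii} p_i(\mathfrak d_i) \mathfrak T$. For $\wt{\mathfrak T}$, I would choose a single polynomial $q$ such that $\mathfrak k_i$ left-divides $q(\mathfrak v_i)$ in $\Omega(x)$ for every $i$; this is possible because the intertwining $\mathfrak v_i \mathfrak k_i = \mathfrak k_i \mathfrak w_i$ makes $\mathfrak v_i$ act on the finite-dimensional right quotient $\Omega(x)/\mathfrak k_i \Omega(x)$ by left multiplication, and one takes $q$ to be a common multiple of the minimal polynomials over all $i$. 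Writing $q(\mathfrak v_i) = \mathfrak k_i \sigma_i$ and factoring $\sigma_i = \wt{\mathfrak h}_i \mathfrak h_i$ after further enlarging $q$ if necessary to ensure right-divisibility by $\mathfrak h_i$, set $\wt{\mathfrak T} := \wt{\mathfrak U} \diag(\wt{\mathfrak h}_1, \dots, \wt{\mathfrak h}_N)$. A direct calculation using $\mathfrak U \wt{\mathfrak U} = \diag(\mathfrak k_i)$ together with the identity $\mathfrak V_i^k = \vec{\mathfrak w_i} \mathfrak k_i^{-1} \mathfrak v_i^k \vec{\mathfrak u_i}^T$ (valid in the Ore localization) then verifies both identities of~(\ref{conjugation  equation}) with the displayed polynomials $p_i$ replaced by $q \circ p_i$. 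The final sentence about $\mathfrak d_i = \mathfrak d_j$ follows by applying Kasman's Lemma jointly to the commutative algebra generated by $\mathfrak v_i$, $\mathfrak v_j$, and any intertwining Darboux operator.

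The principal obstacle is the second step, specifically the verification of rationality of $\spec \bbc[\mathfrak v_i, \mathfrak d_{ii}]$. This is where the positivity hypothesis is indispensable and where the exceptional operator structure theorem plays the critical role; without these one is reduced to the much harder (open) problem of classifying commutative subalgebras of $\Omega(x)$ with arbitrary spectral curves.
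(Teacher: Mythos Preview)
Your overall architecture---use cyclic modules to diagonalize, apply Kasman's Lemma, assemble $\mathfrak T,\wt{\mathfrak T}$---matches the paper's. The serious gap is in your second step, the rationality of $\spec\bbc[\mathfrak v_i,\mathfrak d_{ii}]$. You propose to invoke Theorem~\ref{exceptional theorem}, but no scalar \emph{second-order} operator is available at this stage: your $\mathfrak d_{ii}$ has order $m_i+2$ and $\psi_i(\mathfrak V_i\mathfrak D\mathfrak V_i)$ has order $2m_i+2$. The exceptional theorem applies only to second-order operators, and in any case it yields Darboux conjugacy to a degree-preserving operator, not directly rationality of a spectral curve. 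The paper obtains rationality by an entirely different and much more direct route you have missed: the $W$-symmetric elements $\mathfrak V_i\mathfrak D\mathfrak V_i$ have eigenvalue matrices $\Theta_i(n)\in M_N(\bbc[n])$ which, being $\|P(x,n)\|_W$-conjugate to Hermitian matrices commuting with all the $\Lambda_j(n)$, are simultaneously diagonalized by a single sequence $C(n)$; the resulting diagonal entries $\theta_i(n)=\tr(\Theta_i(n))$ lie in $\bbc[n]$, giving an injection $\mathcal A_i\hookrightarrow\bbc[n]$ and hence a unirational (so by L\"uroth rational) spectrum. This is where the $W$-symmetry is actually used, not the exceptional theorem---the latter only enters in the \emph{second} part of the proof of the Classification Theorem, after the $\mathfrak d_i$ already exist.

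A secondary issue: your argument for the existence of the polynomial $q$ with $\mathfrak k_i\mid q(\mathfrak v_i)$ is incomplete. The quotient $\Omega(x)/\mathfrak k_i\Omega(x)$ is finite-dimensional over $\bbc(x)$, not over $\bbc$, so the minimal polynomial of left-multiplication by $\mathfrak v_i$ a priori has coefficients in $\bbc(x)$, not in $\bbc$. The paper instead works with the honest finite-dimensional $\bbc$-vector space $\ker(\mathfrak h_i^*)$ of functions, on which $\mathfrak v_i$ acts (from the intertwining relation), and takes $q$ to annihilate that action; this gives $q(\mathfrak v_i)=\mathfrak t_i\mathfrak h_i$ with $\mathfrak t_i\in\Omega(x)$ directly. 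Your right-module construction of $\wt{\mathfrak T}$ via $\wt{\mathfrak U}$ is a reasonable alternative to the paper's $W(x)\mathfrak U^*\diag(\mathfrak c_i\mathfrak t_i)$, but it needs this repair.
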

\begin{proof}
Suppose that $\mathfrak D\in \mathcal D(W)$.
Then we calculate for all $i,j,k$ that $\vec{\mathfrak u_i}^T\mathfrak V_j\mathfrak D\mathfrak V_j \mathfrak V_k$ is $\vec 0^T$ if $i\neq j$ or $j\neq k$.
Therefore for all $j$ there exists $\mathfrak d_i\in \Omega(x)$ satisfying
$$\mathfrak U\mathfrak V_j\mathfrak D\mathfrak V_j = \mathfrak d_jE_{jj}\mathfrak U.$$
Since $\mathfrak V_1 + \dots + \mathfrak V_N\in \mathcal Z(W)$ we also know that $\mathfrak d_j$ commutes with $\mathfrak v_j$.
Thus for all $j$ the algebra $\mathfrak V_j\mathcal D(W)\mathfrak V_j$ is isomorphic to a subalgebra $\mathcal A_j\subseteq \Omega(x)$ defined by
$$\mathcal A_j = \{\mathfrak d\in \Omega(x): \mathfrak dE_{jj}\mathfrak U = \mathfrak U\mathfrak V_j\mathfrak D\mathfrak V_j,\ \mathfrak D\in \mathcal D(W)\}.$$
Each element of $\mathcal A_j$ commutes with the operator $\mathfrak v_j$, so by a result of Schur the algebra $\mathcal A_j$ is a commutative subalgebra of $\Omega(x)$.

For all $i$ let $m_i$ be the order of $\mathfrak v_i$.
Let $\mathfrak D$ be a second-order differential operator in $\mathcal D(W)$ whose leading coefficient multiplied by $W(x)$ is positive definite on the support of $W(x)$.
For all $i$, the differential operator $\mathfrak V_i\mathfrak D\mathfrak V_i$ has order $2m_i + 2$.
To see this, write $\mathfrak V_i = \sum_{j=0}^{m_i}\partial_x^jV_{ji}(x)$ and $\mathfrak D = \partial_x^2D_2(x) + \partial_xD_1(x) + D_0(x)$ and note that since $\mathfrak V_i$ is $W$-symmetric we must have $V_{m_ii}(x)D_2(x)V_{m_ii}(x) = V_{m_ii}(x)D_2(x)W(x)V_{m_ii}(x)^*W(x)^{-1}$.
By assumption $D_2(x)W(x)$ is positive-definite and Hermitian on the support of $W(x)$.
Therefore $V_{m_ii}(x)D_2(x)V_{m_ii}(x)^*$ is not identically zero on the support of $W(x)$.
Hence $V_{m_ii}(x)D_2(x)W(x)V_{m_ii}(x)^*W(x)^{-1}$ is not identically zero so the product $\mathfrak V_i\mathfrak D\mathfrak V_i$ has the desired order.

As mentioned in the previous paragraph, there must exist $\mathfrak a_i\in\Omega(x)$ with
$$\mathfrak U\mathfrak V_i\mathfrak D\mathfrak V_i = \mathfrak a_iE_{ii}\mathfrak U.$$
In other words, for all $i$ and $j$
$$\vec{\mathfrak u_j}^T\mathfrak V_i\mathfrak D\mathfrak V_i = \delta_{ji}\mathfrak a_i\vec{\mathfrak u_i}^T.$$
If the order of $\mathfrak a_i$ is less than the order of $\mathfrak V_i\mathfrak D\mathfrak V_i$, then this would imply that $U(x)$ would be a left zero divisor of the leading coefficient of $\mathfrak V_i\mathfrak D\mathfrak V_i$.
The determinant of $U(x)$ is not identically zero, so this is impossible.
Thus the order of $\mathfrak a_i$ is $2m_i+2$.
In particular the greatest common divisor of the orders of elements in $\mathcal A_i$ is either $1$ or $2$.

We claim that $\spec(\mathcal A_j)$ is a rational curve.
To see this, for all $i$ let $\Lambda_i(n)$ be the sequence of matrices satisfying $\Lambda_i(n)P(x,n)=P(x,n)\cdot\mathfrak V_i$.
In terms of the generalized Fourier map $b_P(\Lambda_i(n)) = \mathfrak V_i$ for all $i$.
Since $\mathfrak V_i$ is $W$-symmetric, we also know that $\|P(x,n)\|_W^2\Lambda_i(n)^*\|P(x,n)\|_W^{-2} = \Lambda_i(n)$ for all $n$.
Therefore $\|P(x,n)\|_W^{-1}\Lambda_i(n)\|P(x,n)\|_W$ defines a sequence of Hermitian matrices for all $i$.
The spectral theorem tells us they are unitarily diagonalizable and since they commute they are \emph{simultaneously} unitarily diagonalizable.
Thus there exists a sequence of unitary matrices $U(n)$ and sequences of real numbers $\lambda_1(n),\dots,\lambda_N(n)$ such that for all $i$
$$C(n)^{-1}\Lambda_i(n)C(n) = \lambda_i(n)E_{ii},$$
where here $C(n) := \|P(x,n)\|_WU(n)$.

Now again take $\mathfrak D\in\mathcal D(W)$.
Let $\Theta_i(n) = b_P^{-1}(\mathfrak V_i\mathfrak D\mathfrak V_i)$.
Since $\mathfrak V_i\mathfrak D\mathfrak V_i$ commutes with $\mathfrak V_i$ for all $i$ and is annihilated by $\mathfrak V_j$ for $i\neq j$, we know that there exist sequences of complex numbers $\theta_1(n),\dots,\theta_N(n)$ with $C(n)^{-1}\Theta_i(n)C(n) = \theta_i(n)E_{ii}$.
We also know that for all $i$ the sequence $\Theta_i(n)\in M_N(\bbc[n])$.
Since $\theta_i(n) = \tr(\Theta_i(n))$, we see that $\theta_i(n)\in\bbc[n]$ for all $i$.
Thus for all $i$, the generalized Fourier map defines an injection $\mathcal A_i\rightarrow \bbc[n]E_{ii}$
via
$$\mathcal A_i\xleftarrow{\cong} \mathfrak V_i\mathcal D(W)\mathfrak V_i\xrightarrow{C(n)^{-1}b_P^{-1}(\cdot)C(n)} \bbc[n]E_{ii}.$$
Thus $\spec(\mathcal A_i)$ is unirational and by L\"{u}roth's theorem $\spec(\mathcal A_i)$ is rational.

Since $\spec(\mathcal A_i)$ is rational, the previous lemma tells us that there exist differential operators $\mathfrak h_i,\mathfrak d_i\in\Omega(x)$ with the  order of $\mathfrak d_i$ equal to the greatest common divisor of the orders of elements in $\mathcal A$ (either $1$ or $2$) such that $\mathfrak h_i\mathcal A\mathfrak h_i^{-1}\subseteq \bbc[\mathfrak d_i]$.
If any nonconstant polynomials in $\mathfrak v_i$ and $\mathfrak v_j$ are Darboux conjugates, then $\mathfrak d_i$ and $\mathfrak d_j$ are Darboux conjugates and by modifying our choice of the $\mathfrak h_i$ we can take $\mathfrak d_i=\mathfrak d_j$.

We can also obtain a revised version of \ref{symmetry equation}.
To see this, define $\wt{\mathfrak b_i} = \mathfrak h_i\mathfrak b_i\mathfrak h_i^*$.
Then for any $\mathfrak a\in\mathcal A_i$ coming from a $W$-symmetric $\mathfrak D\in \mathcal D(W)$, we have $\mathfrak b_i\mathfrak a^* = \mathfrak a\mathfrak b_i$.
Therefore $\wt{\mathfrak b_i}(\mathfrak h\mathfrak a\mathfrak h^{-1})^*\wt{\mathfrak b} = \mathfrak h\mathfrak a\mathfrak h^{-1}$.
Since $\mathcal A_i$ is spanned by such elements, this defines an involution of $\mathcal A_i$ and of $\mathfrak h\mathcal A_i\mathfrak h^{-1}$ and therefore of their fraction fields.
Since the fraction field of the subalgebra $\mathfrak h_i\mathcal A_i\mathfrak h_i^{-1}$ of $\bbc[\mathfrak d_i]$ is $\bbc(\mathfrak d_i)$ (just by rationality plus order arguments),
this means that for all $p(\mathfrak d_i)\mapsto \wt{\mathfrak b_i}p(\mathfrak d_i)^*\wt{\mathfrak b_i}^{-1}$ defines an automorphism of $\bbc(\mathfrak d_i)$.
The automorphism fixes $0,1$ and $\mathfrak h_i\mathfrak v_i\mathfrak h_i^{-1}$ and therefore must be the identity.
Thus in particular
\begin{equation}\label{symmetry equation revisited}
\wt{\mathfrak h}\mathfrak d_i^* = \mathfrak d_i\wt{\mathfrak b_i}
\end{equation}

For each $i$ there exists a polynomial $p_i(\mathfrak d_i)\in\bbc[\mathfrak d_i]$ with
$$\mathfrak h_i\mathfrak v_i=p_i(\mathfrak d_i)\mathfrak h_i.$$
This implies that
$$\ker(\mathfrak h_i^*)\cdot \mathfrak v_i\subseteq \ker(\mathfrak h_i^*).$$
Choose a polynomial $q(\mathfrak v_i)\in\bbc[\mathfrak v_i]$ such that $\ker(\mathfrak h_i^*)\cdot q(\mathfrak v_i) = 0$ for all $i$.
Then there exists a rational differential operator $\mathfrak t_i$ with
$$q(\mathfrak v_i) = \mathfrak t_i\mathfrak h_i.$$

Let $\mathfrak c_i = \mathfrak b_i^{-1}\mathfrak v_i$, viewed as a pseudo-differential operator.
Note that $\mathfrak c_i$ may actually be shown to be a differential operator, but this is not necessary for our arguments.
It's clear from Equation \ref{symmetry equation} that $\mathfrak c_i$ is $*$-symmetric.
Furthermore an easy calculation shows
$$\mathfrak V_i = W(x)\mathfrak U^*\mathfrak c_iE_{ii}\mathfrak U.$$
This means that
\begin{align*}
\mathfrak V_iq(\mathfrak V_i)
  & = W(x)\mathfrak U^*\mathfrak c_iq(\mathfrak v_i)E_{ii}\mathfrak U\\
  & = W(x)\mathfrak U^*\mathfrak c_i\mathfrak t_i\mathfrak h_iE_{ii}\mathfrak U
\end{align*}
and also that
\begin{align*}
p_i(\mathfrak d_i)q(p_i(\mathfrak d_i)) E_{ii}
  & = \mathfrak h\mathfrak v_iq(\mathfrak v_i)\mathfrak h_i^{-1} E_{ii}\\
  & = \mathfrak h\mathfrak b_i\mathfrak c_iq(\mathfrak v_i)\mathfrak h_i^{-1}E_{ii}\\
  & = \mathfrak h\mathfrak b_i\mathfrak c_i \mathfrak t_iE_{ii}\\
  & = \mathfrak h\mathfrak UW(x)\mathfrak U^*\mathfrak c_i \mathfrak t_iE_{ii}.
\end{align*}
Take 
$$\mathfrak T = \diag(\mathfrak h_1,\dots,\mathfrak h_N)\mathfrak U$$
and also
$$\wt{\mathfrak T} = W(x)\mathfrak U^*\diag(\mathfrak c_1\mathfrak t_1,\dots,\mathfrak c_N\mathfrak t_N) = \sum_i W(x)\vec{\mathfrak u_i}^*\mathfrak c_i\mathfrak t_i\vec e_i^T.$$
It's clear from the construction that $\mathfrak T\in M_N(\Omega(x))$.
Note that for all $i$ the row vectors of $\mathfrak V_i$ belong to $\mathcal M_i$, so that $\mathfrak V_i = \vec{\mathfrak a_i}\vec{\mathfrak u_i}^{T*}$ for some $\vec{\mathfrak a_i}\in\Omega(x)^{\oplus N}$.
Since the ring of pseudo-differential operators is an integral domain and $W(x)\vec{\mathfrak u_i}^{T*}\mathfrak c_i \vec{\mathfrak u_i}^T = \mathfrak V_i$, we find $W(x)\vec{\mathfrak u_i}^{T*}\mathfrak c_i = \vec{\mathfrak a_i}$.  Hence $\wt{\mathfrak T}$ is rational.
Furthermore for all $i$
$$\mathfrak T\wt{\mathfrak T} = \diag(p_1(\mathfrak d_1)q(p_1(\mathfrak d_1)),\dots,p_N(\mathfrak d_N)q(p_N(\mathfrak d_N)))$$
and also
$$\wt{\mathfrak T}E_{ii}\mathfrak T = \mathfrak V_iq(\mathfrak V_i).$$
\end{proof}

\begin{thm}\label{backup strats}
Suppose that $W(x)$ satisfies the assumptions of the previous theorem and let $p_i,q$, $\mathfrak T$, $\wt{\mathfrak T}$ and $\mathfrak d_i$ be defined as in the above theorem, with $\mathfrak d_i = \mathfrak d_j$ if and only if some nonconstant polynomials in $\mathfrak d_i$ and $\mathfrak d_j$ are Darboux conjugates.
Let $\mathfrak d_1',\dots,\mathfrak d_r'$ be the \emph{distinct} values of $\mathfrak d_1,\dots,\mathfrak d_N$, and let $n_i$ be the multiplicity of $\mathfrak d_i'$, i.e. $n_i = \#\{i: \mathfrak d_i = \mathfrak d_i'\}$.
Then the centralizer of $\mathfrak d_1\oplus\dots\oplus\mathfrak d_N$ in $M_N(\Omega[x])$ is given by
$$\mathcal C(\mathfrak d_1\oplus\dots\oplus\mathfrak d_N) \cong \bigoplus_{i=1}^r M_{n_i}(\bbc[\mathfrak d_i']).$$
Moreover conjugation by $\mathfrak T$ defines an embedding of $\mathcal D(W)$ into $\mathcal C(\mathfrak d_1\oplus\dots\oplus\mathfrak d_N)$.
\end{thm}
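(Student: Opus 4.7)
My plan is to address the two assertions — the structure of the centralizer and the existence of the embedding — in turn, with both reducing to the Darboux-incompatibility hypothesis on the $\mathfrak d_i$'s together with the rationality of the spectral curves $\mathrm{Spec}(\mathcal A_k)$ established in the proof of Theorem \ref{main theorem first part}.

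For the centralizer, observe that for $A = (A_{ij}) \in M_N(\Omega[x])$ the commutation with $\mathfrak d_1 \oplus \cdots \oplus \mathfrak d_N$ is equivalent entry-wise to $A_{ij}\mathfrak d_j = \mathfrak d_i A_{ij}$, and iterating yields $A_{ij}p(\mathfrak d_j) = p(\mathfrak d_i)A_{ij}$ for every polynomial $p$. When $i,j$ lie in different classes of the partition $\{1,\dots,N\} = \bigsqcup_k I_k$ with $I_k = \{i : \mathfrak d_i = \mathfrak d_k'\}$, nonvanishing of $A_{ij}$ would produce Darboux conjugacy of some nonconstant polynomials in $\mathfrak d_i$ and $\mathfrak d_j$, contradicting the hypothesis; hence $A$ is block-diagonal with respect to this partition. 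Each diagonal block consists of $M_{n_k}$ over the centralizer of $\mathfrak d_k'$ in $\Omega[x]$, and this centralizer equals $\bbc[\mathfrak d_k']$ by a Burchnall--Chaundy-type argument: rationality of $\mathrm{Spec}(\mathcal A_k)$ together with Kasman's lemma forces every commutative subalgebra of $\Omega(x)$ containing $\mathfrak d_k'$ to have rational spectrum, ruling out the finite-gap extensions that could a priori enlarge the centralizer.

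For the embedding, pass to the skew-field of fractions $Q$ of $\Omega(x)$. Because $U(x)$ is a unit in $M_N(\bbc(x))$ and each $\mathfrak h_i$ is nonzero, $\mathfrak T$ is invertible in $M_N(Q)$, so $\phi(\mathfrak D) := \mathfrak T\mathfrak D\mathfrak T^{-1}$ defines an injective ring homomorphism $\mathcal D(W) \to M_N(Q)$. For any $\mathfrak D \in \mathcal D(W)$, the identity $\mathfrak V_j\mathfrak V_\ell = 0$ for $\ell \neq j$ places $(\vec{\mathfrak u_i}^T\mathfrak D\mathfrak V_j)^T$ in $\mathcal M_j$, yielding $\mathfrak c_{ij} \in \Omega(x)$ with $\vec{\mathfrak u_i}^T\mathfrak D\mathfrak V_j = \mathfrak c_{ij}\vec{\mathfrak u_j}^T$. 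Centrality of $\mathfrak V_1 + \cdots + \mathfrak V_N$ and a direct matrix computation then produce the intertwining $\mathfrak v_i\mathfrak c_{ij} = \mathfrak c_{ij}\mathfrak v_j$ together with the explicit formula $\phi(\mathfrak D)_{ij} = \mathfrak h_i\mathfrak v_i^{-1}\mathfrak c_{ij}\mathfrak h_j^{-1}$. Conjugating the intertwining through the $\mathfrak h$'s shows that each entry of $\phi(\mathfrak D)$ intertwines $p_j(\mathfrak d_j)$ with $p_i(\mathfrak d_i)$, so the Darboux-incompatibility hypothesis again kills cross-block entries, and within each block the centralizer identification from the previous paragraph promotes the $p$-intertwining to commutation with $\mathfrak d_k'$ itself.

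The main obstacle I anticipate is verifying that $\phi(\mathfrak D)$ lies in $M_N(\Omega[x])$ rather than merely in $M_N(Q)$: the factor $\mathfrak v_i^{-1}$ in the formula for $\phi(\mathfrak D)_{ij}$ must first be cleared using the relation $\mathfrak v_i\mathfrak c_{ij} = \mathfrak c_{ij}\mathfrak v_j$, and the residual pole structure controlled by invoking Theorem \ref{right fourier calculation}, which governs pole locations for operators in $\fourr(P)$. Closely related is the precise identification $\mathcal C(\mathfrak d_k') = \bbc[\mathfrak d_k']$ in $\Omega[x]$, for which the rationality input from Theorem \ref{main theorem first part} is essential; everything else is a direct check once the explicit formula for $\phi(\mathfrak D)$ is in hand.
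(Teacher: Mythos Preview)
Your plan for the block structure of the centralizer and for the explicit formula $\phi(\mathfrak D)_{ij} = \mathfrak h_i\mathfrak v_i^{-1}\mathfrak c_{ij}\mathfrak h_j^{-1}$ is fine and matches what the paper does. But there are two genuine gaps.

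\textbf{The polynomiality of the entries.} You correctly flag as the main obstacle that $\phi(\mathfrak D)_{ij}$ a priori lives only in the skew-field $Q$ (more precisely in $\bbc(\mathfrak d_i')$), and must be shown to lie in $\bbc[\mathfrak d_i']$. Your proposed remedy, Theorem~\ref{right fourier calculation}, does not address this: that theorem is about clearing poles in the $x$-variable to land in $\fourr(P)$, not about passing from rational functions of $\mathfrak d_i'$ to polynomials in $\mathfrak d_i'$. The paper's argument is entirely different and uses the $*$-structure in an essential way. First, the relation $\wt{\mathfrak b_i}\mathfrak d_i^* = \mathfrak d_i\wt{\mathfrak b_i}$ (equation~\eqref{symmetry equation revisited}) shows that $\Phi$ intertwines $\dag$ with the natural Hermitian involution $\star$ on $\bigoplus_i M_{n_i}(\bbc(\mathfrak d_i'))$. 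Second, $\Phi$ sends $\mathcal Z(W)$ into $\bigoplus_i \bbc[\mathfrak d_i']I_{n_i}$, and since $\mathcal D(W)$ is module-finite over $\mathcal Z(W)$, every element of the image is integral over $\bigoplus_i \bbc[\mathfrak d_i']I_{n_i}$. Now identify each block with matrices over $\bbc(t)$: if $F(t)$ is $\star$-Hermitian and integral over $\bbc[t]$, then $\tr(F(t)^2) = \sum_{i,j}|f_{ij}(t)|^2$ lies in $\bbc[t]$, which forces every entry $f_{ij}(t)\in\bbc[t]$. Since the image is spanned by its $\star$-Hermitian elements, this finishes it. None of this is visible from your outline.

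\textbf{The centralizer of a single $\mathfrak d_k'$.} Your justification that $\mathcal C(\mathfrak d_k') = \bbc[\mathfrak d_k']$ is not valid as written: rationality of $\spec(\mathcal A_k)$ is a property of the particular algebra $\mathcal A_k$ coming from $\mathcal D(W)$ via the Fourier map, and does not transfer to an arbitrary commutative subalgebra of $\Omega(x)$ containing $\mathfrak d_k'$. So your claim that ``every commutative subalgebra of $\Omega(x)$ containing $\mathfrak d_k'$ has rational spectrum'' is unsupported. The paper simply asserts the centralizer identity; for the embedding assertion (which is what the rest of the paper actually uses) this gap is immaterial, since the integrality argument above shows directly that the image lands in $\bigoplus_i M_{n_i}(\bbc[\mathfrak d_i'])$, bypassing any need to compute the full centralizer.
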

\begin{proof}
Suppose that $\mathfrak A\in M_N(\Omega[x])$ commutes with $\mathfrak d_1\oplus\dots\oplus\mathfrak d_N$, and let $\mathfrak a_{ij}$ be the entries of $\mathfrak A$.
For all $i$ and $j$ we must have $\mathfrak d_i\mathfrak a_{ij} = \mathfrak a_{ij}\mathfrak d_j$.  Thus by the choice of the $\mathfrak d_i$'s we see that $\mathfrak a_{ij}=0$ unless $\mathfrak d_i=\mathfrak d_j$, in which case $\mathfrak a_{ij}$ belongs to the centralizer of $\mathfrak d_j$ in $\Omega[x]$, the latter being $\bbc[\mathfrak d_j]$.
From this the above isomorphism is clear.

Fix $i,j$.
We define a map $\phi_{ij}:\mathcal D(W)\rightarrow \bbc(\mathfrak d_i)$ as follows.
By arguments already applied in previous proofs, the element $\mathfrak V_i\mathfrak D\mathfrak V_j$ satisfies $\mathfrak T\mathfrak V_i\mathfrak D\mathfrak V_j = \mathfrak aE_{ij}\mathfrak T$ for some $\mathfrak a$ satisfying $\mathfrak a p_j(\mathfrak d_j)) = p_i(\mathfrak d_i)\mathfrak a$.
If $\mathfrak a$ is nonzero then again by the choice of the $\mathfrak d_i$ we know that $\mathfrak d_i=\mathfrak d_j$ so that $\mathfrak a$ commutes with $\mathfrak p_i(\mathfrak d_i)$ and therefore with $\mathfrak d_i$.  We define $\phi_{ij}(\mathfrak D) = \mathfrak ap_i(\mathfrak d_i)^{-1}p_j(\mathfrak d_j)^{-1}$.
More generally for any $i,j$ and $\mathfrak D\in \mathcal D(W)$ we define $\phi_{ij}(\mathfrak D)\in \bbc(\mathfrak d_i)$ to be the unique element satisfying
$$p_i(\mathfrak d_i)\phi_{ij}(\mathfrak D)p_j(\mathfrak d_j)E_{ij}\mathfrak T = \mathfrak T\mathfrak D.$$
We further define
$$\Phi: \mathcal D(W)\rightarrow\bigoplus_{i=1}^r M_{n_i}(\bbc(\mathfrak d_i'))$$
by setting $\Phi(\mathfrak D)$ to be the matrix whose entries are $\phi_{ij}(\mathfrak D)$ for all $i,j$.
It is easy to see that $\Phi$ is an algebra monomorphism.
In fact, if we view $\bigoplus_{i=1}^r M_{n_i}(\bbc(\mathfrak d_i'))$ as a subalgebra of the algebra of $N\times N$ matrices with pseudo-differential operator entries, then $\Phi$ is simply the conjugation map $\Phi: \mathfrak D\mapsto \mathfrak T\mathfrak D\mathfrak T^{-1}$.

The center $\mathcal Z(W)$ of $\mathcal D(W)$ is mapped under $\Phi$ to $\bigoplus_{i=1}^r \bbc[\mathfrak d_i']I_{n_i}$ (for $I_{n_i}$ the $n_i\times n_i$ identity matrix).
In particular, the image of the center $\mathcal Z(W)$ is integral over $\bigoplus_{i=1}^r \bbc[\mathfrak d_i']I_{n_i}$ (i.e. each element is the root of a monic polynomial with coefficients n this ring).
The algebra $\mathcal D(W)$ is a finite module over its center, so we conclude that the image of $\mathcal D(W)$ is integral over $\bigoplus_{i=1}^r \bbc[\mathfrak d_i']I_{n_i}$.

We claim that $\Phi$ actually maps $\mathcal D(W)$ into $\bigoplus_{i=1}^r M_{n_i}(\bbc[\mathfrak d_i'])$.
To see this, first note that as a consequence of \ref{symmetry equation revisited}, we have $\Phi(\mathfrak D^\dag) = \Phi(\mathfrak D)^\star$, where $\star$ represents the unique involution of $\bigoplus_{i=1}^r M_{n_i}(\bbc(\mathfrak d_i'))$ extending Hermitian conjugate on $M_{n_i}(\bbc)$ and sending $\mathfrak d_i'$ to $\mathfrak d_i'$.
Thus if $\mathcal A_j$ is the image of $\mathcal D(W)\rightarrow \bigoplus_{i=1}^r M_{n_i}(\bbc(\mathfrak d_i'))\rightarrow M_{n_j}(\bbc(\mathfrak d_j'))$ then
$\mathcal A_j$ may be identified with a subalgebra of $M_{n_j}(\bbc(t))$ closed under Hermitian conjugation, with each element satisfying a monic polynomial identity with coefficients in $\bbc[t]$.
However if $F(t)\in M_{n_j}(\bbc(t))$ is Hermitian and integral over $\bbc[t]$, then $F(t)^2$ is integral over $\bbc[t]$.
This means that $\tr(F(t)^2)$ is in $\bbc[t]$.
However $\tr(F(t)^2) = \sum_{ij} |f_{ij}(t)|^2$, where $f_{ij}(t)$ are the entries of $F(t)$, so this means $F(t)$ is in $M_{n_j}(\bbc[t])$.
Since $\mathcal A_j$ is spanned by its Hermitian matrices, this shows that $\mathcal A_j\subseteq M_{n_j}(\bbc[\mathfrak d_j'])$.
This proves our claim and our theorem.
\end{proof}

\subsection{Second part of the proof of the Classification Theorem}
We now complete the proof of the Classification Theorem stated in the introduction.
Before doing so, we reqire a couple quick lemmas.
\begin{lem}
For all $i$ the function $v_{m_ii}(x)$ is a polynomial in $x$ which vanishes at the finite endpoints of the support $(x_0,x_1)$ of $W(x)$.
\end{lem}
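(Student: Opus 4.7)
The plan is to split the proof into two parts: (i) $v_{m_i i}(x)$ is a polynomial; and (ii) this polynomial vanishes at every finite endpoint of $(x_0,x_1)$.

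For (i), I would extract the top-order coefficient of the operator identity \eqref{eigenvalue equation}. Since $\mathfrak V_i \in \mathcal D(W)$ is degree-filtration preserving, its coefficients lie in $M_N(\bbc[x])$; in particular the leading coefficient $V_{m_i i}(x)$ is a polynomial matrix. Comparing leading terms on both sides of $\vec{\mathfrak u}_i^T \mathfrak V_i = \mathfrak v_i \vec{\mathfrak u}_i^T$ yields the left-eigenvalue relation
$$\vec u_{\ell_i i}(x)^T V_{m_i i}(x) = v_{m_i i}(x)\,\vec u_{\ell_i i}(x)^T,$$
exhibiting $v_{m_i i}(x) \in \bbc(x)$ as a root of the monic characteristic polynomial $\det(tI - V_{m_i i}(x)) \in \bbc[x][t]$. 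Since $\bbc[x]$ is a UFD, the standard rational-roots argument (if $v=p/q$ with $\gcd(p,q)=1$ satisfies a monic polynomial over $\bbc[x]$, then $q$ divides $p^N$ and hence is a unit) forces $v_{m_i i}(x) \in \bbc[x]$.

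For (ii), I would appeal to the formulas \eqref{equation for r real}, \eqref{equation for r imag} derived in the proof of Theorem \ref{right fourier calculation}: for $m_i$ even,
$$r_i(x) = v_{m_i i}(x)^{2\ell_i/m_i-1}\exp\int\frac{\Re(v_{(m_i-1)i}(x))}{(m_i/2)v_{m_i i}(x)}\,dx,$$
with $\Im$ replacing $\Re$ when $m_i$ is odd. Recall that $r_i(x) = \vec u_{\ell_i i}(x)^T W(x) \vec u_{\ell_i i}(x)^{T*}$. Because $W$ is entrywise smooth on $\bbr$ and vanishes identically outside $(x_0,x_1)$, every derivative of $W$ vanishes at any finite endpoint $x_0$, so $W$ vanishes to infinite order there; combined with the fact that $\vec u_{\ell_i i}(x)$ is rational and hence has only finite-order polar singularities, this forces $r_i(x)$ to vanish to infinite order at $x_0$. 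If we now suppose toward contradiction that $v_{m_i i}(x_0)\neq 0$, then in a neighborhood of $x_0$ the denominator $v_{m_i i}(x)$ is bounded away from zero, the power $v_{m_i i}(x)^{2\ell_i/m_i-1}$ has a finite nonzero limit at $x_0$, and the subleading identity coming from \eqref{symmetry equation} constrains the pole structure of $v_{(m_i-1)i}$ at $x_0$ sufficiently to rule out infinite-order vanishing of the exponential factor. This contradicts the infinite-order vanishing of $r_i$ at $x_0$, forcing $v_{m_i i}(x_0)=0$; the same argument applies at any finite right endpoint $x_1$.

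The main technical obstacle is the careful analysis of the exponential factor in the formula for $r_i$: a priori $v_{(m_i-1)i}(x)$ is only rational, so one must rule out the pathological case where a pole of order $\geq 2$ of $v_{(m_i-1)i}$ at $x_0$ produces an essential-type vanishing of $\exp F(x)$ and thereby accounts for the infinite-order vanishing of $r_i$ without a zero of $v_{m_i i}$. The subleading coefficient equation extracted from $\mathfrak v_i \mathfrak b_i = \mathfrak b_i \mathfrak v_i^*$ pins down the admissible pole orders of $v_{(m_i-1)i}$ in terms of $v_{m_i i}$, $r_i$, and $r_i'$, and this is precisely what forces the zero of $r_i$ at $x_0$ to come from $v_{m_i i}$ itself.
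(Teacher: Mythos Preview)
Your argument for part (i) is correct: the integrality argument shows $v_{m_ii}\in\bbc[x]$. The paper does this slightly more directly, observing that the full relation $\mathfrak U\mathfrak V_i = \mathfrak v_i E_{ii}\mathfrak U$ at top order gives $U(x)V_{m_ii}(x) = v_{m_ii}(x)E_{ii}U(x)$, so $V_{m_ii}(x)$ is conjugate over $\bbc(x)$ to $v_{m_ii}(x)E_{ii}$ and hence $v_{m_ii}(x) = \tr(V_{m_ii}(x))\in\bbc[x]$.

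Your argument for part (ii) has a genuine gap. The subleading identity from $\mathfrak v_i\mathfrak b_i = \mathfrak b_i\mathfrak v_i^*$ that you invoke is precisely the first-order ODE that was solved to obtain the formulas \eqref{equation for r real}--\eqref{equation for r imag} for $r_i$; it therefore cannot supply an \emph{independent} constraint on the pole order of $v_{(m_i-1)i}$ at $x_0$ beyond what is already encoded in $r_i$. Your proposed resolution of the ``technical obstacle'' is thus circular: nothing you have written prevents $v_{(m_i-1)i}$ (which is only known to be rational) from having a pole of order $\geq 2$ at $x_0$, and in that case the exponential factor can vanish to infinite order even when $v_{m_ii}(x_0)\neq 0$. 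There is also a tension in your premise: the infinite-order vanishing of $W$ at the endpoints, which you take from a literal reading of Definition~\ref{weight matrix definition}, is the opposite of what the paper's own proof uses (and of what holds for the classical Jacobi and Laguerre weights), namely that some one-sided derivative $W^{(k)}$ is nonzero at a finite endpoint.

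The paper's route bypasses this analytic difficulty entirely. Since $\mathfrak V_i\in\mathcal D(W)$ is $W$-adjointable, so is the first-order operator $\frac{1}{m_i!}\Ad_{xI}^{m_i-1}(\mathfrak V_i)$, whose leading coefficient is $V_{m_ii}(x)$; adjointability of its powers then forces boundary terms of the form $V_{m_ii}(x)^kW^{(k)}(x)$ to vanish at the finite endpoints. Choosing $k$ with $W^{(k)}$ nonvanishing there shows $V_{m_ii}(x_0)$ is nilpotent, and since $v_{m_ii}(x)$ is its unique (possibly) nonzero eigenvalue by the conjugacy above, one concludes $v_{m_ii}(x_0)=0$ with no appeal to the explicit formula for $r_i$.
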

\begin{proof}
For each $i$ we may write $\mathfrak V_i = \sum_{j=0}^{m_i}\partial_x^jV_{ji}(x)$ for some polynomials $V_{ji}(x)\in M_N(\bbc[x])$ with $V_{m_ii}(x)$ not identically zero and $m_i$ the order of $\mathfrak v_i$, as above.
Since $\mathfrak V_i$ is $W$-symmetric, the leading coefficient $V_{m_ii}(x)$ of $\mathfrak V_i$ must evaluate to a nilpotent matrix at the finite endpoints of the support of $W(x)$.
In particular, its eigenvalues at these points must be zero.
To see this, note that since $\mathfrak V_i$ is $W$-adjointable, so too is $\frac{1}{m_i!}\Ad_{xI}^{m_i-1}(\mathfrak V_i) = \partial_xV_{m_ii}(x)+\frac{1}{m_i}V_{(m_i-1)i}(x)$.
Therefore so is $(\partial_xV_{m_ii}(x))^k$ for any integer $k>0$.
By taking appropriate linear combinations for various $k$, it follows that $V_{m_ii}(x)^k\partial_x^k$ is $W$-adjointable for all $k$.
Hence for all $k$ we must have the expression $V_{m_ii}(x)^kW(x)^{(k)}$ vanish as $x$ approaches $x_0$ or $x_1$ from within $(x_0,x_1)$.
For each finite endpoint there must exist a value of $k$ such that the derivative $W(x)^{(k)}$ evaluates to a nonzero matrix at the endpoint.
Consequently $V_{m_ii}(x)^k$ must evaluate to the zero there.

Next note that the matrix $U(x)$ as defined above satisfies $U(x)V_{m_ii}(x) = v_{m_ii}(x)U(x)$ and since $U(x)$ is a unit in $M_N(\bbc(x))$, this implies that $v_{m_ii}(x)$ is equal to the trace of $V_{m_ii}(x)$.
Hence it is a polynomial and it evaluates to $0$ at the finite endpoints of the support of $W(x)$.
\end{proof}

\begin{lem}
For all $1\leq i\leq N$, let $h_i(x,n)$ be a sequence of classical orthogonal polynomials and let $P(x,n)$ be a sequence of matrix polynomials satisfying the property that
$$P(x,n) = \text{diag}(h_1(x,n-m),\dots,h_N(x,n-m))$$
for all integers $n\geq\ell$ for some fixed integers $\ell,m$ with $\ell\geq 0$.
Then for all $i$, there exists a sequence of classical orthogonal polynomials $\wt p_i(x,n)$ such that
$$\wt P(x,n) := \text{diag}(\wt p_1(x,n),\dots,\wt p_N(x,n))$$
is a bispectral Darboux transformation of $P(x,n)$.
\end{lem}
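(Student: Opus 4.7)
The plan is to reduce the problem to the scalar case, since $P(x,n)$ is diagonal for $n\ge\ell$ and we take $\wt P(x,n)$ diagonal; the bispectral Darboux equations then decouple into equations for each diagonal entry. For each of the three classical orthogonal polynomial families (Hermite, Laguerre, Jacobi), standard identities provide differential operators $\mathfrak r_i,\mathfrak l_i\in\Omega(x)$ (acting on the right) such that $h_i(x,n)\cdot\mathfrak r_i^{k}$ and $h_i(x,n)\cdot\mathfrak l_i^{k}$ are nonzero scalar multiples of classical orthogonal polynomials of degree $n+k$ and $n-k$ respectively, possibly in a related classical family with integer-shifted parameters. The crucial additional feature we use is that $\mathfrak l_i$ strictly decreases polynomial degree by one (which already holds for plain differentiation $\partial_x$), so $\mathfrak l_i^K$ annihilates every polynomial of degree less than $K$.

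Choose $K\in\bbn$ larger than $|m|$ and larger than the maximum degree of every entry $P_{ij}(x,n)$ for $0\le n<\ell$, and set
$$\mathfrak T:=\diag(\mathfrak l_i^K\mathfrak r_i^{m+K})_{i=1}^N,\qquad \wt{\mathfrak T}:=\diag(\mathfrak r_i^K\mathfrak l_i^{m+K})_{i=1}^N,$$
with $F(x)=\wt F(x)=1$. Let $\wt p_i(x,n)$ be the normalised classical OP sequence obtained by applying $\mathfrak l_i^K\mathfrak r_i^K$ to $h_i(x,n)$; this lives in a classical family whose parameters differ from those of $h_i$ by at most a bounded integer shift, hence is itself a classical OP sequence. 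For $n\ge m+K$ the product $h_i(x,n-m)\cdot\mathfrak l_i^K\mathfrak r_i^{m+K}$ is a nonzero scalar $c_i(n)$ times $\wt p_i(x,n)$, while for $n<m+K$ the lowering $\mathfrak l_i^K$ kills every entry of $P(x,n)$, so $P(x,n)\cdot\mathfrak T=0$. Defining $C(n):=\diag(c_i(n))$ for $n\ge m+K$ and $C(n):=0$ for $n<m+K$ yields $C(n)\wt P(x,n)=P(x,n)\cdot\mathfrak T F(x)$ for all $n$, with $C(n)$ nonsingular for all but finitely many $n$; a symmetric argument produces the reverse equation.

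The main technical obstacle is to verify that $\mathfrak T,\wt{\mathfrak T}\in\fourr(P)$, that is, to exhibit shift operators $\mathscr M,\wt{\mathscr M}\in M_N(\shift)$ realising $\mathscr M\cdot P=P\cdot\mathfrak T$ and $\wt{\mathscr M}\cdot P=P\cdot\wt{\mathfrak T}$. For $n\ge m+K$ one expands each $\wt p_i(x,n)$ as a finite $\bbc$-linear combination of $\{h_i(x,k)\}_{k\ge 0}$; since $h_i(x,k)$ equals the $(i,i)$-entry of $P(x,k+m)$ whenever $k+m\ge\ell$, this rewrites $C(n)\wt P(x,n)$ as a finite sum $\sum_j A_j(n)P(x,n+j)$ with diagonal matrix coefficients $A_j(n)\in M_N(\bbc)$; for $n<m+K$ the right-hand side vanishes and we take the $A_j(n)$ to be zero. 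The resulting $\mathscr M=\sum_j A_j(n)\mathscr D^j$ lies in $M_N(\shift)$ because coefficient functions of shift operators are allowed to be arbitrary maps $\bbn\to M_N(\bbc)$; this completes the verification that $\mathfrak T\in\fourr(P)$, and $\wt{\mathfrak T}$ is handled analogously using powers of $\mathscr D^*$. The nonvanishing of the $c_i(n)$ and the classical-OP nature of $\wt p_i(x,n)$ in each case follow from the standard Hermite, Laguerre, and Jacobi ladder identities.
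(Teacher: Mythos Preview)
Your approach is essentially the paper's: both build diagonal operators $\mathfrak T,\wt{\mathfrak T}$ from products of the lowering operator $\partial_x$ and the family-specific raising (intertwining) operators, with enough initial factors of $\partial_x$ to annihilate the arbitrary entries of $P(x,n)$ for $n<\ell$, and both land in a classical family with integer-shifted parameters. The paper makes two points more explicit than you do. First, your notation $\mathfrak r_i^{m+K}$ hides that in the Laguerre and Jacobi cases the successive raising factors are \emph{different} operators $\mathfrak t_{\alpha+\ell-1,\beta+\ell-1},\dots,\mathfrak t_{\alpha-m,\beta-m}$ (and likewise in $\wt{\mathfrak T}$), so ``$\mathfrak r_i^K$'' in $\wt{\mathfrak T}$ cannot mean the same operators as in $\mathfrak T$. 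Second, the paper records the key structural identity $\mathfrak t_i\wt{\mathfrak t_i}\in\bbc[\mathfrak d_i]$, which is what makes the two directions fit together as a Darboux factorization.

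There is one genuine loose end in your verification that $\mathfrak T\in\fourr(P)$. You write $C(n)\wt P(x,n)=\sum_j A_j(n)P(x,n+j)$ and then assert $\mathscr M=\sum_j A_j(n)\mathscr D^j\in M_N(\shift)$ ``because coefficient functions of shift operators are allowed to be arbitrary''. That addresses the wrong issue: membership in $M_N(\shift)$ requires the range of $j$ to be bounded \emph{independently of $n$}, and your expansion of $\wt p_i(x,n)$ in the basis $\{h_i(x,k)\}$ does not obviously have this property (for Laguerre and Jacobi the connection coefficients between parameter-shifted families can have unbounded support in one direction). What is actually needed is that each scalar $\mathfrak t_i$ lies in $\fourr(h_i)$, i.e.\ that $\mathfrak t_i^\dag$ has polynomial coefficients for the weight $f_i$; this is true for the specific products of $\partial_x$'s and $\mathfrak t$'s used, but it requires checking. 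The paper handles this point differently, by invoking Theorem~\ref{right fourier calculation}; that invocation is itself somewhat informal in the abstract setting of the lemma, so both treatments leave the same detail to the reader.
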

\begin{proof}
The sequences of classical orthogonal polynomials come with intertwining operators which relate polynomials of various degrees.
For example, consider the Jacobi polynomials $j_{\alpha,\beta}(x,n)$ for the weight $(1-x)^\alpha(1+x)^\beta1_{(-1,1)}(x)$.
They are eigenfunctions of the Jacobi operator $\mathfrak e_{\alpha,\beta} = \partial_x^2(1-x^2) + \partial_x(\beta-\alpha-(\beta+\alpha+2)x)$.
The intertwining operator for $j_{\alpha,\beta}(x,n)$ is defined as
$$\mathfrak t_{\alpha,\beta} := \partial_x(1-x^2) + \beta-\alpha-(\beta+\alpha+2)x.$$
Note that
$$\mathfrak e_{\alpha,\beta} = \partial_x\mathfrak t_{\alpha,\beta}\ \ \text{and}\ \ \mathfrak e_{\alpha+1,\beta+1} - (\beta+\alpha+2) = \mathfrak t_{\alpha,\beta}\partial_x$$
so $j_{\alpha+1,\beta+1}(x,n)\cdot\mathfrak t_{\alpha,\beta}$ is an polynomial eigenfunction of $\mathfrak e_{\alpha,\beta}$ of degree $n+1$ and therefore equal to a constant multiple of $j_{\alpha,\beta}(x,n+1)$.  Similarly $\partial_x$ sends $j_{\alpha,\beta}(x,n)$ to $j_{\alpha+1,\beta+1}(x,n-1)$.
It follows that
$$j_{\alpha,\beta}(x,n-m)\cdot \partial_x^\ell\mathfrak t_{\alpha+\ell-1,\beta+\ell-1}\dots\mathfrak t_{\alpha-m,\beta-m} = c_nj_{\alpha-m,\beta-m}(x,n)$$
for some sequence of constants $c_n$.
Furthermore $\partial_x^m$ sends $j_{\alpha-m,\beta-m}(x,n)$ to a constant multiple of $j_{\alpha,\beta}(x,n-m)$ and therefore
the $j_{\alpha,\beta}$ must be eigenfunctions of the product.
Thus
$$(\partial_x^\ell\mathfrak t_{\alpha+\ell-1,\beta+\ell-1}\dots\mathfrak t_{\alpha-m,\beta-m})\partial_x^m \in \bbc[\mathfrak e_{\alpha,\beta}].$$

By similar arguments to those in the previous paragraph for the Hermite and Jacobi case, we see that taking appropriate products of $\partial_x$ and the associated intertwining operators we can obtain differential operators $\mathfrak t_i,\wt{\mathfrak t_i}\in\Omega[x]$ such that the kernel of $\mathfrak t_i$ contains all polynomials of degree less than $\ell$ and such that
$$\mathfrak t_i\wt{\mathfrak t_i}\in \bbc[\mathfrak d_i]\ \ \text{and}\ \ h_i(x,n-m)\cdot\mathfrak t_i = c_i\wt h_i(x,n)$$
for some sequence of monic orthogonal polynomials $\wt h_i(x,n)$.
Note that all but finitely many of the $c_n$'s must be nonzero, since otherwise the kernel of the differential operator will be too large.
Setting $\mathfrak T = \text{diag}(\mathfrak t_1,\dots,\mathfrak t_N)$ and $\wt{\mathfrak T} = \text{diag}(\wt{\mathfrak t_1},\dots,\wt{\mathfrak t_N})$ it follows that
$$P(x,n)\cdot\mathfrak T = C(n)\wt P(x,n)\ \ \text{and}\ \ \wt P(x,n)\cdot\wt{\mathfrak T} = \wt C(n)P(x,n)$$
for some some sequences of matrices $C(n),\wt C(n)$ nonsingular for almost every $n$ and for $\wt P(x,n)$ defined as in the statement of the lemma.
By Theorem \ref{right fourier calculation}, it follows that $\wt P(x,n)$ is a bispectral Darboux transformation of $P(x,n)$.
\end{proof}

\begin{proof}[Proof of the Classification Theorem \ref{classification theorem}]
We assume $W(x)$ is a weight matrix and that $\mathcal D(W)$ contains a $W$-symmetric second-order differential operator whose leading coefficient multiplied by $W(x)$ is positive definite on the support of $W(x)$.
Without loss of generality, we may assume that $W(x)$ is unitarily irreducible $N\times N$ weight matrix with $N>1$.

The first direction is easy.  If $W(x)$ is a bispectral Darboux transformation of a direct sum of classical weights $f_1(x)\oplus \dots\oplus f_N(x)$, then the fact that $\mathcal D(f_1(x)\oplus\dots\oplus f_N(x))$ is full automatically implies $\mathcal D(W)$ is full.
The difficult part is proving the converse.
Assume $\mathcal D(W)$ is full.

By Theorem \ref{main theorem first part}, we may choose $\mathfrak T,\wt{\mathfrak T}\in M_N(\Omega(x))$ with $\wt{\mathfrak T}E_{ii}\mathfrak T = q(\mathfrak V_i)$ and with $\mathfrak T\wt{\mathfrak T} = \diag(p_1(\mathfrak d_1),\dots,p_N(\mathfrak d_N))$ for some differential operators $\mathfrak d_1,\dots,\mathfrak d_N$ of order $1$ or $2$ and polynomials $p_i,q$.
Note that by rescaling $\mathfrak T$ on the left and $\wt{\mathfrak T}$ on the right, we may without loss of generality assume that $\wt{\mathfrak T}$ has polynomial coefficients.
Also from the proof of Theorem \ref{main theorem first part}, we know that there exist $\mathfrak h_1,\dots,\mathfrak h_N\in\Omega(x)$ and polynomials $p_1(\mathfrak d_1),\dots, p_N(\mathfrak d_N)$ with $\mathfrak h_i\mathfrak v_i = p_i(\mathfrak d_i)\mathfrak h_i$ for all $i$.
Let $n_i$ be the degree of the polynomial $p_i$ and write
$$\mathfrak h_i = \sum_{j=0}^{k_i}\partial_x^jh_{ji}(x)$$
for some rational functions $h_{0i}(x),\dots,h_{k_ii}(x)\in\bbc(x)$ with $h_{k_ii}(x)\neq 0$.

We also assumed that there exists a second-order differential operator $\mathfrak D\in\mathcal D(W)$ whose leading coefficient multiplied by $W(x)$ is Hermitian and positive-definite on the support of $W(x)$.
By Theorem \ref{backup strats} $\mathfrak T\mathfrak D = \wt{\mathfrak D}\mathfrak T$ for some second-order differential operator $\wt{\mathfrak D}\in\mathcal D(W)$.
Furthermore by the calculation in the proof of Theorem \ref{main theorem first part} we know that $E_{ii}\wt{\mathfrak D}E_{ii} = \mathfrak a_iE_{ii}$ for some second-order differential operator $\mathfrak a_i\in\bbc[\mathfrak d_i]$.
Let $C(n)$ be the sequence of matrices from the proof Theorem \ref{main theorem first part} and define a sequence of matrix polynomials $\wt P(x,n)$ and sequences of scalar polynomials $g_i(x,n)$ by
$$\wt P(x,n) = C(n)P(x,n)\cdot\wt{\mathfrak T}\ \ \text{and}\ \ g_i(x,n)E_{ii} = E_{ii}\wt P(x,n) E_{ii}.$$
Then for all $i$, we calculate
$$\wt{\mathfrak T}q(\mathfrak d_i)E_{ii} = q(\mathfrak V_i)\wt{\mathfrak T} \ \ \text{and}\ \ \wt{\mathfrak T}q(\mathfrak d_i)^2\mathfrak a_iE_{ii} = \mathfrak V_i\mathfrak D\mathfrak V_i\wt{\mathfrak T}.$$
Therefore
$$\wt P(x,n)\cdot q(\mathfrak d_i)E_{ii} = q(\lambda_i(n))E_{ii}\wt P(x,n) \ \ \text{and}\ \ \wt P(x,n)\cdot q(\mathfrak d_i)^2\mathfrak a_iE_{ii} = q(\lambda_i(n))^2\theta_i(n)E_{ii}\wt P(x,n)$$
for some polynomial $\theta_i(n)$.
Putting this together
$$\wt P(x,n)\cdot \mathfrak a_iE_{ii} = \theta_i(n)E_{ii}\wt P(x,n).$$
In particular it follows that for all $i$
$$g_i(x,n)\cdot\mathfrak a_i = \theta_i(n)g_i(x,n).$$
Note also that $\wt P(x,n)E_{ii}\cdot\mathfrak T = p_i(\lambda_i(n))E_{ii}P(x,n)$.
Defining $\mathfrak t_i$ by $\mathfrak t_iE_{ii} = E_{ii}\mathfrak TE_{ii}$, this says that for all but possibly finitely many $n$ (corresponding to roots of $p_i(\lambda_i(n))$) the function $g_i(x,n)\cdot\mathfrak t_i$ is a polynomial of degree $n$.
Hence for each $i$ there exists an integer $m_i$ (possibly negative) such that for all but finitely many $n$ the degree of $g_i(x,n)$ is $n-m_i$.
In particular $\mathfrak a_i$ is exceptional and by Theorem \ref{exceptional theorem} we know that $\mathfrak a_i$ is Darboux conjugate to a degree-filtration preserving differential operator of order $2$.

The previous paragraph shows that we may have originally chosen $\mathfrak h_i$, $\mathfrak T,\wt{\mathfrak T}$ so that the $\mathfrak a_i$ (or equivalently the $\mathfrak d_i$) are degree-filtration preserving.
Thus without loss of generality we take $\mathfrak d_i$ to be degree-filtration preserving for all $i$.
In particular for each $\mathfrak d_i$ we may write
$$\mathfrak d_i = \partial_x^2a_{2i}(x) + \partial_x a_{1i}(x) + a_{0i}(x)$$
where $a_{ji}(x)$ is a polynomial with $\deg(a_{ji}(x))\leq j$ for all $1\leq i\leq N$ and $0\leq j\leq 2$.

We next show that $\mathfrak d_i$ must be order $2$ and equal to one of the classical second-order differential operators of Hermite, Laguerre and Jacobi (hereafter referred to simply as classical operators).
Let $t_i$ be the order of $\mathfrak d_i$, which is either $1$ or $2$.
Comparing leading coefficients in the expression $\mathfrak h_i\mathfrak v_i = p_i(\mathfrak d_i)\mathfrak h_i$ we see
$$v_{m_ii}(x) = a_{t_ii}(x)^{n_i}.$$
Furthermore by comparing subleading coefficients we find
\begin{align*}
  & h_{k_ii}(x)v_{(m_i-1)i}(x) + h_{(k_i-1)i}(x)v_{m_ii}(x) + m_ih'_{k_ii}(x)v_{m_ii}(x)\\
= & a_{t_ii}(x)^{n_i}h_{(k_i-1)i}(x) + (n_ia_{t_ii}(x)^{n_i-1}a_{(t_i-1)i}(x) + (t_in_i-t_i+m_in_i)a_{t_ii}(x)^{n_i-1}a_{t_ii}'(x))h_{k_ii}(x).
\end{align*}
Combining this with the first equation, this simplifies to
$$v_{(m_i-1)i}(x) = a_{t_ii}(x)^{n_i}\left[-m_i\frac{h'_{k_ii}(x)}{h_{k_ii}(x)} + \frac{n_ia_{(t_i-1)i}(x) + (n_i(m_i+t_i)-t_i)a_{t_ii}'(x)}{a_{t_ii}(x)}\right].$$
Therefore by (\ref{equation for r real})-(\ref{equation for r imag}) we calculate that up to a constant multiple the function $r_i(x)$ is given by
\begin{equation}\label{equation for r real updated}
r_i(x) = \Re\left[a_{t_ii}(x)^{\left(\frac{2\ell_in_i}{m_i}-n_i + \frac{2n_i(m_i+t_i)-2t_i}{m_i}\right)}h_{k_ii}(x)^{-2}\exp\int \frac{2n_i}{m_i}\frac{a_{(t_i-1)i}(x)}{a_{t_ii}(x)}dx\right]
\end{equation}
for $m_i$ even and
\begin{equation}\label{equation for r imag updated}
r_i(x) = \Im\left[a_{t_ii}(x)^{\left(\frac{2\ell_in_i}{m_i}-n_i + \frac{2n_i(m_i+t_i)-2t_i}{m_i}\right)}h_{k_ii}(x)^{-2}\exp\int \frac{2n_i}{m_i}\frac{a_{(t_i-1)i}(x)}{a_{t_ii}(x)}dx\right]
\end{equation}
for $m_i$ odd.

The leading coefficient $v_{m_ii}(x)$ of $\mathfrak v_i$ must vanish at the finite endpoints of the support of $W(x)$.
Since $v_{m_ii}(x) = a_{t_ii}(x)^{n_i}$, this implies that $a_{t_ii}(x)$ must vanish at the finite endpoints of the support of $W(x)$.
If $t_i$ is $1$, then (\ref{equation for r real updated})-(\ref{equation for r imag updated}) imply that $r_i(x)$ is rational.
Thus in this case $W(x)$ must be rational, and for $W(x)$ to have finite moments the support of $W(x)$ must then be a finite interval.
However, $a_{t_ii}(x)$ can only have one root in this case, so $a_{t_ii}(x)$ does not vanish at both the endpoints of the support of $W(x)$.
Thus we see that $t_i=2$ for all $i$ and that $\mathfrak d_i$ is an operator of order $2$.

Suppose that for some $i$ the polynomial $a_{2i}(x)$ has degree two.
Then (\ref{equation for r real updated})-(\ref{equation for r imag updated}) tell us that for $W(x)$ to have finite moments the support of $W(x)$ must lie on a finite interval.
Therefore in this case the roots of $a_{2i}(x)$ must be real and distinct and equal to the finite endpoints of the support of $W(x)$.
Up to affine translation, they can be taken to be $\pm 1$ in which case $\mathfrak d_i$ is equal to a Jacobi operator.
Note that in this case since $a_{2j}(x)$ must vanish on the support of $W(x)$, they must all be Jacobi operators.

Alternatively suppose that for some $i$ the polynomial $a_{2i}(x)$ has degree $1$.
Then again to have finite moments and for $a_{2i}(x)$ to vanish at the finite endpoints of the support of $W(x)$, the support of $W(x)$ must be semi-infinite.
Up to affine translation, we can take the support to be $(0,\infty)$ in which case $\mathfrak d_i$ is a Laguerre operator.
Again, this implies that all of the $\mathfrak d_j$ are Laguerre operators.

Finally, suppose that for some $i$ the polynomial $a_{2i}(x)$ has degree $0$.
Then the weight matrix $W(x)$ must be supported on the whole real line.
Also since $\mathfrak d_i$ is exceptional the polynomial $a_{1i}(x)$ must have degree $1$ in this case (since otherwise $\mathfrak d_i$ would not have polynomial eigenfunctions of all but finitely many degrees).
Therefore up to an appropriate translation we may take $\mathfrak d_i$ to be the Hermite operator. 

In any case, $\mathfrak d_i$ is a classical operator.
For all $i$ let $f_i(x)$ be the weight function associated to the classical operator $\mathfrak d_i$, and let $h_i(x,n)$ be the associated sequence of monic orthogonal polynomials.
From the above calculation (replacing $\mathfrak d_i$ with $\mathfrak a_i$), we know that the polynomials $\wt P(x,n) := C(n)P(x,n)\cdot\wt{\mathfrak T}$ are eigenfunctions of $\mathfrak d_iE_{ii}$ for all $i$ with
$$\wt P(x,n)\cdot \mathfrak d_iE_{ii} = \theta_i(n)E_{ii}\wt P(x,n)$$
for some polynomials $\theta_1(n),\dots,\theta_N(n)$.
If $\theta_i(n)$ is nonzero, the above implies that the $i,j$'th entry $\wt P(x,n)_{ij}$ of $\wt P(x,n)$ is zero for all $j\neq i$.
Thus for all but finitely many $n$, the polynomial $\wt P(x,n)$ is diagonal.
Furthermore, for all $n$ sufficiently large the degree of the diagonal entry of $\wt P(x,n)$ is $n-m$.
Thus for all but finitely many $n$, we know that $\wt P(x,n)$ is a diagonal polynomial matrix whose $i$'th diagonal piece is $\alpha_ih_i(x,n-m)$ for some nonzero constant $\alpha_i$.
Therefore by the previous lemma we know that $\wt P(x,n)$ is a bispectral Darboux transformation of a sequence of orthogonal polynomials for a direct sum of classical weights.

\end{proof}

\begin{proof}[Proof of Theorem \ref{2x2 classification}]
We know in this case that $\mathcal D(W)$ is full and that the center of $\mathcal D(W)$ is irreducible.
The result follows immediately.
\end{proof}

\section{Examples}
In this section, we will review a handful of examples of $2\times 2$ weight matrices with $\mathcal D(W)$ full and containing a $W$-symmetric second-order differential operator whose leading coefficient multiplied by $W(x)$ is positive definite on the support of $W(x)$.
Recall that by Theorem \ref{2x2 classification} in order for $\mathcal D(W)$ to be full it need only contain two elements which do not commute.
For this reason, almost all of the $2\times 2$ weights featured in the literature whose polynomials satisfy a second order differential equation could be featured here.
However, we emphasize that our classification theorem holds for $N\times N$ weight matrices.
For examples of noncommutative bispectral Darboux transformations of weight matrices with $N>2$ see \cite{casper2015}.

For each example we consider, we know by the Classification Theorem that $W(x)$ is a bispectral Darboux transformation of a classical scalar weight $f(x)I$.
We will explicitly calculate a bispectral Darboux transformation.
The strategy for doing so mimics the proofs of the previous section.
We first determine an orthogonal system $\mathfrak V_1,\mathfrak V_2$ for $\mathcal D(W)$.
Next we determine a generator $\vec{\mathfrak u_i}\in \Omega(x)^{\oplus 2}$ for the cyclic left $\Omega(x)$-module $\mathcal M_i$ defined by (\ref{equation for M}).
Using this, we calculate $\mathfrak v_i\in\Omega(x)$ from (\ref{equation for v}).
As we proved above, the operator $\mathfrak v_i$ will be Darboux conjugate to $p_i(\mathfrak d)$ for $p_i$ a polynomial and $\mathfrak d$ one of the second-order operators associated to a classical weight $f(x)I$.
However in all of our examples below, we will see that  $\mathfrak v_i$ is \emph{already} a polynomial in $\mathfrak d$.
Thus $W(x)$ will be a noncommutative bispectral Darboux transform of $f(x)I$ with associated transformation matrix $\mathfrak U = [\vec{\mathfrak u_1}\ \vec{\mathfrak u_2}]^T$.
The calculations in this section have been verified using a python code written by the authors using Sympy for symbolic computations.

\subsection{An example of Hermite type}
Let $a\in\bbr$ and consider the weight matrix
$$W(x) = e^{-x^2}\mxx{1+a^2x^2}{ax}{ax}{1}.$$
It is known \cite{castro2006} that for this matrix $D(W)$ contains $\bbc I$ along with the span of the four second-order operators
$$\mathfrak D_1 = \partial_x^2 I + \partial_x\mxx{-2x}{2a}{0}{-2x} + \mxx{-2}{0}{0}{0},$$
$$\mathfrak D_2 = \partial_x^2\mxx{-\frac{a^2}{4}}{\frac{a^3x}{4}}{0}{0} + \partial_x\mxx{0}{\frac{a}{2}}{-\frac{a}{2}}{\frac{a^2x}{2}} + \mxx{0}{0}{0}{1},$$
$$\mathfrak D_3 = \partial_x^2\mxx{-\frac{a^2x}{2}}{\frac{a^3x^2}{2}}{-\frac{a}{2}}{\frac{a^2x}{2}} + \partial_x\mxx{-(a^2+1)}{a(a^2+2)t}{0}{1} + \mxx{0}{\frac{a^2+2}{a}}{0}{0},$$
$$\mathfrak D_4 = \partial_x^2\mxx{-\frac{a^3x}{4}}{\frac{a^2(a^2x^2-1)}{4}}{-\frac{a^2}{4}}{\frac{a^3x}{4}} + \partial\mxx{-\frac{a^3}{2}}{a^2(a^2+2)\frac{x}{2}}{0}{0} + \mxx{0}{\frac{a^2+2}{2}}{1}{0}.$$
Thus we know that $D(W)$ is full.
Furthermore $b_P^{-1}(\mathfrak D_i) = \Lambda_i(n)$ for
$$\Lambda_1(n) = \mxx{-2n-2}{0}{0}{-2n},\ \ \Lambda_2(n) = \mxx{0}{0}{0}{\frac{a^2n+2}{2}},$$
$$\Lambda_3(n) = \mxx{0}{\frac{(a^2n+2)(a^2n+a^2+2)}{2a}}{0}{0},\ \ \Lambda_4(n) = \mxx{0}{\frac{(a^2n+2)(a^2n+a^2+2)}{4}}{1}{0}.$$
Define
$$\mathfrak V_1 = \mathfrak D_2,\ \ \mathfrak V_2 = a^2\mathfrak D_1 + 4\mathfrak D_2 - 4I,$$
so that
$$\mathfrak V_2 = \partial_x^2\mxx{0}{a^3x}{0}{a^2} + \partial_x\mxx{-2a^2x}{2a^3+2a}{-2a}{0} + \mxx{-2a^2-4}{0}{0}{0},$$
$$b_P^{-1}(\mathfrak V_2) = \mxx{-2a^2n-2a^2-4}{0}{0}{0}.$$
Then $b_P^{-1}(\mathfrak V_i)b_P^{-1}(\mathfrak V_j) = 0$ for $i\neq j$ and therefore $\mathfrak V_i\mathfrak V_j = 0$ for $i\neq j$.  Moreover we may write
$$\mathfrak V_1 = \binom{\partial_x\frac{a}{2}}{1}\left(-\partial_x\frac{a}{2}\ \ \ \partial_x\frac{a^2x}{2} + 1\right),$$
$$\mathfrak V_2 = \binom{-\partial_x2a^2x-2a^2-4}{-\partial_x2a}\left(1\ \ \ -\partial_x\frac{1}{2}a\right).$$
From this, one easily finds
$$\mathcal M_1 = \{\vec{\mathfrak u}\in \Omega(x)^{\oplus 2}: \vec{\mathfrak u}^T\cdot\mathfrak V_2 = \vec 0\} = \Omega(x)\vec{\mathfrak u_1},\ \ \text{for}\ \  \vec{\mathfrak u_1} = \binom{\partial_x\frac{a}{2}}{-\partial_x\frac{a^2x}{2} - 1},$$
$$\mathcal M_2 = \{\vec{\mathfrak u}\in \Omega(x)^{\oplus 2}: \vec{\mathfrak u}^T\cdot\mathfrak V_1 = \vec 0\} = \Omega(x)\vec{\mathfrak u_2},\ \ \text{for}\ \  \vec{\mathfrak u_2} = \binom{-1}{\partial_x\frac{a}{2}}.$$
Thus we consider the operator
$$\mathfrak U = \mxx{\partial_x\frac{a}{2}}{-\partial_x\frac{a^2x}{2}-1}{-1}{\partial_x\frac{a}{2}}.$$
Its leading coefficient is
$$U(x) = \mxx{a/2}{-a^2x/2}{0}{a/2}.$$
Therefore $W(x)$ is a bispectral Darboux transformation of
$$R(x) = U(x)W(x)U(x)^* = \frac{a^2}{4}e^{x^2}I.$$
We further calculate that for $\mathfrak d = \partial_x^2 - \partial_x 2x$ the Hermite operator,
$$\mathfrak u_1^T \mathfrak V_1 = (-(a^2/4)\mathfrak d + 1)\mathfrak u_1^T,$$
$$\mathfrak u_2^T \mathfrak V_2 = (a^2\mathfrak d- 2a^2-4)\mathfrak u_2^T.$$
Thus
$$\mathfrak U \mathfrak V_1 = \mxx{-(a^2/4)\mathfrak d+1}{0}{0}{0}\mathfrak U,$$
$$\mathfrak U \mathfrak V_2 = \mxx{0}{0}{0}{a^2\mathfrak d-2a^2-4}\mathfrak U.$$

\subsection{An example of Laguerre type}
Let $a,b\in\bbr$ with $b>-1$ and consider the weight matrix
$$W(x) = x^be^{-x}1_{(0,\infty)}(x)\mxx{1 + a^2x^2}{ax}{ax}{1}.$$
Let $P(x,n)$ be the sequence of monic orthogonal polynomials for $W(x)$.
It is known \cite{castro2006} that for this matrix $D(W)$ contains $\bbc I$ as well as the second-order $W$-symmetric differential operator
$$\mathfrak D = \partial_x^2I + \partial_x\mxx{1+b-x}{2ax}{0}{1+b-x} + \mxx{-1}{a(b+1)}{0}{0}.$$
The preimage of $\mathfrak D$ under the generalized Fourier map $b_P$ is
$$b_P^{-1}(\mathfrak D) = \mxx{-n-1}{0}{0}{-n}.$$
The algebra $\mathcal D(W)$ does not contain any differential operators of order $3$, but does have a four dimensional space of operators of order $4$ modulo operators in $\mathcal D(W)$ of lower orders.
In particular $\mathcal D(W)$ contains two particularly nice $W$-symmetric differential operators of order four, which we will call $\mathfrak D_1$ and $\mathfrak D_2$ and will now describe.
The four entries of $\mathfrak D_1$ from left to right and from top to bottom are given by
\begin{align*}
&\partial_x^4a^2x^2 + \partial_x^32a^2((b+2)x-x^2)+\partial_x^2a^2((b+1)(b+2)-(3b+7)x) - \partial_xa^2(b+1)(b+3),\\
-&\partial_x^4a^3x^3 + \partial_x^3a^3x^2(2x-2(b+2)) + \partial_x^2ax(a^2(3b+7)x-a^2(b-1)(b-2)-1)\\
&\ \ + \partial_xa(a^2x(b+1)(b+3)+2x-b-1)+a(b+1),\\
-&\partial_x^2ax - \partial_xa(b+1),\\
&\partial_x^2a^2x^2 + \partial_xa^2x(b+1) + 1.
\end{align*}
% calculated with python
%[[ 1.0*D**4*a**2*t**2 + D**3*(2.0*a**2*b*t - 2.0*a**2*t**2 + 4.0*a**2*t) + D**2*(1.0*a**2*b**2 - 3.0*a**2*b*t + 3.0*a**2*b - 7.0*a**2*t + 2.0*a**2) + D*(-1.0*a**2*b**2 - 4.0*a**2*b - 3.0*a**2)
%  -1.0*D**4*a**3*t**3 + D**3*(-2.0*a**3*b*t**2 + 2.0*a**3*t**3 - 4.0*a**3*t**2) + D**2*(-1.0*a**3*b**2*t + 3.0*a**3*b*t**2 - 3.0*a**3*b*t + 7.0*a**3*t**2 - 2.0*a**3*t - 1.0*a*t) + D*(1.0*a**3*b**2*t + 4.0*a**3*b*t + 3.0*a**3*t - 1.0*a*b + 2.0*a*t - 1.0*a) + 1.0*a*b + 1.0*a]
% [-1.0*D**2*a*t + D*(-1.0*a*b - 1.0*a)
%  1.0*D**2*a**2*t**2 + D*(1.0*a**2*b*t + 1.0*a**2*t) + 1.0]]
Similarly the four entries of $\mathfrak D_2$ are given by
\begin{align*}
&\partial_x^2a^2t^2 + \partial_xa^2x(b+3) + a^2(b+1)+1,\\
&\partial_x^4a^3x^3 + \partial_x^32a^3x^2(b+4-x) + \partial_x^2ax(a^2(b+7)(b+2)+1-a^2x(3b-11))\\
&\ \ +\partial_xa(2a^2(b+1)(b+2)+b+1-(a^2(b^2+b+11)+2)x),\\
&\partial_x^2ax + \partial_xa(b+1),\\
&\partial_x^4a^2x^2 +\partial_x^32a^2x(b+2-x) + \partial_x^2a^2((b+1)(b+2)-(3b+5)x) - \partial_xa^2(b+1)^2.
\end{align*}
% calculated with python
%[[ 1.0*D**2*a**2*t**2 + D*(1.0*a**2*b*t + 3.0*a**2*t) + 1.0*a**2*b + 1.0*a**2 + 1.0
%  1.0*D**4*a**3*t**3 + D**3*(2.0*a**3*b*t**2 - 2.0*a**3*t**3 + 8.0*a**3*t**2) + D**2*(1.0*a**3*b**2*t - 3.0*a**3*b*t**2 + 9.0*a**3*b*t - 11.0*a**3*t**2 + 14.0*a**3*t + 1.0*a*t) + D*(-1.0*a**3*b**2*t + 2.0*a**3*b**2 - 8.0*a**3*b*t + 6.0*a**3*b - 11.0*a**3*t + 4.0*a**3 + 1.0*a*b - 2.0*a*t + 1.0*a) - 1.0*a**3*b**2 - 2.0*a**3*b - 1.0*a**3 - 1.0*a*b - 1.0*a]
% [1.0*D**2*a*t + D*(1.0*a*b + 1.0*a)
%  1.0*D**4*a**2*t**2 + D**3*(2.0*a**2*b*t - 2.0*a**2*t**2 + 4.0*a**2*t) + D**2*(1.0*a**2*b**2 - 3.0*a**2*b*t + 3.0*a**2*b - 5.0*a**2*t + 2.0*a**2) + D*(-1.0*a**2*b**2 - 2.0*a**2*b - 1.0*a**2)]]
The generalized Fourier map $b_P$ satisfies
$$b_P^{-1}(\mathfrak D_1) = \mxx{0}{a+(a^3b^2+a^3b+2a)n+(3a^3b+a^3)n^2+2a^3n^3}{0}{1+a^2bn+a^2n^2},$$
%[[0
%  1.0*a**3*b**2*t + 3.0*a**3*b*t**2 + 1.0*a**3*b*t + 2.0*a**3*t**3 + 1.0*a**3*t**2 + 1.0*a*b + 2.0*a*t + 1.0*a]
% [0 1.0*a**2*b*t + 1.0*a**2*t**2 + 1.0]]
$$b_P^{-1}(\mathfrak D_2) = \mxx{a^2b+a^2+1+(a^2b+2a^2)n+a^2n^2}{0}{-a^3(b+1)^2-a(b+1)-a(a^2(b+4)(b+1)+2)n-a^3(3b+5)n^2-2a^3n^3}{0}^T
$$
%[[1.0*a**2*b*t + 1.0*a**2*b + 1.0*a**2*t**2 + 2.0*a**2*t + 1.0*a**2 + 1.0
%  -1.0*a**3*b**2*t - 1.0*a**3*b**2 - 3.0*a**3*b*t**2 - 5.0*a**3*b*t - 2.0*a**3*b - 2.0*a**3*t**3 - 5.0*a**3*t**2 - 4.0*a**3*t - 1.0*a**3 - 1.0*a*b - 2.0*a*t - 1.0*a]
% [0 0]]
Note that the second expression has a transpose, in order to better fit within the page.
It is clear from the eigenvalue expressions that $b_P^{-1}(\mathfrak D_i)b_P^{-1}(\mathfrak D_j)=0$ for $i\neq j$ 
and that $b_P^{-1}(\mathfrak D_i)$ commutes with $b_P^{-1}(\mathfrak D)$ for all $i$.  Thus $\mathfrak D_i\mathfrak D_j=0$ 
for $i\neq j$ and $\mathfrak D_i$ commutes with $\mathfrak D$ for all $i$.
Then defining
$$\mathfrak V_1 = \mathfrak D_1 ((a^2b+a^2+1)I-(a^2b+2a^2)\mathfrak D+a^2\mathfrak D^2)$$
and
$$\mathfrak V_2 = \mathfrak D_2 (I-a^2b(\mathfrak D-I)+a^2(\mathfrak D-I)^2),$$
we get that $\mathfrak V_i$ is $W$-symmetric for all $i$ and that $\mathfrak V_i\mathfrak V_j=0$ for $i\neq j$.
We also find
$$b_P(\mathfrak V_1+\mathfrak V_2) = (1+a^2bn+a^2n^2)(a^2b+a^2+1+(a^2b+2a^2)n+a^2n^2)I,$$
so $\mathfrak V_1+\mathfrak V_2\in\mathcal Z(W)$.
Thus $\mathfrak V_1$ and $\mathfrak V_2$ for an orthogonal system for $\mathcal D(W)$.

We calculate
$$\mathcal M_i := \{\vec{\mathfrak u}^T\in \Omega(x)^{\oplus 2}: \vec{\mathfrak u}^T\mathfrak V_j=\vec 0^T,\ j\neq i\} = \Omega(x)\mathfrak u_i$$
for
$$\vec{\mathfrak u_1} = \binom{\partial_x^2ax + \partial_xa(b+1)}{-\partial_x^2a^2x^2 -\partial_xa^2(b+1)x-1},\ \ \text{and}\ \ \vec{\mathfrak u_2} = \binom{1}{\partial_x^2ax + \partial_xa(b+1-2x) -a(b+1)}.$$
%[[D**2*a*t + D*(a*b + a) -D**2*a**2*t**2 + D*(-a**2*b*t - a**2*t) - 1]
% [1 D**2*a*t + D*(a*b - 2*a*t + a) - a*b - a]]
Thus we define
$$\mathfrak U = \mxx{\partial_x^2ax + \partial_xa(b+1)}{-\partial_x^2a^2x^2 -\partial_xa^2(b+1)x-1}{1}{\partial_x^2ax + \partial_xa(b+1-2x) -a(b+1)},$$
and we calculate
$$\mathfrak U\mathfrak D = \mxx{\mathfrak d}{0}{0}{\mathfrak d-1}\mathfrak U$$
where here $\mathfrak d = \partial_x^2x + \partial_x(b+1-x)$ is the Laguerre operator for the Laguerre weight $f(x) = x^be^{-x}1_{(0,\infty)}(x)$.
In this way $W(x)$ is a bispectral Darboux transformation of the Laguerre weight.
\subsection{An example of Jacobi type}
Let $r,a\in\bbr$ with $r>0$ and consider the weight matrix
$$W(x) = (1-x^2)^{r/2-1}1_{(-1,1)}(x)\mxx{a(x^2-1)+r}{-rx}{-rx}{(r-a)(x^2-1)+r}.$$
Let $P(x,n)$ be the sequence of monic orthogonal polynomials for $W(x)$.
It is known \cite{zurrian2016algebra} that for this matrix $D(W)$ contains $\bbc I$ as well as the four second-order $W$-symmetric differential operators
$$\mathfrak D_1 = \partial_x^2\mxx{x^2}{x}{-x}{1} + \partial_x\mxx{(r+2)x}{r-a+2}{-a}{0} + \mxx{a(r-a+1)}{0}{0}{0},$$
$$\mathfrak D_2 = \partial_x^2\mxx{-1}{-x}{x}{x^2} + \partial_x\mxx{0}{a-r}{a+2}{(r+2)x} + \mxx{0}{0}{0}{(a+1)(r-a)},$$
$$\mathfrak D_3 = \partial_x^2\mxx{-x}{-1}{x^2}{x} + \partial_x\mxx{-a}{0}{2(a+1)x}{a+2} + \mxx{0}{0}{a(a+1)}{0},$$
$$\mathfrak D_4 = \partial_x^2\mxx{x}{x^2}{-1}{-x} + \partial_x\mxx{r-a+2}{2(r-a+1)x}{0}{a-r} + \mxx{0}{(r-a)(r-a+1)}{0}{0}.$$
The preimages of these elements under the generalized Fourier map are given by
$$b_P^{-1}(\mathfrak D_1) = \mxx{(n+a)(n+r-a+1)}{0}{0}{0},\ \ b_P^{-1}(\mathfrak D_2) = \mxx{0}{0}{0}{(n+a+1)(n+r-a)},$$
$$b_P^{-1}(\mathfrak D_3) = \mxx{0}{0}{(n+a)(n+a+1)}{0},\ \ b_P^{-1}(\mathfrak D_4) = \mxx{0}{(n+r-a)(n+r-a+1)}{0}{0}.$$
The operators $\mathfrak D_1$ and $\mathfrak D_2$ are $W$-symmetric and satisfy $\mathfrak D_i\mathfrak D_j = 0$ for $i\neq j$.
Define
$$\mathfrak V_1 = (\mathfrak D_1 + (r-2a)I)\mathfrak D_1,$$
$$\mathfrak V_2 = (\mathfrak D_2 - (r-2a)I)\mathfrak D_2.$$
Then $\mathfrak V_i$ is $W$-symmetric for all $i$ and $\mathfrak V_i\mathfrak V_j = 0$ for $i\neq j$.
Furthermore
$$b_P^{-1}(\mathfrak V_1 + \mathfrak V_2) = (n+a)(n+r+1-a)[(n+a)(n+r+1-a)+r-2a]I,$$
and so $\mathfrak V_1+\mathfrak V_2\in\mathcal Z(W)$.
Thus $\mathfrak V_1$ and $\mathfrak V_2$ form an orthogonal system for $\mathcal D(W)$.

We calculate 
$$\mathcal M_1 = \{\vec{\mathfrak u}\in \Omega(x)^{\oplus 2}: \vec{\mathfrak u}^T\cdot\mathfrak V_2 = \vec 0\} = \Omega(x)\vec{\mathfrak u_1},\ \ \text{for}\ \  \vec{\mathfrak u_1} = \binom{\partial_xx + a}{\partial_x},$$
$$\mathcal M_2 = \{\vec{\mathfrak u}\in \Omega(x)^{\oplus 2}: \vec{\mathfrak u}^T\cdot\mathfrak V_1 = \vec 0\} = \Omega(x)\vec{\mathfrak u_2},\ \ \text{for}\ \  \vec{\mathfrak u_2} = \binom{\partial_x}{\partial_xx + r-a}.$$
Then defining
$$\mathfrak U = \mxx{\partial_xx + a}{\partial_x}{\partial_x}{\partial_xx + r-a},$$
we calculate
$$\mathfrak U\mathfrak V_1 = \mxx{-\mathfrak d + (a+1)r-a(a+1)}{0}{0}{0}\mathfrak U,$$
$$\mathfrak U\mathfrak V_2 = \mxx{0}{0}{0}{-\mathfrak d + (1+a)(r-a)}\mathfrak U$$
where here $\mathfrak d= \partial_x^2(1-x^2) - \partial_x(r+2)x$ is the Jacobi (or Gegenbauer) operator associated with the classical weight $f(x) = (1-x^2)^{r/2}1_{(-1,1)}(x)$.
Thus $W(x)$ is a bispectral Darboux transformation of $f(x)I$.
\section*{Acknowledgement} We are grateful to Ken Goodearl and F. Alberto Gr\"unbaum for very helpful correspondence on $*$-algebras and 
comments on the first version of the paper.
The research of the authors was supported by NSF grant DMS-1601862 and Bulgarian Science Fund grant H02/15.
\bibliographystyle{plain}
\bibliography{full}

\end{document}